\newtheorem{theorem}{Theorem}
\newtheorem{proposition}{Proposition}
\newtheorem{lemma}{Lemma}
\newtheorem{assumption}{Assumption}
\theoremstyle{definition}
\newtheorem{definition}{Definition}
\newtheorem{example}{Example}
\theoremstyle{remark}
\newtheorem{remark}{Remark}
\title{\vspace{-1cm}\textbf{Robust Optimization of Rank-Dependent Models\\ with Uncertain Probabilities}\thanks{We are very grateful to Ronny Ben-Tal, Henry Lam and conference and seminar participants at 
EURANDOM, 
the 2023 SIAM Conference on Optimization, 
the Tinbergen Institute,
the University of Amsterdam and 
the University of Vienna for their comments and suggestions.
{\tt Python} code to implement the optimization procedures developed in this paper is available from \url{https://github.com/GuanJinNL/ROptRDU.github.io.git}.
This research was funded in part by the Netherlands Organization for Scientific Research under grant NWO VICI 2020--2027 (Jin, Laeven).
\textit{Email addresses}: \href{mailto:G.Jin@uva.nl}{\tt G.Jin@uva.nl}, \href{mailto:R.J.A.Laeven@uva.nl}{\tt R.J.A.Laeven@uva.nl}, and \href{mailto:D.denHertog@uva.nl}{\tt D.denHertog@uva.nl}.}}
\author[a]{Guanyu Jin}
\author[a,c,d]{Roger J.~A.~Laeven}
\author[b]{Dick den Hertog}
\affil[a]{{\small Dept.~of Quantitative Economics, University of Amsterdam,\protect\\ 
1001 NJ Amsterdam, The Netherlands}}
\affil[b]{{\small Dept.~of Business Analytics, University of Amsterdam,\protect\\  1001 NJ Amsterdam, The Netherlands}}
\affil[c]{{\small EURANDOM, 5600 MB Eindhoven, The Netherlands}}
\affil[d]{{\small CentER, Tilburg University, 5000 LE Tilburg, The Netherlands}}
\begin{document}
\maketitle

\begin{abstract}
This paper studies distributionally robust optimization for a rich class of risk measures with ambiguity sets defined by $\phi$-divergences.
The risk measures are allowed to be non-linear in probabilities, are represented by Choquet integrals possibly induced by a probability weighting function, and encompass many well-known examples. 
Optimization for this class of risk measures 
is challenging due to their rank-dependent nature. 
We show that for various shapes of probability weighting functions, including concave, convex and inverse $S$-shaped, the robust optimization problem can be reformulated into a rank-independent problem. 
In the case of a concave probability weighting function, the problem can be reformulated further into a convex optimization problem that admits explicit conic representability for a collection of canonical examples. 
While the number of constraints in general scales exponentially with the dimension of the state space, we circumvent this dimensionality curse and develop two types of algorithms.
They yield tight upper and lower bounds on the exact optimal value and are formally shown to converge asymptotically. 
This is illustrated numerically in a robust newsvendor problem and a robust portfolio choice problem. 
\end{abstract}

\noindent \textit{JEL Classification}: C61, D81.\\[2mm]
\noindent \textit{OR/MS Classification}: Programming: Stochastic.
Decision analysis: Risk.\\[2mm] 
\noindent\textit{Keywords}: Distributionally robust optimization; rank dependence; $\phi$-divergence; probability weighting functions; inverse $S$-shape.


%


\onehalfspacing

\section{Introduction}
Many stochastic optimization problems in management science, operations research, engineering, economics, and finance arise from decisions involving risk (probabilities given) and ambiguity (probabilities unknown). 

A variety of models for decision under risk have been proposed.
Among the most popular and empirically viable models is the rank-dependent utility (RDU) model of \citet{Quiggin1982}.
In RDU, the utility loss associated to a random variable $X$ under the probabilistic model $\mathbb{P}$ is measured by a rank-dependent evaluation $\rho$ with respect to a non-additive, distorted measure $h\circ\mathbb{P}$:\footnote{See also \citet{Schmeidler1986, Schmeidler1989} for related pioneering developments.}
\begin{align*}
    \rho_{u,h,\mathbb{P}}(X)\triangleq\int -u(X)\mathrm{d}(h\circ \mathbb{P}),
\end{align*}
where $u:\mathbb{R}\rightarrow\mathbb{R}$ is a utility function, assumed to be non-decreasing, and $h:[0,1]\rightarrow[0,1]$, with $h(0)=1-h(1)=0$ and non-decreasing, is a distortion or probability weighting function. 
RDU serves as a pivotal building block of prospect theory \citep{TverskyKahneman1992}
and encompasses expected utility \citep{NeunmannMorgen} when $h$ is linear and the dual theory of \citet{Yaari1987} when $u$ is affine.
It accommodates \cite{Allais1953} type phenomena that are incompatible with expected utility.
The utility function captures attitude toward wealth and the shape of the distortion function, e.g., concave, convex or (inverse) $S$-shaped, dictates attitude toward risk.
Importantly, under RDU probabilities of outcomes are weighted according to $h(F_X(x))\triangleq h(\mathbb{P}(X\leq x))$, leading to an evaluation that is non-linear in probabilities, which depend on the ranking of outcomes.
These non-linear and rank-dependent features bring major computational challenges. 

In the aforementioned theories for decision under risk, $\mathbb{P}$ is assumed to be given.
When ambiguity is present, $\mathbb{P}$ is unknown.
Ambiguity is often treated via a worst-case approach that is robust against malevolent nature.
For example, \citet{MaxminEU} introduce maxmin expected utility, a.k.a.~multiple priors, under which risks are evaluated according to their worst-case expected utility taken over a set of probabilistic models.
\citet{HansenSargent01,HansenSargent07} propose multiplier preferences under which probabilistic models far away from a reference model are penalized according to the Kullback-Leibler divergence (a.k.a.~relative entropy).
The variational preferences of \citet{Maccheroni} admit a general penalty function, thus significantly generalizing the maxmin and multiplier preferences models.
\citet{dual_theory} develop a rank-dependent theory for decision under risk and ambiguity that encompasses both the dual and rank-dependent counterparts of \citet{MaxminEU} and \citet{Maccheroni}. 

Parallel to these developments, the field of (distributionally) robust optimization has been studying risk and ambiguity from a computational perspective. 
In robust optimization, ambiguity sets are often constructed exogenously from data, as a confidence region of the true underlying distribution, rather than endogenously based on preferences. 
A widely used family of statistical estimators for distributional uncertainty is given by $\phi$-divergences \citep{Csiszar1975,NCE,CE_phi_div,Pardo06}.
Many distributionally robust optimization problems with these types of ambiguity sets can be reformulated into tractable robust counterparts, which can then be solved efficiently using standard optimization algorithms (see e.g., \citealp{BenTal11}, \citealp{KuhnWieseman}, and \citealp{EsfahaniKuhn}).

Although a variety of distributionally robust optimization techniques have been developed for optimization under uncertainty, most of the literature is concerned with stochastic models in which the probabilities appear linearly in the optimization problems, such as (possibly generalized) variants of expected value or expected utility maximization.
Despite the growing interest in and applications of rank-dependent models across a wide variety of fields (see e.g., \citealp{Denneberg}, 
\citealp{Wakker}, 
\citealp{Follmer2011},
and the references therein), optimization of these models---whether ambiguity is present or not---is still relatively underdeveloped. 
The main difficulties lie in both the non-linearity in probabilities and the rank-dependence: for each value of a decision vector, the rank-dependent evaluation of an uncertain objective or constraint can be permuted, since the ranking of the outcomes depends upon the decision vector.

In this paper, we develop an efficient approach for optimizing rank-dependent models, with and without uncertain probabilities. 
More precisely, we study the following nominal and robust minimization problems, in a discrete probability space setting:
\begin{align}\label{P-Nom-intro}
     \inf_{\mathbf{a}\in \mathcal{A}}\rho_{u,h,\mathbf{p}}(f(\mathbf{a},\mathbf{X})), \\
\label{P-intro}
    \inf_{\mathbf{a}\in \mathcal{A}}\sup_{\mathbf{q}\in\mathcal{D}_{\phi}(\mathbf{p},r)}\rho_{u,h,\mathbf{q}}(f(\mathbf{a},\mathbf{X})) ,
\end{align}
where $\mathbf{a}\in \mathbb{R}^{n_a}$ is the decision vector, $\mathcal{A}$ is a set of constraints, $\mathbf{X}\in \mathbb{R}^l$ is a random vector, $f:\mathbb{R}^{n_a}\times \mathbb{R}^l\to \mathbb{R}$ is a deterministic function, $\mathcal{D}_{\phi}(\mathbf{p},r)$ is a $\phi$-divergence ambiguity set (formally defined in \eqref{defDivset}), and $\rho_{u,h,\mathbf{q}}(.)$ is a rank-dependent evaluation (formally defined in \eqref{ranked_sum}), for some probability vectors $\mathbf{p},\mathbf{q}\in \mathbb{R}^m$.

Our main contributions can be summarized as follows:
\begin{itemize}
    \item We show that 
    we can reformulate problems \eqref{P-Nom-intro} and \eqref{P-intro}, as well as their epigraph formulations, into rank-independent, tractable (robust) counterparts,
    for the various empirically relevant shapes of the probability weighting function $h$, including concave, convex, and inverse $S$-shaped functions. 
    \item For concave probability weighting functions, we show that the reformulated robust counterpart admits a conic representation, provided $h$ and $\phi$ are conic representable functions. 
    For a list of canonical examples of $h$ and $\phi$, we provide explicit epigraph representations of the reformulated robust counterpart that can directly be implemented into standard conic optimization programs such as CVXPY.
    \item While the reformulated robust counterparts are more tractable than \eqref{P-Nom-intro} and \eqref{P-intro}, their multiplicity of constraints increases exponentially in the dimension of the underlying discrete probability space.  
    We provide two types of algorithms to circumvent this curse of dimensionality. 
    Each of them, or a combination thereof in the case of convex and inverse $S$-shaped probability weighting functions, yields tight upper and lower bounds that, as we formally establish, converge to the optimal objective value of the exact problem. 
    Moreover, we show that one of our algorithms can also be applied to a more general type of rank-dependent model, namely Choquet expected utility (CEU,  \citealp{Schmeidler1989}).\footnote{Under CEU, the non-additive measure need not be obtained by distorting an additive probability measure.}
    \item We provide numerical examples of our approach in applications of robust optimization with rank-dependent models and uncertain probabilities, in the context of inventory planning and portfolio optimization. 
    We efficiently obtain optimal decisions that are robust against uncertainty, and yield accurate optimal objective values. 
    Concrete examples with codes are provided on \url{https://github.com/GuanJinNL/ROptRDU.github.io.git}.
\end{itemize}

To our best knowledge, our approach is the first that can handle nominal and distributionally robust, generic optimization problems involving rank-dependent evaluations with possibly inverse $S$-shaped probability weighting functions.

\subsection{Related Literature}\label{sec: related works}
Our work builds on the decision-theoretic literature on evaluating risk and ambiguity.
Specifically, we consider in \eqref{P-Nom-intro}--\eqref{P-intro} the rank-dependent evaluations of \citet{Quiggin1982} and adopt a robust approach as in \citet{MaxminEU},  \citet{Maccheroni}, \citet{HansenSargent01,HansenSargent07} and \citet{dual_theory}, which can be viewed as (generalized) decision-theoretic foundations of the classical decision rule of \citet{Wald1950}; see also \citet{Huber1981}.
We contribute to this literature by developing corresponding optimization techniques.

An initial connection between robust risk measures and robust optimization has been studied in an early paper by \citet{Bertsimas09}. 
They show that a decision-maker's preference, as represented by a coherent risk measure, can provide a device for constructing an uncertainty set for robust optimization purposes. 
The authors were able to obtain tractability results for distortion risk measures under a specific parameterization and the (strict) assumption that the underlying discrete probability space is uniform. 
Our paper relaxes these assumptions, while also providing a blueprint for constructing uncertainty sets tailored to general risk and ambiguity preferences. 
\citet{SiamPaper} studied distributionally robust optimization (DRO) problems for a broad collection of uncertainty sets and risk measures. 
However, for classes of risk measures that are non-linear in probabilities, such as distortion risk measures, the tractability remained unknown (see their Table~I); they are covered as special cases in this paper. 
Although different choices of uncertainty sets are possible, our paper focuses on uncertainty sets defined by $\phi$-divergences, which constitute a family of divergences including the Kullback-Leibler divergence, Burg entropy and Hellinger distance. 
The study of $\phi$-divergences in DRO is motivated by several earlier studies; see e.g., \citet{NCE,CE_phi_div} and \cite{BenTal11}. 

Recently, \citet{clmpaper} and \citet{ConcentrationPaper} studied DRO problems involving distortion risk measures with general non-concave distortion functions. 
Their interesting results show that the DRO problem associated with a non-concave distortion function is equivalent to that with its concave envelope approximation provided that the underlying uncertainty set obeys certain moment conditions. 
In this paper, we show that this equivalence is typically not satisfied in our general setting, thus requiring novel techniques. 
Optimization of non-expected utility models with possibly non-concave distortion functions has also been studied in innovative work of \citet{Quantile_Method}, in particular in the context of portfolio choice, introducing the so-called quantile method; see also \cite{carlier2006law}. 
The effectiveness of this method relies on the ability to parametrize the distributions of all random objective functions $\{f(\mathbf{a},\mathbf{X}), \mathbf{a}\in \mathcal{A}\}$ by a set of quantile functions 
that satisfy finitely many constraints. 
For a general convex function $f$ and feasibility set $\mathcal{A}$ that are considered in \eqref{P-Nom-intro}, there is, however, no systematic approach to obtain such a quantile formulation. 
Our work provides a method to optimize (both robust and nominal) rank-dependent models for a broad class of decision problems and can directly be implemented in standard optimization software. 
Finally, \citet{Delage2022} and \citet{WangXu2021} 
analyze `preference robust' optimization, which considers uncertainty in the distortion function rather than in the underlying probabilistic model.

The remainder of the paper is organized as follows.
Section~\ref{sec: Prelim} introduces the setting and notation. 
Sections~\ref{sec: RC} to \ref{sec:SolvingAlgorithms} study the optimization problems~\eqref{P-Nom-intro} and~\eqref{P-intro} for concave distortion functions.
The techniques we develop are extended in Section~\ref{sec:NonConcave} to non-concave, inverse $S$-shaped distortion functions.
Section~\ref{sec: NumericalExp} presents our numerical experiments. 
Conclusions are in Section~\ref{sec: Conclusion}. 
In Electronic Companions~\ref{sec:tab}--\ref{App:HitandRun}, we provide all the proofs, additional examples, and technical details.

\section{Setup and Notation}\label{sec: Prelim}
\subsection{Rank-Dependent Evaluation}
Let $(\Omega,\mathcal{F})$ be a measurable space.
We define the rank-dependent evaluation $\rho_{u,h,\mathbb{Q}}$ of the utility loss associated with a random variable $X:\Omega\rightarrow\mathbb{R}$ under a given probability measure $\mathbb{Q}$ on $(\Omega,\mathcal{F})$ as the integral with respect to the non-additive measure $h\circ \mathbb{Q}$ \citep{Denneberg}, or equivalently, as a Choquet integral \citep{Choquet1954}: 
\begin{align}
\rho_{u,h,\mathbb{Q}}(X)&\triangleq\int -u(X)\,\mathrm{d}(h\circ \mathbb{Q}) \label{def:distortion_non_additive}\\
&=\int_{0}^{\infty}h\left(\mathbb{Q}[-u(X)>t]\right)\mathrm{d}t + \int_{-\infty}^{0}\left(h\left(\mathbb{Q}[-u(X)>t]\right)-1\right)\mathrm{d}t.\label{def:distortionriskmeasure}
\end{align}
Here, $u$ is a non-decreasing 
utility function defined on a suitable domain containing the support of $X$. 
Furthermore, the function $h:[0,1]\to [0,1]$ is a distortion, or probability weighting, function that is non-decreasing and satisfies $h(0)=0$ and $h(1)=1$. 
We note that $\rho_{u,h,\mathbb{Q}}$ is also known as a \textit{distortion risk measure} when $u$ is the identity function. 
In this paper, we consider distortion functions that may be concave, convex or inverse $S$-shaped.

\begin{definition}\label{def:inverse_S_shape}
We say that $h$ is inverse $S$-shaped if, for some $p^0\in (0,1)$, we have that $h$ is concave for $p\leq p^0$ and $h$ is convex for $p\geq p^0$.
\end{definition} 
If $\Omega$ is discrete with $|\Omega|=m$, then the integral \eqref{def:distortionriskmeasure} reduces to a rank-dependent sum. 
Let $x_{(1)}\geq x_{(2)}\geq \ldots \geq x_{(m)}$ denote the ranked realizations of $X$, with $\{(i)\}^m_{i=1}$ denoting the indices of the ranked realizations. 
The monotonicity of $u$ preserves the ranking of $(x_i)^m_{i=1}$. Therefore, we have
\begin{align}\label{ranked_sum}
    \rho_{u,h,\mathbf{q}}(X)=\sum^m_{i=1}-\left(h\left(\sum^m_{j={i}} q_{(j)}\right)-h\left(\sum^m_{j=i+1} q_{(j)}\right)\right)u\left(x_{(i)}\right),
\end{align}
with $\mathbf{q}\in[0,1]^{m}$ the probability vector associated to $\Omega$ and $\sum^m_{j=m+1}q_{(j)}\triangleq 0$, by convention.

A well-known example of a rank-dependent evaluation is the Conditional-Value-at-Risk (a.k.a.~Expected Shortfall; \citealp{Follmer2011}, Section~4.6), $\mathrm{CVaR}_{1-\alpha}(X)$, which is defined as
\begin{align}\label{def:CVaR}
    \mathrm{CVaR}_{1-\alpha}(X)\triangleq\frac{1}{1-\alpha}\int^{1-\alpha}_0\mathrm{VaR}_{\gamma}(X)\,\mathrm{d}\gamma,\quad 0\leq\alpha<1,
\end{align}
where $\mathrm{VaR}_{\gamma}(X)\triangleq\inf\{x:\mathbb{Q}(-X\leq x)\geq 1-\gamma\}$, $0<\gamma<1$, is the Value-at-Risk. 
For a certain level $\alpha\in[0,1)$, $\mathrm{CVaR}_{1-\alpha}(X)$ can be interpreted as the (sign-changed) average of the left $(1-\alpha)\cdot 100\%$ tail of the risk $X$. 
It is easily verified that $\mathrm{CVaR}_{1-\alpha}$ is a rank-dependent evaluation with linear utility function and distortion function $h(p)=\min\left\{\frac{p}{1-\alpha},1\right\}$. 
Other canonical examples of distortion functions are given in Table~\ref{tab:h_conjugates} of Electronic Companion~\ref{sec:tab}. 
The distortion function captures attitude toward risk whereas the utility function describes attitude toward wealth (see e.g., \citealp{Quiggin1982}, \citealp{Yaari1987}, \citealp{HKS1987}, and \citealp{EL21}).

\subsection{\texorpdfstring{$\phi$}{TEXT}-Divergence Ambiguity Sets}
We construct ambiguity (or uncertainty) sets using $\phi$-divergences. 
For discrete outcome spaces $\Omega$ with $|\Omega|=m$, the $\phi$-divergence $I_\phi(\mathbf{q},\mathbf{p})$ between two probability vectors $\mathbf{q},\mathbf{p}\in[0,1]^{m}$ is defined as
\begin{align}\label{def:Iphi}
I_\phi(\mathbf{q},\mathbf{p})\triangleq\sum^m_{i=1}p_i\phi\left(\frac{q_i}{p_i}\right).
\end{align}
Here, $\phi:[0,\infty)\to \mathbb{R}$ is a convex function that satisfies the following conventions: $\phi(1)=0$, $0\phi(0/0)\triangleq 0$ and $0\phi(x/0)\triangleq x\lim_{t\to \infty}\phi(t)/t$ (see \citealp{Pardo06}, Definition~1.1).
For a nominal probability vector $\mathbf{p}$ and $r>0$, the $\phi$-divergence ambiguity set is defined as
\begin{align}
\mathcal{D}_\phi(\mathbf{p},r)\triangleq\left\{\mathbf{q}\in \mathbb{R}^m~\middle |~ \mathbf{q}\geq \mathbf{0},~ \mathbf{q}^T\mathbf{1}=1, I_{\phi}(\mathbf{q},\mathbf{p})\leq r\right\}.
\label{defDivset}
\end{align}
Here, $\mathbf{0},\mathbf{1}\in \mathbb{R}^m$ are the vectors with all entries equal to 0 and 1, respectively.
As outlined in \citet{BenTal11} (Section~3.2), one may construct $\mathcal{D}_\phi(\mathbf{p},r)$ as a statistical confidence set by replacing $\mathbf{p}$ with an empirical estimator $\hat{\mathbf{p}}_n$. 
Indeed, as shown in \citet{Pardo06} (Corollary~3.1), under the null-hypothesis $H_0:\mathbf{p}=\mathbf{p}_0$, and provided $\phi$ is twice continuously differentiable in a neighborhood of $1$ with $\phi''(1)>1$, the following object converges to a chi-squared distribution:\footnote{From a statistical perspective, it may be more natural to consider $I_{\phi}(\hat{\mathbf{p}}_n,\mathbf{p}_0)$, where the true model $\mathbf{p}_0$ appears as the reference model. 
However, in the robust optimization literature, $I_{\phi}(\mathbf{p}_0,\hat{\mathbf{p}}_n)$ appears more commonly. 
According to Theorem~3.1 and Corollary~3.1 of \citet{Pardo06}, both objects have the same limiting distribution under the null.} 
\begin{align}
    \frac{2n}{\phi''(1)}I_{\phi}(\mathbf{p}_0,\hat{\mathbf{p}}_n)\rightsquigarrow \chi^2_{m-1,1-\alpha}.
\label{eq:chisquare}\end{align}
Here, $n$ is the sample size used for constructing the empirical estimator, and $\chi^2_{m-1,1-\alpha}$ is the $(1-\alpha)$-quantile of the chi-square distribution with $m-1$ degrees of freedom.\footnote{That is, $\mathbb{P}(Z\leq \chi^2_{m-1,1-\alpha})=1-\alpha$, for $Z\sim \chi^2_{m-1}$.}  
Using \eqref{eq:chisquare}, one can construct an asymptotic confidence set $\mathcal{D}_{\phi}(\hat{\mathbf{p}}_n,r)$ by choosing
\begin{align}
    r=\frac{\phi''(1)}{2n}\chi^2_{m-1,1-\alpha}.
\label{r_choice}
\end{align}

\subsection{Problem Formulations, Terminology and Assumptions}\label{subsec: problem_formulation}
We study the following nominal and robust minimization problems: 
\begin{align}\label{P-Nom}
\tag{P-Nom}
     \inf_{\mathbf{a}\in \mathcal{A}}\rho_{u,h,\mathbf{p}}(f(\mathbf{a},\mathbf{X})),\\
\label{P}
\tag{P}
    \inf_{\mathbf{a}\in \mathcal{A}}\sup_{\mathbf{q}\in\mathcal{D}_{\phi}(\mathbf{p},r)}\rho_{u,h,\mathbf{q}}(f(\mathbf{a},\mathbf{X})),
\end{align}
where $\mathbf{a}\in \mathbb{R}^{n_a}$ is the decision vector contained in a compact set of constraints $\mathcal{A}$ consisting of convex inequalities, $\mathbf{X}\in \mathbb{R}^l$ is a random vector, $f:\mathbb{R}^{n_a}\times \mathbb{R}^l\to \mathbb{R}$ is a jointly convex function in $(\mathbf{a},\mathbf{x})$, $\mathcal{D}_{\phi}(\mathbf{p},r)$ is a $\phi$-divergence ambiguity set defined in \eqref{defDivset} with respect to a nominal probability vector $\mathbf{p}$, and $\rho_{u,h,\mathbf{q}}(.)$ is the rank-dependent evaluation defined in \eqref{ranked_sum}, with respect to some probability vector $\mathbf{q}\in \mathbb{R}^m$.

Henceforth, we often consider the nominal and robust rank-dependent evaluations in a constraint form induced by the following epigraph formulation:
\begin{align}\label{nom_rho_constr}\tag{P-Nom-EG}
    \rho_{u,h,\mathbf{p}}(f(\mathbf{a},\mathbf{X})) \leq c,\\
\label{rob_rho_constr}\tag{P-EG}
    \sup_{\mathbf{q}\in \mathcal{D}_\phi(\mathbf{p},r)}\rho_{u,h,\mathbf{q}}(f(\mathbf{a},\mathbf{X})) \leq c.
\end{align}
That is, we emphasize that we are able to deal with a robust rank-dependent evaluation both in the objective and in the constraint, where in the latter case we consider the following type of problem:
\begin{align}\label{P-constraint}
\tag{P-constraint}
\begin{split}
    &\min_{\mathbf{a}\in \mathcal{A}}~g(\mathbf{a})\\
    \text{s.t.}&\sup_{\mathbf{q}\in\mathcal{D}_{\phi}(\mathbf{p},r)}\rho_{u,h,\mathbf{q}}(f(\mathbf{a},\mathbf{X}))\leq c,
\end{split}
\end{align}
where $g$ is convex in $\mathbf{a}$, and $c\in\mathbb{R}$ is a fixed parameter (in contrast to being an epigraph variable). 
The nominal version of \eqref{P-constraint} is defined analogously, with the only difference that the robust constraint is replaced by the nominal constraint \eqref{nom_rho_constr}.

Additionally, we would like to mention the alternative, but highly related, approach of defining a robust rank-dependent evaluation, which appears in the decision theory literature (see e.g., \citealp{dual_theory}):
\begin{align}\label{rdu_pen_1}
    \tilde{\rho}_{\text{rob}}(X)\triangleq\sup_{\mathbf{q}\geq \mathbf{0}, \mathbf{q}^T\mathbf{1}=1}\rho_{u,h,\mathbf{q}}(X)-\theta I_\phi(\mathbf{q},\mathbf{p}),\quad\theta>0.
\end{align}
In Electronic Companion~\ref{app:remark}, we provide additional details on how \eqref{rdu_pen_1} can be reformulated similar to \eqref{rob_rho_constr}, and that the minimization problem~\eqref{P} is equivalent to minimizing~\eqref{rdu_pen_1} for a specific $\theta$.

Let $[m]\triangleq\{1,\ldots,m\}$ for an integer $m$.
The following assumptions are made throughout this paper:
\begin{assumption}\label{assumption:well-defined}
The optimization problem \eqref{P} is finite: $-\infty<$\eqref{P} $<\infty$.
\end{assumption}
\begin{assumption}\label{assump: nominal_p}
The nominal probability vector $\mathbf{p}$ satisfies $p_i>0$ for all $i\in [m]$.
\end{assumption}
\begin{assumption}\label{assump:lower_semi}
The functions $\phi(t), -h(p), -u(f(\mathbf{a},\mathbf{x}))$ are lower-semicontinuous on their respective domains.   
\end{assumption}
\begin{assumption}\label{assump: finite_strict_feasibility}
    \eqref{P-constraint} is finite and contains a Slater point, i.e., there exists $\mathbf{a}_0\in \mathrm{int}(\mathcal{A})$ such that $\sup_{\mathbf{q}\in\mathcal{D}_{\phi}(\mathbf{p},r)}\rho_{u,h,\mathbf{q}}(f(\mathbf{a}_0,\mathbf{X}))< c$. 
    Furthermore, we assume $\eqref{P-constraint}> \inf_{\mathbf{a}\in \mathcal{A}}g(\mathbf{a})$.  
\end{assumption}

Assumption~\ref{assumption:well-defined} can be satisfied if e.g., the image set $\{u(f(\mathbf{a},\mathbf{x}))~|~\mathbf{a}\in \mathcal{A}, \mathbf{x}\in \mathrm{supp}(\mathbf{X})\}$ is contained in a bounded interval. 
Assumption~\ref{assump: nominal_p} constitutes a weak redundancy condition.
Assumption~\ref{assump:lower_semi} is to ensure that the optimization problem~\eqref{P} has an optimal solution in $\mathcal{A}$, and Assumption~\ref{assump: finite_strict_feasibility} is to ensure that \eqref{P-constraint} satisfies the strong duality theorem.

In this paper, the term \textit{robust solution} refers to an optimal solution of \eqref{P} or \eqref{P-constraint}. 
Similarly, a \textit{nominal solution} refers to an optimal solution of \eqref{P-Nom} or the nominal version of \eqref{P-constraint}.  

\subsection{Further Notation}
For a convex function $g:\mathbb{R}^d\to \mathbb{R}$, we denote by $g^*$ its convex \textit{conjugate} $g^*(\mathbf{y})\triangleq\sup_{\mathbf{z}\in \mathrm{dom}(g)}\{\mathbf{y}^T\mathbf{z}-g(\mathbf{z})\}$, where $\mathrm{dom}(g)\triangleq\{\mathbf{z}~|~ g(\mathbf{z})<+\infty\}$ is the effective domain of $g$. 
Note that $g^*$ is always convex, since it is the pointwise supremum of a linear function in $\mathbf{y}$. 
Furthermore, $g^*$ is non-decreasing if $\mathrm{dom}(g)\subset [0,\infty)$. 
For $\lambda>0$, the \textit{perspective} of $g$ is the function $\tilde{g}: (\mathbf{z},\lambda)\mapsto \lambda g\left(\frac{\mathbf{z}}{\lambda}\right)$.
By convention, $\tilde{g}(\mathbf{0},0)=0$ and $\tilde{g}(\mathbf{z},0)=\infty$ for $x\neq 0$. 
Note that $\tilde{g}$ is convex if $g$ is convex.
The \textit{epigraph} of a function $g$ is the set $\mathrm{Epi}(g)\triangleq\{(\mathbf{z},t):g(\mathbf{z})\leq t\}$. 
An epigraph may have a conic representation expressed in a conic inequality $\succeq_{\mathbf{K}}$, where $\mathbf{K}$ is a proper cone (i.e., pointed, closed, convex and with a non-empty interior) and $\mathbf{K}^*$ denotes its dual cone, defined as 
\begin{align*}
    \mathbf{K}^*\triangleq\{\boldsymbol{\lambda}\in \mathbb{R}^d: \boldsymbol{\lambda}^T\mathbf{z}\geq 0,\forall \mathbf{z}\in \mathbf{K}\};
\end{align*}
see \citet{Cha09} and \citet{ModernCVX} for further details.

\section{Robust Counterpart of Rank-Dependent Models}\label{sec: RC}
In this section, we show how the optimization problems \eqref{P-Nom}--\eqref{P} can be reformulated into rank-independent problems with finitely many constraints.
Our idea hinges on the insight that a rank-dependent evaluation with a concave distortion function admits a dual representation that is itself a robust optimization problem with linear probabilities and convex uncertainty set. 
Although the dual representation holds only for concave distortion functions, we show in Section~\ref{sec:NonConcave} how the same idea can be extended to encompass convex and inverse $S$-shaped distortion functions. 
This makes it possible to conduct robust optimization for a broad class of rank-dependent models.

\subsection{Reformulation into the Robust Counterpart}\label{sec: Reformulation}
We start by reformulating the constraints \eqref{rob_rho_constr}--\eqref{nom_rho_constr} with concave distortion functions, since they form the basis of the more complicated convex and inverse $S$-shaped cases described in Section~\ref{sec:NonConcave}. 
In Sections~\ref{sec: RC}--\ref{sec:SolvingAlgorithms}, without further mentioning, we assume the utility function to be concave; Section~\ref{sec:NonConcave} allows $u$ to be non-concave.
The reformulation relies on utilizing the following dual representation, which involves a \textit{composite uncertainty set}. 
\begin{theorem}\label{thm:ReformulationWholeProb}
Let $h:[0,1]\to [0,1]$ be a concave distortion function.
Then, for all $(\mathbf{a},c)\in \mathbb{R}^{n_a+1}$, we have that \eqref{rob_rho_constr} is satisfied if and only if 
\vspace{-0.1cm}
\begin{align}\label{rob_constr_ref}
\sup_{(\mathbf{q},\mathbf{\bar{q}})\in \mathcal{U}_{\phi,h}(\mathbf{p})}\sum^m_{i=1}-\bar{q}_iu(f(\mathbf{a},\mathbf{x}_i))\leq c,
\end{align}
where $\mathcal{U}_{\phi,h}(\mathbf{p})$ is a convex composite uncertainty set given by 
\vspace{-0.1cm}
\begin{align}\label{uncertaintyset}
\mathcal{U}_{\phi,h}(\mathbf{p})\triangleq\left\{(\mathbf{q},\bar{\mathbf{q}})\in \mathbb{R}^{2m} \;
    \begin{tabular}{|l}
      $\sum^m_{i=1}q_i=\sum^m_{i=1}\bar{q}_i=1$\\
      $\sum^m_{i=1}p_i\phi\left(\frac{q_i}{p_i}\right)\leq r$\\
      $\sum_{i\in J}\bar{q}_{i}\leq h\left(\sum_{i\in J} q_{i}\right),~\forall J\subset [m]$\\
      $q_i,\bar{q}_i\geq 0,~\forall i\in [m]$
\end{tabular}
\right\}.
\end{align}
As a special case of interest, \eqref{nom_rho_constr} is satisfied if and only if 
\begin{align}\label{nom_constr_ref}
    \sup_{\mathbf{\bar{q}}\in M_{h}(\mathbf{p})}\sum^m_{i=1}-\bar{q}_iu(f(\mathbf{a},\mathbf{x}_i))\leq c,
\end{align}
where $M_h(\mathbf{p})$ is the set induced by $h$:
\begin{align}\label{defMhq}
    M_h(\mathbf{p})\triangleq\left\{\bar{\mathbf{q}}\in \mathbb{R}^{m} \;
    \begin{tabular}{|l}
      $\sum^m_{i=1}\bar{q}_i=1$\\
      $\sum_{i\in J}\bar{q}_{i}\leq h\left(\sum_{i\in J} p_{i}\right),~\forall J\subset [m]$\\
      $\bar{q}_i\geq 0,~ \forall i\in [m]$
\end{tabular}
\right\}.
\end{align}
\end{theorem}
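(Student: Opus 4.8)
The plan is to deduce both equivalences from a single classical dual representation of the rank-dependent sum \eqref{ranked_sum} valid for concave $h$, and then to merge the two layers of optimization. First I would fix $\mathbf{a}$ and $\mathbf{q}$ and set $\nu(J)\triangleq h\!\left(\sum_{i\in J}q_i\right)$ for $J\subset[m]$. Since $J\mapsto\sum_{i\in J}q_i$ is additive, for any $A,B\subset[m]$ the pair $\{\sum_{i\in A\cup B}q_i,\ \sum_{i\in A\cap B}q_i\}$ has the same sum as $\{\sum_{i\in A}q_i,\ \sum_{i\in B}q_i\}$ but a larger maximum and smaller minimum; concavity of $h$ then yields $\nu(A\cup B)+\nu(A\cap B)\le \nu(A)+\nu(B)$, i.e.\ $\nu$ is a submodular capacity with $\nu(\emptyset)=0$ and $\nu([m])=1$. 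The Choquet integral \eqref{def:distortion_non_additive} defining $\rho_{u,h,\mathbf{q}}$ is therefore the Choquet integral of $-u(f(\mathbf{a},\cdot))$ against a submodular capacity, and I would invoke the Schmeidler--Denneberg core representation to write it as a maximum over the (nonempty, compact) core of $\nu$:
\[
\rho_{u,h,\mathbf{q}}(f(\mathbf{a},\mathbf{X}))=\max_{\bar{\mathbf q}\in M_h(\mathbf q)}\ \sum_{i=1}^m -\bar q_i\,u(f(\mathbf{a},\mathbf{x}_i)),
\]
where $M_h(\mathbf q)=\{\bar{\mathbf q}\ge\mathbf 0:\ \sum_i\bar q_i=1,\ \sum_{i\in J}\bar q_i\le\nu(J)\ \forall J\subset[m]\}$. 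This is exactly \eqref{defMhq} with $\mathbf q$ in place of $\mathbf p$, and attainment follows because $M_h(\mathbf q)$ is a bounded polytope and the objective is linear.

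Taking $\mathbf q=\mathbf p$ gives the nominal identity $\rho_{u,h,\mathbf p}(f(\mathbf{a},\mathbf{X}))=\max_{\bar{\mathbf q}\in M_h(\mathbf p)}\sum_i -\bar q_i u(f(\mathbf{a},\mathbf{x}_i))$, so \eqref{nom_rho_constr} holds iff \eqref{nom_constr_ref} holds; this disposes of the special case at once. For the robust constraint I would substitute the representation into the outer supremum and collapse the nested optimization:
\[
\sup_{\mathbf q\in\mathcal D_\phi(\mathbf p,r)}\rho_{u,h,\mathbf q}(f(\mathbf{a},\mathbf{X}))
=\sup_{\mathbf q\in\mathcal D_\phi(\mathbf p,r)}\ \max_{\bar{\mathbf q}\in M_h(\mathbf q)}\ \sum_{i=1}^m -\bar q_i u(f(\mathbf{a},\mathbf{x}_i))
=\sup_{(\mathbf q,\bar{\mathbf q})\in\mathcal U_{\phi,h}(\mathbf p)}\ \sum_{i=1}^m -\bar q_i u(f(\mathbf{a},\mathbf{x}_i)).
\]
The last equality is purely set-theoretic: the joint condition ``$\mathbf q\in\mathcal D_\phi(\mathbf p,r)$ and $\bar{\mathbf q}\in M_h(\mathbf q)$'' unpacks, using \eqref{defDivset} and \eqref{defMhq}, into precisely the list of constraints defining $\mathcal U_{\phi,h}(\mathbf p)$ in \eqref{uncertaintyset}, so a supremum of a nested max over this region equals a single supremum over the joint feasible set. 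Comparing both sides with the threshold $c$ gives \eqref{rob_rho_constr}$\iff$\eqref{rob_constr_ref}. I would also record the asserted convexity of $\mathcal U_{\phi,h}(\mathbf p)$: the coupling constraints $\sum_{i\in J}\bar q_i\le h(\sum_{i\in J}q_i)$ are convex because $\mathbf q\mapsto h(\sum_{i\in J}q_i)$ is concave (concave $h$ composed with a linear map) while the left-hand side is linear, the divergence constraint is convex as $I_\phi$ is jointly convex, and the remaining constraints are affine or sign constraints.

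The main obstacle is getting the dual representation right rather than the bookkeeping that follows. The delicate points are (i) verifying that concavity of $h$ translates into \emph{submodularity} of the set function $\nu$, and (ii) ensuring the representation is a \emph{maximum} over the lower core $\{\mu\le\nu\}$ (rather than a minimum over an upper core), together with nonemptiness and compactness of that core so that the max is attained and the outer $\sup$--$\max$ interchange above is legitimate. Once the orientation and attainment of the Schmeidler--Denneberg representation are pinned down, the nominal case is immediate and the robust case reduces to merging the two suprema over the product-type feasible region $\mathcal U_{\phi,h}(\mathbf p)$.
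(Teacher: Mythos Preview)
Your proposal is correct and follows essentially the same approach as the paper: the paper also invokes the Denneberg dual representation (Proposition~10.3 of \citealp{Denneberg}) for the submodular capacity $h\circ\mathbb{Q}$ to obtain $\rho_{u,h,\mathbf{q}}(X)=\sup_{\bar{\mathbf{q}}\in M_h(\mathbf{q})}\mathbb{E}_{\bar{\mathbf{q}}}[-u(X)]$, and then merges the two suprema into one over $\mathcal{U}_{\phi,h}(\mathbf{p})$. Your version is slightly more self-contained in that you spell out the submodularity argument and the convexity of $\mathcal{U}_{\phi,h}(\mathbf{p})$ explicitly, whereas the paper cites Denneberg for the former and leaves the latter implicit.
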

As a consequence, Theorem~\ref{thm:ReformulationWholeProb} implies that the robust problem~\eqref{P} is equivalent to the following rank-independent problem that is more suitable for robust optimization techniques:
\begin{align}\label{P-ref}
\tag{P-ref}
     \begin{split}
        \min_{\mathbf{a}\in \mathcal{A}} \sup_{(\mathbf{q},\bar{\mathbf{q}})\in \mathcal{U}_{\phi,h}(\mathbf{p})}-\sum^m_{i=1}\bar{q}_iu(f(\mathbf{a},\mathbf{x}_i)).
    \end{split}
\end{align}
Similarly, the nominal problem~\eqref{P-Nom} can be reformulated to
\begin{align}\label{P-Nom-ref}
\tag{P-Nom-ref}
     \begin{split}
        \min_{\mathbf{a}\in \mathcal{A}}\sup_{\bar{\mathbf{q}}\in M_{h}(\mathbf{p})}-\sum^m_{i=1}\bar{q}_iu(f(\mathbf{a},\mathbf{x}_i)).
    \end{split}
\end{align}


Moreover, the equivalence between~\eqref{rob_rho_constr} and~\eqref{rob_constr_ref} features an interpretation somewhat similar to that in \citet{Bertsimas09}, which connects the shape of an uncertainty set to decision theory.
Indeed, this equivalence can be viewed as a device for constructing a myriad of preference-based uncertainty sets $\mathcal{U}_{\phi,h}(\mathbf{p})$ for robust optimization. 
In Figure~\ref{fig: formUset} of Electronic Companion~\ref{App:Shape_UC}, we provide an illustrative visualization of the widely varying shapes of uncertainty sets $\mathcal{U}_{\phi,h}(\mathbf{p})$ that can be generated by selecting specific examples of the deterministic, univariate functions $h$ and $\phi$. 


The following theorem establishes how the semi-infinite constraints \eqref{rob_constr_ref} and \eqref{nom_constr_ref} can be further reformulated into finitely many constraints, yielding a (more) tractable robust counterpart problem.
\begin{theorem}\label{thm:RC}
Let $h:[0,1]\to [0,1]$ be a concave distortion function. 
Then, we have that, for all $(\mathbf{a},c)\in \mathbb{R}^{n_a+1}$, the inequality \eqref{rob_constr_ref} holds if and only if there exist $\alpha,\beta,\gamma,$ $(\nu_j)^{2^m-2}_{j=1},(\lambda_j)^{2^m-2}_{j=1}\in \mathbb{R}$, such that
\begin{align}\label{RC: final}
    \begin{cases}
    \alpha+\beta+\gamma r+\sum^m_{i=1}p_i\gamma \phi^*\left(\frac{-\alpha+\sum_{j:i\in I_j}\nu_j}{\gamma}\right)&\hspace{-0.33cm}+\sum^{2^m-2}_{j=1}\lambda_j(-h)^*\left(\frac{-\nu_j}{\lambda_j}\right)\leq c,\\
    -u(f(\mathbf{a},\mathbf{x}_i))-\beta-\sum_{j:i\in I_j}\lambda_j\leq 0,&\forall i\in [m]\\
    \lambda_j,\gamma \geq 0,& \forall j\in [2^m-2],
    \end{cases}
\end{align}
where $I_1,\ldots, I_{2^m-2}$ are all subsets of $[m]$, except the empty set and the set $[m]$ itself. 

In the nominal case, we have that the inequality \eqref{nom_constr_ref} holds if and only if there exist $\beta\in \mathbb{R}$, $(\lambda_j)_j\geq 0$ such that
\begin{align}\label{RC:nominal}
    \begin{cases}
    \beta+\sum^{2^m-2}_{j=1}\lambda_jh\left(\sum_{k\in I_j}p_k\right)\leq c,\\
    -u(f(\mathbf{a},\mathbf{x}_i))-\beta-\sum_{j: i\in I_j}\lambda_j\leq 0,~\forall i\in [m]\\
    \beta\in \mathbb{R}, \lambda_j\geq 0,~ \forall j\in [2^m-2].
    \end{cases}
\end{align}
\end{theorem}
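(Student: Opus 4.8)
The plan is to read \eqref{rob_constr_ref} as the requirement that a convex maximization problem has optimal value at most $c$, and to obtain \eqref{RC: final} as the dual characterization of this requirement via Lagrangian (conic) duality; the ``if and only if'' then amounts to the absence of a duality gap. Concretely, by Theorem~\ref{thm:ReformulationWholeProb} the left-hand side of \eqref{rob_constr_ref} is $\sup_{(\mathbf q,\bar{\mathbf q})\in\mathcal U_{\phi,h}(\mathbf p)}\sum_{i=1}^m -\bar q_i u(f(\mathbf a,\mathbf x_i))$, the maximum of a linear objective over the convex set \eqref{uncertaintyset}. Writing $u_i\triangleq u(f(\mathbf a,\mathbf x_i))$, I would treat this as the primal problem and attach multipliers $\alpha$ to $\sum_i q_i=1$, $\beta$ to $\sum_i\bar q_i=1$, $\gamma\ge 0$ to $\sum_i p_i\phi(q_i/p_i)\le r$, and $\lambda_j\ge 0$ to each subset inequality $\sum_{i\in I_j}\bar q_i\le h(\sum_{i\in I_j}q_i)$, keeping $\mathbf q,\bar{\mathbf q}\ge\mathbf 0$ inside the inner supremum.

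The key device for producing the auxiliary variables $\nu_j$ is to decouple the composite term $h(\sum_{i\in I_j}q_i)$ by introducing slack variables $s_j=\sum_{i\in I_j}q_i$ with free multipliers $\nu_j$. The Lagrangian then splits additively over $\bar{\mathbf q}$, $\mathbf s$ and $\mathbf q$. Maximizing over $\bar q_i\ge 0$ is finite precisely when each coefficient $-u_i-\beta-\sum_{j:i\in I_j}\lambda_j$ is nonpositive, which yields the second family of inequalities in \eqref{RC: final}. Maximizing over $s_j$ gives $\sup_{s_j}\{\lambda_j h(s_j)-\nu_j s_j\}=\lambda_j(-h)^*(-\nu_j/\lambda_j)$, the perspective term in \eqref{RC: final}, while maximizing over $q_i\ge 0$ gives, after the change of variables $q_i=p_i w_i$ and the definition of the conjugate and its perspective, $\sup_{q_i\ge 0}\{q_i(\sum_{j:i\in I_j}\nu_j-\alpha)-\gamma p_i\phi(q_i/p_i)\}=p_i\gamma\phi^*((-\alpha+\sum_{j:i\in I_j}\nu_j)/\gamma)$. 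Collecting the constant terms $\alpha+\beta+\gamma r$ produces exactly the first inequality of \eqref{RC: final}, so the dual problem is to minimize its left-hand side over feasible $(\alpha,\beta,\gamma,\lambda,\nu)$. The requirement that this dual value be at most $c$ is equivalent to the existence of a feasible point with objective at most $c$, i.e.\ to \eqref{RC: final}.

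The step I expect to be the main obstacle is establishing that there is no duality gap, so that ``primal value $\le c$'' is genuinely equivalent to ``dual value $\le c$'' rather than only bounding it. I would verify a constraint qualification for the inner convex program: Assumption~\ref{assump: nominal_p} gives $\mathbf q=\mathbf p$ with $I_\phi(\mathbf p,\mathbf p)=0<r$, which is strictly feasible for the divergence constraint, and together with the lower-semicontinuity of $\phi$ and $-h$ from Assumption~\ref{assump:lower_semi} and the finiteness of the value this yields strong duality and dual attainment for the conic reformulation. Care is also needed at the boundary of the multiplier domain: when $\gamma=0$ or some $\lambda_j=0$ the corresponding perspective terms must be read through the closure convention $\tilde g(\mathbf z,0)=\infty$ for $\mathbf z\neq\mathbf 0$ and $0$ otherwise, which keeps the dual objective lower-semicontinuous and legitimizes the interchange of the suprema over $(\mathbf q,\mathbf s)$ with the infimum over $\nu$.

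For the nominal statement, the argument simplifies to linear programming duality. By Theorem~\ref{thm:ReformulationWholeProb}, \eqref{nom_constr_ref} asks that $\sup_{\bar{\mathbf q}\in M_h(\mathbf p)}\sum_i -\bar q_i u_i\le c$, where $M_h(\mathbf p)$ in \eqref{defMhq} is a nonempty, bounded polytope, so the inner problem is a finite linear program whose right-hand sides are the constants $h(\sum_{k\in I_j}p_k)$. Dualizing with $\beta$ for $\sum_i\bar q_i=1$ and $\lambda_j\ge 0$ for the subset constraints gives the objective $\beta+\sum_j\lambda_j h(\sum_{k\in I_j}p_k)$ together with $-u_i-\beta-\sum_{j:i\in I_j}\lambda_j\le 0$, and strong LP duality (both programs feasible and finite) turns $\sup\le c$ into the solvability of \eqref{RC:nominal}. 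No $\nu_j$ or $\gamma$ appear here because there is no $\mathbf q$-variable and no divergence constraint to dualize.
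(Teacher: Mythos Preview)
Your proposal is correct and follows essentially the same Lagrangian duality approach as the paper: attach multipliers $\alpha,\beta,\gamma,\lambda_j$ to the constraints of $\mathcal{U}_{\phi,h}(\mathbf{p})$, decouple $h(\sum_{i\in I_j}q_i)$ via auxiliary variables $s_j$ (the paper calls them $w_j$) with free multipliers $\nu_j$, and read off the perspective-conjugate terms from the separated suprema. The only point the paper makes more explicit is the Slater argument for the \emph{subset} constraints: at $(\mathbf q,\bar{\mathbf q})=(\mathbf p,\mathbf p)$ one needs $\sum_{i\in I_j}p_i<h(\sum_{i\in I_j}p_i)$, which holds because a nontrivial concave distortion satisfies $h(x)>x$ on $(0,1)$; your write-up only checks strict feasibility of the divergence constraint.
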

\begin{remark}\label{rem:extendedh}
For technical reasons, the conjugate $(-h)^*$ in Theorem~\ref{thm:RC} is taken with respect to the domain $[0,\infty)$: $(-h)^*(y)=\sup_{t\geq 0}\{yt+h(t)\}$, by defining $h(x)=1$ for all $x>1$.
\end{remark}
\begin{remark}
In Table~\ref{tab:h_conjugates}, we provide $(-h)^*$ explicitly for a collection of canonical examples. 
\eqref{P-ref} may also be further reformulated using the ``optimistic dual counterpart'' \citep{Gorissen2014}.
This approach is particularly useful if $(-h)^*$ cannot be computed analytically, as is the case e.g., for $h(p)=\Phi(\Phi^{-1}(p)+\nu)$, $\nu>0$, with $\Phi$ the standard normal cdf; see \citet{Wang2000} and \citet{Goovaerts2008}. 
Further details are provided in Electronic Companion~\ref{app: OPDual}.
\end{remark}

\subsection{Conic Representability of the Robust Counterpart}\label{sec: Tractability}
In this subsection, we explore the conic representability of the robust counterpart reformulated in Theorem~\ref{thm:RC}. 
Following \citet{ModernCVX}, a set $\mathcal{S}\subset \mathbb{R}^n$ is conic representable by a cone $\mathbf{K}$ if and only if
\[\mathbf{x}\in \mathcal{S}\Leftrightarrow \exists~ \mathbf{w}, \mathbf{A}, \mathbf{b}: \mathbf{A}\begin{pmatrix}\mathbf{x}\\\mathbf{w} \end{pmatrix}-\mathbf{b}\succeq_\mathbf{K}\mathbf{0}.\]
A function is said to be conic representable if its epigraph is. 
The reformulated robust counterpart \eqref{RC: final} in Theorem~\ref{thm:RC} contains constraints that are expressed in the perspective of the univariate conjugate functions $(-h)^*$ and $\phi^*$. 
We focus on the conic representability of these constraints: 
\begin{align*}
    \lambda(-h)^*\left(\frac{-\nu}{\lambda}\right)\leq z,\ \text{and}\ \gamma\phi^*\left(\frac{s}{\gamma}\right)\leq t.
\end{align*}
In practice, the derivation of the conjugate functions might be difficult. 
However, the epigraph representation of a conjugate function does not always require an explicit form of the conjugate function itself. 
The following two lemmas provide a generic approach for determining the conic representation of the epigraph of the perspective function through that of the function itself. 
Interestingly, this procedure does not require any derivation of the conjugate function. 
In Tables~\ref{tab:h_conjugates} and~\ref{tab:phi_conjugate}, we provide the explicit conic representation for many canonical examples of the conjugate functions $(-h)^*$ and $\phi^*$, where the representations are composed of a combination of standard cones such as the quadratic, power, and exponential. 
These explicit conic representations are useful for the implementation of these constraints in standard optimization software such as CVXPY. 
Details on the derivations of the epigraph representations can be found in Electronic Companion~\ref{App:RCderivation}.

The following two lemmas originate from \citet{ModernCVX} (Propositions~2.3.2 and~2.3.4), where they were stated only for the quadratic cones. 
However, they can be extended to general cones.
\begin{lemma}\label{lem:conic_conjugate}
If $f$ is conic representable with a cone $\mathbf{K}$, i.e., there exist $(\mathbf{A},\mathbf{v},\mathbf{B},\mathbf{b})$ such that
\begin{align*}
    \mathrm{Epi}(f)=\{(\mathbf{x},t):\exists~ \mathbf{w}: \mathbf{A}\mathbf{x}+t\mathbf{v}+\mathbf{B}\mathbf{w}+\mathbf{b}\succeq_\mathbf{K}\mathbf{0}\},
\end{align*}
and $\mathbf{A}\mathbf{x}+t\mathbf{v}+\mathbf{B}\mathbf{w}+\mathbf{b}\succ_\mathbf{K}\mathbf{0}$ for some $(\mathbf{x},t,\mathbf{w})$, 
then $f^*$ is conic representable with the dual cone $\mathbf{K}^*$:
\begin{align*}
    \mathrm{Epi}(f^*)=\{(\mathbf{y},s):\exists~ \boldsymbol{\xi}\in \mathbf{K}^*: \mathbf{A}^T\boldsymbol\xi = -\mathbf{y}, \mathbf{B}^T\boldsymbol\xi=\mathbf{0}, \mathbf{v}^T\boldsymbol\xi =1, s\geq \mathbf{b}^T\boldsymbol{\xi}\}.
\end{align*}
In particular, $f$ and $f^*$ are representable by the same cone if $\mathbf{K}$ is self-adjoint.
\end{lemma}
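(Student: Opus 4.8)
The plan is to express $f^*$ directly as the optimal value of a conic optimization problem and then dualize it via the conic duality theorem. First I would substitute the given conic epigraph description of $f$ into the definition of the conjugate. Since $f(\mathbf{x})=\inf\{t:(\mathbf{x},t)\in\mathrm{Epi}(f)\}$, we obtain
\[
f^*(\mathbf{y})=\sup_{\mathbf{x}}\left\{\mathbf{y}^T\mathbf{x}-f(\mathbf{x})\right\}=\sup_{\mathbf{x},t,\mathbf{w}}\left\{\mathbf{y}^T\mathbf{x}-t:\ \mathbf{A}\mathbf{x}+t\mathbf{v}+\mathbf{B}\mathbf{w}+\mathbf{b}\succeq_\mathbf{K}\mathbf{0}\right\},
\]
where the inner infimum over $t$ merges with the outer supremum, because minimizing $t$ and negating is the same as maximizing $-t$ jointly over the feasible set. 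This recasts $f^*(\mathbf{y})$ as a parametric conic \emph{maximization} problem in the variables $(\mathbf{x},t,\mathbf{w})$ with a single conic constraint.

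Next I would attach a multiplier $\boldsymbol{\xi}\in\mathbf{K}^*$ to the conic constraint and form the Lagrangian $\mathbf{y}^T\mathbf{x}-t+\boldsymbol{\xi}^T(\mathbf{A}\mathbf{x}+t\mathbf{v}+\mathbf{B}\mathbf{w}+\mathbf{b})$. Collecting terms, the supremum over the unconstrained $(\mathbf{x},t,\mathbf{w})$ is finite precisely when the linear coefficients vanish, namely $\mathbf{A}^T\boldsymbol{\xi}=-\mathbf{y}$ (coefficient of $\mathbf{x}$), $\mathbf{v}^T\boldsymbol{\xi}=1$ (coefficient of $t$), and $\mathbf{B}^T\boldsymbol{\xi}=\mathbf{0}$ (coefficient of $\mathbf{w}$); when these hold the Lagrangian collapses to $\mathbf{b}^T\boldsymbol{\xi}$. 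The resulting dual is the conic minimization
\[
\inf_{\boldsymbol{\xi}\in\mathbf{K}^*}\left\{\mathbf{b}^T\boldsymbol{\xi}:\ \mathbf{A}^T\boldsymbol{\xi}=-\mathbf{y},\ \mathbf{B}^T\boldsymbol{\xi}=\mathbf{0},\ \mathbf{v}^T\boldsymbol{\xi}=1\right\},
\]
whose feasible set is exactly the one appearing in the claimed description of $\mathrm{Epi}(f^*)$.

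It then remains to equate the two optimal values and convert this into the epigraph identity. The \emph{if} direction needs only weak conic duality: for any dual-feasible $\boldsymbol{\xi}$, the sign condition $\boldsymbol{\xi}\in\mathbf{K}^*$ together with the equality constraints yields $\mathbf{b}^T\boldsymbol{\xi}\geq f^*(\mathbf{y})$, so $s\geq\mathbf{b}^T\boldsymbol{\xi}$ forces $(\mathbf{y},s)\in\mathrm{Epi}(f^*)$. The \emph{only if} direction is where the strict-feasibility hypothesis enters, and this is the main obstacle. Here I would invoke the conic duality theorem (e.g.\ \citealp{ModernCVX}), by which strict feasibility of the primal maximization---exactly the assumption that $\mathbf{A}\mathbf{x}+t\mathbf{v}+\mathbf{B}\mathbf{w}+\mathbf{b}\succ_\mathbf{K}\mathbf{0}$ for some $(\mathbf{x},t,\mathbf{w})$---guarantees zero duality gap \emph{and} attainment of the dual whenever the common value is finite. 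Thus if $s\geq f^*(\mathbf{y})<\infty$, there is a dual-feasible $\boldsymbol{\xi}$ with $\mathbf{b}^T\boldsymbol{\xi}=f^*(\mathbf{y})\leq s$; and if $f^*(\mathbf{y})=+\infty$, both the fibre of $\mathrm{Epi}(f^*)$ over $\mathbf{y}$ and the dual feasible set are empty, so the equivalence holds vacuously. Care is needed only in tracking these finiteness and attainment cases, after which the epigraph description follows. Finally, when $\mathbf{K}$ is self-adjoint we have $\mathbf{K}^*=\mathbf{K}$, so the representation of $f^*$ uses the same cone as that of $f$, which gives the last assertion immediately.
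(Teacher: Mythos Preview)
Your proposal is correct and follows essentially the same route as the paper: rewrite $f^*(\mathbf{y})$ as the optimal value of a conic program over $(\mathbf{x},t,\mathbf{w})$ using the epigraph description of $f$, then apply the conic duality theorem (with the strict-feasibility hypothesis) to obtain the dual, whose feasible set is precisely the claimed representation of $\mathrm{Epi}(f^*)$. Your treatment of the infinite case $f^*(\mathbf{y})=+\infty$ is slightly more explicit than the paper's, but the argument is otherwise identical.
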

\begin{lemma}\label{lem:conic_pers}
If $f$ is conic representable with a cone $\mathbf{K}$, then so is its perspective.
\end{lemma}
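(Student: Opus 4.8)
The plan is to homogenize the given conic description of $\mathrm{Epi}(f)$ in the perspective variable $\lambda$ and read off from it a conic description of $\mathrm{Epi}(\tilde f)$. Writing the hypothesis in the notation of Lemma~\ref{lem:conic_conjugate} as
\begin{align*}
\mathrm{Epi}(f)=\left\{(\mathbf{x},t):\exists\,\mathbf{w}:\ \mathbf{A}\mathbf{x}+t\mathbf{v}+\mathbf{B}\mathbf{w}+\mathbf{b}\succeq_{\mathbf{K}}\mathbf{0}\right\},
\end{align*}
I would propose the candidate set
\begin{align*}
\mathcal{R}\triangleq\left\{(\mathbf{z},\lambda,s):\ \lambda\geq 0,\ \exists\,\mathbf{w}':\ \mathbf{A}\mathbf{z}+s\mathbf{v}+\mathbf{B}\mathbf{w}'+\lambda\mathbf{b}\succeq_{\mathbf{K}}\mathbf{0}\right\}
\end{align*}
and establish $\mathcal{R}=\mathrm{Epi}(\tilde f)$. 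Since $\mathcal{R}$ is the projection onto $(\mathbf{z},\lambda,s)$ of the solution set of a single conic inequality over the product cone $\mathbf{K}\times\mathbb{R}_{+}$ (the second factor encoding $\lambda\geq 0$), it is conic representable by construction; hence the whole content of the lemma is the set identity $\mathcal{R}=\mathrm{Epi}(\tilde f)$, and the proof reduces to verifying that identity.

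For the regime $\lambda>0$ the identity is a routine consequence of the positive homogeneity of the cone order. By definition $(\mathbf{z},\lambda,s)\in\mathrm{Epi}(\tilde f)$ with $\lambda>0$ means $\lambda f(\mathbf{z}/\lambda)\leq s$, equivalently $(\mathbf{z}/\lambda,\,s/\lambda)\in\mathrm{Epi}(f)$. Inserting the representation of $f$, multiplying the membership $\mathbf{A}(\mathbf{z}/\lambda)+(s/\lambda)\mathbf{v}+\mathbf{B}\mathbf{w}+\mathbf{b}\succeq_{\mathbf{K}}\mathbf{0}$ by $\lambda>0$ (which preserves $\succeq_{\mathbf{K}}$ because $\mathbf{K}$ is a cone), and setting $\mathbf{w}'\triangleq\lambda\mathbf{w}$ produces precisely the defining inequality of $\mathcal{R}$. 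Each step is reversible for $\lambda>0$ (divide back by $\lambda$ and put $\mathbf{w}=\mathbf{w}'/\lambda$), so $\mathcal{R}\cap\{\lambda>0\}=\mathrm{Epi}(\tilde f)\cap\{\lambda>0\}$.

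The delicate part, and the step I expect to carry the real work, is the boundary $\lambda=0$. Setting $\lambda=0$ in the defining inequality leaves $\mathbf{A}\mathbf{z}+s\mathbf{v}+\mathbf{B}\mathbf{w}'\succeq_{\mathbf{K}}\mathbf{0}$, whose solvability in $\mathbf{w}'$ cuts out exactly the recession cone of $\mathrm{Epi}(f)$, i.e.\ the epigraph of the recession function $f^{\infty}$. Thus $\mathcal{R}$ is the epigraph of the \emph{closed} perspective, with boundary value $(\mathrm{cl}\,\tilde f)(\mathbf{z},0)=f^{\infty}(\mathbf{z})$; this closed object is exactly what the conic reformulation~\eqref{RC: final} requires, where the perspective variables $\gamma$ and $\lambda_{j}$ play the role of $\lambda$. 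Two facts then remain to be nailed down: (i) the $\lambda>0$ slice is dense in $\mathcal{R}$, so that $\mathcal{R}=\mathrm{cl}\big(\mathcal{R}\cap\{\lambda>0\}\big)=\mathrm{Epi}(\mathrm{cl}\,\tilde f)$; and (ii) the projection onto $(\mathbf{z},\lambda,s)$ eliminating the auxiliary variable $\mathbf{w}'$ yields a closed set, so that no boundary point is spuriously gained or lost. Both reduce to a recession-cone computation on the lifted feasible set, for which the interior (Slater) hypothesis invoked in Lemma~\ref{lem:conic_conjugate} is the natural sufficient condition. I therefore expect the $\lambda>0$ scaling identity to be immediate and this closure analysis at $\lambda=0$ to be the main obstacle.
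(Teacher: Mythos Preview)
Your approach is essentially the same as the paper's: both homogenize the conic description of $\mathrm{Epi}(f)$ via the substitution $\mathbf{w}'=\lambda\mathbf{w}$ and multiply through by $\lambda>0$ to obtain the linear conic constraint $\mathbf{A}\mathbf{z}+s\mathbf{v}+\mathbf{B}\mathbf{w}'+\lambda\mathbf{b}\succeq_{\mathbf{K}}\mathbf{0}$. The paper's proof simply records this chain of set equalities and stops, implicitly working under its convention $\tilde f(\mathbf{0},0)=0$, $\tilde f(\mathbf{z},0)=\infty$ for $\mathbf{z}\neq\mathbf{0}$, without discussing the boundary $\lambda=0$; your additional recession/closure analysis is more careful than what the paper provides and correctly identifies that the conic set $\mathcal{R}$ actually represents the \emph{closed} perspective, which is indeed the object used downstream in~\eqref{RC: final}.
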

We provide an example illustrating how the ideas in the proofs of Lemmas~\ref{lem:conic_conjugate} and~\ref{lem:conic_pers} can be systematically applied to determine the conic representation of the epigraph of the perspective transformation of a distortion function.
\begin{example}
Consider 
 $h(p)=
 p^r(1-\log(p^r)),$ 
\ $0<r<1$.   
Instead of deriving a closed-form expression of $(-h)^*$, which can be challenging in specific cases, we determine a conic representation of the epigraph $\lambda (-h)^*\left(\frac{-\nu}{\lambda}\right)\leq t$ by first determining a conic representation of $\mathrm{Epi}(-h)$. 
We have
\begin{align*}
    (p,t)\in \mathrm{Epi}(-h) \Leftrightarrow \exists~ w\geq 0: -w+w\log(w)\leq t, w\leq p^r, w\leq 1,
\end{align*}
due to the fact that $x\mapsto -x+x\log(x)$ is decreasing on $[0,1]$. 
Following the proof of Lemma~\ref{lem:conic_conjugate}, we have that $(y,s)\in \mathrm{Epi}((-h)^*)$ if and only if 
\begin{align*}
    -s\leq \min_{p,w\geq 0, t\in \mathbb{R}}\{-yp+t~|-w+w\log(w)\leq t, w\leq p^r, w\leq 1\}.
\end{align*}
Since this is a bounded convex problem satisfying Slater's condition, the duality theorem implies that the minimization problem is equal to
\begin{align*}
&\max_{\xi_1,\xi_2,\xi_3\geq 0}-\xi_3+\inf_{p,w\geq 0, t\in \mathbb{R}}\{-yp-\xi_2p^r+(\xi_2+\xi_3-\xi_1)w+\xi_1w\log(w)+(1-\xi_1)t\}\\
    &=\max_{\xi_2,\xi_3\geq 0}-\xi_3+\inf_{p,w\geq 0}\{-yp-\xi_2p^r+(\xi_2+\xi_3-1)w+w\log(w)\} \\
    &=\max_{\xi_2,\xi_3\geq 0}\{-\xi_3-e^{-\xi_2-\xi_3}+\left(r^{\frac{1}{1-r}}-r^{\frac{r}{1-r}}\right)\xi_2^{\frac{1}{1-r}}|y|^{1-\frac{1}{1-r}}~|~y\leq 0\},
\end{align*}
where in the last equality we computed $\inf_{p\geq 0}\{-yp-\xi_2p^r\}$ explicitly.\footnote{$
    \inf_{p\geq 0}-yp-\xi_2p^r=\begin{cases}
        \left(r^{\frac{1}{1-r}}-r^{\frac{r}{1-r}}\right)\xi_2^{\frac{1}{1-r}}|y|^{1-\frac{1}{1-r}}&y<0,\quad\xi_2>0\\
        0&y\leq 0,\quad\xi_2=0\\
        -\infty&y\geq 0,\quad\xi_2>0.
    \end{cases}$} 
This gives that $(y,s)\in \mathrm{Epi}((-h)^*)$ if and only if there exist $\xi_2,\xi_3,\xi_4\geq 0$ such that  
\begin{align*}
    \xi_3+e^{-\xi_2-\xi_3}+\left(r^{\frac{r}{1-r}}-r^{\frac{1}{1-r}}\right)\xi_4\leq s,\quad\xi_2\leq |y|^r\xi_4^{1-r},\quad y\leq 0.
\end{align*}
Lemma~\ref{lem:conic_pers} then yields that $\lambda (-h)^*\left(\frac{-\nu}{\lambda}\right)\leq z$ if and only if there exist $\xi_2,\xi_3,\xi_4\geq 0$ such that
\begin{align*}
     \xi_3+\lambda e^{-(\xi_2+\xi_3)/\lambda}+\left(r^{\frac{r}{1-r}}-r^{\frac{1}{1-r}}\right)\xi_4\leq z,\quad\xi_2\leq |\nu|^r\xi_4^{1-r},\quad\nu \geq 0,
\end{align*}
which is a combination of the power cone and the exponential cone.
\end{example}
In some cases, one can also calculate a conjugate function by writing it as an inf-convolution. 
For example, if $f$ is a sum of individual $f_i$'s: $f(\mathbf{x})=\sum^n_{i=1}f_i(\mathbf{x})$, then the conjugate of a sum is the inf-convolution of the sum of the conjugates (see Theorem~2.1, \citealp{DickBoek}):
\begin{align*}
    \left(\sum^n_{i=1}f_i\right)^*(\mathbf{s})=\inf_{\{\mathbf{v}^i\}_{i\in [n]}}\left\{\sum^n_{i=1}f_i^*(\mathbf{v}^i)~\middle |~ \sum^n_{i=1}\mathbf{v}^i=\mathbf{s}\right\},
\end{align*}
where the infimum can eventually be omitted when considering the epigraph formulation.

\section{Solving the Robust Problem I: A Cutting-Plane Method}\label{sec:ReducingExp1}
As shown in Theorem~\ref{thm:RC}, the number of reformulated constraints grows exponentially, as $2^m$, where $m$ is the dimension of the probability vector. 
If $m$ is of small or moderate size, then the reformulations in \eqref{RC: final} and \eqref{RC:nominal} can be applied to solve the minimization problems \eqref{P} and \eqref{P-Nom} exactly. 
However, for larger $m$, this becomes computationally intractable. 
In this section, we show how the reformulation of \eqref{P} to \eqref{P-ref} (and similarly for \eqref{P-Nom} to \eqref{P-Nom-ref}) by Theorem~\ref{thm:ReformulationWholeProb} enables us to devise a suitable cutting-plane method, which circumvents this curse of dimensionality.  

\subsection{The Cutting-Plane Algorithm}\label{subsec:cuttingplane}
The standard cutting-plane algorithm is one of the most applied algorithms to solve robust optimization problems and has shown great efficiency in many applications (e.g., \citealp{Boyd09}, 
\citealp{CutvsRef}).
The general idea is simple: 
Approximate the uncertainty set $\mathcal{U}$ with a suitably chosen subset $\mathcal{U}_j\subset \mathcal{U}$ and solve the corresponding robust problem, which is often a simpler problem, similar to the nominal problem. 
If the solution is feasible according to the robust evaluation with respect to the original set $\mathcal{U}$, then the process is terminated.
Otherwise, the worst-case parameter in $\mathcal{U}$ associated with the current solution is added to $\mathcal{U}_j$, and the process is repeated. 

In our case, we apply the cutting-plane method to the reformulated problem \eqref{P-ref}, where the uncertainty set is given by the \textit{composite uncertainty set} $\mathcal{U}_{\phi,h}(\mathbf{p})$. 
After obtaining a candidate solution using a suitable subset $\mathcal{U}_{j}$, we verify its feasibility with respect to the original uncertainty set $\mathcal{U}_{\phi,h}(\mathbf{p})$. 
This involves solving the following optimization problem, for a given solution $\mathbf{a}_*$:
\begin{align*}
    \sup_{(\mathbf{q},\mathbf{\bar{q}})\in \mathcal{U}_{\phi,h}(\mathbf{p})}\sum^m_{i=1}-\bar{q}_iu(f(\mathbf{a}_*,\mathbf{x}_i)).
\end{align*}
At first sight, this seems problematic due to the $2^m$ number of constraints in $\mathcal{U}_{\phi,h}(\mathbf{p})$. Fortunately, this can be avoided, since the equivalent robust rank-dependent evaluation is the optimization problem: 
\begin{align}\label{robustcheck_1}
     \sup_{\mathbf{q}\in \mathcal{D}_\phi(\mathbf{p},r)}\rho_{u,h,\mathbf{q}}(f(\mathbf{a}_*,\mathbf{X})).
\end{align}
Problem~\eqref{robustcheck_1} can be solved efficiently. 
Indeed, for a given solution $\mathbf{a}_*$, we can assess the ranking of the outcomes:  $u(f(\mathbf{a}_j,\mathbf{x}_{(1)}))\geq \ldots \geq u(f(\mathbf{a}_j,\mathbf{x}_{(m)}))$. 
Then, by rewriting the alternating sum in \eqref{ranked_sum}, problem \eqref{robustcheck_1} becomes equal to the following convex optimization problem:
\begin{align}\label{ranked_opt_robustcheck}
    \sup_{\mathbf{q}\in \mathcal{D}_\phi(\mathbf{p},r)}\sum^{m}_{k=1}h\left(\sum^m_{j=k}q_{(j)}\right)\left(u(f(\mathbf{a}_*,\mathbf{x}_{(j-1)}))-u(f(\mathbf{a}_*,\mathbf{x}_{(j)}))\right),
\end{align}
where $u(f(\mathbf{a}_*,\mathbf{x}_{(0)})):=0$. The optimal solution of \eqref{ranked_opt_robustcheck} is a probability vector $\mathbf{q}^*\in \mathcal{D}_{\phi}(\mathbf{p},r)$. 
This probability vector further yields a $\bar{\mathbf{q}}^*$ such that $\bar{q}_{(i)}^*=h\left(\sum^{m}_{k=i}q^*_{(k)}\right)-h\left(\sum^{m}_{k=i+1}q^*_{(k)}\right)$ and $\bar{q}^*_{(m)}=h(q^*_{(m)})$, which is precisely the probability vector that constitutes the rank-dependent sum \eqref{ranked_sum}. 
The following lemma justifies that we may add the probability vectors $(\mathbf{q}^*,\bar{\mathbf{q}}^*)$ as the worst-case probabilities at each iteration of the cutting-plane procedure.

\begin{lemma}\label{lem: qq_in_U}
    We have that $(\mathbf{q}^*,\bar{\mathbf{q}}^*)\in \mathcal{U}_{\phi,h}(\mathbf{p})$.
\end{lemma}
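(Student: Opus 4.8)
The plan is to verify directly that the pair $(\mathbf{q}^*,\bar{\mathbf{q}}^*)$ satisfies each of the four defining requirements of $\mathcal{U}_{\phi,h}(\mathbf{p})$ in \eqref{uncertaintyset}. The requirements involving only $\mathbf{q}^*$---namely $\sum_i q_i^* = 1$, the divergence bound $\sum_i p_i \phi(q_i^*/p_i) \leq r$, and $q_i^* \geq 0$---are immediate, since $\mathbf{q}^*$ is by construction the optimal solution of \eqref{ranked_opt_robustcheck} and hence lies in $\mathcal{D}_\phi(\mathbf{p},r)$. It therefore remains to check the normalization $\sum_i \bar{q}_i^* = 1$, the non-negativity $\bar{q}_i^* \geq 0$, and the family of coupling inequalities $\sum_{i \in J} \bar{q}_i^* \leq h\!\left(\sum_{i \in J} q_i^*\right)$ for all $J \subset [m]$.

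To streamline the bookkeeping, I would work throughout in the ranked indices and abbreviate $S_i \triangleq \sum_{k=i}^m q^*_{(k)}$, with $S_{m+1} = 0$ and $S_1 = 1$, so that $q^*_{(i)} = S_i - S_{i+1}$ and $\bar{q}^*_{(i)} = h(S_i) - h(S_{i+1})$. The normalization then follows from a telescoping argument: $\sum_{i=1}^m \bar{q}^*_{(i)} = h(S_1) - h(S_{m+1}) = h(1) - h(0) = 1$. Non-negativity of each $\bar{q}^*_{(i)}$ is a consequence of the monotonicity of $h$ together with $S_i \geq S_{i+1}$, which holds because $q^*_{(i)} \geq 0$.

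The crux is the coupling inequality. For a fixed $J \subset [m]$, rewriting it in terms of $S$ amounts to showing $\sum_{i \in J}[h(S_i) - h(S_{i+1})] \leq h\!\left(\sum_{i \in J}(S_i - S_{i+1})\right)$. My plan is to read the left-hand side as a sum of increments of $h$ over the disjoint subintervals $\{[S_{i+1}, S_i]\}_{i \in J}$ of $[0,1]$, whose total length equals $L \triangleq \sum_{i \in J}(S_i - S_{i+1}) = \sum_{i \in J} q^*_{(i)}$. The key structural fact I would invoke is the decreasing-increments property of a concave function: for a fixed length $\ell$ and $a' \geq a$, concavity yields $h(a'+\ell) - h(a') \leq h(a+\ell) - h(a)$. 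Ordering the chosen intervals by position and stacking them contiguously from the origin---so that the $j$-th chosen interval, of length $\ell_j$, is compared with $[c_{j-1}, c_j]$ where $c_j = \sum_{i \leq j} \ell_i$---one checks that the left endpoint of each selected interval dominates the corresponding stacked left endpoint $c_{j-1}$, whence each increment can only grow when slid down to the bottom. Summing the resulting bounds telescopes to $h(c_{|J|}) - h(c_0) = h(L) - h(0) = h(L)$, which is precisely the desired inequality (here $h(0)=0$ is used once more). Since the intervals $\{[S_{i+1},S_i]\}_{i=1}^m$ partition $[0,1]$, disjointness holds automatically for every subset $J$, so the argument is uniform in $J$.

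The main obstacle is making the ``sliding to the bottom'' comparison rigorous: one must verify that, after ordering the intervals by position, the left endpoint of each selected interval is at least the cumulative length of the preceding selected intervals, so that the decreasing-increments inequality applies with the correct orientation. This is where both the concavity of $h$ and the anchoring $h(0)=0$ enter essentially; the remaining manipulations are routine telescoping sums.
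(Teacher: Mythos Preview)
Your proposal is correct, and the ``sliding to the bottom'' step you flag as the main obstacle works exactly as you indicate: listing the selected intervals $[a_1,b_1],\ldots,[a_k,b_k]$ in increasing order (with $a_1\geq 0$ and $a_{j}\geq b_{j-1}$), one has $a_j \geq a_1 + \sum_{i<j}\ell_i \geq c_{j-1}$ by a one-line induction, so the decreasing-increments inequality $h(a_j+\ell_j)-h(a_j)\leq h(c_j)-h(c_{j-1})$ applies termwise and the sum telescopes to $h(L)$.

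Your route is genuinely different from the paper's. The paper does not verify the coupling inequalities by hand; instead it invokes Lemma~4.98 of \citet{Follmer2011}, which says that the Choquet integral with respect to $h\circ\mathbb{Q}^*$ dominates the linear expectation $\mathbb{E}_{\bar{\mathbf{q}}^*}[-X]$ for \emph{every} random variable $X$, and then specializes to $X=-\mathbbm{1}_A$ to read off $\bar{\mathbf{q}}^*(A)\leq h(\mathbf{q}^*(A))$ for all $A\subset[m]$ in one stroke. What the paper's approach buys is brevity and a conceptual link to the core/dual-representation machinery already used in Theorem~\ref{thm:ReformulationWholeProb}; what your approach buys is self-containment---you use only the concavity and boundary conditions of $h$, with no external reference---and a transparent explanation of \emph{why} the rank-dependent weights $\bar q^*_{(i)}=h(S_i)-h(S_{i+1})$ land in the core $M_h(\mathbf{q}^*)$.
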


We can now describe the cutting-plane procedure in full detail, which we do more precisely in Algorithm~\ref{cutting_plane_algo}.   
\begin{algorithm}[htp]
\caption{\textbf{Cutting-Plane Method}}\label{cutting_plane_algo}
\begin{algorithmic}[1]
\State Start with $\mathcal{U}_1=\{(\mathbf{p},\mathbf{p})\}$. 
Fix a tolerance parameter $\epsilon_{\mathrm{tol}}>0$.
\State\label{step2}At the $j$-th iteration, solve the following problem with the uncertainty set $\mathcal{U}_j$:
\begin{align}\label{cutting_plane_pb_j}
    \begin{split}
        \min_{\mathbf{a}\in \mathcal{A}}\sup_{(\mathbf{q},\mathbf{\bar{q}})\in \mathcal{U}_j}-\sum^m_{i=1}\bar{q}_iu(f(\mathbf{a},\mathbf{x}_i)).
    \end{split}
\end{align}
\State\label{step3}Let $(\mathbf{a}_j,c_j)$ be the optimal solution and objective value of \eqref{cutting_plane_pb_j}. 
Determine a ranking of the realizations: 
\begin{align*}
    -u(f(\mathbf{a}_j,\mathbf{x}_{(1)}))\leq \ldots \leq -u(f(\mathbf{a}_j,\mathbf{x}_{(m)})).
\end{align*}
Then, solve the optimization problem \eqref{ranked_opt_robustcheck}, which gives an optimal objective value $v_j$ and a solution $(\mathbf{q}^*_j,\bar{\mathbf{q}}^*_j)$.
\State\label{step4}If $v_j- c_j\leq \epsilon_{\mathrm{tol}}$, then the solution is accepted and the process is terminated.
\State\label{step5}If not, set $\mathcal{U}_{j+1} = \mathcal{U}_j \cup \{(\mathbf{q}^*_j,\mathbf{\bar{q}}^*_j)\}$ and repeat steps~\ref{step2}--\ref{step5}.
\end{algorithmic}
\end{algorithm}
We note that the cutting-plane algorithm always computes a \emph{lower} bound $c_j$ on the optimal objective value of \eqref{P-ref}, since solving \eqref{cutting_plane_pb_j} at each iteration $j$ with respect to a subset $\mathcal{U}_{j}\subset \mathcal{U}_{\phi,h}(\mathbf{p})$ is a less conservative problem. 
Moreover, the lower bound is improved iteratively, since $\mathcal{U}_j\subset \mathcal{U}_{j+1}$. 
Furthermore, by solving \eqref{ranked_opt_robustcheck}, which is a robust rank-dependent evaluation of a particular feasible solution, we obtain an \emph{upper} bound $v_j$ on \eqref{P-ref} at each iteration. 
Hence, the final step of the cutting-plane algorithm yields an upper and a lower bound on the exact optimal objective value \eqref{P-ref}, with a gap value $v_j-c_j\leq \epsilon_{\mathrm{tol}}>0$ that can be chosen arbitrarily small.

The natural, formal question that arises is whether the cutting-plane algorithm actually terminates after finitely many iterations, thus yielding convergent upper and lower bounds as $\epsilon_{\mathrm{tol}}\to 0$. 
The following theorem, which has its roots in \citet{Boyd09}, 
states that, for any $\epsilon_{\mathrm{tol}}>0$, the termination is indeed guaranteed.
We note that \citet{Boyd09} imposes Lipschitz continuity conditions, which we avoid in our setting. 
\begin{theorem}\label{thm:cuttingplane_conv}
Let $h$ be a concave distortion function.
Suppose $\sup_{\mathbf{a}\in \mathcal{A},i\in [m]}u(f(\mathbf{a}, \mathbf{x}_i))<\infty$. 
Then, for all $\epsilon_{\mathrm{tol}}>0$, Algorithm~\ref{cutting_plane_algo} terminates after finitely many iterations.
\end{theorem}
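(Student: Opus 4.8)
The plan is to argue by contradiction. Suppose that, for some $\epsilon_{\mathrm{tol}}>0$, Algorithm~\ref{cutting_plane_algo} never terminates, so that it generates infinite sequences $\{\mathbf{a}_j\}\subset\mathcal{A}$, $\{(\mathbf{q}^*_j,\bar{\mathbf{q}}^*_j)\}$ and values $\{c_j\},\{v_j\}$ with $v_j-c_j>\epsilon_{\mathrm{tol}}$ for every $j$. Writing $F(\mathbf{a},\bar{\mathbf{q}})\triangleq -\sum_{i=1}^m\bar q_i u(f(\mathbf{a},\mathbf{x}_i))$, the relaxed problem \eqref{cutting_plane_pb_j} reads $c_j=\min_{\mathbf{a}\in\mathcal{A}}\sup_{(\mathbf{q},\bar{\mathbf{q}})\in\mathcal{U}_j}F(\mathbf{a},\bar{\mathbf{q}})$ with minimizer $\mathbf{a}_j$, while Theorem~\ref{thm:ReformulationWholeProb} together with Lemma~\ref{lem: qq_in_U} identifies the robustness check \eqref{ranked_opt_robustcheck} as $v_j=\sup_{(\mathbf{q},\bar{\mathbf{q}})\in\mathcal{U}_{\phi,h}(\mathbf{p})}F(\mathbf{a}_j,\bar{\mathbf{q}})=F(\mathbf{a}_j,\bar{\mathbf{q}}^*_j)$, the supremum being attained at the added vertex. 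I would record the two structural facts that drive the proof: (i) the $\mathcal{U}_j$ are nested, $\mathcal{U}_j\subset\mathcal{U}_{j+1}\subset\mathcal{U}_{\phi,h}(\mathbf{p})$, so that $\{c_j\}$ is non-decreasing and $c_j\le c^*$, the optimal value of \eqref{P-ref}; and (ii) since $u(f(\mathbf{a},\mathbf{x}_i))\le M<\infty$ by hypothesis, $F\ge -M$, so $\{c_j\}$ is also bounded below.

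From (i)--(ii), $\{c_j\}$ is monotone and bounded, hence convergent and in particular Cauchy: given any $\delta>0$ there is $N_1$ with $c_j-c_i<\delta$ for all $j\ge i\ge N_1$. Here I use Assumption~\ref{assumption:well-defined} (equivalently, finiteness of \eqref{P}) to guarantee $c^*<\infty$, and the standing assumption $\sup_{\mathbf{a}\in\mathcal{A},i\in[m]}u(f(\mathbf{a},\mathbf{x}_i))<\infty$ for the lower bound; this is precisely the point where the latter hypothesis enters. Next, because $\mathcal{A}$ is compact I would pass to a subsequence (not relabeled) along which $\mathbf{a}_j\to\mathbf{a}_\infty$; in particular $\{\mathbf{a}_j\}$ is itself Cauchy along this subsequence.

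The crux is a sandwich between an old cut and the current violation. For indices $i<j$ in the subsequence, the vertex $(\mathbf{q}^*_i,\bar{\mathbf{q}}^*_i)$ was added at step~\ref{step5} of iteration $i$ and therefore belongs to $\mathcal{U}_j$; feasibility of the relaxed optimum $\mathbf{a}_j$ then gives $F(\mathbf{a}_j,\bar{\mathbf{q}}^*_i)\le c_j$. On the other hand, the non-termination condition at iteration $i$ reads $F(\mathbf{a}_i,\bar{\mathbf{q}}^*_i)=v_i>c_i+\epsilon_{\mathrm{tol}}$. Subtracting, $F(\mathbf{a}_i,\bar{\mathbf{q}}^*_i)-F(\mathbf{a}_j,\bar{\mathbf{q}}^*_i)>\epsilon_{\mathrm{tol}}-(c_j-c_i)$. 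The decisive observation is that the \emph{same} probability vector $\bar{\mathbf{q}}^*_i$ appears in both terms on the left, so that, since $\bar{\mathbf{q}}^*_i\ge 0$ and $\sum_l(\bar q^*_i)_l=1$, the left-hand side is bounded above by $\max_{l\in[m]}|u(f(\mathbf{a}_i,\mathbf{x}_l))-u(f(\mathbf{a}_j,\mathbf{x}_l))|$, with no dependence on $\bar{\mathbf{q}}^*_i$ at all. Because $u$ is concave and $f(\cdot,\mathbf{x}_l)$ is (finite) convex, each of the finitely many maps $\mathbf{a}\mapsto u(f(\mathbf{a},\mathbf{x}_l))$ is continuous, hence uniformly continuous on the compact set $\mathcal{A}$; combined with $\|\mathbf{a}_i-\mathbf{a}_j\|\to 0$ along the subsequence, this forces the left-hand side to $0$. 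Choosing $\delta=\epsilon_{\mathrm{tol}}/2$ and $i,j$ large enough that both $c_j-c_i<\epsilon_{\mathrm{tol}}/2$ and the above maximum is $<\epsilon_{\mathrm{tol}}/2$, the displayed inequality yields $\epsilon_{\mathrm{tol}}/2>\epsilon_{\mathrm{tol}}/2$, a contradiction. Hence the algorithm terminates in finitely many iterations.

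I expect the main obstacle to be exactly the step that replaces the Lipschitz hypothesis of \citet{Boyd09} by mere compactness: one must ensure that the error term driven to zero involves only the decision variable $\mathbf{a}$ and not the worst-case weights. Keeping the cut vertex $\bar{\mathbf{q}}^*_i$ fixed across the two evaluations $F(\mathbf{a}_i,\cdot)$ and $F(\mathbf{a}_j,\cdot)$, and bounding their difference by a maximum of finitely many uniformly continuous functions of $\mathbf{a}$ alone, is what makes this work; the convergence of $\{c_j\}$, which supplies the term $-(c_j-c_i)\to 0$, is the only other ingredient and rests on the two boundedness facts above.
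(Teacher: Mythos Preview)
Your argument is correct, but it follows a different route from the paper's proof, and the two are worth contrasting.

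Both proofs share the same sandwich: fixing the old cut $\bar{\mathbf{q}}^*_i$, one compares $F(\mathbf{a}_i,\bar{\mathbf{q}}^*_i)>c_i+\epsilon_{\mathrm{tol}}$ against $F(\mathbf{a}_j,\bar{\mathbf{q}}^*_i)\le c_j$ for $j>i$, and both use that $\{c_j\}$ is nondecreasing and bounded above by the optimal value of \eqref{P-ref} so that $c_j-c_i$ can be made $<\epsilon_{\mathrm{tol}}/2$. The divergence is in how the left-hand side is driven to a contradiction. You pass to a convergent subsequence of $\{\mathbf{a}_j\}$ in the compact set $\mathcal{A}$ and invoke uniform continuity of the finitely many maps $\mathbf{a}\mapsto u(f(\mathbf{a},\mathbf{x}_l))$. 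The paper instead works entirely in the \emph{image space} $\mathbb{R}^m$: from the sandwich it deduces that the utility vectors $(u(f(\mathbf{a}_t,\mathbf{x}_i)))_{i=1}^m$ are pairwise $\tfrac{1}{2}\epsilon_{\mathrm{tol}}$-separated in $\|\cdot\|_2$, and then shows that all these vectors lie in a bounded set $\mathcal{T}\subset\mathbb{R}^m$ (upper bound from $M=\sup u(f)<\infty$, lower bound from the constraint $-\sum_i p_i u(f(\mathbf{a},\mathbf{x}_i))\le\tilde c$ together with $p_{\min}>0$). A bounded set cannot contain infinitely many $\tfrac{1}{2}\epsilon_{\mathrm{tol}}$-separated points, yielding the contradiction.

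What each approach buys: the paper's packing argument uses neither compactness of $\mathcal{A}$ nor continuity of $u\circ f$; the hypothesis $\sup_{\mathbf{a},i}u(f(\mathbf{a},\mathbf{x}_i))<\infty$ (together with Assumption~\ref{assump: nominal_p}) is what does the work. In your route, by contrast, that hypothesis is essentially a by-product of compactness plus continuity, and the place you flag it (``the lower bound on $c_j$'') is not actually needed, since a nondecreasing sequence bounded above already converges. Your argument is cleaner when compactness and continuity are available; the paper's is more self-contained under its minimal stated assumptions and explains precisely why the theorem singles out the bound $\sup u(f)<\infty$.
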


\begin{remark}
Algorithm~\ref{cutting_plane_algo} also applies to the nominal case \eqref{P-Nom}, where $r\equiv 0$ in the set $\mathcal{U}_{\phi,h}(\mathbf{p})$. 
Furthermore, Theorem~\ref{thm:cuttingplane_conv} still holds in this case, since its proof does not depend on the choice of $r$. 
\end{remark} 

\subsection{Robust Optimization with a Rank-Dependent Evaluation in the Constraint}
In the previous subsection, we have shown that our cutting-plane algorithm yields convergent upper and lower bounds to problems \eqref{P-Nom}--\eqref{P}, where the rank-dependent evaluation appears in the objective function. 
In this subsection, we discuss how the cutting-plane algorithm can be adapted to the robust problem \eqref{P-constraint}, where the RDU model appears in the constraint, to provide a convergent lower bound. 
The adaptation of Algorithm~\ref{cutting_plane_algo} to problem~\eqref{P-constraint} is presented in full detail in Algorithm~\ref{Algo: cutting_plane_constraint} in Electronic Companion~\ref{app:con}.\footnote{In a nutshell, we replace \eqref{cutting_plane_pb_j} in Algorithm~\ref{cutting_plane_algo} by \eqref{P-constraint} adapted to the set $\mathcal{U}_j$ and replace $c_j$ in step \ref{step4} by the constraint parameter $c$.}  
The following theorem establishes the convergence of the cutting-plane algorithm.
\begin{theorem}\label{thm:cuttingplane_conv_constr}
Let $h$ be concave.
Suppose $\sup_{\mathbf{a}\in \mathcal{A},i\in [m]}u(f(\mathbf{a}, \mathbf{x}_i))<\infty$. 
Then, for all $\epsilon_{\mathrm{tol}}>0$, Algorithm~\ref{Algo: cutting_plane_constraint} terminates after finitely many iterations.
Moreover, if the function $\mathbf{a}\mapsto u(f(\mathbf{a},\mathbf{x}))$ is concave for all $\mathbf{x}\in \mathbb{R}^l$, then the optimal objective value of the final solution obtained from Algorithm~\ref{Algo: cutting_plane_constraint} converges to that of \eqref{P-constraint}, as $\epsilon_{\mathrm{tol}}\to 0$.
\end{theorem}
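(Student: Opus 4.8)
\emph{Strategy.} Throughout I would write $G(\mathbf{a})\triangleq\sup_{(\mathbf{q},\bar{\mathbf{q}})\in\mathcal{U}_{\phi,h}(\mathbf{p})}\sum_{i=1}^m-\bar q_i u(f(\mathbf{a},\mathbf{x}_i))$, which by Theorem~\ref{thm:ReformulationWholeProb} equals $\sup_{\mathbf{q}\in\mathcal{D}_\phi(\mathbf{p},r)}\rho_{u,h,\mathbf{q}}(f(\mathbf{a},\mathbf{X}))$, and denote by $\mathrm{OPT}$ the optimal value of \eqref{P-constraint}. The plan is to treat the two assertions separately: finite termination for each fixed $\epsilon_{\mathrm{tol}}>0$, which I expect not to require concavity of $\mathbf{a}\mapsto u(f(\mathbf{a},\mathbf{x}))$, and convergence of the terminal objective value to $\mathrm{OPT}$ as $\epsilon_{\mathrm{tol}}\downarrow 0$, which will use it. Let $\mathbf{a}_\epsilon$ denote the minimizer returned at the terminating iteration $j^*$ when the tolerance is $\epsilon\triangleq\epsilon_{\mathrm{tol}}$.

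\emph{Finite termination.} This part mirrors the proof of Theorem~\ref{thm:cuttingplane_conv}, and I would emphasize that it uses only \emph{feasibility} of each iterate $\mathbf{a}_j$ for the relaxed set $\mathcal{U}_j$, never optimality, so no concavity is needed. Arguing by contradiction, suppose the algorithm never terminates and produces infinitely many cuts $(\mathbf{q}^*_j,\bar{\mathbf{q}}^*_j)\in\mathcal{U}_{\phi,h}(\mathbf{p})$, membership being guaranteed by Lemma~\ref{lem: qq_in_U}. Feasibility of $\mathbf{a}_j$ for $\mathcal{U}_j$ gives $\sum_i-\bar q^{*}_{k,i}u(f(\mathbf{a}_j,\mathbf{x}_i))\le c$ for every earlier cut $k<j$, while non-termination gives $v_j=\sum_i-\bar q^{*}_{j,i}u(f(\mathbf{a}_j,\mathbf{x}_i))>c+\epsilon_{\mathrm{tol}}$. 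Since $\bar{\mathbf{q}}\mapsto\sum_i-\bar q_i u(f(\mathbf{a},\mathbf{x}_i))$ is linear with coefficients bounded by $M\triangleq\sup_{\mathbf{a}\in\mathcal{A},\,i\in[m]}|u(f(\mathbf{a},\mathbf{x}_i))|<\infty$ (finite because $\mathcal{A}$ is compact, $f$ convex hence continuous and $u$ monotone hence bounded on bounded ranges, together with the standing hypothesis $\sup_{\mathbf{a},i}u(f(\mathbf{a},\mathbf{x}_i))<\infty$), it is $M$-Lipschitz in $\bar{\mathbf{q}}$ uniformly in $\mathbf{a}$. Subtracting the two inequalities yields $\|\bar{\mathbf{q}}^*_j-\bar{\mathbf{q}}^*_k\|_1>\epsilon_{\mathrm{tol}}/M$ for all $j>k$, contradicting the existence of a convergent subsequence of $\{\bar{\mathbf{q}}^*_j\}$, which compactness of $\mathcal{U}_{\phi,h}(\mathbf{p})$ forces. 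Hence termination occurs after finitely many steps.

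\emph{Convergence.} Here I would first note that concavity of $\mathbf{a}\mapsto u(f(\mathbf{a},\mathbf{x}))$ makes every cut $\mathbf{a}\mapsto\sum_i-\bar q^{*}_{k,i}u(f(\mathbf{a},\mathbf{x}_i))$ convex, so each relaxed problem is a convex program and $\mathbf{a}_\epsilon$ is a genuine global minimizer; moreover the Slater point of Assumption~\ref{assump: finite_strict_feasibility} satisfies every cut strictly, guaranteeing feasibility of all relaxed problems, so $\mathbf{a}_\epsilon$ exists. I would then derive two bounds. Because $\mathcal{U}_{j^*}\subset\mathcal{U}_{\phi,h}(\mathbf{p})$, the terminal relaxed feasible region contains the true feasible region of \eqref{P-constraint}; minimizing the same objective $g$ over the larger set gives $g(\mathbf{a}_\epsilon)\le\mathrm{OPT}$, hence $\limsup_{\epsilon\downarrow0}g(\mathbf{a}_\epsilon)\le\mathrm{OPT}$. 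For the reverse bound, termination supplies the $\epsilon$-feasibility estimate $G(\mathbf{a}_\epsilon)=v_{j^*}\le c+\epsilon$. Taking a sequence $\epsilon_n\downarrow0$ realizing $\liminf_{\epsilon\downarrow0}g(\mathbf{a}_\epsilon)$ and, by compactness of $\mathcal{A}$, a subsequence with $\mathbf{a}_{\epsilon_n}\to\mathbf{a}_*$, lower semicontinuity of $G$ (inherited from Assumption~\ref{assump:lower_semi}) gives $G(\mathbf{a}_*)\le\liminf_n G(\mathbf{a}_{\epsilon_n})\le\liminf_n(c+\epsilon_n)=c$, so $\mathbf{a}_*$ is feasible for \eqref{P-constraint} and $g(\mathbf{a}_*)\ge\mathrm{OPT}$. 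Continuity of the convex function $g$ then yields $\liminf_{\epsilon\downarrow0}g(\mathbf{a}_\epsilon)=g(\mathbf{a}_*)\ge\mathrm{OPT}$, and the two bounds squeeze $g(\mathbf{a}_\epsilon)\to\mathrm{OPT}$.

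\emph{Main obstacle.} I expect the convergence part, and specifically closing the gap from below, to be the delicate step. Two points must be handled with care: first, one may only treat $\mathbf{a}_\epsilon$ as a \emph{global} minimizer of the relaxed problem when the cuts are convex, which is precisely what concavity of $u\circ f$ buys and which legitimizes the relaxation bound $g(\mathbf{a}_\epsilon)\le\mathrm{OPT}$; second, passing to the limit in the $\epsilon$-feasibility constraint uses only lower semicontinuity of $G$, not continuity, so the inequality directions must be tracked exactly (lsc gives $G(\mathbf{a}_*)\le\liminf_n G(\mathbf{a}_{\epsilon_n})$, which happens to point the right way once combined with $G(\mathbf{a}_{\epsilon_n})\le c+\epsilon_n$). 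Finite termination, by contrast, is the routine compactness-plus-Lipschitz separation argument already underlying Theorem~\ref{thm:cuttingplane_conv}.
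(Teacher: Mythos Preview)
Your proof is correct on both halves but follows a different line from the paper's.

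For termination, the paper fixes the cut $\bar{\mathbf{q}}^*_t$ and separates the \emph{utility vectors} $(u(f(\mathbf{a}_t,\mathbf{x}_i)))_i$ and $(u(f(\mathbf{a}_s,\mathbf{x}_i)))_i$ in $\ell^2$; boundedness of these vectors is obtained from the ever-present nominal cut $\mathbf{p}\in\mathcal{U}_j$ together with Assumption~\ref{assump: nominal_p} and the one-sided hypothesis $\sup u(f)<\infty$. You instead fix the iterate $\mathbf{a}_j$ and separate the \emph{probability vectors} $\bar{\mathbf{q}}^*_j,\bar{\mathbf{q}}^*_k$, then appeal directly to compactness of $\mathcal{U}_{\phi,h}(\mathbf{p})$. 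Your route is shorter but needs the two-sided bound $M=\sup|u(f)|<\infty$; the justification ``monotone hence bounded on bounded ranges'' is not literally valid for arbitrary monotone $u$, though the conclusion does follow here from the standing concavity of $u$ and compactness of $\mathcal{A}$. The paper sidesteps this entirely by extracting the lower bound on $u(f(\mathbf{a}_j,\cdot))$ from feasibility of $\mathbf{a}_j$ against $\mathbf{p}$.

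For convergence, the paper uses strong duality for \eqref{P-constraint} (valid under Assumption~\ref{assump: finite_strict_feasibility} once concavity of $u\circ f$ makes the problem convex) and bounds $0\le P(0)-g(\mathbf{a}_{\epsilon_{\mathrm{tol}}})\le\lambda^*\epsilon_{\mathrm{tol}}$ via the KKT multiplier $\lambda^*$, giving an explicit linear rate. Your compactness-plus-lower-semicontinuity squeeze is more elementary---it needs neither the Slater point nor KKT theory---and uses concavity only to guarantee that each relaxed subproblem is convex so that $\mathbf{a}_\epsilon$ is genuinely a global minimizer (hence $g(\mathbf{a}_\epsilon)\le\mathrm{OPT}$). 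The price is that you obtain qualitative convergence without a rate.
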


Thus, using the cutting-plane algorithm, we can obtain lower bounds that converge to the exact optimal objective value of \eqref{P-constraint}, as $\epsilon_{\mathrm{tol}}\to 0$. 
Naturally, one would also like to obtain an upper bound on \eqref{P-constraint}, as well as a feasible solution of \eqref{P-constraint}. 
We note that this is not guaranteed by the cutting-plane algorithm as described in Algorithm~\ref{Algo: cutting_plane_constraint}, since it only provides a solution that is feasible within a tolerance $\epsilon_{\mathrm{tol}}>0$. 
With this objective in mind, we explore the rank-dependent nature of $\rho_{u,h,\mathbf{q}}(.)$ and propose a method to obtain an upper bound and a feasible solution of \eqref{P-constraint}, which requires solving an optimization problem with only $3m+3$ number of constraints. 
We first state the following definition.
\begin{definition}\label{def: ranked_index}
    Given an $\mathbf{a}\in \mathcal{A}$, we let $\mathcal{I}(\mathbf{a})$ denote the set of all permutations $(i_1,\ldots, i_m)$ of the index vector $(1,\ldots,m)$ such that the ranking $ -u(f(\mathbf{a},\mathbf{x}_{i_1}))\leq \ldots \leq -u(f(\mathbf{a},\mathbf{x}_{i_m}))$ holds. 
\end{definition}
The idea is that for any given solution $\mathbf{a}_0\in \mathcal{A}$, we can determine a ranking of the realizations $\{-u(f(\mathbf{a}_0,\mathbf{x}_i))\}^m_{i=1}$ and obtain a vector of indices $(i_1,\ldots, i_m)\in \mathcal{I}(\mathbf{a}_0)$. 
A natural upper bound on \eqref{P-constraint} can then be computed by solving the following optimization problem induced by the ranking $(i_1,\ldots, i_m)$ of $\mathbf{a}_0$:
\begin{align}\label{ranked_upper}
    \begin{split}
        U^*(\mathbf{a}_0)\triangleq\min_{\mathbf{a}\in \mathcal{A}}\left\{g(\mathbf{a})~\middle |~ 
        \sup_{(\mathbf{q},\bar{\mathbf{q}})\in \mathcal{U}^{(i_1,\ldots,i_m)}_{\phi,h}(\mathbf{p})}-\sum^m_{i=1}\bar{q}_iu(f(\mathbf{a},\mathbf{x}_i))\leq c\right\},
    \end{split}
\end{align}
where the rank-dependent uncertainty set is 
\begin{align}
\nonumber
    \begin{split}
    \mathcal{U}_{\phi,h}^{(i_1,\ldots,i_m)}(\mathbf{p})\triangleq\left\{(\mathbf{q},\bar{\mathbf{q}})\in \mathbb{R}^{2m} \;
    \begin{tabular}{|l}
      $\sum^m_{i=1}q_i=\sum^m_{i=1}\bar{q}_i=1$\\
      $\sum^m_{i=1}p_i\phi\left(\frac{q_i}{p_i}\right)\leq r$\\
      $\sum^m_{j= k}\bar{q}_{i_j}\leq h\left(\sum^m_{j=k} q_{i_j}\right),~\forall k\in [m]$\\
      $q_i,\bar{q}_i\geq 0,~\forall i\in [m]$
\end{tabular}
\right\}.
    \end{split}
\end{align}
Indeed, since $\mathcal{U}_{\phi,h}(\mathbf{p})\subset \mathcal{U}_{\phi,h}^{(i_1,\ldots,i_m)}(\mathbf{p})$, solving \eqref{ranked_upper} (which can be done using Theorem~\ref{thm:RC}) yields an upper bound and a feasible solution of \eqref{P-constraint}. 
In particular, if $\mathbf{a}_0$ is a good approximation of the optimal solution of \eqref{P-constraint}, such that the ranking coincides, then $U^*(\mathbf{a}_0)$ is exactly equal to the optimal objective value of \eqref{P-constraint}. 
This observation is made precise in the following lemma and is pivotal for developing a convergent upper bound for \eqref{P-constraint}.
\begin{lemma}\label{lem: optimal_ranking}
Let $\mathbf{a}^*$ be a minimizer of \eqref{P-constraint}. 
Then, for any $\mathbf{a}_0\in \mathcal{A}$ such that $\mathcal{I}(\mathbf{a}_0)\subset \mathcal{I}(\mathbf{a}^*)$, we have that the value $U^*(\mathbf{a}_0)$ as defined in \eqref{ranked_upper} is equal to the optimal objective value of \eqref{P-constraint}. 
\end{lemma}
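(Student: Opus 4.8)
The plan is to establish the two inequalities $U^*(\mathbf{a}_0)\geq\eqref{P-constraint}$ and $U^*(\mathbf{a}_0)\leq\eqref{P-constraint}$ separately. The first holds for \emph{every} $\mathbf{a}_0\in\mathcal{A}$ and does not use the hypothesis: as noted in the text, omitting all non-nested core constraints gives the inclusion $\mathcal{U}_{\phi,h}(\mathbf{p})\subset\mathcal{U}^{(i_1,\ldots,i_m)}_{\phi,h}(\mathbf{p})$, so the robust constraint in \eqref{ranked_upper} is tighter than the one in \eqref{P-constraint}. Hence the feasible set of \eqref{ranked_upper} is contained in that of \eqref{P-constraint}, and minimizing the same objective $g$ over the smaller set can only increase the optimal value, yielding $U^*(\mathbf{a}_0)\geq\eqref{P-constraint}$.

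For the reverse inequality I would show that a minimizer $\mathbf{a}^*$ of \eqref{P-constraint} is itself feasible for \eqref{ranked_upper}; since $g(\mathbf{a}^*)$ equals the optimal value of \eqref{P-constraint}, this immediately gives $U^*(\mathbf{a}_0)\leq g(\mathbf{a}^*)=\eqref{P-constraint}$ and closes the argument. Feasibility of $\mathbf{a}^*$ reduces to the claim that, at $\mathbf{a}^*$, relaxing the core constraints to the nested ones does not change the worst-case value, i.e.
\begin{align*}
\sup_{(\mathbf{q},\bar{\mathbf{q}})\in \mathcal{U}^{(i_1,\ldots,i_m)}_{\phi,h}(\mathbf{p})}-\sum^m_{i=1}\bar{q}_iu(f(\mathbf{a}^*,\mathbf{x}_i)) = \sup_{(\mathbf{q},\bar{\mathbf{q}})\in \mathcal{U}_{\phi,h}(\mathbf{p})}-\sum^m_{i=1}\bar{q}_iu(f(\mathbf{a}^*,\mathbf{x}_i)),
\end{align*}
whose right-hand side equals $\sup_{\mathbf{q}\in\mathcal{D}_\phi(\mathbf{p},r)}\rho_{u,h,\mathbf{q}}(f(\mathbf{a}^*,\mathbf{X}))$ by Theorem~\ref{thm:ReformulationWholeProb}, and is therefore $\leq c$ by feasibility of $\mathbf{a}^*$ for \eqref{P-constraint}. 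The set inclusion already gives ``$\geq$'' in the displayed equality, so only ``$\leq$'' requires work.

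The crux, and the step I expect to be the main obstacle, is this equality of suprema, and it is precisely here that the hypothesis enters. From $(i_1,\ldots,i_m)\in\mathcal{I}(\mathbf{a}_0)\subset\mathcal{I}(\mathbf{a}^*)$ I obtain that the permutation defining the ranked set also orders the coefficients of $\mathbf{a}^*$, namely $-u(f(\mathbf{a}^*,\mathbf{x}_{i_1}))\leq\cdots\leq -u(f(\mathbf{a}^*,\mathbf{x}_{i_m}))$. Fixing any feasible $\mathbf{q}$ and maximizing only over $\bar{\mathbf{q}}$, I would rewrite the objective by Abel summation through the tail sums $S_k=\sum_{j\geq k}\bar{q}_{i_j}$, obtaining $\sum_k S_k\bigl(-u(f(\mathbf{a}^*,\mathbf{x}_{i_k}))+u(f(\mathbf{a}^*,\mathbf{x}_{i_{k-1}}))\bigr)$ with nonnegative increments by the ordering; since the nested constraints bound each $S_k$ by $h(\sum_{j\geq k}q_{i_j})$, the maximum is attained by saturating every nested constraint. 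The resulting greedy optimizer is exactly the rank-dependent weight vector $\bar{q}_{i_k}=h(\sum_{j\geq k}q_{i_j})-h(\sum_{j> k}q_{i_j})$, which by the dual representation underlying Theorem~\ref{thm:ReformulationWholeProb} attains the supremum over the full core $M_h(\mathbf{q})$ (the analogue of \eqref{defMhq} with $\mathbf{q}$ in place of $\mathbf{p}$) and in particular belongs to it, hence satisfies all non-nested constraints of $\mathcal{U}_{\phi,h}(\mathbf{p})$ as well. Thus the maximizer over the relaxed set already lies in the unrelaxed set, so the two inner suprema over $\bar{\mathbf{q}}$ coincide for each $\mathbf{q}$; taking the supremum over $\mathbf{q}\in\mathcal{D}_\phi(\mathbf{p},r)$ then yields the displayed equality. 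The only delicate point is that the consistent ordering is what makes the Abel increments nonnegative, so that saturation is optimal — without it the greedy vector need not maximize — which is exactly why the containment $\mathcal{I}(\mathbf{a}_0)\subset\mathcal{I}(\mathbf{a}^*)$ is indispensable.
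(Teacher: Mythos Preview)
Your proposal is correct and follows essentially the same approach as the paper: both directions are obtained via the set inclusion $\mathcal{U}_{\phi,h}(\mathbf{p})\subset\mathcal{U}^{(i_1,\ldots,i_m)}_{\phi,h}(\mathbf{p})$ and by showing that $\mathbf{a}^*$ is feasible for \eqref{ranked_upper}, which in turn rests on the equality of the two suprema at $\mathbf{a}^*$ established through the Abel/tail-sum rewriting and the observation that the saturating vector $\bar{q}_{i_k}=h(\sum_{j\geq k}q_{i_j})-h(\sum_{j>k}q_{i_j})$ lies in $M_h(\mathbf{q})$ (cf.\ Lemma~\ref{lem: qq_in_U}). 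The paper phrases the key equality slightly more generally (for any $\mathbf{a}$ and any ranking in $\mathcal{I}(\mathbf{a})$) before specializing to $\mathbf{a}^*$, but the content is the same.
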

Lemma~\ref{lem: optimal_ranking} suggests that if one approximates the optimal solution of \eqref{P-constraint} with a sequence of cutting-plane solutions as $\epsilon_{\mathrm{tol}}\to 0$, then under certain continuity conditions, the upper bounds computed in \eqref{ranked_upper} also converge to the exact value. 
This is established in the following theorem.  
\begin{theorem}\label{thm: convergence_cutting_plane_constraint}
    Let $h$ be concave.
    Assume $\sup_{\mathbf{a}\in \mathcal{A},i\in [m]}u(f(\mathbf{a}, \mathbf{x}_i))<\infty$.
    If the functions $g(\mathbf{a})$ and $ u(f(\mathbf{a},\mathbf{x}_i))$ are continuous on $\mathcal{A}$, for all $i\in [m]$, and $\mathbf{a}_n$ is a sequence of solutions obtained from Algorithm~\ref{Algo: cutting_plane_constraint} with $\epsilon_{\mathrm{tol},n}\to 0$, then $(U^*(\mathbf{a}_n))_{n\geq 1}$ is a sequence of upper bounds that converges to the optimal objective value of \eqref{P-constraint}.
\end{theorem}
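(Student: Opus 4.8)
The plan is to combine compactness of $\mathcal{A}$ with the exactness guarantee of Lemma~\ref{lem: optimal_ranking} and a ranking-stability observation, organized through the sub-subsequence principle. Write $V^*$ for the optimal objective value of \eqref{P-constraint} and fix a minimizer. The upper-bound property noted below \eqref{ranked_upper} (which comes from $\mathcal{U}_{\phi,h}(\mathbf{p})\subset \mathcal{U}_{\phi,h}^{(i_1,\ldots,i_m)}(\mathbf{p})$) gives $U^*(\mathbf{a}_n)\geq V^*$ for every $n$, so it remains to prove $\limsup_{n}U^*(\mathbf{a}_n)\leq V^*$. Since $U^*(\mathbf{a}_n)\geq V^*$ always, it suffices to show that every subsequence of $(\mathbf{a}_n)$ admits a further subsequence $(\mathbf{a}_{n_k})$ along which $U^*(\mathbf{a}_{n_k})=V^*$ for all large $k$.

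First I would record two properties of the cutting-plane iterates. Because each run of Algorithm~\ref{Algo: cutting_plane_constraint} solves \eqref{P-constraint} with a relaxed uncertainty set $\mathcal{U}_j\subset \mathcal{U}_{\phi,h}(\mathbf{p})$ (by Lemma~\ref{lem: qq_in_U} and the initialization), the relaxation yields $g(\mathbf{a}_n)\leq V^*$; and termination at tolerance $\epsilon_{\mathrm{tol},n}$ gives $\sup_{\mathbf{q}\in\mathcal{D}_{\phi}(\mathbf{p},r)}\rho_{u,h,\mathbf{q}}(f(\mathbf{a}_n,\mathbf{X}))\leq c+\epsilon_{\mathrm{tol},n}$. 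The next ingredient is continuity of the map $\mathbf{a}\mapsto \sup_{\mathbf{q}\in\mathcal{D}_{\phi}(\mathbf{p},r)}\rho_{u,h,\mathbf{q}}(f(\mathbf{a},\mathbf{X}))$: the Choquet sum \eqref{ranked_sum} is jointly continuous in $(\mathbf{a},\mathbf{q})$ whenever each $\mathbf{a}\mapsto u(f(\mathbf{a},\mathbf{x}_i))$ is continuous, since the integral is continuous even where the ranking changes, and $\mathcal{D}_{\phi}(\mathbf{p},r)$ is compact, so the supremum is continuous by the maximum theorem.

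Given any subsequence, I would extract a convergent sub-subsequence $\mathbf{a}_{n_k}\to\bar{\mathbf{a}}\in\mathcal{A}$ using compactness of $\mathcal{A}$, and show $\bar{\mathbf{a}}$ is optimal for \eqref{P-constraint}. Passing to the limit in the feasibility inequality, continuity and $\epsilon_{\mathrm{tol},n_k}\to 0$ give $\sup_{\mathbf{q}\in\mathcal{D}_{\phi}(\mathbf{p},r)}\rho_{u,h,\mathbf{q}}(f(\bar{\mathbf{a}},\mathbf{X}))\leq c$, so $\bar{\mathbf{a}}$ is feasible and hence $g(\bar{\mathbf{a}})\geq V^*$; combined with $g(\bar{\mathbf{a}})=\lim_k g(\mathbf{a}_{n_k})\leq V^*$ from continuity of $g$, this forces $g(\bar{\mathbf{a}})=V^*$, i.e.\ $\bar{\mathbf{a}}$ is a minimizer.

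The crux is the ranking-stability claim $\mathcal{I}(\mathbf{a}_{n_k})\subset\mathcal{I}(\bar{\mathbf{a}})$ for all large $k$. For every pair $(i,i')$ with $-u(f(\bar{\mathbf{a}},\mathbf{x}_i))<-u(f(\bar{\mathbf{a}},\mathbf{x}_{i'}))$, continuity preserves this strict inequality along $\mathbf{a}_{n_k}$ for $k$ large; as there are finitely many pairs, a single index works uniformly. Any permutation sorting the values at $\mathbf{a}_{n_k}$ must then respect all strict orderings at $\bar{\mathbf{a}}$, and therefore lies in $\mathcal{I}(\bar{\mathbf{a}})$, since ties at $\bar{\mathbf{a}}$ impose no constraint on membership in $\mathcal{I}(\bar{\mathbf{a}})$. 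Invoking Lemma~\ref{lem: optimal_ranking} with the minimizer $\bar{\mathbf{a}}$ then yields $U^*(\mathbf{a}_{n_k})=V^*$ for all large $k$, which is precisely what the sub-subsequence principle requires to conclude $U^*(\mathbf{a}_n)\to V^*$. I expect this ranking-stability step, together with the joint continuity of the Choquet integral across ranking changes, to be the main obstacle, as it is where ties among the limiting values $-u(f(\bar{\mathbf{a}},\mathbf{x}_i))$ must be handled carefully.
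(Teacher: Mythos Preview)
Your proof is correct and follows essentially the same approach as the paper: extract a limit point $\bar{\mathbf{a}}$ by compactness, show it is feasible (via continuity of the worst-case evaluation and $\epsilon_{\mathrm{tol},n}\to 0$) and optimal (via $g(\mathbf{a}_n)\leq V^*$ and continuity of $g$), then use ranking stability $\mathcal{I}(\mathbf{a}_{n_k})\subset\mathcal{I}(\bar{\mathbf{a}})$ to invoke Lemma~\ref{lem: optimal_ranking}. Your explicit use of the sub-subsequence principle is a cleaner way to pass from subsequential to full convergence than the paper's slightly informal ``we may assume there exists a limit,'' and you avoid citing Theorem~\ref{thm:cuttingplane_conv_constr} by directly observing the relaxation lower bound $g(\mathbf{a}_n)\leq V^*$; these are minor organizational differences rather than a different route.
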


\subsection{Choquet Expected Utility}\label{sec:ChoquetEU}
In this subsection, we briefly discuss how our cutting-plane method can also be extended to solve optimization problems for a more general rank-dependent model, namely the Choquet expected utility model \citep{Schmeidler1986,Schmeidler1989}. 
Let $c:\mathcal{F}\to [0,1]$ be a monotone set function\footnote{$c(A)\leq c(B),$ if $A\subset B$.} such that $c(\emptyset)=0,~c(\Omega)=1$. 
The Choquet expected utility model evaluates a random variable $X$ by the Choquet integral:
\begin{align}
    \int -X \,\mathrm{d}c\triangleq\int^\infty_0 c(-X>t)\,\mathrm{d}t+\int^0_{-\infty}(c(-X>t)-1)\,\mathrm{d}t.
\end{align}
The rank-dependent evaluation $\rho_{u,h,\mathbb{Q}}$ as defined in \eqref{def:distortionriskmeasure} is a special case of the Choquet expected utility, where $c(A)=h(\mathbb{Q}(A))$. 
If $c$ is a submodular set function,\footnote{$c(A\cap B)+c(A\cup B)\leq c(A)+ c(B),\ \forall A,B\in \mathcal{F}$.} then the Choquet expected utility can also be written as a worst-case expectation (see \citealp{Denneberg}), similar to Theorem~\ref{thm:ReformulationWholeProb}:
\begin{align}\label{Choquet_EU}
     \int -X \,\mathrm{d}c=\sup_{\bar{\mathbf{q}}\in \mathcal{U}_{c}}-\sum^m_{i=1}\bar{q}_ix_i,\quad
     \mathcal{U}_{c}\triangleq\left\{\bar{\mathbf{q}}\in \mathbb{R}^{m} \;
    \begin{tabular}{|l}
      $\sum^m_{i=1}\bar{q}_i=1$\\
      $\sum_{i\in J}\bar{q}_{i}\leq c(J),~\forall J\subset [m]$\\
      $\bar{q}_i\geq 0,~ \forall i\in [m]$
\end{tabular}
\right\}.
\end{align}
The uncertainty set $\mathcal{U}_{c}$ has essentially the same structure as the set $\mathcal{U}_{\phi,h}(\mathbf{p})$ defined in \eqref{uncertaintyset} and consists of only linear constraints. 
Therefore, the same procedure of the cutting-plane algorithm can be extended naturally to solve the more general rank-dependent minimization problem $\min_{\mathbf{a}\in \mathcal{A}}\int -f(\mathbf{a},\mathbf{X})\,\mathrm{d}c$. 
By contrast, the piecewise-linear approximation methods, which we will discuss in Section~\ref{sec:SolvingAlgorithms}, constitute a more restrictive, but as we will explicate in some cases more efficient, approach.

\section{Solving the Robust Problem II: Piecewise-Linear Approximation }\label{sec:SolvingAlgorithms}
In this section, we develop a different method for computing lower and upper bounds on the optimal objective value of \eqref{P-ref}, which relies on a suitable approximation of the distortion functions, and is leveraged in Section~\ref{sec:NonConcave}.
\subsection{Piecewise-Linear Distortion Functions}\label{subsec: piecelin_exact}
In this subsection, we show that if the distortion function $h$ is concave and piecewise-linear, then the exponential complexity of the constraints in $\mathcal{U}_{\phi,h}(\mathbf{p})$ can be reduced to the order of $m\cdot K$, where $m$ is the dimension of the probability vector, and $K$ is the number of linear pieces of $h$. 

More precisely, we consider a function $h=\min_{j\in [K]}h_j$, with affine functions $h_j(p)=l_j\cdot p+b_j$, such that the slopes $l_1>\cdots>l_K$ are decreasing and the intercepts $b_1<\cdots<b_K$ are increasing. 
The affine functions are assumed to be defined on a set of support points $0=x_0<x_1<\cdots <x_K=1$, such that $h(p)=h_j(p),~ \text{if}~p\in [x_{j-1},x_j]$, for all $j\in [K]$. 
Furthermore, we impose $b_1\equiv 0$ and $l_K+b_K\equiv 1$, so that $h(0)=0$ and $h(1)=1$. 
We refer to this type of function as a \textit{$K$-piecewise-linear distortion function}. 
Clearly, this function is non-decreasing and concave.

We prove the following lemma.
\begin{lemma}\label{lem:PL_UC}
    Let $h$ be a $K$-piecewise-linear distortion function.
    Then, $(\mathbf{q},\bar{\mathbf{q}})\in \mathcal{U}_{\phi,h}(\mathbf{p})$ if and only if there exists $\mathbf{t}\triangleq(\{t_{i,j}\}^m_{i=1})^{K}_{j=1}\in \mathbb{R}^{mK}$ such that the variables $(\mathbf{q},\bar{\mathbf{q}},\mathbf{t})$ satisfy the constraints
    \begin{align}\label{PL_UC}
        \begin{cases}
            q_i, \bar{q}_i, t_{i,j}\geq 0,~\forall i \in [m],~ \forall j\in [K]\\
            \bar{q}_i\leq l_j\cdot q_i+t_{i,j},~\forall i\in [m],~ \forall j\in [K]\\
            \sum^m_{i=1}t_{i,j}\leq b_j,~\forall j\in [K]\\
            \sum^m_{i=1}q_i=\sum^m_{i=1}\bar{q}_i=1\\
            \sum^m_{i=1}p_i\phi\left(\frac{q_i}{p_i}\right)\leq r.
        \end{cases}       
    \end{align}
\end{lemma}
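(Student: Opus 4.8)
The plan is to eliminate the exponentially many subset constraints $\sum_{i\in J}\bar q_i\le h\!\left(\sum_{i\in J}q_i\right)$, $J\subset[m]$, from the definition \eqref{uncertaintyset} of $\mathcal{U}_{\phi,h}(\mathbf{p})$ by exploiting the defining lower-envelope structure $h=\min_{j\in[K]}h_j$ of a $K$-piecewise-linear distortion function. By definition $h(s)=\min_{j\in[K]}(l_j s+b_j)$ for every $s\in[0,1]$, so the only constraints of \eqref{uncertaintyset} that I need to rewrite are the subset constraints; the normalization and $\phi$-divergence constraints carry over verbatim to \eqref{PL_UC}.

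First I would observe that, using $h=\min_j h_j$, the subset constraint for a fixed $J$ is equivalent to the conjunction over $j\in[K]$ of $\sum_{i\in J}\bar q_i\le l_j\sum_{i\in J}q_i+b_j$, i.e.\ $\sum_{i\in J}(\bar q_i-l_j q_i)\le b_j$. Quantifying over all $J\subset[m]$ and all $j$, for each fixed $j$ the left-hand side is maximized by the subset $J_j^\star\triangleq\{i:\bar q_i-l_j q_i>0\}$ collecting exactly the positive summands. Hence the entire family of subset constraints is equivalent to the $K$ inequalities $\sum_{i=1}^m(\bar q_i-l_j q_i)^+\le b_j$, $j\in[K]$, where $(x)^+\triangleq\max\{x,0\}$. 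This is the step that actually collapses the $2^m$ complexity down to the order $K$.

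It then remains to linearize each positive part via epigraph-type variables. For the direction ``$\Rightarrow$'', given $(\mathbf{q},\bar{\mathbf{q}})\in\mathcal{U}_{\phi,h}(\mathbf{p})$ I would set $t_{i,j}\triangleq(\bar q_i-l_j q_i)^+\ge 0$; then $\bar q_i\le l_j q_i+t_{i,j}$ holds by construction, and applying the subset constraint at $J=J_j^\star$ yields $\sum_i t_{i,j}=\sum_{i\in J_j^\star}(\bar q_i-l_j q_i)\le b_j$, so $(\mathbf{q},\bar{\mathbf{q}},\mathbf{t})$ satisfies \eqref{PL_UC}. For the direction ``$\Leftarrow$'', given any feasible $(\mathbf{q},\bar{\mathbf{q}},\mathbf{t})$ of \eqref{PL_UC} and any $J\subset[m]$, summing $\bar q_i\le l_j q_i+t_{i,j}$ over $i\in J$ and using $t_{i,j}\ge 0$ together with $\sum_i t_{i,j}\le b_j$ gives $\sum_{i\in J}\bar q_i\le l_j\sum_{i\in J}q_i+b_j=h_j\!\left(\sum_{i\in J}q_i\right)$ for every $j$; taking the minimum over $j\in[K]$ recovers $\sum_{i\in J}\bar q_i\le h\!\left(\sum_{i\in J}q_i\right)$, since $\sum_{i\in J}q_i\in[0,1]$.

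I expect the only delicate point to be the worst-case subset reduction in the second paragraph, as it is precisely what removes the exponential quantifier over $J$; the subsequent introduction of the variables $t_{i,j}$ is routine epigraph bookkeeping. I would also record that the boundary cases are automatically consistent: $J=\emptyset$ gives $0\le b_1=0$, and $J=[m]$ gives $1\le l_j+b_j$, which holds because $l_j+b_j\ge h(1)=1$ for every $j$.
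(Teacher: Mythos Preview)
Your proof is correct and follows essentially the same approach as the paper: both directions use the same construction $t_{i,j}=(\bar q_i-l_jq_i)^+$ for ``$\Rightarrow$'' and the same summation-plus-nonnegativity argument for ``$\Leftarrow$''. Your explicit intermediate reduction to the $K$ inequalities $\sum_{i=1}^m(\bar q_i-l_jq_i)^+\le b_j$ and the boundary-case checks are slight elaborations, but the underlying argument is identical to the paper's.
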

Hence, for $K$-piecewise-linear distortion functions, we obtain the following reformulation of the robust counterpart~\eqref{rob_constr_ref}.
\begin{theorem}\label{thm:RCpiecewiselin}
Let $h$ be a K-piecewise-linear distortion function. 
Then, we have that $(\mathbf{a},c)$ satisfies the constraint~\eqref{rob_constr_ref} if and only if there exist variables $\alpha,\beta,\gamma,\{\lambda_{ij}\}_{i\in [m],j\in [K]},\{\nu_j\}^K_{j=1}\in \mathbb{R}$ such that
\begin{align}\label{RC:piecewiseaffine}
\begin{cases}
    \alpha +\beta+\gamma r +\sum^K_{j=1}\nu_jb_j+\sum^m_{i=1}p_i\gamma\phi^*\left(\frac{-\alpha+\sum^K_{j=1}\lambda_{ij}l_j}{\gamma}\right)\leq c\\
    -u(f(\mathbf{a},\mathbf{x}_i))-\beta-\sum^K_{j=1}\lambda_{ij}\leq 0,~\forall i\in [m]\\
    \lambda_{ij}\leq \nu_j,~\forall i\in [m],~\forall j\in [K]\\
    \gamma, \lambda_{ij},\nu_j\geq 0,~\forall i\in [m],~\forall j\in [K].
\end{cases}    
\end{align}
In the nominal case, we have that \eqref{nom_constr_ref} holds if and only if there exist variables $\beta,\lambda_{ij},\nu_j\in \mathbb{R}$ such that \eqref{RC:piecewiseaffine} holds with $\alpha=\gamma\equiv 0$, whence the first line becomes $\beta+\sum^K_{j=1}\nu_jb_j+\sum^m_{i=1}\sum^K_{j=1}\lambda_{ij}l_jp_i\leq c$.
\end{theorem}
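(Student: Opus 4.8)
The plan is to combine the lifted description of the composite uncertainty set from Lemma~\ref{lem:PL_UC} with a Lagrangian duality argument. By Lemma~\ref{lem:PL_UC}, when $h$ is $K$-piecewise-linear, the semi-infinite constraint \eqref{rob_constr_ref} is equivalent to
$$\sup_{(\mathbf{q},\bar{\mathbf{q}},\mathbf{t})}\left\{\sum_{i=1}^m -\bar q_i\,u(f(\mathbf{a},\mathbf{x}_i)) \;:\; (\mathbf{q},\bar{\mathbf{q}},\mathbf{t})\text{ satisfy }\eqref{PL_UC}\right\}\le c,$$
so that the $2^m$ subset constraints are replaced by the $O(mK)$ linear constraints of \eqref{PL_UC} together with the single convex divergence constraint. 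The inner problem is a convex maximization of a linear objective over a convex set, and I would dualize it, reading off \eqref{RC:piecewiseaffine} as the feasibility of the dual at value $\le c$.

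Concretely, I would attach multipliers $\lambda_{ij}\ge 0$ to $\bar q_i\le l_j q_i+t_{ij}$, $\nu_j\ge 0$ to $\sum_i t_{ij}\le b_j$, $\gamma\ge 0$ to $\sum_i p_i\phi(q_i/p_i)\le r$, and free multipliers $\alpha,\beta$ to the normalizations $\sum_i q_i=1$ and $\sum_i\bar q_i=1$, keeping $q_i,\bar q_i,t_{ij}\ge 0$ as domain restrictions. Maximizing the resulting Lagrangian block-by-block over the primal variables gives: the maximization over $\bar q_i\ge 0$ is finite (equal to $0$) exactly when $-u(f(\mathbf{a},\mathbf{x}_i))-\beta-\sum_j\lambda_{ij}\le 0$, the second line of \eqref{RC:piecewiseaffine}; the maximization over $t_{ij}\ge 0$ is finite exactly when $\lambda_{ij}\le\nu_j$, the third line; and for $\gamma>0$ the maximization over $q_i\ge 0$ of $(\sum_j\lambda_{ij}l_j-\alpha)q_i-\gamma p_i\phi(q_i/p_i)$ equals, after the substitution $z=q_i/p_i$, the perspective term $p_i\gamma\,\phi^*\!\big((-\alpha+\sum_j\lambda_{ij}l_j)/\gamma\big)$. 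Collecting the constant terms $\alpha+\beta+\gamma r+\sum_j\nu_j b_j$ then yields precisely the first line of \eqref{RC:piecewiseaffine} as the dual objective.

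The step I expect to be the main obstacle is upgrading this to an \emph{iff} rather than a one-sided bound, i.e., establishing strong duality together with dual attainment. Weak duality already delivers the ``if'' direction: any dual-feasible tuple with objective $\le c$ upper-bounds the inner supremum. For the ``only if'' direction I would invoke a convex (Lagrangian/Slater) duality theorem after verifying the constraint qualification: the feasible set \eqref{PL_UC} is nonempty and its only nonlinear constraint admits a strict feasible point, since $\mathbf{q}=\mathbf{p}$ gives $I_\phi(\mathbf{p},\mathbf{p})=0<r$ under Assumption~\ref{assump: nominal_p}, while the remaining linear constraints can be met strictly. This yields a zero duality gap and attainment of the dual optimum, so that ``inner sup $\le c$'' is equivalent to the existence of a dual-feasible tuple with value $\le c$. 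Along the way I would record the boundary convention for the perspective of $\phi^*$ at $\gamma=0$ and use the lower-semicontinuity in Assumption~\ref{assump:lower_semi} to make the conjugate identifications rigorous.

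Finally, the nominal case follows by specializing the argument to a fixed $\mathbf{q}=\mathbf{p}$ (equivalently $r=0$, where \eqref{nom_constr_ref} uses $M_h(\mathbf{p})$ with $\mathbf{q}$ absent): the divergence constraint and the normalization $\sum_i q_i=1$ disappear, so the multipliers $\gamma$ and $\alpha$ drop out, and the block maximization over $q_i$ is replaced by substituting $q_i=p_i$ directly. This turns the perspective term into the linear term $\sum_i\sum_j\lambda_{ij}l_j p_i$, reproducing \eqref{RC:piecewiseaffine} with $\alpha=\gamma\equiv 0$ and the stated nominal first line, while the second and third lines are unchanged.
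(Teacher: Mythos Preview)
Your proposal is correct and follows essentially the same approach as the paper: both invoke Lemma~\ref{lem:PL_UC} to replace $\mathcal{U}_{\phi,h}(\mathbf{p})$ by the lifted set \eqref{PL_UC}, then dualize the resulting convex maximization with exactly the same multiplier assignments $(\alpha,\beta,\gamma,\lambda_{ij},\nu_j)$, perform the same block-by-block supremum analysis over $\bar q_i$, $t_{ij}$, and $q_i$, and justify strong duality via a Slater point based on $\mathbf{q}=\mathbf{p}$. The nominal specialization is also handled identically, the only cosmetic difference being that the paper notes the nominal inner problem is an LP (so strong duality is automatic) rather than appealing to Slater.
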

Thus, for $K$-piecewise-linear distortion functions, the exponential complexity in the uncertainty set $\mathcal{U}_{\phi,h}(\mathbf{p})$ can be reduced. 
This result also provides a tractable approximation for general non-piecewise-linear concave distortion functions, as we outline in the following subsection, exploiting that each concave function can be approximated uniformly by a piecewise-linear concave function.

\subsection{Piecewise-Linear Approximation of General Concave Distortion Functions}\label{subsec:approx_affine}
For a general concave $h$, we can reduce the complexity in $\mathcal{U}_{\phi,h}(\mathbf{p})$ by approximating $h$ with a piecewise-linear function $h_\epsilon$, such that $\max_{x\in[0,1]}|h(x)-h_\epsilon(x)|\leq \epsilon$ for any given $\epsilon>0$. 
An upper approximation $h_\epsilon\geq h$ and a lower approximation $h_\epsilon\leq h$ yield an upper and lower bound on the optimal objective value of \eqref{P-ref}, respectively. 
Uniform approximation of general concave functions with piecewise-linear functions has been studied in the literature (see, e.g., \citealp{PieceLin1}, \citealp{PieceLin2}). 

More specifically, using the concavity of $h$, one can approximate $h$ from below by choosing a set of support points $\{x_i\}_{i=0}^K$ and considering the concave piecewise-linear function that connects the values $\{h(x_i)\}_i$. 
Given an $\epsilon>0$, the required number of support points $K$ can be minimized.
We briefly describe our approach of determining the minimal $K$ support points $\{x_i\}^K_{i=0}$ such that $h$ is lower approximated by a piecewise-linear function with a prescribed error $\epsilon$. 
Set $x_0=0$. 
At the $i$-th iteration, 
we choose the next support point $x_{i+1}$ such that the maximal error is equal to $\epsilon$:
\begin{align}\label{h_error}
    e_i(x_{i+1})\triangleq\sup_{x\in [x_i,x_{i+1}]}\left\{h(x)-\frac{h(x_{i+1})-h(x_i)}{x_{i+1}-x_i}(x-x_i)-h(x_i)\right\}=\epsilon.
\end{align}
If $e_i(1)\leq \epsilon$, then we simply choose $x_{i+1}=1$. 
Otherwise, we choose $x_{i+1}$ such that \eqref{h_error} holds. 
By construction, the piecewise-linear approximation $h_\epsilon$ induced by this set of support points has a maximum approximation error of $\epsilon$ and satisfies $h_\epsilon\leq h$, due to concavity. 
The existence of such $x_{i+1}$ at each iteration is verified in the following lemma. 
Moreover, it implies that we can solve \eqref{h_error} for $x_{i+1}$ using the bisection method.
\begin{lemma}\label{lem:PL_h_construct}
    Let $h$ be a continuous, increasing, strictly concave function on $[0,1]$. 
    Then, $e_i(x_{i+1})$ is an increasing and continuous function in $x_{i+1}$. 
    In particular, for any $\epsilon>0$ and iteration $i$, if $e_i(1)>\epsilon$, then there exists a $x_{i+1}\in (x_i,1)$ such that $e_i(x_{i+1})=\epsilon$.
\end{lemma}

\subsection{The Piecewise-Linear Approximation Method and its Convergence}
Given a piecewise-linear lower approximation $h_\epsilon\leq h$ of $h$, we can approximate the uncertainty set $\mathcal{U}_{\phi,h}(\mathbf{p})$ with $\mathcal{U}_{\phi,h_\epsilon}$ and apply Theorem~\ref{thm:RCpiecewiselin}. 
This yields a lower bound on \eqref{P-ref}.  
Similarly, the concave distortion function $\tilde{h}_\epsilon$, which we define as
\begin{align}\label{upper_distortion}
    \tilde{h}_\epsilon(p)\triangleq\begin{cases}
        0,& p=0\\
        \min\{h_\epsilon(p)+\epsilon,1\},& 0<p\leq 1,
    \end{cases}
\end{align}
yields an upper approximation of $h$ with uniform error $\epsilon$. 
Hence, we can also approximate $\mathcal{U}_{\phi,h}(\mathbf{p})$ by $\mathcal{U}_{\phi,\tilde{h}_\epsilon}(\mathbf{p})$ and apply Theorem~\ref{thm:RCpiecewiselin} (note that the constraints \eqref{RC:piecewiseaffine} for $\tilde{h}_{\epsilon}$ are the same as those for $h_{\epsilon}$, but with $b_j$ replaced by $b_j+\epsilon$). 
This yields an upper bound for \eqref{P-ref} since $\mathcal{U}_{\phi,h}(\mathbf{p})\subset\mathcal{U}_{\phi,\tilde{h}_\epsilon}(\mathbf{p})$. 
Therefore, we obtain an upper and lower bound using the piecewise-linear approximation method that we summarize in Algorithm~\ref{algo_PL_method}. 
We also show that both bounds converge to the optimal objective value of the exact problem \eqref{P-ref}, if the approximation error $\epsilon$ approaches zero.  
Thus, Algorithm~\ref{algo_PL_method} terminates for any parameter $\delta>0$. 
This also applies, \textit{mutatis mutandis}, to problem~\eqref{P-constraint}.

\begin{algorithm}[htp]
\caption{\textbf{Piecewise-Linear Approximation Method}}\label{algo_PL_method}
\begin{algorithmic}[1]
\State Set an $\epsilon>0$. 
Approximate $h$ from below by a $K$-piecewise-linear distortion function $h_\epsilon$ such that: 
(1) The number of pieces, $K$, is minimized; 
(2) $h_{\epsilon}(x)\leq h(x),~\forall x \in [0,1]$; (3) $\sup_{x\in [0,1]}h(x)-h_{\epsilon}(x)\leq \epsilon$.
\State Solve problem~\eqref{P-ref} with $\mathcal{U}_{\phi,h}(\mathbf{p})$ replaced by $\mathcal{U}_{\phi,h_\epsilon}(\mathbf{p})$, using Theorem~\ref{thm:RCpiecewiselin}. 
This gives a lower bound $L^*$ and an optimal solution $\mathbf{a}^*$. 
Do the same with $\tilde{h}_\epsilon$ to obtain an upper bound $U^*$.
\State If $U^*-L^*<\delta$ for some prescribed $\delta>0$, then we take $\mathbf{a}^*$ as the final solution.
\State Otherwise, set $\epsilon\rightarrow \epsilon/2$ and perform all previous steps to obtain new bounds and solutions.
\end{algorithmic}
\end{algorithm}

\begin{theorem}\label{thm:convergencePL}
Let $h$ be concave. 
Suppose $\sup_{\mathbf{a}\in \mathcal{A},i\in [m]}u(f(\mathbf{a},\mathbf{x}_i))<\infty$. 
Then, Algorithm~\ref{algo_PL_method} terminates after finitely many iterations, for any $\epsilon,\delta>0$. If the function $\mathbf{a}\mapsto u(f(\mathbf{a},\mathbf{x}))$ is concave for all $\mathbf{x}\in \mathbb{R}^l$, 
then this also holds when Algorithm~\ref{algo_PL_method} is applied to problem~\eqref{P-constraint}.
\end{theorem}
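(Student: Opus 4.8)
The plan is to exploit the two-sided sandwich $h_\epsilon\leq h\leq\tilde h_\epsilon$ together with the uniform bound $\sup_{p}|\tilde h_\epsilon(p)-h_\epsilon(p)|\leq\epsilon$, and to show that the resulting gap between the upper and lower problems is controlled linearly by $\epsilon$. First I would record the monotonicity of the problem in the distortion function: because $\bar q_i\geq 0$, the inner objective $\sum_i-\bar q_iu(f(\mathbf{a},\mathbf{x}_i))$ is nondecreasing in the constraint bounds defining the composite set, so $\mathcal{U}_{\phi,h_\epsilon}(\mathbf{p})\subset\mathcal{U}_{\phi,h}(\mathbf{p})\subset\mathcal{U}_{\phi,\tilde h_\epsilon}(\mathbf{p})$, which already yields $L^*\leq\eqref{P-ref}\leq U^*$ and in particular $U^*\geq L^*$.

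Next I would quantify the gap. Fix $\mathbf{a}$ and $\mathbf{q}\in\mathcal{D}_\phi(\mathbf{p},r)$ and write the ranked form \eqref{ranked_sum}. A summation-by-parts rearrangement expresses the evaluation as $\rho_{u,\hat h,\mathbf{q}}(f(\mathbf{a},\mathbf{X}))=-u(x_{(1)})+\sum_{i\geq 2}\hat h\big(\sum_{j\geq i}q_{(j)}\big)\big(u(x_{(i-1)})-u(x_{(i)})\big)$, where $\hat h$ enters only through weights multiplying the nonnegative, telescoping increments $u(x_{(i-1)})-u(x_{(i)})$. Replacing $\hat h=\tilde h_\epsilon$ by $h_\epsilon$ therefore perturbs the value by at most $\epsilon\sum_{i\geq 2}\big(u(x_{(i-1)})-u(x_{(i)})\big)=\epsilon\big(u(x_{(1)})-u(x_{(m)})\big)$, i.e.\ by $\epsilon$ times the range $R(\mathbf{a})$ of $u(f(\mathbf{a},\cdot))$. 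Taking suprema over $\mathbf{q}$ preserves the bound, so $0\leq\Phi_{\tilde h_\epsilon}(\mathbf{a})-\Phi_{h_\epsilon}(\mathbf{a})\leq\epsilon R(\mathbf{a})$; evaluating at the minimizer $\mathbf{a}_L$ of the lower problem (which exists by compactness of $\mathcal{A}$ and Assumption~\ref{assump:lower_semi}) gives $0\leq U^*-L^*\leq\epsilon R(\mathbf{a}_L)$. It then remains to bound $R(\mathbf{a}_L)$ by a constant independent of $\epsilon$: the upper side is immediate from $\sup_{\mathbf{a},i}u(f(\mathbf{a},\mathbf{x}_i))<\infty$, and for the lower side I would test $\mathbf{q}=\mathbf{p}$ inside the supremum (admissible since $I_\phi(\mathbf{p},\mathbf{p})=0\leq r$). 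Since $p_i>0$ by Assumption~\ref{assump: nominal_p} and $h_\epsilon$ is positive on $(0,1]$, the weights $h_\epsilon\big(\sum_{j\geq i}p_{(j)}\big)$ are bounded below by a positive constant (uniformly in $\epsilon$ as $h_\epsilon\to h$), which converts finiteness of $\Phi_{h_\epsilon}(\mathbf{a}_L)=L^*$ into a finite lower bound on $u(x_{(m)})$, hence a finite $\bar R$. Thus $U^*-L^*\leq\epsilon\bar R$, and after finitely many halvings of $\epsilon$ we reach $U^*-L^*<\delta$, proving termination for the objective version.

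For the constraint version \eqref{P-constraint}, the same per-$\mathbf{a}$ estimate $\Phi_{\tilde h_\epsilon}(\mathbf{a})\leq\Phi_{h_\epsilon}(\mathbf{a})+\epsilon\bar R$ shows the feasible region of the $\tilde h_\epsilon$-problem contains $\{\mathbf{a}:\Phi_{h_\epsilon}(\mathbf{a})\leq c-\epsilon\bar R\}$, so the upper bound $U^*_c$ is dominated by the value of the $h_\epsilon$-problem with right-hand side tightened to $c-\epsilon\bar R$. Here the added hypothesis that $\mathbf{a}\mapsto u(f(\mathbf{a},\mathbf{x}))$ is concave is essential: it makes $-u(f(\mathbf{a},\cdot))$ convex, hence $\Phi_{\hat h}$ a supremum of functions convex in $\mathbf{a}$ and therefore convex, so every approximate problem is a convex program to which the Slater point of Assumption~\ref{assump: finite_strict_feasibility} applies. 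Invoking continuity of the optimal value of a convex program in its right-hand side under Slater's condition, the tightened value converges to the value of the $h_\epsilon$-problem with right-hand side $c$ as $\epsilon\to 0$; squeezing between $L^*_c\leq\eqref{P-constraint}\leq U^*_c$ then forces $U^*_c-L^*_c\to 0$ and hence termination of Algorithm~\ref{algo_PL_method} in this case as well.

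The step I expect to be the main obstacle is this constraint-version stability. Unlike the objective version, where the $\epsilon$-perturbation enters additively and the gap bound is immediate, in \eqref{P-constraint} the perturbation acts on the feasible set, and constrained optima are not generically continuous under such perturbations. The crux is to turn the uniform constraint perturbation $\epsilon\bar R$ into a \emph{vanishing} perturbation of the optimal value, which is precisely where convexity (from the concavity hypothesis) and strict feasibility (the Slater point) must be combined to rule out the pathological discontinuity; making this value-continuity argument rigorous is where the real work lies.
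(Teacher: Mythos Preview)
Your proposal is correct and follows essentially the same route as the paper. Both arguments (i) use the telescoping identity to bound the perturbation by $\epsilon$ times the utility range $R(\mathbf{a})$, (ii) bound $R(\mathbf{a})$ uniformly in $\epsilon$ by testing $\mathbf{q}=\mathbf{p}$ and using $p_{\min}>0$, and (iii) for \eqref{P-constraint} translate the $\epsilon$-perturbation of the constraint into an $O(\epsilon)$ perturbation of the optimal value via Lagrangian duality under Slater's condition. Two minor points where the paper is slightly more explicit: for step (ii) the paper uses the cleaner inequality $h_\epsilon(x)\geq x$ (valid for any concave distortion with $h_\epsilon(0)=0$, $h_\epsilon(1)=1$), which immediately yields $\rho_{u,h_\epsilon,\mathbf{p}}\geq \mathbb{E}_{\mathbf{p}}[-u(\cdot)]$ and hence a range bound with constant $p_{\min}$ independent of both $h$ and $\epsilon$; your argument via a uniform positive lower bound on $h_\epsilon(p_{\min})$ works but introduces an unnecessary dependence on $h$. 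For step (iii), what you call ``continuity of the optimal value under Slater'' is exactly what the paper proves by exhibiting the KKT multiplier $\lambda^*(\epsilon)$ and bounding it uniformly via the fixed Slater point $\mathbf{a}_0$ of Assumption~\ref{assump: finite_strict_feasibility}; since both $h_\epsilon$ and the right-hand side move with $\epsilon$, you should make explicit that the Slater slack at $\mathbf{a}_0$ is bounded away from zero uniformly in $\epsilon$, which is precisely how the paper closes the argument.
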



\section{Non-Concave Distortion Functions}\label{sec:NonConcave}

In the previous sections, we have discussed how to optimize a rank-dependent model when the distortion function is concave. 
We now extend these ideas to convex and inverse $S$-shaped distortion functions, which are often found in empirical work (see e.g., \citealp{Wakker}, \citealp{Prelec}). 
Optimization problems with non-concave distortion functions are challenging due to their non-convex nature. 
Moreover, rank-dependent models with non-concave distortion functions lack a dual representation, which was a pivotal tool that enabled us to reformulate the rank-dependent problem. 
In the recent literature (e.g., \citealp{clmpaper} and \citealp{ConcentrationPaper}), uncertainty sets with a specific structure have been identified for which optimizing the robust distortion risk measure with non-concave distortion function is equivalent to optimizing the same model with $h$ replaced by its concave envelope $\hat{h}$.\footnote{The concave envelope of a function $h$ is the smallest concave function that dominates $h$.} 
However, this equivalence is typically not satisfied in our  setting, as we state in the following proposition.
\begin{proposition}\label{prop:impossiblility}
Let $\mathcal{U}$ be a compact set of probability vectors. 
Enumerate the realizations of $X$ as $x_1>\ldots >x_m$ and suppose that $\mathcal{U}$ does not contain the probability vector for which $q_{1}=1$, which is concentrated on $x_1$. 
Then, there exists a continuous distortion function $h$ such that
\[\sup_{\mathbf{q}\in \mathcal{U}}\rho_{h,\mathbf{q}}(X)<\sup_{\mathbf{q}\in \mathcal{U}}\rho_{\hat{h},\mathbf{q}}(X).\]
\end{proposition}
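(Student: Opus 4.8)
The plan is to exploit the one inequality that always holds and then engineer a single $h$ for which it is strict. Because $\hat h\ge h$ pointwise and the Choquet integral is monotone in the distortion, $\rho_{h,\mathbf q}(X)\le \rho_{\hat h,\mathbf q}(X)$ for every $\mathbf q$, and hence $\sup_{\mathbf q\in\mathcal U}\rho_{h,\mathbf q}(X)\le \sup_{\mathbf q\in\mathcal U}\rho_{\hat h,\mathbf q}(X)$ for \emph{every} distortion $h$; the content of the proposition is to produce one $h$ making this strict. My choice is the simplest strictly convex distortion, $h(p)=p^2$ (any continuous distortion with $h(p)<p$ on $(0,1)$ works). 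Its concave envelope is the chord through $(0,0)$ and $(1,1)$, i.e.\ $\hat h(p)=p$, so that $\rho_{\hat h,\mathbf q}$ collapses to the undistorted expectation, a \emph{linear} functional of $\mathbf q$. This keeps the right-hand side transparent while forcing the left-hand side to be strictly smaller pointwise.

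First I would write both evaluations in ranked/Abel-summation form (orienting signs, consistently with the hypothesis, so that the supremum is driven toward concentration on $x_1$). With $x_1>\cdots>x_m$ and cumulative weights $T_i(\mathbf q)=\sum_{j\le i}q_j$, one gets
\[
\rho_{h,\mathbf q}(X)=x_m+\sum_{i=1}^{m-1}(x_i-x_{i+1})\,h\!\left(T_i(\mathbf q)\right),\qquad \rho_{\hat h,\mathbf q}(X)=x_m+\sum_{i=1}^{m-1}(x_i-x_{i+1})\,T_i(\mathbf q)=\mathbb E_{\mathbf q}[X],
\]
with all coefficients $x_i-x_{i+1}>0$. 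Since $0\le T_i\le 1$ and $t^2\le t$ with equality only at $t\in\{0,1\}$, this gives $\rho_{h,\mathbf q}(X)\le \mathbb E_{\mathbf q}[X]\le V^{*}:=\sup_{\mathbf q\in\mathcal U}\mathbb E_{\mathbf q}[X]=\sup_{\mathbf q\in\mathcal U}\rho_{\hat h,\mathbf q}(X)$, and the first inequality is \emph{strict} unless every $T_i(\mathbf q)\in\{0,1\}$, i.e.\ unless $\mathbf q$ is a Dirac $e_k$. Thus the only way the left-hand supremum can reach $V^{*}$ is through a degenerate maximiser.

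The hard part, and where the hypothesis enters, is ruling this out. I would argue by contradiction: $\rho_{h,\cdot}$ is continuous and $\mathcal U$ is compact, so the supremum is attained at some $\mathbf q^{*}$; if $\rho_{h,\mathbf q^{*}}=V^{*}$ then $\mathbf q^{*}=e_k$ is a Dirac with $\mathbb E_{e_k}[X]=x_k=V^{*}$. Compactness together with $e_1\notin\mathcal U$ forces $V^{*}=\max_{\mathbf q\in\mathcal U}\mathbb E_{\mathbf q}[X]<x_1$ (the value $x_1$ is attained only by $e_1$), so $k\ge 2$. To exclude the remaining Diracs $e_k$, $k\ge2$, I would invoke the non-degeneracy of the ambiguity set: a $\phi$-divergence ball around an interior nominal $\mathbf p$ (Assumption~\ref{assump: nominal_p}) contains no Dirac for a finite radius, so no maximiser can be degenerate. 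Hence $\rho_{h,\mathbf q}(X)<V^{*}$ for all $\mathbf q\in\mathcal U$ and, by compactness, $\sup_{\mathbf q\in\mathcal U}\rho_{h,\mathbf q}(X)<V^{*}=\sup_{\mathbf q\in\mathcal U}\rho_{\hat h,\mathbf q}(X)$.

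I expect the genuine obstacle to be exactly this strictness step. The bare condition $e_1\notin\mathcal U$ removes only the single Dirac attaining the top value $x_1$, whereas a Dirac renders $X$ almost surely constant and makes the distortion entirely irrelevant (so, e.g., $\mathcal U=\{e_2\}$ admits no gap for any $h$). Consequently it is some mild non-degeneracy of $\mathcal U$ — the absence of an expectation-optimal point mass — that actually guarantees the gap, and this is precisely what the exclusion of $e_1$ is meant to encode in the paper's divergence-ball setting. I would therefore isolate and state the strictness claim as a small lemma, namely that the expectation-maximiser over $\mathcal U$ is not a point mass, and keep the construction $h(p)=p^2$ fixed so that the saturating measures are exactly the Diracs, making that lemma the only thing left to verify.
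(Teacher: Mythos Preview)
Your construction matches the paper's exactly: take $h(p)=p^2$, whose concave envelope is $\hat h(p)=p$, so that $\hat h(p)-h(p)>0$ on $(0,1)$. The paper's argument is, however, more direct than your contrapositive through $V^*$. It picks a maximizer $\mathbf q^*$ of $\sup_{\mathbf q\in\mathcal U}\rho_{h,\mathbf q}(X)$ (compactness), observes $q_1^*<1$ since $e_1\notin\mathcal U$, and writes the gap at $\mathbf q^*$ in the decumulative form $S_i=\sum_{k\ge i}q_k^*$:
\[
\rho_{\hat h,\mathbf q^*}(X)-\rho_{h,\mathbf q^*}(X)=\sum_{i=2}^{m}\bigl(\hat h(S_i)-h(S_i)\bigr)(x_{i-1}-x_i)>0,
\]
whence $\sup_{\mathbf q\in\mathcal U}\rho_{\hat h,\mathbf q}(X)\ge\rho_{\hat h,\mathbf q^*}(X)>\rho_{h,\mathbf q^*}(X)=\sup_{\mathbf q\in\mathcal U}\rho_{h,\mathbf q}(X)$. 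No separate analysis of the $\hat h$-supremum, or of which Dirac might attain it, is needed.

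Two execution issues in your write-up. First, your Abel-summation formula is in the wrong orientation for the paper's convention: $\rho_{h,\mathbf q}(X)=\int(-X)\,\mathrm d(h\circ\mathbb Q)$, so with $\hat h(p)=p$ one has $\rho_{\hat h,\mathbf q}(X)=-\mathbb E_{\mathbf q}[X]$, not $+\mathbb E_{\mathbf q}[X]$, and the relevant weights are the decumulative $S_i$ rather than your cumulative $T_i$. Your heuristic that the supremum is ``driven toward concentration on $x_1$'' and the bound $V^*<x_1$ both come from this slip; under the paper's convention the worst case loads mass toward $x_m$. Second, your appeal to the $\phi$-divergence structure and Assumption~\ref{assump: nominal_p} to rule out all Diracs imports hypotheses not in the proposition; the paper argues from $q_1^*<1$ alone. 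Your observation that a Dirac $e_k$ with $k\ge2$ would make both sides coincide is sharp---the paper's one-line positivity claim does not explicitly dispose of that case either---but that is separate from the argument the paper actually gives.
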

Therefore, it is necessary to study how to reformulate a robust or nominal rank-dependent evaluation (constraint), in the case of a convex or inverse $S$-shaped distortion function. 
We will first study how to reformulate a nominal constraint of the form \eqref{nom_rho_constr}.

\subsection{The Nominal Problem}

Although rank-dependent models do not admit a dual representation for general non-concave distortion functions, we can still express them as inner robust optimization problems if the distortion function is convex or inverse $S$-shaped. 
The idea is to treat the concave and convex parts of the inverse $S$-shaped function separately, where the convex part is transformed into a concave part via the dual function $\bar{h}(p)\triangleq 1-h(1-p)$. 
This result is stated in the following theorem.
Henceforth, the utility function is allowed to be non-concave.

\begin{theorem}\label{thm: inverse-s-shape}
Let $h:[0,1]\to [0,1]$ be an inverse $S$-shaped distortion function with $p^0\in [0,1]$ as in Definition~\ref{def:inverse_S_shape}. 
Define the dual function $\bar{h}(p)\triangleq 1-h(1-p)$. 
Then, we have that, for all $(\mathbf{a},c)\in \mathbb{R}^{n_a+1}$, \eqref{nom_rho_constr} is satisfied if and only if there exist $z\in \mathbb{R}, \bar{\mathbf{q}}\in N^{\text{cv}}_{\bar{h}}(\mathbf{p})$ such that
\begin{align}\label{dual_repr_non_concave}
\begin{cases}
    \displaystyle z+ \sum^m_{i=1}-\bar{q}_iu(f(\mathbf{a},\mathbf{x}_i))\leq c\\
    \displaystyle\sup_{\mathbf{q}\in M^{\text{ca}}_h(\mathbf{p})}\sum^m_{i=1}-q_iu(f(\mathbf{a},\mathbf{x}_i))\leq z,   
\end{cases}
\end{align}
where
\begin{align}\label{uc_concave}
M^{\text{ca}}_h(\mathbf{p})\triangleq\left\{\mathbf{q}\in \mathbb{R}^m \;
    \begin{tabular}{|l}
      $q_i\geq 0$\\
      $\sum_{i\in J}q_{i}\leq h\left(\sum_{i\in J} p_i\right),~\forall J\subset [m]:\sum_{i\in J} p_i\leq p^0 $\\
      $\sum^m_{i=1}q_i=h(p^0)$
\end{tabular}
\right\},
\end{align}
and 
\begin{align}\label{uc_convex}
N^{\text{cv}}_{\bar{h}}(\mathbf{p})\triangleq\left\{\bar{\mathbf{q}}\in \mathbb{R}^m \;
    \begin{tabular}{|l}
      $\bar{q}_i\geq 0$\\
      $\sum_{i\in J}\bar{q}_{i}\leq \bar{h}\left(\sum_{i\in J} p_j\right),~\forall J\subset [m]:\sum_{i\in J} p_i\leq 1-p^0$\\
      $\sum^m_{i=1}\bar{q}_i=\bar{h}(1-p^0)$
\end{tabular}
\right\}.
\end{align}
\end{theorem}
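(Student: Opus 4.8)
The plan is to decompose the rank-dependent sum along the inflection point $p^0$ of the inverse $S$-shaped distortion function, separating the concave tail from the convex tail. Recall from \eqref{ranked_sum} that with ranked realizations $x_{(1)}\geq\ldots\geq x_{(m)}$, the evaluation $\rho_{u,h,\mathbf{p}}(f(\mathbf{a},\mathbf{X}))$ telescopes into $\sum_{i}\bigl(h(S_i)-h(S_{i+1})\bigr)\bigl(-u(x_{(i)})\bigr)$, where $S_i\triangleq\sum_{j\geq i}p_{(j)}$ are the decumulative probabilities. First I would locate the index $k$ at which the cumulative sum $S_i$ crosses $p^0$, i.e.\ the largest $i$ with $S_i\geq p^0$. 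For the indices where the argument of $h$ lies in $[0,p^0]$, the function $h$ is concave, and for those where it lies in $[p^0,1]$, $h$ is convex. The convex piece is the place where the dual transformation $\bar h(p)=1-h(1-p)$ enters: since $h$ convex on $[p^0,1]$ is equivalent to $\bar h$ concave on $[0,1-p^0]$, rewriting the convex part of the sum in terms of $1-S_i$ turns it into a concave distortion evaluation, to which Theorem~\ref{thm:ReformulationWholeProb} (or rather its nominal specialization \eqref{nom_constr_ref}) applies.

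Second, I would make the split rigorous by introducing the auxiliary variable $z$ that captures the contribution of the concave tail $[0,p^0]$ and treating the convex tail as a deterministic weighting with the remaining mass $h(p^0)$ on top. Concretely, the concave-part contribution should be shown to equal $\sup_{\mathbf{q}\in M^{\text{ca}}_h(\mathbf{p})}\sum_i -q_i u(f(\mathbf{a},\mathbf{x}_i))$: the set $M^{\text{ca}}_h(\mathbf{p})$ in \eqref{uc_concave} is precisely the core-type set one gets by restricting the submodular inequalities of $M_h(\mathbf{p})$ (from \eqref{defMhq}) to the coalitions $J$ with $\sum_{i\in J}p_i\leq p^0$, with total mass normalized to $h(p^0)$ rather than $1$. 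This is exactly the Choquet/core representation for a concave set function, invoked over the sub-lattice of small coalitions, so the supremum representation follows from the same Schmeidler--Denneberg duality already used to prove Theorem~\ref{thm:ReformulationWholeProb}. Symmetrically, the convex tail should be recast via $\bar h$ as a representation over $N^{\text{cv}}_{\bar h}(\mathbf{p})$ in \eqref{uc_convex}, where the complementary coalitions satisfy $\sum_{i\in J}p_i\leq 1-p^0$ and the mass is $\bar h(1-p^0)$.

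Third, I would stitch the two pieces together and verify the equivalence with \eqref{nom_rho_constr}. The key algebraic identity is that the weight attached to $-u(x_{(i)})$ across the whole sum equals the \emph{sum} of a contribution from the concave representation and one from the convex (dual) representation, so that the full constraint $\rho_{u,h,\mathbf{p}}(f(\mathbf{a},\mathbf{X}))\leq c$ is equivalent to the existence of a splitting $z$ plus $\bar{\mathbf{q}}\in N^{\text{cv}}_{\bar h}(\mathbf{p})$ satisfying the two lines of \eqref{dual_repr_non_concave}. Because the concave part is itself a supremum, it lands in the constraint position (the second line of \eqref{dual_repr_non_concave}), while the convex-part weights, being a \emph{minimum} over a simplex-like set after dualization, become the explicit optimization variable $\bar{\mathbf{q}}$. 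I would check both inequality directions: feasibility of $(z,\bar{\mathbf{q}})$ in \eqref{dual_repr_non_concave} implies \eqref{nom_rho_constr} by summing the two inequalities and using the weight identity, and conversely an optimal ranking for a fixed $\mathbf{a}$ produces an explicit witness $\bar{\mathbf{q}}$ and scalar $z$ attaining the bound.

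The main obstacle I anticipate is the bookkeeping at the crossover index, where $p^0$ may fall \emph{strictly inside} a decumulative interval $(S_{k+1},S_k)$ rather than coinciding with a cumulative breakpoint. In that case the term whose argument straddles $p^0$ must be split between the concave and convex representations, and one must argue that the min/max over $M^{\text{ca}}_h(\mathbf{p})$ and $N^{\text{cv}}_{\bar h}(\mathbf{p})$ correctly allocates this fractional weight—i.e.\ that the two core-type polytopes, defined via the restricted coalition inequalities, are rich enough to realize any admissible split. Establishing that the restricted submodular inequalities (only over coalitions with $\sum_{i\in J}p_i\leq p^0$, resp.\ $\leq 1-p^0$) suffice to reproduce exactly the concave, resp.\ convex, part of the Choquet integral—and no more—is the delicate point; I would handle it by appealing to the comonotonic additivity of the Choquet integral and to the fact that the worst-case ranking is fixed once $\mathbf{a}$ is fixed, so that the relevant coalitions are nested and the supremum is attained at the ranking-induced extreme point of the core.
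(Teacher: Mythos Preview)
Your plan matches the paper's proof: split the rank-dependent sum at $p^0$, write the concave tail as $\sup_{\mathbf{q}\in M^{\text{ca}}_h(\mathbf{p})}\sum_i(-q_i)u(f(\mathbf{a},\mathbf{x}_i))$, and recast the convex tail via $\bar h(p)=1-h(1-p)$ as \emph{minus} a supremum over $N^{\text{cv}}_{\bar h}(\mathbf{p})$ (hence an infimum, which is exactly why $\bar{\mathbf{q}}$ becomes the existential variable in \eqref{dual_repr_non_concave}). Your reading of the distinct roles of the concave and convex pieces is correct.

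The paper resolves your anticipated crossover obstacle more directly than you propose. Rather than working with restricted coalitions and invoking nested sets at the fixed ranking, it extends $h|_{[0,p^0]}$ to a globally concave, non-decreasing $h_0$ on all of $[0,1]$ by setting $h_0(p)\triangleq h(p^0)$ for $p>p^0$ (and symmetrically defines $\bar h_0$). Then $h_0\circ P$ is monotone and submodular on the full power set, so Denneberg's Proposition~10.3 yields the Choquet representation over \emph{all} coalitions $J\subset[m]$; for coalitions with $\sum_{i\in J}p_i>p^0$ the constraint reads $\sum_{i\in J}q_i\le h_0(\sum_{i\in J}p_i)=h(p^0)$, which is already implied by the mass equality $\sum_i q_i=h(p^0)$, recovering precisely $M^{\text{ca}}_h(\mathbf{p})$. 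The crossover index $k^*$ itself is handled by simply adding and subtracting $h(p^0)\cdot(-x_{(k^*+1)})$ in the telescoping sum, so no fractional-allocation argument between the two polytopes is needed. Your nested-coalition route would also go through, but the constant-extension trick is cleaner and sidesteps the bookkeeping you flagged.
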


\begin{remark}\label{rem: convex_h}
    In particular, if $h$ is convex, we have that
    \begin{align}
        \rho_{u,h,\mathbf{p}}(f(\mathbf{a},\mathbf{X}))=-\sup_{\bar{\mathbf{q}}\in N^{\text{cv}}_{\bar{h}}(\mathbf{p})}\sum^m_{i=1}\bar{q}_iu(f(\mathbf{a},\mathbf{x}_i)),
    \end{align}
    with $p^0\equiv 0$.
\end{remark}
Theorem~\ref{thm: inverse-s-shape} shows that \eqref{nom_rho_constr} can be reformulated into rank-independent constraints~\eqref{dual_repr_non_concave}, where the supremum constraint can be reformulated further as in Theorem~\ref{thm:RC}. 
However, both \eqref{uc_concave} and \eqref{uc_convex} still feature an exponential number of constraints. 
As outlined in the previous section, we can circumvent this using piecewise-linear approximation. 
Let $h$ be a piecewise-linear, inverse $S$-shaped distortion function with its concave and convex parts specified by the concave functions $h_l,\bar{h}_l$ (note that $\bar{h}_l$ is the dual of the convex part of $h$), respectively, on the domains $[0,p^0]$ and $[0,1-p^0]$. 
Due to concavity and monotonicity, they can be expressed as minima of affine functions:
\begin{align}\label{pw_concave_convex}
    h_l(p)=\min\{l^{(1)}_1p+b^{(1)}_1,\ldots, l^{(1)}_{K_1}p+b^{(1)}_{K_1}\},\quad \bar{h}_l(p)=\min\{l^{(2)}_1p+b^{(2)}_1,\ldots, l^{(2)}_{K_2}p+b^{(2)}_{K_2}\},
\end{align}
defined on the support points
\begin{align*}
    0=s^{(1)}_0\leq s^{(1)}_1,\ldots,s^{(1)}_{K_1-1}\leq s^{(1)}_{K_1}= p^0,\quad 0=s^{(2)}_0\leq s^{(2)}_1,\ldots,s^{(2)}_{K_2-1}\leq s^{(2)}_{K_2}=1-p^0,
\end{align*}
such that, for all $k\in \{1,\ldots, K_1\}, v\in \{1,\ldots,K_2\}$,
\begin{align}\label{condition:pl_h}
    h_l(p)=l^{(1)}_kp+b^{(1)}_k, \forall p\in [s^{(1)}_{k-1},s^{(1)}_k],~\bar{h}_l(p)=l^{(2)}_vp+b^{(2)}_v, \forall p\in [s^{(2)}_{v-1},s^{(2)}_v].
\end{align}
The following theorem establishes a reformulation of the constraints in \eqref{dual_repr_non_concave}, when $h$ is piecewise-linear.
\begin{theorem}\label{thm: bilinear_nom}
    Let $h$ be a piecewise-linear, inverse $S$-shaped distortion function with its concave and convex part specified by $h_l,\bar{h}_l$ as in \eqref{pw_concave_convex}. 
    Then, for any $(\mathbf{a},c)\in \mathbb{R}^{n_a+1}$, \eqref{nom_rho_constr} is satisfied if and only if there exist variables $\lambda_{ik},\nu_k,t_{ik},\bar{q}_i\geq 0$, $\beta\in \mathbb{R}$ such that 
    \begin{align}\label{RC: invS}
    \begin{cases}
        \beta\cdot h(p^0)+\sum^{K_1}_{k=1}\nu_kb^{(1)}_k+\sum^m_{i=1}\sum^{K_1}_{k=1}\lambda_{ik}l^{(1)}_kp_i-\sum^m_{i=1}\bar{q}_iu(f(\mathbf{a},\mathbf{x}_i))\leq c\\
    -u(f(\mathbf{a},\mathbf{x}_i))-\beta-\sum^{K_1}_{k=1}\lambda_{ik}\leq 0,~\forall i\in [m]\\
        \lambda_{ik}\leq \nu_k,~\forall i\in [m],~\forall k \in [K_1]\\
        \bar{q}_i\leq l^{(2)}_kp_i+t_{ik},~\forall i\in [m],~ \forall k\in [K_2]\\
        \sum^{m}_{i=1}t_{ik}\leq b^{(2)}_k,~\forall k\in [K_2] \\ 
        \sum^m_{i=1}\bar{q}_i=\bar{h}(1-p^0).
    \end{cases}
\end{align}
\end{theorem}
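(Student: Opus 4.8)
The plan is to combine Theorem~\ref{thm: inverse-s-shape}, which already reduces the nominal constraint \eqref{nom_rho_constr} to the rank-independent system \eqref{dual_repr_non_concave}, with the piecewise-linear machinery developed for the concave case. First I would invoke Theorem~\ref{thm: inverse-s-shape} to replace \eqref{nom_rho_constr} by the existence of $z\in\mathbb{R}$ and $\bar{\mathbf{q}}\in N^{\text{cv}}_{\bar{h}}(\mathbf{p})$ satisfying the two inequalities in \eqref{dual_repr_non_concave}. It then remains to (i) linearize the membership $\bar{\mathbf{q}}\in N^{\text{cv}}_{\bar{h}}(\mathbf{p})$ for the convex part, (ii) dualize the inner supremum over the concave part $M^{\text{ca}}_h(\mathbf{p})$, and (iii) eliminate $z$.

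For the convex part, $\bar{h}_l$ is a concave, piecewise-linear function equal to $\min_{k\in[K_2]}\{l^{(2)}_kp+b^{(2)}_k\}$ on $[0,1-p^0]$. Mirroring the argument of Lemma~\ref{lem:PL_UC}, I would show that $\bar{\mathbf{q}}\in N^{\text{cv}}_{\bar{h}}(\mathbf{p})$ holds if and only if there exist $t_{ik}\geq 0$ with $\bar{q}_i\leq l^{(2)}_kp_i+t_{ik}$ for all $i,k$ and $\sum_{i=1}^m t_{ik}\leq b^{(2)}_k$ for all $k$, together with $\sum_{i=1}^m\bar{q}_i=\bar{h}(1-p^0)$: summing the first family over $i\in J$ reproduces the subset inequalities $\sum_{i\in J}\bar{q}_i\leq\bar{h}_l(\sum_{i\in J}p_i)$, and the converse (constructing the $t_{ik}$ from a feasible $\bar{\mathbf{q}}$) follows the same LP-feasibility construction as in Lemma~\ref{lem:PL_UC}. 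This produces exactly lines four through six of \eqref{RC: invS}.

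For the concave part I would treat the inner problem $\sup_{\mathbf{q}\in M^{\text{ca}}_h(\mathbf{p})}\sum_{i=1}^m -q_iu(f(\mathbf{a},\mathbf{x}_i))$ as a linear program in $\mathbf{q}$ and take its dual, exactly as in the nominal case of Theorem~\ref{thm:RCpiecewiselin}. Writing the subset constraints of \eqref{uc_concave} in their piecewise-linear form $\sum_{i\in J}q_i\leq l^{(1)}_k\sum_{i\in J}p_i+b^{(1)}_k$ and assigning the multiplier $\beta$ to the equality $\sum_i q_i=h(p^0)$, $\nu_k$ to the budget on piece $k$ and $\lambda_{ik}$ to the individual inequalities, strong LP duality (valid since $M^{\text{ca}}_h(\mathbf{p})$ is a nonempty compact polytope and the objective is bounded by the hypothesis $\sup_{\mathbf{a},i}u(f(\mathbf{a},\mathbf{x}_i))<\infty$) gives that the supremum equals the minimum of $\beta h(p^0)+\sum_{k=1}^{K_1}\nu_kb^{(1)}_k+\sum_{i=1}^m\sum_{k=1}^{K_1}\lambda_{ik}l^{(1)}_kp_i$ over dual-feasible $(\beta,\{\lambda_{ik}\},\{\nu_k\})$ satisfying $-u(f(\mathbf{a},\mathbf{x}_i))-\beta-\sum_k\lambda_{ik}\leq 0$, $\lambda_{ik}\leq\nu_k$ and $\lambda_{ik},\nu_k\geq 0$; these are lines two and three of \eqref{RC: invS}. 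Note that the right-hand side $h(p^0)$ of the equality (rather than $1$) is precisely what turns the usual $\beta$ term into $\beta h(p^0)$. Finally, substituting this dual value for $z$ into the first inequality of \eqref{dual_repr_non_concave} collapses the two constraints into line one of \eqref{RC: invS}, yielding both directions.

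I expect the main obstacle to be the same one that underlies Lemma~\ref{lem:PL_UC}: the affine pieces $l^{(1)}_kp+b^{(1)}_k$ and $l^{(2)}_kp+b^{(2)}_k$ represent $h_l$ and $\bar{h}_l$ only on $[0,p^0]$ and $[0,1-p^0]$, whereas the linearized constraints (lines three through five) are imposed across all indices $i$ and pieces $k$. I must verify that imposing the affine inequalities also for subsets $J$ carrying large probability mass does not over-constrain, which I would do by a complementarity argument: using the normalizations $\sum_i q_i=h(p^0)$ and $\sum_i\bar{q}_i=\bar{h}(1-p^0)$, the inequality for any $J$ whose mass exceeds the relevant threshold is equivalent to the inequality for its complement $J^c$, whose mass falls below the threshold, so that the restricted subset constraints in \eqref{uc_concave}--\eqref{uc_convex} and the full linearized system define the same polytope. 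Establishing this equivalence rigorously, and confirming that strong duality indeed applies to the restricted program, is the crux of the argument.
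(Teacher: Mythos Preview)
Your overall plan is correct and coincides with the paper's: invoke Theorem~\ref{thm: inverse-s-shape}, linearize $N^{\text{cv}}_{\bar{h}}(\mathbf{p})$ via Lemma~\ref{lem:PL_UC}, dualize the LP over $M^{\text{ca}}_h(\mathbf{p})$ as in the nominal part of Theorem~\ref{thm:RCpiecewiselin} (with the equality $\sum_i q_i=h(p^0)$ producing the $\beta\,h(p^0)$ term), and absorb $z$. You also correctly flag the one real issue, namely that the affine pieces represent $h_l$ and $\bar h_l$ only on $[0,p^0]$ and $[0,1-p^0]$ while the linearized constraints implicitly cover all subsets.

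The complementarity argument you sketch for this step does not work as written. Passing from $J$ to $J^c$ and using $\sum_i q_i=h(p^0)$ transforms an upper bound $\sum_{i\in J}q_i\le l^{(1)}_k\sum_{i\in J}p_i+b^{(1)}_k$ into a \emph{lower} bound on $\sum_{i\in J^c}q_i$, not into another constraint of the form appearing in \eqref{uc_concave}; moreover $\sum_{i\in J}p_i>p^0$ only gives $\sum_{i\in J^c}p_i<1-p^0$, which need not be $\le p^0$. So the two systems are not linked by complementation.

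The paper resolves the obstacle by a direct monotonicity argument that uses exactly the normalization you invoke but avoids complements. For the concave part: if $\sum_{i\in J}p_i>p^0$ then, since each affine piece has positive slope and dominates $h_l$ at $p^0$, one has $l^{(1)}_k\sum_{i\in J}p_i+b^{(1)}_k> l^{(1)}_k p^0+b^{(1)}_k\ge h_l(p^0)=h(p^0)$; on the other hand $\sum_{i\in J}q_i\le\sum_i q_i=h(p^0)$ by the equality constraint. Hence the affine inequality for such $J$ is automatically satisfied, so the linearized polytope coincides with $M^{\text{ca}}_h(\mathbf{p})$. The identical reasoning (with $1-p^0$, $\bar h_l$, and $\sum_i\bar q_i=\bar h(1-p^0)$) handles $N^{\text{cv}}_{\bar{h}}(\mathbf{p})$. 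Replace your complementarity step by this two-line bound and the rest of your proof goes through verbatim.
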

Importantly, this yields a 
problem with only $O(m\cdot K_1\vee K_2)$ constraints.
We note that, not surprisingly, \eqref{RC: invS} contains a non-convex constraint due to the product term $\bar{q}_iu(f(\mathbf{a},\mathbf{x}_i))$. 
Hence, the computation of~\eqref{nom_rho_constr} with the reformulated constraints in \eqref{RC: invS} requires a mixed-integer nonlinear programming (MINLP) solver, such as BARON that uses the branch and reduce search method to obtain a global optimum (see e.g., \citealp{ryoo1996branch}). 
In particular, if the set $\mathcal{A}$ is polyhedral (i.e., $\mathcal{A}= \{\mathbf{a}\in \mathbb{R}^{n_a}:\mathbf{D}\mathbf{a}\leq \mathbf{d}\}$), $f(\mathbf{a},\mathbf{x})=\mathbf{a}^T\mathbf{x}$ and $u$ is piecewise-linear, then the nomimal problem~\eqref{nom_rho_constr} with constraints \eqref{RC: invS} can also be solved efficiently by Gurobi (\citealp{gurobi}), using bilinear and special ordered sets of type 2 (SOS2) constraints; see the precise formulation in~\eqref{RC: invS_SOS2} in Electronic Companion~\ref{app:SOS2}, further elaboration on the SOS2 constraints in \eqref{sos2_nonconcave}, and a numerical study on the performance of Gurobi in this setting in Section~\ref{subsec:portfolio_inv_S}. 
\subsection{Robust Rank-Dependent Models with Inverse \texorpdfstring{$S$}{TEXT}-Shaped Distortion}
In this subsection, we study the robust constraint~\eqref{rob_rho_constr}
when $h$ is inverse-$S$-shaped. 
We note that then \eqref{rob_rho_constr} cannot be reformulated using a composite uncertainty set as in \eqref{rob_constr_ref}, since the convex part of the distortion function gives rise to a sup-inf term when combining the set $\mathcal{D}_\phi(\mathbf{p},r)$ with \eqref{uc_convex}.
However, we can still circumvent this using a suitable cutting-plane approach, where we iteratively solve a nominal problem using \eqref{RC: invS}, and compute the robust rank-dependent evaluation for each nominal solution. 

Indeed, for any given feasible solution $\mathbf{a}_*\in \mathcal{A}$, we can assess the ranking of the outcomes $u(f(\mathbf{a}^T,\mathbf{x}_{(1)}))\geq\ldots\geq u(f(\mathbf{a}^T,\mathbf{x}_{(m)}))$. 
Then, we calculate the robust rank-dependent evaluation:
\begin{align}\label{robustcheck_non_concave}
   \sup_{\mathbf{q}\in \mathcal{D}_\phi(\mathbf{p},r)}\sum^{m}_{i=1}h\left(\sum^m_{k=i}q_{(k)}\right)\left(u(f(\mathbf{a}_*),\mathbf{x}_{(i-1)}))-u(f(\mathbf{a}_*,\mathbf{x}_{(i)}))\right).
\end{align}
Problem~\eqref{robustcheck_non_concave} is, quite naturally, non-convex and can again be computed using a solver such as BARON. 
If $h$ is piecewise-linear and the divergence function $\phi$ is linear or quadratic, then one can also solve \eqref{robustcheck_non_concave} using SOS2 constraints in Gurobi.
Specifically, given a set of support points $(t_j,h(t_j))^K_{j=0}$, where $0=t_0<t_1<\cdots < t_{K-1}<t_K=1$, the SOS2 constraints can be formulated as follows:
\begin{align}\label{sos2_nonconcave}
    \begin{split}
\max_{\substack{\mathbf{q}\in \mathcal{D}_\phi(\mathbf{p},r)\\ \boldsymbol{\lambda}\in \mathbb{R}^{K\times m}\\
\mathbf{z}\in \mathbb{R}^m}}&~-u(f(\mathbf{a}_*,\mathbf{x}_{(1)}))+\sum^m_{i=2}z_{i}(u(f(\mathbf{a}_*,\mathbf{x}_{(i-1)}))-u(f(\mathbf{a}_*,\mathbf{x}_{(i)})))\\
\text{subject to}&~ \sum^m_{k=i}q_{(k)} = \sum^K_{j=1}\lambda_{ij}t_j,~ \forall i\in \{2,\ldots, m\}\\
                 &z_{i} = \sum^K_{j=1}\lambda_{ij}h(t_j),~\forall i\in  \{2,\ldots, m\}\\
                 &\sum^K_{j=1}\lambda_{ij}=1,~\forall i\in [m]\\
                 &\lambda_{ij}\geq 0, \mathrm{SOS2},~\forall i\in [m],~\forall j\in [K]. 
    \end{split}
\end{align}
The SOS2 constraints control, for each $i\in [m]$, the variables $\{\lambda_{ij}\}^K_{j=1}$ in a way such that only two adjacent $\lambda_{ij},\lambda_{i,j+1}$ can be non-zero. 
Since the support points $\{t_j\}^K_{j=0}$ are ordered, the two non-zero adjacent variables $\lambda_{ij},\lambda_{i,j+1}$ ensure that we are only optimizing within each interval $[t_j,t_{j+1}]$.

Thus, we can now introduce Algorithm~\ref{algo: cutting_plane_inv_S}, the cutting-plane approach that enables us to solve the robust problem \eqref{P} for piecewise-linear, inverse $S$-shaped distortion functions.

\begin{algorithm}[htp]
\caption{\textbf{Cutting-Plane Method with Inverse $S$-Shaped Distortion Function}}\label{algo: cutting_plane_inv_S}
\begin{algorithmic}[1]
\State Start with $\mathcal{U}_1=\{\mathbf{p}\}$. Fix a tolerance parameter $\epsilon_{\mathrm{tol}}>0$.
\State \label{step2_invS}At the $j$-th iteration, solve the following problem with the uncertainty set $\mathcal{U}_j$, to obtain a solution $\mathbf{a}_j$ and optimal objective value $c_j$:
\begin{align}\label{cut_iter_invS}
    \begin{split}
        \min_{\mathbf{a}\in \mathcal{A}}\sup_{\mathbf{q}\in \mathcal{U}_j}\rho_{u,h,\mathbf{q}}(f(\mathbf{a},\mathbf{X})),
    \end{split}
\end{align}
by applying the reformulation in Theorem~\ref{thm: bilinear_nom} to each element of $\mathcal{U}_j$.
\State \label{step3_invS}Determine the robust rank-dependent evaluation $v_j$ of $\mathbf{a}_j$ by solving \eqref{sos2_nonconcave}, which gives an optimal solution $\mathbf{q}^*_j$.
\State \label{step4_invS}If $v_j-c_j\leq \epsilon_{\mathrm{tol}}$, then the solution $\mathbf{a}_j$ is accepted and the process terminated.
\State \label{step5_invS}If not, set $\mathcal{U}_{j+1} = \mathcal{U}_j \cup \{\mathbf{q}^*_j\}$, and repeat steps \ref{step2_invS}--\ref{step5_invS}.
\end{algorithmic}
\end{algorithm}
The following theorem establishes that this cutting-plane method terminates after finitely many iterations.
\begin{theorem}\label{thm: cut_end_invS}
    Let $h$ be an inverse $S$-shaped, piecewise-linear distortion function. 
    Suppose that $\sup_{\mathbf{a}\in \mathcal{A}, i\in [m]}|u(f(\mathbf{a},\mathbf{x}_i))|<\infty$. 
    Then, for all $\epsilon_{\mathrm{tol}}>0$, Algorithm~\ref{algo: cutting_plane_inv_S} terminates after finitely many iterations.
\end{theorem}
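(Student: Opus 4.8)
The plan is to argue by contradiction. Suppose Algorithm~\ref{algo: cutting_plane_inv_S} never terminates. Then $v_j-c_j>\epsilon_{\mathrm{tol}}$ at every iteration $j$, and the algorithm produces infinite sequences $(\mathbf{a}_j)_j\subset\mathcal{A}$ and $(\mathbf{q}^*_j)_j\subset\mathcal{D}_\phi(\mathbf{p},r)$. First I would record the defining relations. Since $\mathbf{a}_j$ minimizes \eqref{cut_iter_invS} over the finite set $\mathcal{U}_j$, we have $c_j=\max_{\mathbf{q}\in\mathcal{U}_j}\rho_{u,h,\mathbf{q}}(f(\mathbf{a}_j,\mathbf{X}))$, while $v_j=\rho_{u,h,\mathbf{q}^*_j}(f(\mathbf{a}_j,\mathbf{X}))$ by the definition of $\mathbf{q}^*_j$ in Step~\ref{step3_invS}. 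Because $\mathbf{q}^*_k\in\mathcal{U}_{k+1}\subseteq\mathcal{U}_j$ for every $k<j$, it follows that $c_j\geq\rho_{u,h,\mathbf{q}^*_k}(f(\mathbf{a}_j,\mathbf{X}))$. Combining these with the non-termination inequality yields the crucial separation
\begin{align}\label{eq:sep_invS}
    \rho_{u,h,\mathbf{q}^*_j}(f(\mathbf{a}_j,\mathbf{X}))-\rho_{u,h,\mathbf{q}^*_k}(f(\mathbf{a}_j,\mathbf{X}))>\epsilon_{\mathrm{tol}},\quad k<j.
\end{align}

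Next I would exploit that the rank-dependence introduces only finitely many regimes. For each $j$, the ranking of $(u(f(\mathbf{a}_j,\mathbf{x}_i)))_{i\in[m]}$ determines a permutation of $[m]$, and there are at most $m!$ such permutations; by the pigeonhole principle some permutation $\pi$ is realized by infinitely many $j$, and I would restrict to that infinite sub-collection. On it, both evaluations in \eqref{eq:sep_invS} use the common ordering $\pi$, so by \eqref{ranked_sum} each is bilinear: writing $y_{j,i}\triangleq u(f(\mathbf{a}_j,\mathbf{x}_{(i)}))$ for the ranked utilities and $w^\pi_i(\mathbf{q})\triangleq h\bigl(\sum_{l=i}^m q_{(l)}\bigr)-h\bigl(\sum_{l=i+1}^m q_{(l)}\bigr)$ for the decision weights, we have $\rho_{u,h,\mathbf{q}}(f(\mathbf{a}_j,\mathbf{X}))=-\sum_{i=1}^m w^\pi_i(\mathbf{q})\,y_{j,i}$. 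Since $h$ is piecewise-linear, hence continuous, and the partial sums are linear in $\mathbf{q}$, each $w^\pi_i$ is continuous in $\mathbf{q}$, so \eqref{eq:sep_invS} becomes
\begin{align}\label{eq:sep_bilinear}
    \sum_{i=1}^m\bigl(w^\pi_i(\mathbf{q}^*_k)-w^\pi_i(\mathbf{q}^*_j)\bigr)y_{j,i}>\epsilon_{\mathrm{tol}},\quad k<j.
\end{align}

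The third step is to invoke compactness. The hypothesis $\sup_{\mathbf{a},i}|u(f(\mathbf{a},\mathbf{x}_i))|<\infty$ confines the vectors $\mathbf{y}_j$ to a compact box, and $\mathcal{D}_\phi(\mathbf{p},r)$ is compact (closed by the lower semicontinuity of $\phi$ in Assumption~\ref{assump:lower_semi}, bounded as a subset of the simplex). Passing to a further subsequence (with original indices $j_1<j_2<\cdots$), I may assume $\mathbf{q}^*_j\to\mathbf{q}^*$ and $\mathbf{y}_j\to\mathbf{y}^*$. To close the argument, fix $k=j_s$ and let $j=j_t\to\infty$ in \eqref{eq:sep_bilinear}; by continuity of $w^\pi$ the left-hand side converges to $\sum_i\bigl(w^\pi_i(\mathbf{q}^*_{j_s})-w^\pi_i(\mathbf{q}^*)\bigr)y^*_i\geq\epsilon_{\mathrm{tol}}$. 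Now let $s\to\infty$: again $w^\pi_i(\mathbf{q}^*_{j_s})\to w^\pi_i(\mathbf{q}^*)$, so this quantity tends to $0$, contradicting $0\geq\epsilon_{\mathrm{tol}}>0$. Hence the algorithm terminates after finitely many iterations.

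I expect the main obstacle to be precisely the handling of the rank-dependence, which is what distinguishes this argument from the concave case of Theorem~\ref{thm:cuttingplane_conv}: there the cut adds a composite pair $(\mathbf{q}^*,\bar{\mathbf{q}}^*)$ and the objective is already linear in $\bar{\mathbf{q}}$, whereas here the cut adds only $\mathbf{q}^*$ and $\rho_{u,h,\mathbf{q}}$ is evaluated with the full inverse $S$-shaped $h$, so the decision weights genuinely depend on the ordering induced by $\mathbf{a}_j$. The device that resolves this—and lets us dispense with the Lipschitz continuity assumptions of \citet{Boyd09}—is the reduction to finitely many permutations, after which the evaluation is bilinear in the continuous weights $w^\pi(\mathbf{q})$ and the bounded ranked utilities $\mathbf{y}_j$, so that compactness alone forces the separation in \eqref{eq:sep_bilinear} to collapse.
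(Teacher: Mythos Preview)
Your proof is correct and takes a genuinely different route from the paper's. The paper's argument first invokes Theorem~\ref{thm: inverse-s-shape} to decompose $\rho_{u,h,\mathbf{q}}$ into a difference of two supremum terms (over $M_h^{\mathrm{ca}}(\mathbf{q})$ and $N_{\bar h}^{\mathrm{cv}}(\mathbf{q})$), then compares $\rho_{u,h,\mathbf{q}^t}(f(\mathbf{a}^t,\mathbf{X}))$ with $\rho_{u,h,\mathbf{q}^t}(f(\mathbf{a}^s,\mathbf{X}))$ (same $\mathbf{q}$, different $\mathbf{a}$) to obtain the quantitative separation $\|(u(f(\mathbf{a}^{s},\mathbf{x}_i))-u(f(\mathbf{a}^{t},\mathbf{x}_i)))_i\|_2\geq \tfrac14\epsilon_{\mathrm{tol}}$, followed by a packing argument in the bounded set of utility vectors. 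This mirrors the structure of Theorem~\ref{thm:cuttingplane_conv} and also uses the monotone-lower-bound observation $|c^s-c^t|\leq\tfrac12\epsilon_{\mathrm{tol}}$ for large $s,t$.

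You instead compare $\rho_{u,h,\mathbf{q}^*_j}(f(\mathbf{a}_j,\mathbf{X}))$ with $\rho_{u,h,\mathbf{q}^*_k}(f(\mathbf{a}_j,\mathbf{X}))$ (same $\mathbf{a}_j$, different $\mathbf{q}$), which lets you bypass the dual decomposition entirely and also makes the $c^s$--$c^t$ estimate unnecessary. The pigeonhole step on the $m!$ permutations fixes the ranking so that both evaluations become the same bilinear form $-\langle w^\pi(\mathbf{q}),\mathbf{y}_j\rangle$, and then compactness of $\mathcal{D}_\phi(\mathbf{p},r)$ together with continuity of $w^\pi$ and boundedness of the utility vectors forces the separation to collapse. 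This is more elementary and self-contained; the paper's approach, by contrast, yields a quantitative separation in the utility space and stays closer to the standard cutting-plane template of \citet{Boyd09} used elsewhere in the paper. One minor point worth a remark in your write-up: when ties occur in the ranking of $(u(f(\mathbf{a}_j,\mathbf{x}_i)))_i$, fix a deterministic tie-breaking rule so that the permutation map $j\mapsto\pi$ is well defined; this does not affect \eqref{ranked_sum}.
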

Next, we examine the convergence of the piecewise-linear approximation to problems~\eqref{P-Nom} and~\eqref{P}. 
In conjunction with Theorem~\ref{thm: cut_end_invS}, it constitutes, in a sense, the master theorem of the paper.
\begin{theorem}\label{thm: invS_PL_error}
     Let $h$ be an inverse $S$-shaped distortion function and let $h_\epsilon$ be 
    such that $\sup_{p\in [0,1]}|h(p)-h_\epsilon(p)|\leq \epsilon$.
    Suppose that $\sup_{\mathbf{a}\in \mathcal{A}, i\in [m]}|u(f(\mathbf{a},\mathbf{x}_i))|<\infty$. 
    Then, the optimal objective value of \eqref{P-Nom} for $h_\epsilon$ converges to that of $h$, as $\epsilon\to 0$.

    Similarly, for the robust problem~\eqref{P}, Algorithm~\ref{algo: cutting_plane_inv_S}, if applied to $h_\epsilon$, yields an optimal objective value that converges to the exact optimal objective value of \eqref{P} for $h$, as both $\epsilon_{\mathrm{tol}},\epsilon\to 0$. 
\end{theorem}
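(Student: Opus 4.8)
The plan is to reduce both assertions to a single uniform perturbation estimate on the rank-dependent evaluation and then push this estimate through the infima and suprema defining \eqref{P-Nom} and \eqref{P}. Set $M\triangleq\sup_{\mathbf{a}\in\mathcal{A},\,i\in[m]}|u(f(\mathbf{a},\mathbf{x}_i))|$, which is finite by hypothesis. The central step is to show that, for every $\mathbf{a}\in\mathcal{A}$ and every probability vector $\mathbf{q}$,
\[
\bigl|\rho_{u,h,\mathbf{q}}(f(\mathbf{a},\mathbf{X}))-\rho_{u,h_\epsilon,\mathbf{q}}(f(\mathbf{a},\mathbf{X}))\bigr|\leq 2M\epsilon.
\]
Fixing $\mathbf{a}$ and $\mathbf{q}$, rank the outcomes as $f(\mathbf{a},\mathbf{x}_{(1)})\geq\cdots\geq f(\mathbf{a},\mathbf{x}_{(m)})$; this ranking depends only on $\mathbf{a}$ (through $f$ and the monotonicity of $u$) and in particular is common to $h$ and $h_\epsilon$ and independent of $\mathbf{q}$. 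Writing $S_i\triangleq\sum_{j\geq i}q_{(j)}$ and applying Abel summation to \eqref{ranked_sum}, together with $h(0)=h_\epsilon(0)=0$ and $h(1)=h_\epsilon(1)=1$, yields
\[
\rho_{u,h,\mathbf{q}}(f(\mathbf{a},\mathbf{X}))=-u(f(\mathbf{a},\mathbf{x}_{(1)}))-\sum_{i=2}^m h(S_i)\bigl(u(f(\mathbf{a},\mathbf{x}_{(i)}))-u(f(\mathbf{a},\mathbf{x}_{(i-1)}))\bigr),
\]
and identically for $h_\epsilon$. Subtracting, the first terms cancel; since $u$ is non-decreasing and the outcomes are ranked, the increments $u(f(\mathbf{a},\mathbf{x}_{(i-1)}))-u(f(\mathbf{a},\mathbf{x}_{(i)}))$ are nonnegative and telescope, so the difference is bounded by $\sup_p|h(p)-h_\epsilon(p)|\cdot\bigl(u(f(\mathbf{a},\mathbf{x}_{(1)}))-u(f(\mathbf{a},\mathbf{x}_{(m)}))\bigr)\leq 2M\epsilon$.

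For the nominal claim, the estimate with $\mathbf{q}=\mathbf{p}$ gives $|\rho_{u,h,\mathbf{p}}(f(\mathbf{a},\mathbf{X}))-\rho_{u,h_\epsilon,\mathbf{p}}(f(\mathbf{a},\mathbf{X}))|\leq 2M\epsilon$ uniformly in $\mathbf{a}\in\mathcal{A}$. Two functions that are uniformly within $2M\epsilon$ of one another have infima within $2M\epsilon$, so the optimal value of \eqref{P-Nom} for $h_\epsilon$ differs from that for $h$ by at most $2M\epsilon$, which vanishes as $\epsilon\to0$.

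For the robust claim I first treat the exact problems. Because the perturbation bound is uniform in $\mathbf{q}$, taking the supremum over $\mathbf{q}\in\mathcal{D}_\phi(\mathbf{p},r)$ preserves it, giving $|\sup_{\mathbf{q}}\rho_{u,h,\mathbf{q}}(f(\mathbf{a},\mathbf{X}))-\sup_{\mathbf{q}}\rho_{u,h_\epsilon,\mathbf{q}}(f(\mathbf{a},\mathbf{X}))|\leq 2M\epsilon$ for each $\mathbf{a}$; a further infimum over $\mathbf{a}$ shows the exact value of \eqref{P} for $h_\epsilon$ lies within $2M\epsilon$ of that for $h$. It remains to connect the output of Algorithm~\ref{algo: cutting_plane_inv_S}, run on the piecewise-linear $h_\epsilon$, to the exact value of \eqref{P} for $h_\epsilon$. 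By Theorem~\ref{thm: cut_end_invS} the algorithm terminates after finitely many iterations at some index $j$ with $v_j-c_j\leq\epsilon_{\mathrm{tol}}$. Here $c_j$ is the value of the restricted min-sup problem over the finite set $\mathcal{U}_j\subset\mathcal{D}_\phi(\mathbf{p},r)$, hence a lower bound on \eqref{P} for $h_\epsilon$, while $v_j=\sup_{\mathbf{q}\in\mathcal{D}_\phi(\mathbf{p},r)}\rho_{u,h_\epsilon,\mathbf{q}}(f(\mathbf{a}_j,\mathbf{X}))$ computed in \eqref{sos2_nonconcave} is the exact robust evaluation of the feasible point $\mathbf{a}_j$, hence an upper bound; thus $c_j$ and $v_j$ both lie within $\epsilon_{\mathrm{tol}}$ of the exact value of \eqref{P} for $h_\epsilon$. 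Combining with the previous bound through the triangle inequality, the value returned by the algorithm lies within $\epsilon_{\mathrm{tol}}+2M\epsilon$ of the exact value of \eqref{P} for $h$, which tends to $0$ as $\epsilon_{\mathrm{tol}},\epsilon\to0$.

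The main obstacle is the first step, namely producing a perturbation bound that is uniform simultaneously in $\mathbf{a}$ and in $\mathbf{q}$. The uniformity in $\mathbf{q}$ is exactly what permits the estimate to survive the inner supremum of the robust problem, and it rests on two facts: the ranking of outcomes is independent of both $h$ and $\mathbf{q}$, and after Abel summation the distortion enters only linearly through the weights $h(S_i)$, whose perturbation is controlled by $\sup_p|h(p)-h_\epsilon(p)|$ while the accompanying monotone $u$-increments telescope to the bounded quantity $u(f(\mathbf{a},\mathbf{x}_{(1)}))-u(f(\mathbf{a},\mathbf{x}_{(m)}))$. Once this estimate and the termination guarantee of Theorem~\ref{thm: cut_end_invS} are available, the remaining steps are routine.
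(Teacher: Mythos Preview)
Your proof is correct and follows essentially the same route as the paper's: establish the uniform perturbation bound $|\rho_{u,h,\mathbf{q}}-\rho_{u,h_\epsilon,\mathbf{q}}|\leq 2M\epsilon$ via the Abel/telescoping representation of the rank-dependent sum, push it through the $\inf$ and $\sup$, and then combine with the $\epsilon_{\mathrm{tol}}$ gap from Theorem~\ref{thm: cut_end_invS} by the triangle inequality. Your treatment is slightly more explicit than the paper's (which simply asserts the bound and the algorithm's accuracy), but there is no substantive difference in strategy.
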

We note that when $h$ is an inverse-$S$-shaped distortion function, the piecewise-linear approximation specified by $\{h_l,\bar{h}_l\}$ in \eqref{pw_concave_convex} provides neither a lower bound nor an upper bound on $h(p)$, for all $p\in[0,1]$. 
Indeed, $h_l$ is a lower approximation of $h$ on $[0,p^0]$, whereas $1-\bar{h}_l(1-p)$ is an upper approximation of $h$ on $[p_0,1]$. 
Nonetheless, if one aims to bound $h$ on the entire $[0,1]$, then we can apply the same device as in \eqref{upper_distortion}: either translate $h_l$ with an $\epsilon$ (the maximal error) to obtain an upper piecewise-linear approximation of $h$ on $[0,p^0]$, or do the same with the dual function $\bar{h}_l$ for a lower approximation on $[p^0,1]$.
Theorem~\ref{thm: invS_PL_error} then implies that the corresponding lower and upper bounds on the optimal objective value of \eqref{P-Nom} (or \eqref{P}), computed from these piecewise-linear approximations of $h$, will converge to the exact optimal value, as $\epsilon\to 0$.

Finally, we note that Algorithm~\ref{algo: cutting_plane_inv_S} can also be adapted (similar to Algorithm~\ref{Algo: cutting_plane_constraint} in the Electronic Companion) to solve problem \eqref{P-constraint}, where the rank-dependent evaluation with inverse-$S$-shaped distortion function is in the constraint. 
Hence, we also present the following theorems with similar statements as in Theorems~\ref{thm: cut_end_invS} and~\ref{thm: invS_PL_error}.

\begin{theorem}\label{thm: inv_S_constraint_cut}
    Let $h$ be an inverse $S$-shaped, piecewise-linear distortion function. 
    Suppose that $u(f(\mathbf{a},\mathbf{x}_i))$ is continuous in $\mathbf{a}\in \mathcal{A}$ for all $i\in [m]$. 
    Then, for any $\epsilon_{\mathrm{tol}}>0$, Algorithm \ref{algo: cutting_plane_inv_S} terminates after finitely many iterations when applied to \eqref{P-constraint}. 
    Furthermore, if $h$ is also continuous, then the final solution $\mathbf{a}_{\epsilon_{\mathrm{tol}}}$ of Algorithm~\ref{algo: cutting_plane_inv_S} gives a lower bound $g(\mathbf{a}_{\epsilon_{\mathrm{tol}}})$ that converges to the optimal objective value of \eqref{P-constraint} as $\epsilon_{\mathrm{tol}}\to 0$. 
\end{theorem}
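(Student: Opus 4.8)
The plan is to combine the inverse-$S$-shaped machinery behind Theorem~\ref{thm: cut_end_invS} with the constraint-version argument used for the concave case in Theorems~\ref{thm:cuttingplane_conv_constr} and~\ref{thm: convergence_cutting_plane_constraint}. Write $R(\mathbf{a},\mathbf{q})\triangleq\rho_{u,h,\mathbf{q}}(f(\mathbf{a},\mathbf{X}))$, and recall that, adapted to \eqref{P-constraint}, the $j$-th iterate $\mathbf{a}_j$ solves $\min_{\mathbf{a}\in\mathcal{A}}g(\mathbf{a})$ subject to $\sup_{\mathbf{q}\in\mathcal{U}_j}R(\mathbf{a},\mathbf{q})\leq c$ (solvable via Theorem~\ref{thm: bilinear_nom} under the standing assumptions), while $v_j=\sup_{\mathbf{q}\in\mathcal{D}_\phi(\mathbf{p},r)}R(\mathbf{a}_j,\mathbf{q})$ is computed through~\eqref{sos2_nonconcave} and attained at some $\mathbf{q}^*_j$. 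The crucial estimate for termination is that $R(\cdot,\mathbf{q})$ is $1$-Lipschitz in the sup-norm of the utility-loss vector, \emph{uniformly in} $\mathbf{q}$: since $h$ is a monotone distortion with $h(0)=0,h(1)=1$, the rank-dependent sum is a Choquet integral with respect to a monotone capacity of total mass one, hence monotone and translation equivariant, so $|R(\mathbf{a},\mathbf{q})-R(\mathbf{a}',\mathbf{q})|\leq\max_{i\in[m]}|u(f(\mathbf{a},\mathbf{x}_i))-u(f(\mathbf{a}',\mathbf{x}_i))|$ for every $\mathbf{q}$. As each $\mathbf{a}\mapsto u(f(\mathbf{a},\mathbf{x}_i))$ is continuous on the compact set $\mathcal{A}$, it is uniformly continuous, furnishing a single modulus that controls the right-hand side uniformly in $\mathbf{q}$. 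If the algorithm did not terminate, then for any $j<k$ feasibility of $\mathbf{a}_k$ with respect to $\mathcal{U}_k\ni\mathbf{q}^*_j$ gives $R(\mathbf{a}_k,\mathbf{q}^*_j)\leq c$, while non-termination at step $j$ gives $R(\mathbf{a}_j,\mathbf{q}^*_j)=v_j>c+\epsilon_{\mathrm{tol}}$; subtracting yields $R(\mathbf{a}_j,\mathbf{q}^*_j)-R(\mathbf{a}_k,\mathbf{q}^*_j)>\epsilon_{\mathrm{tol}}$, so the Lipschitz bound forces $\max_i|u(f(\mathbf{a}_j,\mathbf{x}_i))-u(f(\mathbf{a}_k,\mathbf{x}_i))|>\epsilon_{\mathrm{tol}}$, whence $\|\mathbf{a}_j-\mathbf{a}_k\|\geq\delta$ for the $\delta$ associated to $\eta=\epsilon_{\mathrm{tol}}$. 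Infinitely many pairwise $\delta$-separated points cannot live in the totally bounded set $\mathcal{A}$, a contradiction; hence the algorithm terminates after finitely many iterations.

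For the lower-bound and convergence claim, note first that each restricted feasible set $\{\mathbf{a}\in\mathcal{A}:\sup_{\mathbf{q}\in\mathcal{U}_j}R(\mathbf{a},\mathbf{q})\leq c\}$ contains the feasible set of \eqref{P-constraint}, because $\mathcal{U}_j\subseteq\mathcal{D}_\phi(\mathbf{p},r)$; therefore $g(\mathbf{a}_j)\leq\mathrm{opt}$ at every iteration, where $\mathrm{opt}$ denotes the optimal value of \eqref{P-constraint}, and in particular the terminal value $g(\mathbf{a}_{\epsilon_{\mathrm{tol}}})$ is a valid lower bound. To show $g(\mathbf{a}_{\epsilon_{\mathrm{tol}}})\to\mathrm{opt}$, take $\epsilon_{\mathrm{tol},n}\to0$ with terminal solutions $\mathbf{a}_n$ (well defined by the first part), and by compactness of $\mathcal{A}$ pass to a subsequence $\mathbf{a}_{n_k}\to\mathbf{a}^\infty\in\mathcal{A}$. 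The termination rule gives $\sup_{\mathbf{q}\in\mathcal{D}_\phi(\mathbf{p},r)}R(\mathbf{a}_{n_k},\mathbf{q})\leq c+\epsilon_{\mathrm{tol},n_k}$; since the uniform $1$-Lipschitz estimate renders the family $\{R(\cdot,\mathbf{q})\}_{\mathbf{q}}$ equicontinuous, the map $\mathbf{a}\mapsto\sup_{\mathbf{q}}R(\mathbf{a},\mathbf{q})$ is continuous, and letting $k\to\infty$ yields $\sup_{\mathbf{q}}R(\mathbf{a}^\infty,\mathbf{q})\leq c$. Thus $\mathbf{a}^\infty$ is feasible for \eqref{P-constraint}, so $g(\mathbf{a}^\infty)\geq\mathrm{opt}$. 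On the other hand $g$ is convex, hence continuous on $\mathcal{A}$ (as in the hypotheses of Theorem~\ref{thm: convergence_cutting_plane_constraint}), so $g(\mathbf{a}^\infty)=\lim_k g(\mathbf{a}_{n_k})\leq\mathrm{opt}$. Combining the inequalities gives $g(\mathbf{a}^\infty)=\mathrm{opt}$ and $g(\mathbf{a}_{n_k})\to\mathrm{opt}$; as every subsequence admits a further subsequence with this same limit, the full sequence converges, establishing $g(\mathbf{a}_{\epsilon_{\mathrm{tol}}})\to\mathrm{opt}$.

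The main obstacle is securing the \emph{uniform-in-$\mathbf{q}$} control that both the separation step and the continuity of $\mathbf{a}\mapsto\sup_{\mathbf{q}}R(\mathbf{a},\mathbf{q})$ require: mere continuity of $R(\cdot,\mathbf{q})$ for each fixed $\mathbf{q}$ is insufficient, because the worst-case $\mathbf{q}^*_j$ varies with the iteration. The Choquet sup-norm Lipschitz estimate is precisely what resolves this, and it is the reason only continuity---rather than Lipschitz continuity, and without any continuity in $\mathbf{q}$---of $\mathbf{a}\mapsto u(f(\mathbf{a},\mathbf{x}_i))$ is needed for termination; the additional continuity of $h$ enters only to guarantee that the maximizer $\mathbf{q}^*_j$ in~\eqref{sos2_nonconcave} is attained and that $R(\mathbf{a},\cdot)$ is well-behaved on $\mathcal{D}_\phi(\mathbf{p},r)$. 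A secondary subtlety, distinguishing this from the objective case of Theorem~\ref{thm: cut_end_invS}, is that $\mathbf{a}_{\epsilon_{\mathrm{tol}}}$ need not be exactly feasible for \eqref{P-constraint} but only $\epsilon_{\mathrm{tol}}$-feasible, so the argument cannot rely on a direct monotonicity-of-bounds comparison and must instead pass through a limit point and the continuity of the robust-evaluation map.
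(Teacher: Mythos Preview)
Your proof is correct and takes a genuinely different route from the paper in both parts. For termination, the paper adapts the proof of Theorem~\ref{thm: cut_end_invS}: it decomposes $R(\mathbf{a},\mathbf{q})$ via Theorem~\ref{thm: inverse-s-shape} into a difference of two suprema over $M^{\text{ca}}_h(\mathbf{q})$ and $N^{\text{cv}}_{\bar{h}}(\mathbf{q})$, bounds each piece by Cauchy--Schwarz, and obtains $\|(u(f(\mathbf{a}_s,\mathbf{x}_i))-u(f(\mathbf{a}_t,\mathbf{x}_i)))_i\|_2\geq\tfrac14\epsilon_{\mathrm{tol}}$ before invoking boundedness of the utility-vector set. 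You replace all of this with the single observation that any Choquet integral against a normalized monotone capacity is $1$-Lipschitz in the sup-norm of its integrand, which delivers the separation directly (with a better constant) and avoids the inverse-$S$ decomposition entirely. For convergence, the paper sandwiches $g(\mathbf{a}_{\epsilon_{\mathrm{tol}}})$ between the perturbed value $P(\epsilon_{\mathrm{tol}})$ and $P(0)$ and then proves $P(\epsilon)\to P(0)$ via Berge's maximum theorem, relying on the joint continuity of $(\mathbf{q},\mathbf{a})\mapsto R(\mathbf{a},\mathbf{q})$ from Lemma~\ref{lem: continuity_rank_dependent} (this is where the paper actually uses continuity of $h$). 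You instead pass to a limit point $\mathbf{a}^\infty$ along a subsequence and use only the equicontinuity in $\mathbf{a}$, uniform in $\mathbf{q}$, furnished by your Lipschitz estimate---thereby sidestepping both Berge and the joint-continuity lemma. Your argument is more elementary and in fact shows that continuity of $h$ is not essential to the convergence step itself; the paper's value-function route is more systematic and directly reusable for Theorem~\ref{thm: inv_S_constraint}.
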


\begin{theorem}\label{thm: inv_S_constraint}
    Let $h$ be an inverse $S$-shaped distortion function. 
    Suppose that $u(f(\mathbf{a},\mathbf{x}_i))$ is continuous in $\mathbf{a}\in \mathcal{A}$ for all $i\in [m]$. 
    If $g$, appearing in the objective, and $h$ are continuous, then for any distortion function $h_\epsilon$ such that $h_{\epsilon}(p)\leq h(p)$ for all $p\in[0,1]$ and $\sup_{p\in [0,1]}|h(p)-h_{\epsilon}(p)|\leq \epsilon$, the optimal objective value of \eqref{P-constraint} for $h_\epsilon$ converges to that of $h$, as $\epsilon\to 0$.
\end{theorem}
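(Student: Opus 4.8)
Write $G_h(\mathbf{a})\triangleq\sup_{\mathbf{q}\in\mathcal{D}_\phi(\mathbf{p},r)}\rho_{u,h,\mathbf{q}}(f(\mathbf{a},\mathbf{X}))$ for the left-hand side of the robust constraint, and let $V(h)$ denote the optimal objective value of \eqref{P-constraint} with distortion $h$; define $V(h_\epsilon)$ analogously. The plan is to sandwich $V(h_\epsilon)$ between $V(h)$ and a quantity tending to $V(h)$. First I would record the easy inequality $V(h_\epsilon)\le V(h)$: since $h_\epsilon\le h$, the Choquet representation \eqref{def:distortionriskmeasure} gives $\rho_{u,h_\epsilon,\mathbf{q}}\le\rho_{u,h,\mathbf{q}}$ pointwise in $\mathbf{q}$, hence $G_{h_\epsilon}\le G_h$; thus the feasible set $\{\mathbf{a}\in\mathcal{A}:G_h(\mathbf{a})\le c\}$ is contained in $\{\mathbf{a}\in\mathcal{A}:G_{h_\epsilon}(\mathbf{a})\le c\}$, and minimizing the common objective $g$ over the larger set can only decrease the value.

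Next I would establish a \emph{uniform} approximation bound. Set $M\triangleq\sup_{\mathbf{a}\in\mathcal{A},\,i\in[m]}|u(f(\mathbf{a},\mathbf{x}_i))|$, which is finite because $\mathcal{A}$ is compact and each $u(f(\cdot,\mathbf{x}_i))$ is continuous. Subtracting the two Choquet representations \eqref{def:distortionriskmeasure} gives, for every $\mathbf{a}\in\mathcal{A}$ and $\mathbf{q}\in\mathcal{D}_\phi(\mathbf{p},r)$,
\begin{align*}
0\le \rho_{u,h,\mathbf{q}}(f(\mathbf{a},\mathbf{X}))-\rho_{u,h_\epsilon,\mathbf{q}}(f(\mathbf{a},\mathbf{X}))=\int_{-\infty}^{\infty}\big(h-h_\epsilon\big)\big(\mathbf{q}[-u(f(\mathbf{a},\mathbf{X}))>t]\big)\,\mathrm{d}t\le 2M\epsilon,
\end{align*}
since the integrand lies in $[0,\epsilon]$ and vanishes outside the range of $-u(f(\mathbf{a},\mathbf{X}))$, an interval of length at most $2M$ (the boundary argument values $0$ and $1$ are annihilated because $h$ and $h_\epsilon$ agree there). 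Taking the supremum over $\mathbf{q}$ yields $G_{h_\epsilon}(\mathbf{a})\le G_h(\mathbf{a})\le G_{h_\epsilon}(\mathbf{a})+2M\epsilon$, uniformly in $\mathbf{a}\in\mathcal{A}$.

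I would then show $G_h$ is continuous on $\mathcal{A}$. For a fixed normalized capacity the Choquet integral is $1$-Lipschitz in its integrand with respect to the supremum norm (by monotonicity and translation invariance), so $|\rho_{u,h,\mathbf{q}}(f(\mathbf{a},\mathbf{X}))-\rho_{u,h,\mathbf{q}}(f(\mathbf{a}',\mathbf{X}))|\le\max_i|u(f(\mathbf{a},\mathbf{x}_i))-u(f(\mathbf{a}',\mathbf{x}_i))|$ uniformly in $\mathbf{q}$; combining this with the elementary inequality $|\sup_{\mathbf{q}}A_{\mathbf{q}}-\sup_{\mathbf{q}}B_{\mathbf{q}}|\le\sup_{\mathbf{q}}|A_{\mathbf{q}}-B_{\mathbf{q}}|$ shows $G_h$ inherits the modulus of continuity of the maps $\mathbf{a}\mapsto u(f(\mathbf{a},\mathbf{x}_i))$, hence is continuous. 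The same bound makes $\{G_{h_\epsilon}\le c\}$ closed, so by compactness of $\mathcal{A}$ and continuity of $g$ a minimizer $\mathbf{a}_\epsilon$ of the $h_\epsilon$-problem exists (the feasible set is nonempty, containing the nonempty feasible set of the $h$-problem).

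Finally I would run a standard stability argument. Fix $\epsilon_n\to 0$ with minimizers $\mathbf{a}_n$; from $V(h_{\epsilon_n})\le V(h)$ I already have $\limsup_n V(h_{\epsilon_n})\le V(h)$. For the reverse, extract (compactness of $\mathcal{A}$) a convergent subsequence $\mathbf{a}_{n_k}\to\mathbf{a}^*$; the uniform bound gives $G_h(\mathbf{a}_{n_k})\le G_{h_{\epsilon_{n_k}}}(\mathbf{a}_{n_k})+2M\epsilon_{n_k}\le c+2M\epsilon_{n_k}$, and continuity of $G_h$ forces $G_h(\mathbf{a}^*)\le c$, so $\mathbf{a}^*$ is $h$-feasible and $g(\mathbf{a}^*)\ge V(h)$; continuity of $g$ then yields $\lim_k V(h_{\epsilon_{n_k}})=g(\mathbf{a}^*)\ge V(h)$. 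A subsequence-of-subsequence argument rules out $\liminf_n V(h_{\epsilon_n})<V(h)$, whence $V(h_{\epsilon_n})\to V(h)$; as the sequence $\epsilon_n\to0$ was arbitrary, the claim follows. The only genuinely delicate point is the passage through the rank-dependent inner supremum: a priori both the perturbation $h\to h_\epsilon$ and the perturbation $\mathbf{a}\to\mathbf{a}'$ reshuffle the ranking of the outcomes, but the Choquet-integral representation and its $1$-Lipschitz property dissolve this, reducing everything to the two $\mathbf{q}$-uniform estimates above; the remaining care lies in coupling the feasibility of the limit point $\mathbf{a}^*$ with the vanishing constraint slack $2M\epsilon_{n_k}$.
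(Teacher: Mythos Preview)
Your proof is correct and parallels the paper's strategy: both establish the one-sided inequality $V(h_\epsilon)\le V(h)$ from $h_\epsilon\le h$, both derive the uniform bound $G_h\le G_{h_\epsilon}+2M\epsilon$ from $\sup_p|h-h_\epsilon|\le\epsilon$, and both close the argument via continuity of $G_h$ together with compactness of $\mathcal{A}$.

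The organization differs in two ways worth flagging. First, the paper introduces the auxiliary value function $P(\epsilon)=\min\{g(\mathbf{a}):G_h(\mathbf{a})\le c+\epsilon M\}$, sandwiches $P(\epsilon)\le V(h_\epsilon)\le P(0)$, and then proves $P(\epsilon)\to P(0)$ by invoking Berge's maximum theorem on the correspondence $\epsilon\mapsto\{\mathbf{a}:G_h(\mathbf{a})\le c+\epsilon M\}$; you dispense with the auxiliary $P(\epsilon)$ and run the Berge argument by hand, extracting a subsequence of minimizers $\mathbf{a}_{n_k}\to\mathbf{a}^*$ and checking $h$-feasibility of the limit directly. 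Second, for the continuity of $G_h$ the paper proves a separate lemma on joint continuity of $(\mathbf{q},\mathbf{a})\mapsto\rho_{u,h,\mathbf{q}}(f(\mathbf{a},\mathbf{X}))$ and applies Berge once more over the compact set $\mathcal{D}_\phi(\mathbf{p},r)$, whereas your $1$-Lipschitz observation for normalized capacities gives the same conclusion in one line and, usefully, bypasses any rank-reshuffling bookkeeping. Both routes are valid; yours is the more elementary and self-contained, at no loss of generality. One small point: the nonemptiness of the $h$-feasible set (and hence of the $h_\epsilon$-feasible set) that you use relies on Assumption~\ref{assump: finite_strict_feasibility}, which you might cite explicitly.
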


\section{Numerical Examples}\label{sec: NumericalExp}
In this section, we illustrate the various methods developed in this paper by applying them to two canonical examples of optimization problems: the \emph{newsvendor} problem and the \emph{portfolio choice} problem. 
In both examples, we compare the solution of the robust problem \eqref{P} to the solution of the nominal problem \eqref{P-Nom}. 
We use the phrase ``robust/nominal solution'' to refer to the solution of the robust/nominal problem.
\subsection{Robust Single-Item Newsvendor}
In the single-item newsvendor problem, a seller is uncertain about the demand for a certain product, and has to decide in advance how many units of the product have to be ordered and stocked in the inventory. 
Let $d_i\geq 0$ denote the realization of the demand in state $i$. 
Furthermore, let: $c$ be the ordering cost of one unit, $v>c$ be the selling price, $s<c$ be the salvage value per unsold item returned to the factory, and $l$ be the loss per unit of unmet demand, which may include both the cost of a lost sale and a penalty for the lost customer goodwill. 
Finally, let $y$ be the number of items ordered, i.e., the decision variable.
The profit function $\pi(d_i,y)$ is defined as
\begin{align}
    \pi(d_i,y)&\triangleq v\min\{d_i,y\}+s(y-d_i)_+-l(d_i-y)_+-cy\nonumber\\
    &= (s-v)(y-d_i)_+-l(d_i-y)_++(v-c)y.
\end{align}

\begin{figure}[t]
    \centering
     \caption{{\small Single-item newsvendor problem. 
     This figure displays the worst-case evaluation $\mathrm{WC}(\alpha_0)\triangleq\sup_{\mathbf{q}\in \mathcal{D}_{\phi}(\mathbf{p},r(n))}\mathrm{CVaR}_{1-\alpha_0}(.)$ under the robust and nominal solutions, for a range of values of $r(n) = \chi^2_{2,0.95}/(2n)$ and $\alpha_0= 0.4, 0.3, 0.2, 0.1$.}}\label{fig: robvsnonrob_snews}  
    \subfloat{%
  \includegraphics[width=0.4\textwidth]{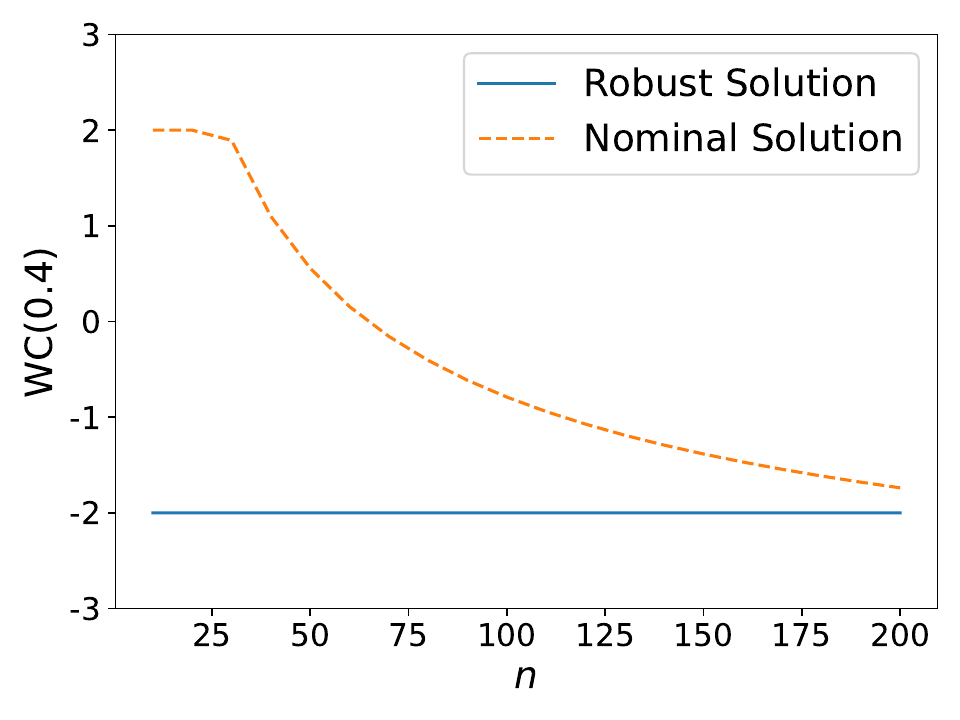}%
}%
\subfloat{%
  \includegraphics[width=0.4\textwidth]{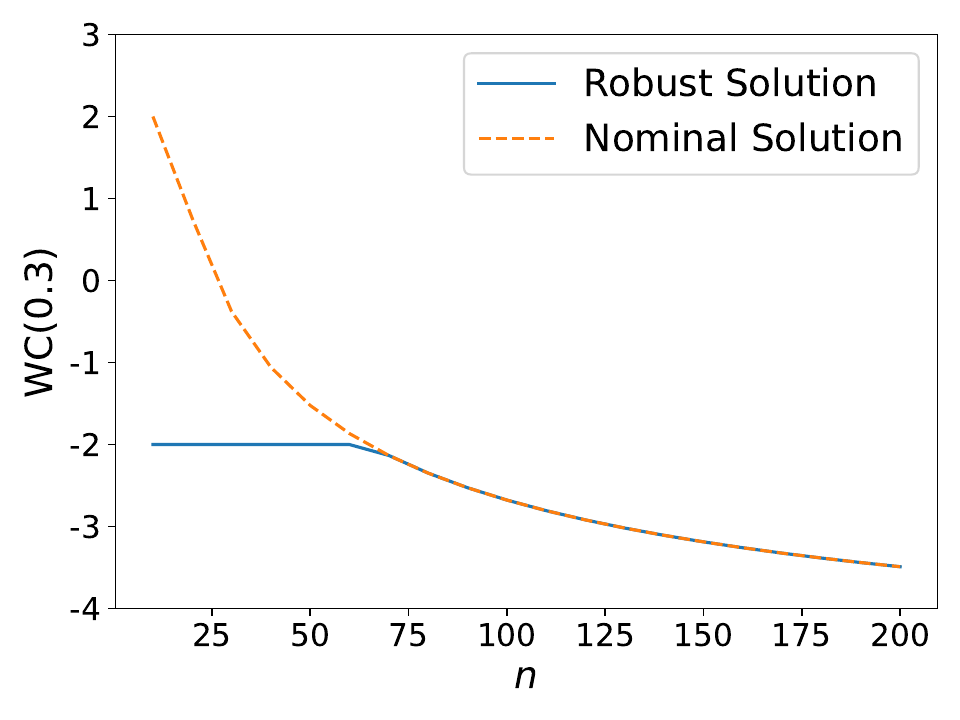}%
}

\subfloat{%
  \includegraphics[width=0.4\textwidth]{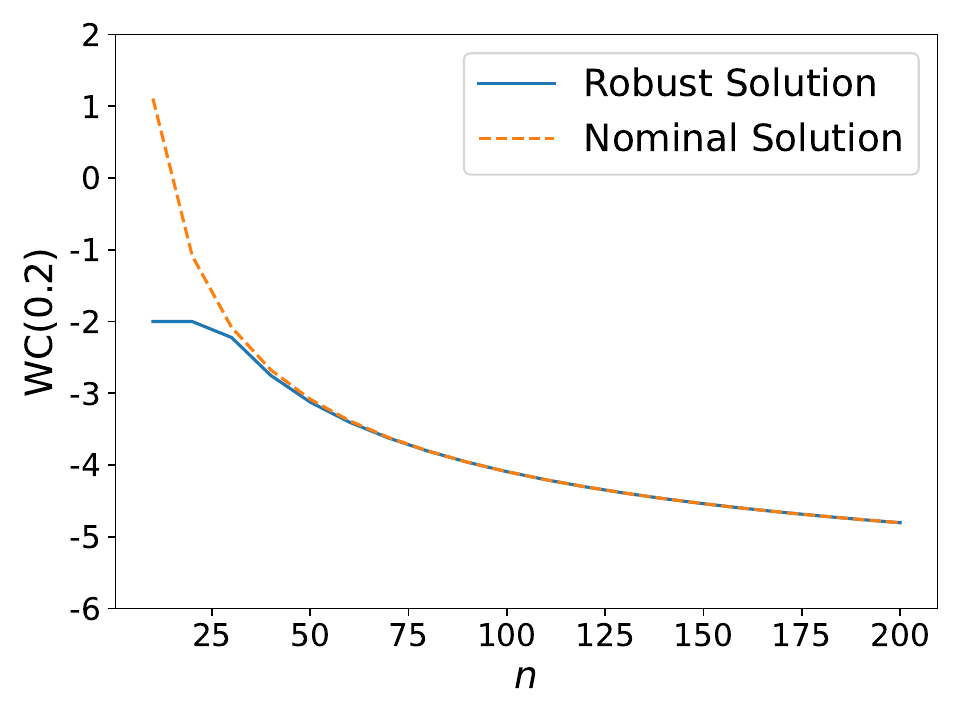}%
}
\subfloat{%
  \includegraphics[width=0.4\textwidth]{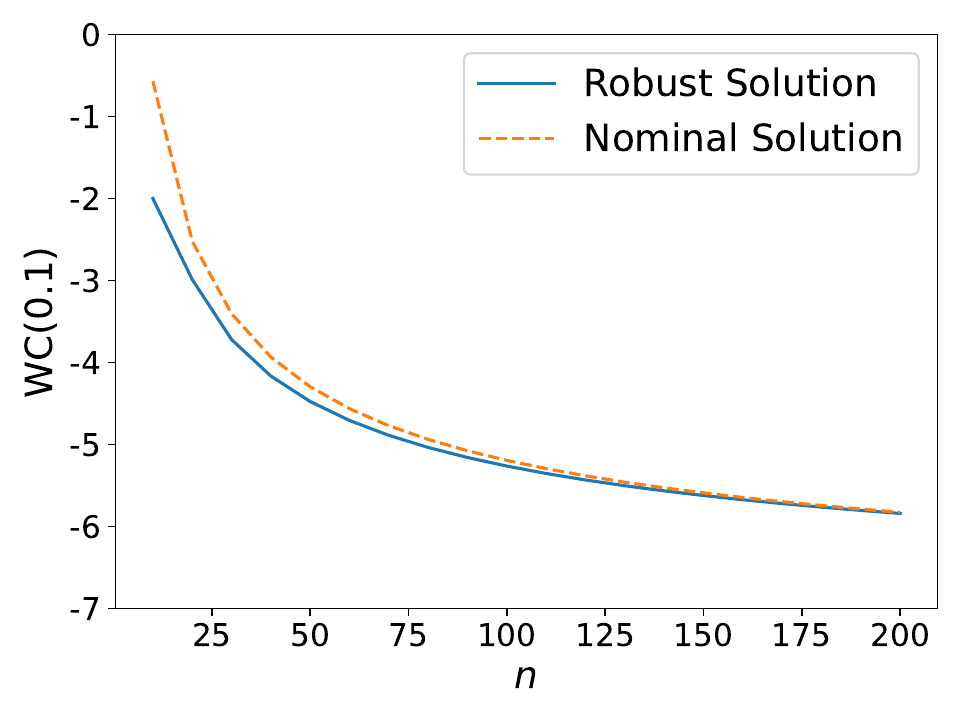}%
}
\end{figure}
Note that $\pi(d_i,y)$ is concave in $y$ since it is a sum of concave piecewise-linear functions in $y$ for all $i$. 
Assume that the demand $d_i\in \{4,8,10\}$, which corresponds to low, medium, and high demand, with nominal probabilities $\mathbf{p}=\{0.375,0.375,0.25\}$.  
Furthermore, we assume that the number of items ordered will not exceed the maximum demand, i.e., $y\leq 10$. 
Finally, we set the parameter values $c=4,v=6,s=2,l=4$.

We solve both the robust problem \eqref{P} and the nominal problem $\eqref{P-Nom}$, where the decision variable $y$ is subject to the constraint $0\leq y\leq 10$, $\mathbf{d}$ is the uncertain parameter, and $\pi(\mathbf{d},y)$ is the random objective function.
The nominal and robust newsvendor problem is studied in \cite{EeckhoudtMS} and \cite{BenTal11}, respectively, assuming expected utility preferences.
We choose $\mathrm{CVaR}_{1-\alpha_0}$ (see \eqref{def:CVaR}) to be our rank-dependent evaluation $\rho_{u,h,\mathbf{q}}$. 
This corresponds to a piecewise-linear distortion function $h(p)=\min\{p/(1-\alpha_0),1\}$ and a linear utility function $u(x)=x$. 
For the robust problem \eqref{P}, we choose the KL-divergence function $\phi(t)=t\log(t)-t+1$, and set the radius of the divergence set $\mathcal{D}_{\phi}(\mathbf{p},r)$ to be $r=\chi^2_{2,0.95}/(2n)$ as in \eqref{r_choice}, where $n$ is the 
sample size that we assume that $\mathbf{p}$ is estimated from.
A larger value of the objective function indicates a larger (i.e., worse) evaluation of the utility loss; a value of $\alpha_0=0.4$ means that the worst 60\% of the utility loss distribution is considered. 

As the cardinality of  realizations of the uncertain parameter $d$ is merely 3, we can readily invoke Theorem~\ref{thm:RC} to solve both the reformulated nominal and robust problems \eqref{P-ref} and \eqref{P-Nom-ref} exactly, without reducing the number of constraints. 
We then perform the following experiment: 
For each $\alpha_0$, we first obtain a nominal solution to \eqref{P-Nom}. 
Then, for each radius $r(n)=\chi^2_{2,0.95}/(2n)$, we obtain a robust solution to \eqref{P}. 
To compare the robust solution with the nominal solution, we calculate their worst-case evaluation $\sup_{\mathbf{q}\in \mathcal{D}_{\phi}(\mathbf{p},r(n))}\mathrm{CVaR}_{1-\alpha_0}(.)$ under the radius $r(n)$. 
This is repeated for a range of $n= 10, 20,\ldots, 200$, and $\alpha_0 = 0.4, 0.3, 0.2, 0.1$. 
As $n$ increases, the $\phi$-divergence radius $r$ decreases, and thus there is less ambiguity. 

The results are displayed in Figure~\ref{fig: robvsnonrob_snews}.
Quite naturally, we observe that the differences between the worst-case evaluations of both solutions decrease as $n$ increases. 
We also observe a decrease in the value of the worst-case evaluation as $\alpha_0$ decreases, reflecting that the $\mathrm{CVaR}_{1-\alpha_0}$ risk measure itself is monotonically decreasing in $\alpha_0$. 
Furthermore, we see that for lower values of $\alpha_0$, the differences between the worst-case evaluations of both solutions are already small for small sample size $n$. 
Moreover, we observe a flat, constant worst-case evaluation of the robust solution at level $-2$. 
This is the largest (i.e., worst) value that the robust rank-dependent evaluation attains for a robust solution $y^*=7$. 
This conservativeness occurs when $n$ is relatively small.

To further illustrate the differences between the robust and nominal solutions, we use the Hit-and-Run algorithm (see Electronic Companion~\ref{App:HitandRun} for further details) to sample $5\mathord{,}000$ probability vectors $\mathbf{q}$ from the KL-divergence uncertainty set for $n=50$. 
For each sampled vector $\mathbf{q}$, we calculate the $\mathrm{CVaR}_{1-\alpha_0}$ evaluation, 
with $\alpha_0=0.4$.
\begin{figure}[t]
    \centering
    \caption{{\small Single-item newsvendor problem (\textit{continued}): 
    This figure displays, for each sampled probability vector $\mathbf{q}$ from the KL-divergence uncertainty set for $n=50$, the corresponding $\mathrm{CVaR}_{1-\alpha_0}(.)$ evaluation with $\alpha_0=0.4$ of the robust and nominal solutions.}}\label{fig: robvsnonrob_cvar_news} 
     \includegraphics[height=2.2in]{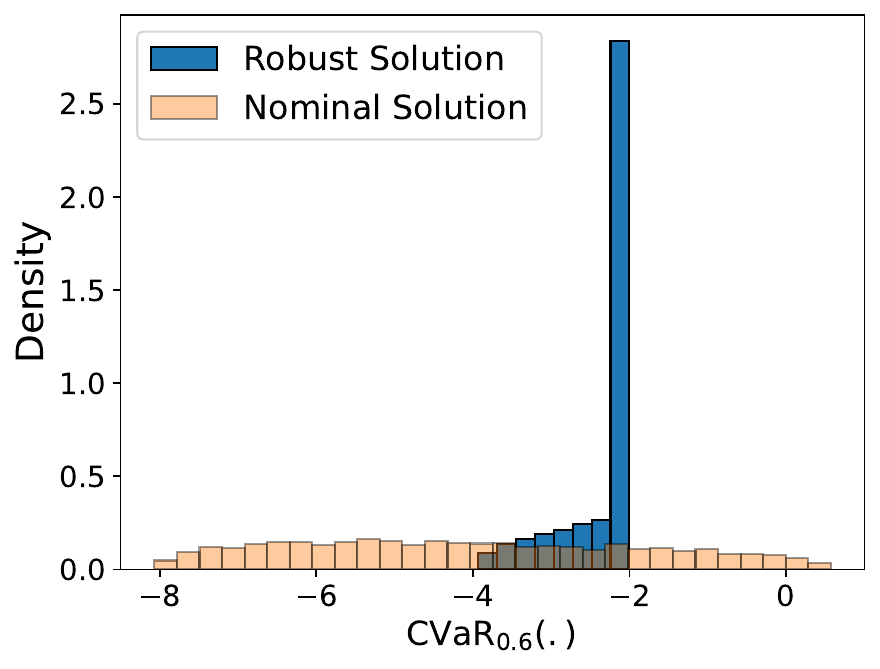}
\end{figure}
As shown in Figure~\ref{fig: robvsnonrob_cvar_news}, the evaluation of the nominal solution exhibits a large variance. 
It also exceeds the largest (i.e., worst) evaluation of the robust solution. 
On the other hand, the robust solution shows less variance and is concentrated at the value $-2$, which as mentioned above, is the most conservative evaluation that the robust optimal solution attains.

\subsection{Robust Multi-Item Newsvendor}
We next examine the multi-item newsvendor problem, where each item in a set of items features its own uncertain demand.
Let $d^{(j)}_{i}$ be the $i$-th realization of the $j$-th item's demand, $i=1,2,3$. 
We take $j=1,2,3$ and consider the sum of the individual profit functions
\begin{align}
    \pi_{\mathrm{tot}}(\mathbf{d},\mathbf{y})=\pi_1(d^{(1)},y_1)+\pi_2(d^{(2)},y_2)+\pi_3(d^{(3)},y_3),
\end{align}
where
\begin{align}
     \pi_j(d^{(j)},y_j)&=v_j\min\{d^{(j)},y_j\}+s_j(y_j-d^{(j)})_+-l_j(d^{(j)}-y_j)_+-c_jy_j,
\end{align}
with $v_j,s_j,l_j,c_j$ the parameters corresponding to item $j$. 
We assume that the demand takes on the same possible values for all items, i.e., $d^{(j)}_{i}\in \{4,8,10\}$ for all $j$. 
Since each realization of $d^{(j)}$ contributes to a possible realization of $\pi_{\mathrm{tot}}(\mathbf{y},\mathbf{d})$, there are in total $m=3^3=27$ possible realizations. 
We solve again problems~\eqref{P} and~\eqref{P-Nom} with the same preference specification as in the single-item problem.
Since the $\mathrm{CVaR}_{1-\alpha_0}$ risk measure has a piecewise-linear distortion function, we can apply Theorem~\ref{thm:RCpiecewiselin} to solve \eqref{P-ref} and \eqref{P-Nom-ref} in this higher-dimensional setting.
We take the nominal probability $\mathbf{p}\in \mathbb{R}^{27}$ to be the probability of each combination of realizations of $(d^{(1)},d^{(2)},d^{(3)})$, which is the product of the probabilities $\mathbf{p}^{(1)},\mathbf{p}^{(2)},\mathbf{p}^{(3)}$ of each individual realization. 
The parameters we use 
are given in Table~\ref{tab: mnews_param}.
\begin{table}[t]
    \centering
    \caption{{\small Multi-item newsvendor problem. 
    This table displays the parameters used in the multi-item newsvendor problem.}}
    \label{tab: mnews_param}
    \begin{tabular}{c|cccc|ccc}
         $\mathrm{Item}(j)$ &$c$& $v$ & $s$ & $l$ & $p^{(j)}_1$ & $p^{(j)}_2$ & $p^{(j)}_3$ \\
        \hline
         1&4 & 6 & 2 & 4 & 0.375 & 0.375 & 0.25 \\
         2&5 & 8 & 2.5 & 3 & 0.25 & 0.25 & 0.5 \\
        3 &4& 5 & 1.5 & 4 & 0.127 & 0.786 & 0.087 
    \end{tabular}
    
\end{table}

Similar to the single-item problem, we investigate the differences between the worst-case evaluations of the robust and nominal solutions, for a range of $n$ and $\alpha_0$. 
We choose $\alpha_0=0.4, 0.3$ to compare the results with the single-item problem, and $\alpha_0=0.9, 0.8$ to explore higher values of $\alpha_0$. 
From Figure~\ref{fig: robvsnonrob_mnews}, we observe that in the multi-item problem, the worst-case evaluation is much lower (i.e., better) than in the single-item problem, which speaks to the diversification benefits of a multi-item inventory. 
The overall pattern is similar to the single-item case: for relatively low $\alpha_0$, the difference in worst-case evaluation between the robust solution and the nominal solution is smaller than for larger $\alpha_0$. 
The same holds as the sample size $n$ increases.
\begin{figure}[t]
    \centering
       \caption{{\small Multi-item newsvendor problem. 
       This figure displays the worst-case evaluation $\mathrm{WC}(\alpha_0)\triangleq\sup_{\mathbf{q}\in \mathcal{D}_{\phi}(\mathbf{p},r(n))}\mathrm{CVaR}_{1-\alpha_0}(.)$ under the robust and nominal solutions, for a range of values of $r(n) = \chi^2_{2,0.95}/(2n)$ and $\alpha_0=0.9, 0.8, 0.4, 0.3$.}}\label{fig: robvsnonrob_mnews}
     \subfloat{%
 \includegraphics[width=0.4\textwidth]{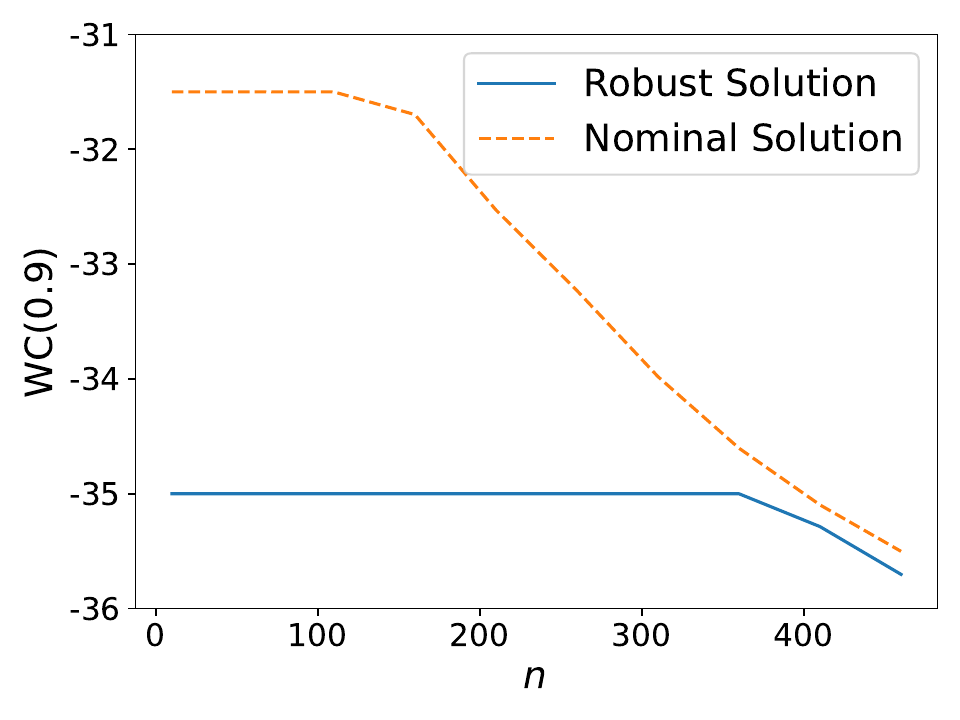}%
}%
\subfloat{%
  \includegraphics[width=0.4\textwidth]{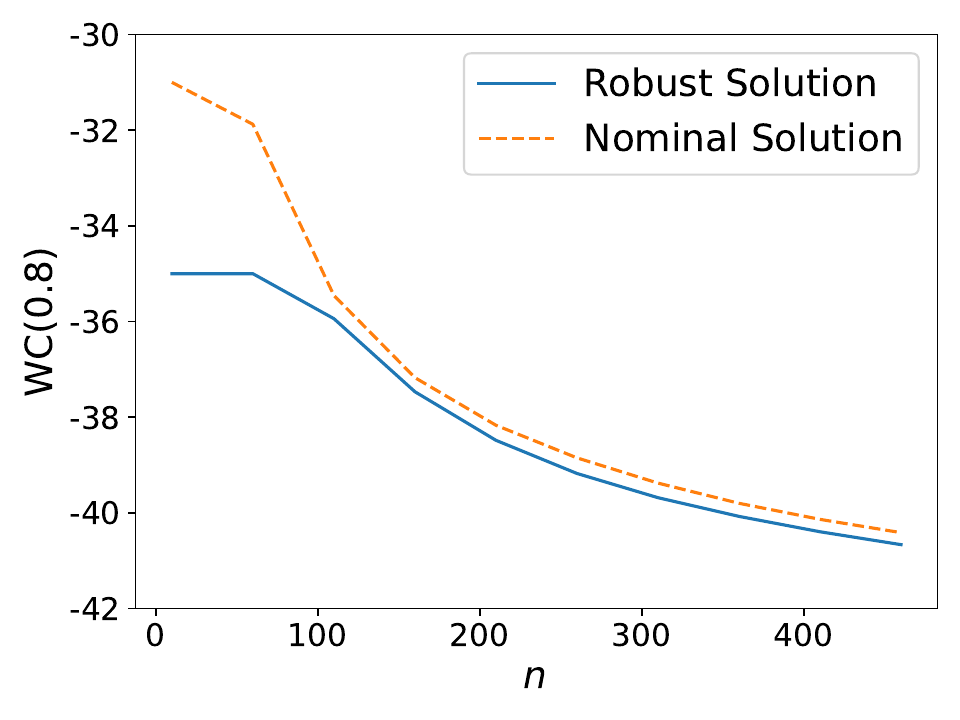}%
}%

\subfloat{%
  \includegraphics[width=0.4\textwidth]{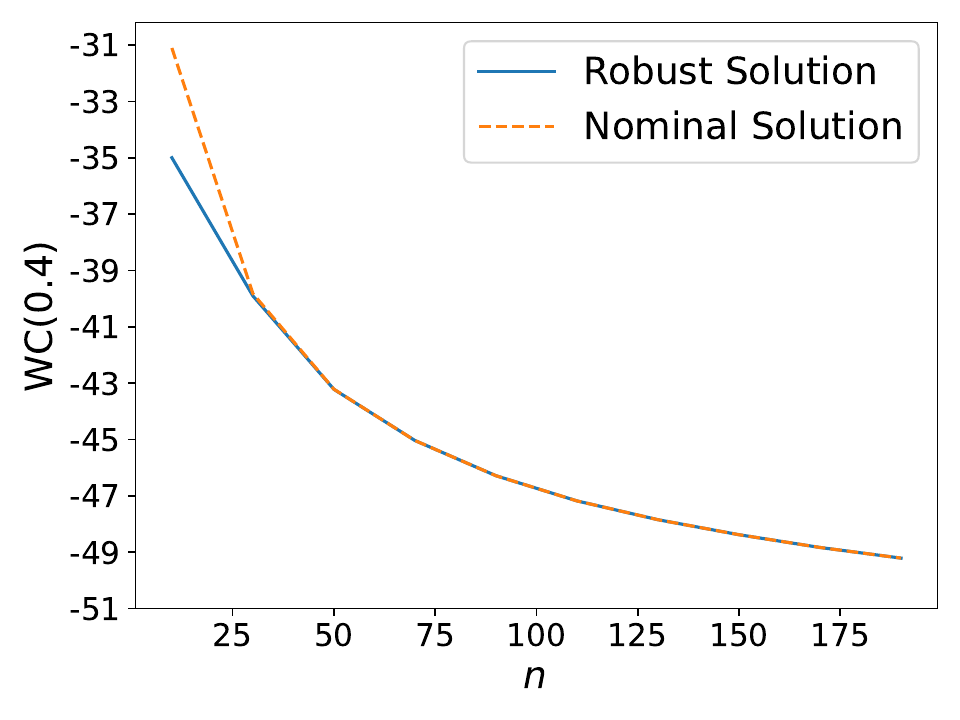}%
}%
\subfloat{%
  \includegraphics[width=0.4\textwidth]{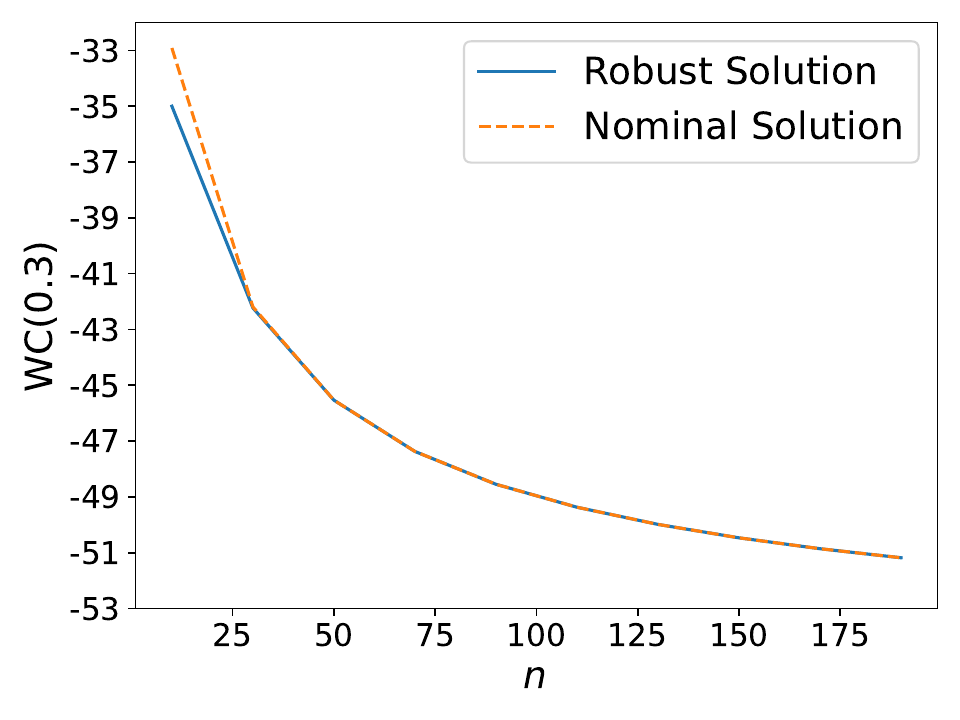}%
}
\end{figure}

\subsection{Robust Portfolio Choice with Concave Distortion Function}
In this subsection, we investigate the performance of the cutting-plane algorithm and the piecewise-linear approximation method described in Algorithms~\ref{cutting_plane_algo} and~\ref{algo_PL_method}, by studying robust portfolio optimization problems.
We consider the returns of six portfolios formed on size and book-to-market ratio (2$\times$3) obtained from Kenneth French's data library.\footnote{\url{https://mba.tuck.dartmouth.edu/pages/faculty/ken.french/data_library.html}} 
We use monthly returns from January 1984 to January 2014. 
This gives us a total of $m=360$ return realizations for each of the six portfolios. 
We denote by $\mathbf{r}\in\mathbb{R}^{6}$ the uncertain return of the six portfolios, with empirical realizations $\mathbf{r}_1,\ldots,\mathbf{r}_{360}$.
We solve the following nominal and robust portfolio optimization problems:
\begin{align}
    \label{P-nom-num1}\min_{\mathbf{a}\in \mathcal{A}}&~\rho_{u,h,\mathbf{p}}(1+\mathbf{a}^T\mathbf{r})\\
    \label{P-rob-num1}\min_{\mathbf{a}\in \mathcal{A}}&\sup_{\mathbf{q}\in \mathcal{D}_{\phi}(\mathbf{p},r)}\rho_{u,h,\mathbf{q}}(1+\mathbf{a}^T\mathbf{r}),
\end{align}
where $1+\mathbf{a}^T\mathbf{r}$ is the total wealth after one period with initial wealth normalized to unity, which is evaluated under the rank-dependent utility model. 
The decision variable $\mathbf{a}\in \mathbb{R}^6$ is subject to the constraints $\mathcal{A}=\{\mathbf{a}\in \mathbb{R}^6~|~ \sum^6_{j=1}a_j=1, a_j\geq 0\}$. 
The nominal probability $\mathbf{p}$ is set to be the empirical distribution with $p_i=\frac{1}{360}$. 
We use the modified chi-squared divergence function $\phi(x)=(x-1)^2$, with radius $r=\frac{1}{n}\chi^2_{359,0.95}$. 
We choose the concave distortion function $h(p)=1-(1-p)^2$, motivated by two decision-theoretic papers: \citet{ELS20} and \citet{EL21}. 
Furthermore, we choose the exponential utility function $u(x)=1-e^{-x/\lambda}$ with $\lambda=10$.

We solve problems~\eqref{P-nom-num1}--\eqref{P-rob-num1} with the cutting-plane method described in Algorithm~\ref{cutting_plane_algo}, and compare its performance to the piecewise-linear method described in Algorithm~\ref{algo_PL_method},
formally relying on Theorems~\ref{thm:cuttingplane_conv} and~\ref{thm:convergencePL}. 
As shown in Table~\ref{tab:ct_plane_PL}, both algorithms yield a very small gap between the upper and lower bounds they provide. 
The piecewise-linear approximation method is typically more efficient, but this is not surprising, since the cutting-plane method is a more general method and utilizes less structure of the problems at hand. 
\begin{table}[H]
\centering
\caption{{\small Robust portfolio choice. 
This table displays the results of Algorithms~\ref{cutting_plane_algo} and~\ref{algo_PL_method} when applied to the robust problem \eqref{P-rob-num1} and the nominal problem \eqref{P-nom-num1}. 
The respective tolerance parameters are set to $\epsilon_{\mathrm{tol}}=0.0001$ and $\epsilon=0.001$. 
}  }\label{tab:ct_plane_PL}
\subfloat{\centering 
\begin{tabular}{ccccc}
\multicolumn{5}{c}{\textbf{Panel A: The cutting-plane method}}\\ \hline
  Problem&  Lower Bound&Upper Bound& \# Cuts&Run Time 
    \\ \hline 
    Robust &-0.08953&  -0.08948&5 &54 sec 
    \\
    Nominal & -0.09403 & -0.09397 &5 & 22 sec 
    \\ 
    \end{tabular}}

\medskip
\subfloat{\centering
\begin{tabular}{ccccc}
\multicolumn{4}{c}{\textbf{Panel B: Piecewise-linear approximation}}\\ \hline
  Problem&  Lower Bound&Upper Bound& Run Time 
    \\ \hline 
    Robust &-0.08951&  -0.08948&  14 sec 
    \\
    Nominal & -0.09401 & -0.09398 &7 sec 
    \\ 
    \end{tabular}}
\end{table}
Additionally, for the cutting-plane algorithm, we calculate the worst-case rank-dependent evaluation of the nominal solution, which is equal to $-0.08938$. 
We also calculate the evaluation of the robust solution under the nominal distribution $\mathbf{p}$, which is equal to $-0.09395$. 
As we can see from the table, the worst-case evaluation of the nominal solution is not much larger than that of the robust solution (cf.\ the first rows of the Panels~A and~B), suggesting that the nominal solution is already ``near-optimal'' for the robust problem \eqref{P-rob-num1}. 
Similarly, the robust solution is also ``near-optimal'' for the nominal problem~\eqref{P-nom-num1}. 
It appears that when the dimension $m$ of the state space is large, the robust and nominal problems do not yield highly different solutions in this example.
  
\subsection{Robust Portfolio Choice with Inverse \texorpdfstring{$S$}{TEXT}-shaped Distortion Function}\label{subsec:portfolio_inv_S}
In this subsection, we investigate portfolio optimization problems \eqref{P-nom-num1}--\eqref{P-rob-num1} when $h$ is an inverse $S$-shaped distortion function.
In particular, we examine the \cite{Prelec} distortion function:
\begin{align}\label{Prelec}
    h_{\alpha}(p)\triangleq 1-\exp(-(-\log(1-p))^\alpha),\quad 0<\alpha<1.
\end{align}
We focus primarily on $\alpha= 0.6,\, 0.75$, which are  broadly consistent with common empirical findings (\citealp{Wakker}, p.\ 260). 
We consider a linear utility function, $u(x)=x$, to isolate the effect of inverse $S$-shaped probability weighting. 
To obtain an upper and lower bound on problems \eqref{P-nom-num1}--\eqref{P-rob-num1}, we approximate Prelec's function from above and below, using piecewise-linear functions. 
The approximation procedure is carried out by applying the method described in Section~\ref{subsec:approx_affine} separately to the concave and convex parts of Prelec's function, where we minimize the number of linear pieces under a pre-specified approximation error.\footnote{The approximation error is chosen to be $\epsilon=0.003$ leading to 19, 13, and 6 linear pieces for $\alpha=0.6,\, 0.75,\, 0.95$.} 
The shape of Prelec's function and its respective upper and lower piecewise-linear approximations are displayed in Figure~\ref{fig: PL_prelec}.  
\begin{figure}[t]
    \begin{minipage}{\textwidth}
    \centering
    \caption[]{{\small Prelec's distortion. This figure displays Prelec's distortion function \eqref{Prelec} and its upper and lower piecewise-linear approximations (dashed) for $\alpha=0.6,\, 0.75$.
    The approximation error is set to $\epsilon = 0.003$. }}
    \label{fig: PL_prelec}
    \vskip-0.43cm
     \includegraphics[height=2.7in]{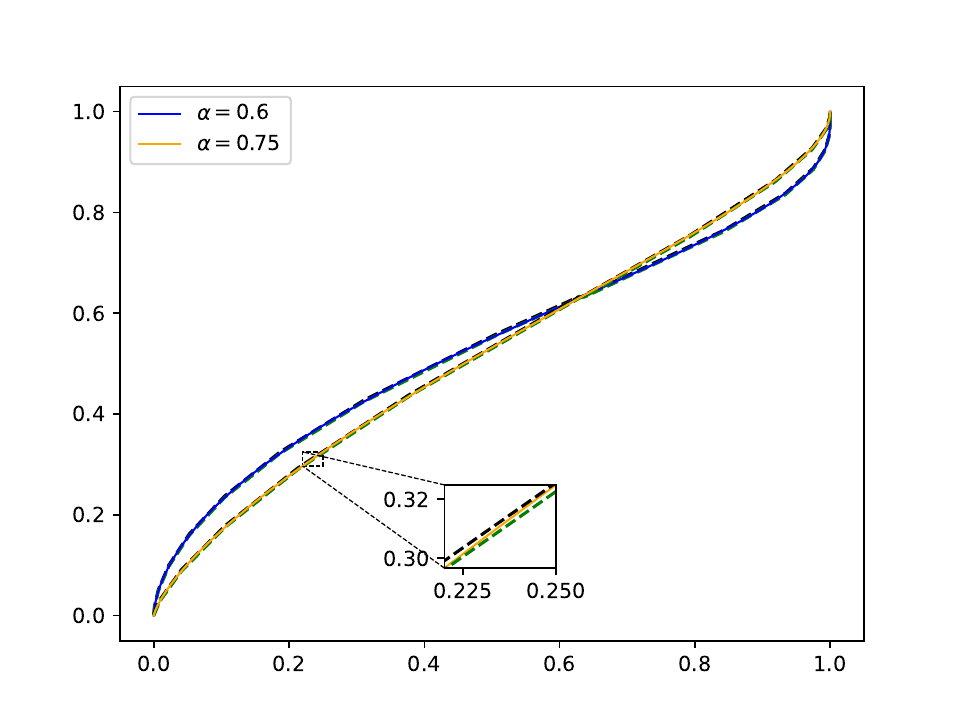}
    \vskip-0.36cm 
   \end{minipage}
\end{figure}

We generate 100 return data for 5 assets using the same numerical scheme as in \citet{EsfahaniKuhn}, where the returns for each asset $j\in \{1,\ldots,5\}$ are composed of two factors:
$r_j=\psi+\gamma_j$,    
with a systematic risk factor $\psi\sim N(0, 0.02)$ and an idiosyncratic risk factor $\gamma_j\sim N(0.03j,0.25j)$, for the $j$-th asset.
Here, $N(\mu,\sigma)$ denotes the Gaussian distribution. 
By construction, the asset with a higher index $j$ has a higher expected return and standard deviation.

We obtain lower and upper bounds on the nominal problem~\eqref{P-nom-num1} by implementing the constraints \eqref{RC: invS} of Theorem~\ref{thm: bilinear_nom} for the lower and upper piecewise-linear approximation of $h$, respectively, in the Gurobi solver (Version 11.0.3). 
Next, we invoke Theorems~\ref{thm: cut_end_invS}--\ref{thm: invS_PL_error}.
Hence, a lower bound for the robust problem \eqref{P-rob-num1} is obtained by calculating the lower bound implied by the cutting-plane method described in Algorithm~\ref{algo: cutting_plane_inv_S} (where we set $\epsilon_{\mathrm{tol}}= 0.001$), while using the lower approximation of $h$. 
Then, with the solution obtained from the cutting-plane method, we determine an upper bound on \eqref{P-rob-num1} by calculating its worst-case evaluation using the SOS2-constraints as in \eqref{sos2_nonconcave}, where we use the upper approximation of $h$. 
For the robust problem, we consider 
the total variation divergence $\phi(t)=|t-1|$. 
We choose the radius for the robust problem 
to be $r=\frac{1}{m}\chi^2_{m-1,0.95}$, where $m=100$. 

The results are displayed in Table~\ref{tab:Prelec_variational_and_quadratic}. 
We observe very tight upper and lower bounds. 
Furthermore, Gurobi solves both the nominal and robust problems \eqref{P-nom-num1}--\eqref{P-rob-num1} efficiently. 
As we can observe from Panel~A, 
the evaluation of the nominal solution decreases with $\alpha$. 
This is not surprising. 
Indeed, as we can see in Figure~\ref{fig: PL_prelec}, Prelec's function has the property that if $\alpha_0<\alpha_1$, then $h_{\alpha_0}(p)>h_{\alpha_1}(p)$ for the concave segment $p\in [0,1-1/e]$, which covers more than half of the interval $[0,1]$.
Interestingly, Panel~B of Table~\ref{tab:Prelec_variational_and_quadratic} reveals that the robust evaluation of the robust solution can exhibit an opposite, mildly increasing relationship with respect to $\alpha$. 
This is because in the robust case, if the radius $r$ is sufficiently large, then the worst-case probability $q^*_{(m)}$ of the worst-case realization can be such that $q^*_{(m)} \geq 1-1/e$ (recall that $(m)$ denotes the index of the highest ranked realization, as in \eqref{robustcheck_non_concave}). 
When this happens, all distorted decumulative probabilities in the objective function of \eqref{robustcheck_non_concave} are increasing in $\alpha$, since then all decumulative probabilities are on the interval $[1-1/e,1]$. 
If the radius $r$ would be set sufficiently small, then the same decreasing pattern with respect to $\alpha$ as in Panel~A would also emerge in the robust case. 

\begin{table}[t]
\centering
\caption{{\small Robust portfolio choice with inverse $S$-shaped distortion function. 
Upper and lower bounds (UB and LB) obtained for the nominal problem \eqref{P-nom-num1} and the robust problem \eqref{P-rob-num1} when $h$ is Prelec's distortion function with $\alpha\in \{0.6,\, 0.75,\, 0.95\}$ (see Figure~\ref{fig: PL_prelec}) and the divergence function is given by $\phi(t)=|t-1|$. 
The cardinality of return realizations is $m=100$.} }\label{tab:Prelec_variational_and_quadratic}
\subfloat{\begin{tabular}{cccccc}
\multicolumn{5}{c}{\textbf{Panel A: Solutions Nominal Problem}}\\  \hline
  $\alpha$&LB&UB&Run Time (LB)& Run Time (UB)& \\ \hline 
    0.6&-1.144&-1.142&1.34 sec&  1.21 sec\\
    0.75 & -1.153&-1.152& 0.44 sec&0.90 sec \\
    0.95 & -1.162&-1.160& 0.43 sec&0.30 sec \\
    \end{tabular}} 

\subfloat{\begin{tabular}{cccccc}
\multicolumn{6}{c}{\textbf{Panel B: Solutions Robust Problem}}\\  \hline
  $\alpha$&LB & UB&\# Cuts& Run Time (LB)&Run Time (UB)  \\ \hline 
    0.6&-1.041 &-1.040&9&  314 sec&2.77 sec\\
    0.75 &-1.039 &-1.038&9&142 sec& 1.45 sec\\
    0.95 & -1.036 &-1.035& 9 & 98 sec & 0.37 sec
    \end{tabular}} 

\end{table}

\section{Concluding Remarks} \label{sec: Conclusion}
In this paper, we have shown that nominal and robust optimization problems involving rank-dependent models can be reformulated into rank-independent, tractable optimization problems.
When the distortion function is concave, we have demonstrated that this reformulation admits a conic representation, which we have explicitly derived for canonical distortion and divergence functions.
Whereas the multiplicity of constraints in the reformulation increases exponentially with the dimension of the underlying probability space, we have developed two types of algorithms, and combinations thereof, to circumvent this curse of dimensionality. 
We have formally established that the upper and lower bounds the algorithms generate converge to the optimal objective value.
Finally, we have illustrated the good performance of our methods in two examples involving concave as well as inverse $S$-shaped distortion functions, efficiently yielding nominal and robust solutions that generate very tight upper and lower bounds on the optimal objective values.
%

As a direction for future research, one can investigate 
whether the approach developed in this paper can be extended to encompass robust optimization problems with general law-invariant convex functionals (\cite{Follmer2011}, S.~4.5), which also admit a dual representation. 

\singlespacing
\bibliographystyle{apalike}
\bibliography{sample}


\setcounter{equation}{0}
\appendix
\renewcommand{\thesection}{EC.\arabic{section}}
\counterwithout{equation}{section}
\renewcommand{\theequation}{EC.\arabic{equation}}
\theoremstyle{plain}
\newtheorem{mylemma}{Lemma}[section]
\renewcommand{\themylemma}{EC.\arabic{section}.\arabic{mylemma}}
\theoremstyle{plain}
\newtheorem{mytheorem}{Theorem}[section]
\renewcommand{\themytheorem}{EC.\arabic{section}.\arabic{mytheorem}}
\renewcommand{\theproposition}{EC.\arabic{section}}
\theoremstyle{plain}
\newtheorem{myproposition}{Proposition}[section]
\renewcommand{\themyproposition}{EC.\arabic{section}.\arabic{myproposition}}

\renewcommand{\theassumption}{EC.\arabic{section}}
\theoremstyle{plain}
\newtheorem{myassumption}{Assumption}[section]
\renewcommand{\themyassumption}{EC.\arabic{section}.\arabic{myassumption}}

\renewcommand{\thedefinition}{EC.\arabic{section}}
\theoremstyle{plain}
\newtheorem{mydefinition}{Definition}[section]
\renewcommand{\themydefinition}{EC.\arabic{section}.\arabic{mydefinition}}

\begin{landscape}
\section*{Online Appendix}
\section{Tables~\ref{tab:h_conjugates} and~\ref{tab:phi_conjugate}}\label{sec:tab}
\begin{table}[H]	
\centering
{\footnotesize\begin{tabular}{l|l|l|l|l|}
\textit{Distortion family} & \textit{Function}
& \textit{Conjugate} & \textit{Epigraph of perspective} & \textit{Conic}\\
&$h(p)$, $p\in [0,1]$ & $(-h)^*(y)$, $y\leq 0$& $\lambda(-h)^*\left(\frac{-\nu}{\lambda}\right)\leq z,~\lambda>0$& \textit{representation}\\ \hline

Expectation&$p$&$0$, $y\leq -1$&$0\leq z$, $\nu\geq \lambda$&CQ\\ \hline

$\mathrm{CVaR}_{1-\alpha}$&$
    \min\{\frac{p}{1-\alpha},1\}$, $\alpha\in[0,1)$ &  $\max\{(1-\alpha)y+1,0\}$ & $\max\{-(1-\alpha)\nu +\lambda,0\}\leq z$,~$\nu \geq 0$&CQ      \\ \hline
     Proportional Hazard&$\begin{cases}
     p^r\\
     r\in (0,1)
     \end{cases}$& $r^{\frac{1}{1-r}}(1+\frac{1}{r})|y|^\frac{r}{r-1}$&$\begin{cases}
          \lambda(r^{\frac{r}{1-r}}-r^\frac{1}{1-r})^{1-r}\leq z^{1-r}\nu^r\\
          \nu \geq 0
      \end{cases}$ &PC\\\hline
      Absolute Deviation&$\begin{cases}
    (1+r)p&p<1/2\\
    (1-r)p+r&p\geq 1/2\\
    r\in (0,1)
\end{cases}$&$\max\{(y+1+r)/2,0\},~ y\leq -(1-r)$&$\begin{cases}\max\{(-\nu+\lambda+\lambda r)/2,0\}\leq z\\ -\nu\leq -(1-r)\lambda \end{cases}$&CQ\\ \hline
     Gini Principles& $\begin{cases}
        (1+r)p-rp^2\\
        r\in (0,1)
     \end{cases}$ &$\begin{cases}
        \frac{1}{4r}\max\{y+1+r,0\}^2 & y+1\leq r\\
        y+1 & y+1>r
     \end{cases}$&$\begin{cases}
        z = z_1+z_2\\
        -\nu+\lambda(1-r)= \xi_1+\xi_2\\
        (z_1+\lambda)\geq \sqrt{\frac{w^2}{r}+(z_1-\lambda)^2},~z_2\geq \xi_2\\
        \xi_1+2r\leq w\\
        \xi_1\leq 0, \xi_2, w, \nu \geq 0
    \end{cases}$& CQ\\ \hline
      Dual Moments&$\begin{cases}
          1-(1-p)^n\\
          n>1
      \end{cases}$  &$\begin{cases}
       y+ c(n)\cdot \min\{|y|,n\}^{\frac{n}{n-1}}+1\\
       c(n)= \left(n^{-\frac{1}{n-1}}-n^{-\frac{n}{n-1}}\right)
    
      \end{cases}$& $\begin{cases}
        \lambda-\xi_3+(n^{-\frac{1}{n-1}}-n^{-\frac{n}{n-1}})\xi_4\leq z\\
        \xi_2\leq \xi_4^{\frac{n-1}{n}}\cdot \lambda^{1-\frac{n-1}{n}}\\
        -\nu+\xi_3\leq 0,~ \xi_2\geq \xi_3,\\ \xi_2,\xi_3,\xi_4\geq 0.
    \end{cases}$& PC  \\ \hline
      $\mathrm{MAXMINVAR}$&$\begin{cases}
          (1-(1-p)^n)^{1/n}\\
          n>1
      \end{cases}$&$\begin{cases}
      y\left(1-s^{\frac{1}{n}}\right)+\left(1-s\right)^{\frac{1}{n}}\\
      s= \frac{|y|^{\frac{n}{n-1}}}{1+|y|^{\frac{n}{n-1}}}
      \end{cases}$& $\begin{cases}
          -\nu+\xi_3\leq 0,\xi_2\geq \lambda\\ -\xi_3+\|(\xi_2,\xi_3)\|_{\frac{n}{n-1}}\leq z\\
          \xi_2,\xi_3 \geq 0
      \end{cases}$&PC\\\hline
      Lookback Transform&$\begin{cases}
          p^r(1-\log(p^r))\\
          r\in (0,1)
      \end{cases}$&Closed form unknown &$\begin{cases}
          \xi_3+\lambda e^{-(\xi_2+\xi_3)/\lambda}+(r^{\frac{r}{1-r}}-r^{\frac{1}{1-r}})\xi_4\leq z\\
          \xi_2\leq \nu^r\xi_4^{1-r}\\
          \xi_2,\xi_3,\xi_4,\nu \geq 0
      \end{cases}$& $\mathrm{EXP}\times \mathrm{PC}$ \\\hline
\end{tabular}}
\caption{Canonical distortion functions with explicit expressions of their conjugates $(-h)^*$ and conic representations of the epigraphs of their perspectives. 
The cones are abbreviated as: CQ: quadratic cone, PC: power cone, EXP: exponential cone. 
Note that $(-h)^*(y)=+\infty$ for $y>0$; see Remark~\ref{rem:extendedh}.
For Expectation and $\mathrm{CVaR}_{1-\alpha}$, see \cite{Follmer2011}, Section~4.6; 
for Proportional Hazard, see \cite{Wang1995}; 
for Absolute Deviation and Gini Principles, see \cite{denneberg1990premium};
for Dual Moments, see \cite{MuliereScarsini1989} and \cite{ELS20}; 
for MAXMINVAR, see \cite{Cherny09}; 
for Lookback Transform, see \cite{Denneberg1990}. }\label{tab:h_conjugates}
\end{table}
\end{landscape}
\begin{landscape}
\begin{table}[htp]
\centering
{\footnotesize\begin{tabular}{l|l|l|l|l|}
\textit{Divergence family}& \textit{Function} &\textit{Conjugate} & \textit{Epigraph of perspective} & \textit{Conic}\\
& $\phi(x)$, $x\geq 0$ & $\phi^*(y)$, $y\in \mathbb{R}$&$\gamma\phi^*\left(\frac{s}{\gamma}\right)\leq t,~\gamma>0$&\textit{representation} \\ \hline
Kullback-Leibler&$x\log x-x+1$&$e^y-1$&$\begin{cases}
    \gamma\log(\frac{\gamma}{w})+s\leq 0\\
    w-\gamma \leq t.
\end{cases}$&EXP\\\hline
Burg entropy&$-\log x+x-1$&$-\log(1-y),~ y<1$&$\begin{cases}
    \gamma\log(\frac{\gamma}{v})\leq t\\
    v=\gamma - s,~v>0\\
\end{cases}$&EXP\\\hline
$\chi^2$-distance& $\frac{1}{x}(x-1)^2$&$2-2\sqrt{1-y},~ y<1$&$\begin{cases}
    2\gamma -2w\leq t\\
    \sqrt{w^2+\frac{1}{4}(\gamma -v)^2}\leq \frac{1}{2}(\gamma+v)\\
    w\geq 0,~v= \gamma -s,~ v>0\\
\end{cases}$&CQ\\\hline
Variation distance &$|x-1|$& $\max\{s,-1\},~ s\leq 1$&$\max\{s,-\gamma\}\leq t,~ s\leq \gamma$ &CQ\\\hline
Modified $\chi^2$-distance& $(x-1)^2$&$\max\{0,y/2+1\}^2-1$&$\begin{cases}
 \sqrt{w^2+\frac{t^2}{4}}\leq \frac{t+2\gamma}{2}\\
    0\leq w, s/2+\gamma\leq w
\end{cases}$&CQ\\ \hline
Hellinger distance& $(\sqrt{x}-1)^2$& $\frac{y}{1-y},~ y<1$&$\begin{cases}
     -\gamma+v\leq t\\ \sqrt{\gamma^2+\frac{1}{4}(v-w)^2}\leq \frac{1}{2}(v+w)\\
     w=\gamma-s,~ w>0.
\end{cases}$&CQ\\ \hline
$\chi$-divergence of order $\theta>1$& $|x-1|^\theta$& $y+(\theta -1)(\frac{|y|}{\theta})^{\frac{\theta}{\theta-1}}$& $\begin{cases}
s+(\theta-1)\theta^{\frac{\theta}{1-\theta}} w\leq t, 0\leq w\\
~|s|\leq w^{\frac{\theta}{\theta-1}}\cdot \gamma^{1-\frac{\theta}{\theta-1}}    
\end{cases}$&PC\\ \hline
Cressie and Read& $\frac{1-\theta+\theta x-x^\theta}{\theta(1-\theta)},~ 0<\theta<1$&$\begin{cases}
    \frac{1}{\theta}(1-y(1-\theta))^{\frac{\theta}{\theta -1}}-\frac{1}{\theta}\\
    y>\frac{1}{1-\theta}
\end{cases}$& $\begin{cases}
    |w|\leq (t\theta +\gamma)^{\frac{\theta-1}{\theta}}\cdot \gamma^{1-\frac{\theta-1}{\theta}}\\ w = \gamma-s(1-\theta), ~s<\frac{\gamma}{1-\theta}
\end{cases}$&PC\\ \hline
\end{tabular}}
\caption{Canonical $\phi$-divergence functions taken from Table~2 of \citet{BenTal11}, but with the explicit epigraphs of their perspective functions (provided in the fourth column) and corresponding conic representations (provided in the fifth column). 
The cones are abbreviated as: CQ: quadratic cone, PC: power cone, EXP: exponential cone.}\label{tab:phi_conjugate}
\end{table}
\end{landscape}

\section{Proofs} \label{app:proofs}
\begin{proof}[\textbf{Proof of Theorem~\ref{thm:ReformulationWholeProb}}]
Let $\mathbf{q}$ be an arbitrary probability vector and denote by $\mathbb{Q}$ the corresponding probability measure on the sigma-algebra $2^{|\Omega|}$, i.e., the set of all subsets of $\Omega$.
Recall Eqn.~\eqref{def:distortion_non_additive}.
By \citet{Denneberg}, pp.~16--17, the set function induced by the composition $h\circ \mathbb{Q}$ is monotone and submodular.
It then follows from Proposition~10.3 of \citet{Denneberg} that, for any $X:\Omega\rightarrow\mathbb{R}$, 
\begin{align}
\nonumber
 \rho_{u,h,\mathbf{q}}(X)=\sup_{\mathbf{\bar{q}}\in M_h(\mathbf{q})} \mathbb{E}_{\mathbf{\bar{q}}}[-u(X)],
\end{align}
with $M_h(\mathbf{q})$ defined in \eqref{defMhq}.

Hence, we have that
\begin{align*}
    \sup_{\mathbf{q}\in \mathcal{D}_\phi(\mathbf{p},r)}\rho_{u,h,\mathbf{q}}(f(\mathbf{a},\mathbf{X}))
    &=\sup_{\mathbf{q}\in \mathcal{D}_\phi(\mathbf{p},r)}\sup_{\mathbf{\bar{q}}\in M_h(\mathbf{q})} \mathbb{E}_{\mathbf{\bar{q}}}[-u(f(\mathbf{a},\mathbf{X}))]\\
    &=\sup_{(\mathbf{q},\bar{\mathbf{q}})\in \mathcal{U}_{\phi,h}(\mathbf{p})}-\sum^m_{i=1}\bar{q}_iu(f(\mathbf{a},\mathbf{x}_i)),
\end{align*}
where $\mathcal{U}_{\phi,h}(\mathbf{p})$ is as defined in \eqref{uncertaintyset}.
This proves the stated result.
\end{proof}
\begin{proof}[\textbf{Proof of Theorem~\ref{thm:RC}}] Suppose that a pair $(\mathbf{a},c)\in \mathbb{R}^{n_a+1}$ satisfies the inequality
\begin{align}\label{primal_prob}
\sup_{(\mathbf{q},\mathbf{\bar{q}})\in \mathcal{U}_{\phi,h}(\mathbf{p})}-\sum^m_{i=1} \bar{q}_iu(f(\mathbf{a},\mathbf{x}_i))\leq c.
\end{align}
Then, the left-hand side of \eqref{primal_prob} constitutes a maximization problem upper bounded by $c<\infty$. 
Moreover, the set $\mathcal{U}_{\phi,h}(\mathbf{p})$ contains $(\mathbf{p},\mathbf{p})$ as a Slater point, since $\phi(1)=0<r$ and $h(\sum_{k\in I_j}p_k)>\sum_{k\in I_j}p_k$ for all subsets $I_j\subset [m]$ that are not $\emptyset$ and $[m]$.\footnote{This is because $h(x)>x$ for all $x\in (0,1)$ due to concavity (excluding the trivial case where $h(x)=x$ for all $x\in[0,1]$).} 
Therefore, strong duality holds, and we consider the Lagrangian function
\begin{align*}
&L(\mathbf{a},\mathbf{q},\bar{\mathbf{q}},\alpha,\beta,(\lambda_j)_j,\gamma)\\
&=-\sum^m_{i=1}\bar{q}_iu(f(\mathbf{a},\mathbf{x}_i))-\alpha \left(\sum^m_{i=1}q_i-1\right)-\beta\left(\sum^m_{i=1}\bar{q}_i-1\right)-\sum^{2^m-2}_{j=1} \lambda_j\left(\sum_{k\in I_j}\bar{q}_k-h\left(\sum_{k\in I_j}q_k\right)\right)\\
&~~~~~~-\gamma \left(\sum^m_{i=1}p_i\phi\left(\frac{q_i}{p_i}\right)-r\right)\\
&=\alpha+\beta+\gamma r +\sum^m_{i=1}-(u(f(\mathbf{a},\mathbf{x}_i))+\beta)\bar{q}_i-\sum^{2^m-2}_{j=1} \lambda_j\sum_{k\in I_j}\bar{q}_k+\sum^m_{i=1}-\alpha q_i-\gamma \left(\sum^m_{i=1}p_i\phi\left(\frac{q_i}{p_i}\right)\right)\\
&~~~~~~ -\sum^{2^m-2}_{j=1}-\lambda_jh\left(\sum_{k\in I_j}q_k\right),
\end{align*}
for $\alpha,\beta\in \mathbb{R}$ and $\lambda_j,\gamma\geq 0$.  
We analyze $\sup_{\mathbf{q},\mathbf{\bar{q}}\geq 0}L(\mathbf{a},\mathbf{q},\mathbf{\bar{q}},\alpha,\beta,(\lambda_j)_j,\gamma)$, which excluding the constant $\alpha+\beta+\gamma r$ is equal to:
\begin{align*}
&\sup_{\mathbf{q},\bar{\mathbf{q}}\geq 0}\sum^m_{i=1}-(u(f(\mathbf{a},\mathbf{x}_i))+\beta)\bar{q}_i-\sum^{2^m-2}_{j=1} \lambda_j\sum_{k\in I_j}\bar{q}_k+\sum^m_{i=1}-\alpha q_i-\gamma p_i\phi\left(\frac{q_i}{p_i}\right)\\
&\qquad-\sum^{2^m-2}_{j=1} \lambda_j(-h)\left(\sum_{k\in I_j}q_k\right)\\
    &=\sup_{\bar{\mathbf{q}}\geq 0}\sum^m_{i=1}-(u(f(\mathbf{a},\mathbf{x}_i))+\beta)\bar{q}_i-\sum^{2^m-2}_{j=1} \lambda_j\sum_{k\in I_j}\bar{q}_k+\sup_{\mathbf{q}\geq 0}\sum^m_{i=1}-\alpha q_i-\gamma p_i\phi\left(\frac{q_i}{p_i}\right)\\
    &\qquad-\sum^{2^m-2}_{j=1} \lambda_j(-h)\left(\sum_{k\in I_j}q_k\right).
\end{align*}
We examine both supremum terms separately. 
The first supremum gives:
\begin{align*}
     \sup_{\bar{\mathbf{q}}\geq 0}\sum^m_{i=1}-(u(f(\mathbf{a},\mathbf{x}_i))+\beta)\bar{q}_i-\sum^{2^m-2}_{j=1} \lambda_j\sum_{k\in I_j}\bar{q}_k
     &=\sup_{\bar{\mathbf{q}}\geq 0}\sum^m_{i=1}-\left(u(f(\mathbf{a},\mathbf{x}_i))+\beta+\sum_{j:i\in I_j}\lambda_j\right)\bar{q}_i\\
     &=\begin{cases} 
     0&\text{if}~u(f(\mathbf{a},\mathbf{x}_i))+\beta+\sum_{j:i\in I_j}\lambda_j\geq 0,~\forall i\\
     \infty&\text{else}.
     \end{cases}
\end{align*}
The second supremum gives:
\begin{align*}
     &\sup_{\mathbf{q}\geq 0}\sum^m_{i=1}-\alpha q_i-\gamma p_i\phi\left(\frac{q_i}{p_i}\right)-\sum^{2^m-2}_{j=1} \lambda_j(-h)\left(\sum_{k\in I_j}q_k\right)\\
     &=\sup_{\substack{\mathbf{q} \geq \mathbf{0}\\w_1,\ldots, w_{2^m-2}\geq 0}}\left\{\sum^m_{i=1}-\alpha q_i-\gamma p_i\phi\left(\frac{q_i}{p_i}\right)-\sum^{2^m-2}_{j=1} \lambda_j(-h)(w_j)~\middle|~w_j=\sum_{k\in I_j}q_k\right\}\\
     &=\inf_{\nu_1,\ldots, \nu_{2^m-2}}\sup_{\substack{\mathbf{q}\geq \mathbf{0}\\w_1,\ldots, w_{2^m-2}\geq 0}}\sum^m_{i=1}-\alpha q_i-\gamma p_i\phi\left(\frac{q_i}{p_i}\right)-\sum^{2^m-2}_{j=1} \lambda_j(-h)(w_j)-\sum^{2^m-2}_{j=1}\nu_j\left(w_j-\sum_{k\in I_j}q_k\right)\\
    &=\inf_{\nu_1,\ldots,\nu_{2^m-2}}\sup_{\mathbf{q}\geq 0}\sum^m_{i=1}\left(-\alpha+\sum_{j: i\in I_j}\nu_j\right) q_i-\gamma p_i\phi\left(\frac{q_i}{p_i}\right)+\sup_{w_1,\ldots ,w_{2^m-2}\geq 0}\sum^{2^m-2}_{j=1}-\nu_jw_j-\lambda_j(-h)(w_j)\\
    &=\inf_{\nu_1,\ldots, \nu_{2^m-2}}\sum^m_{i=1}p_i\left(\sup_{t\geq 0}\left(-\alpha+\sum_{j: i\in I_j}\nu_j\right)t-\gamma \phi(t)\right)+\sum^{2^m-2}_{j=1}\lambda_j(-h)^*\left(\frac{-\nu_j}{\lambda_j}\right)\\
    &=\inf_{\nu_1,\ldots,\nu_{2^m-2}}\sum^m_{i=1}p_i\gamma \phi^*\left(\frac{-\alpha+\sum_{j:i\in I_j}\nu_j}{\gamma}\right)+\sum^{2^m-2}_{j=1}\lambda_j(-h)^*\left(\frac{-\nu_j}{\lambda_j}\right),
\end{align*}
with $\phi^*$ and $(-h)^*$ the conjugates of $\phi$ and $-h$. 

Therefore, strong duality implies that a pair $(\mathbf{a},c)$ satisfies \eqref{primal_prob}
if and only if
\begin{align*}
\inf_{\substack{\alpha,\beta,\nu_j\in\mathbb{R}\\\lambda_j,\gamma\geq 0}}&\alpha+\beta+\gamma r+\sum^m_{i=1}p_i\gamma \phi^*\left(\frac{-\alpha+\sum_{j:i\in I_j}\nu_j}{\gamma}\right)+\sum^{2^m-2}_{j=1}\lambda_j(-h)^*\left(\frac{-\nu_j}{\lambda_j}\right)\leq c\\
    \text{subject to}&~-u(f(\mathbf{a},\mathbf{x}_i))-\beta-\sum_{j:i\in I_j}\lambda_j\leq 0,~\forall i\in [m].    
\end{align*}
Since the infimum is attained due to the boundedness of the primal problem~\eqref{primal_prob} and the strong duality theorem, we may remove the infimum sign and obtain that the above holds if and only if there exist $\lambda_j,\gamma\geq 0,\alpha,\beta,\nu_j\in\mathbb{R}$ such that
\begin{align*}
    \begin{cases}
    \alpha+\beta+\gamma r+\sum^m_{i=1}p_i\gamma \phi^*\left(\frac{-\alpha+\sum_{j:i\in I_j}\nu_j}{\gamma}\right)+\sum^{2^m-2}_{j=1}\lambda_j(-h)^*\left(\frac{-\nu_j}{\lambda_j}\right)\leq c\\
    -u(f(\mathbf{a},\mathbf{x}_i))-\beta-\sum_{j:i\in I_j}\lambda_j\leq 0,~\forall i\in[m]\\
    \lambda_j,\gamma \geq 0\\
    \alpha, \beta, \nu_{j}\in \mathbb{R},~\forall j\in [2^m-2].
    \end{cases}
\end{align*}

For the nominal problem, the Lagrangian function is given by
\begin{align*}
    L(\mathbf{a},\bar{\mathbf{q}},\beta,(\lambda_j)_j)&=-\sum^m_{i=1}\bar{q}_iu(f(\mathbf{a},\mathbf{x}_i))-\beta\left(\sum^m_{i=1}\bar{q}_i-1\right)-\sum^{2^m-2}_{j=1} \lambda_j\left(\sum_{k\in I_j}\bar{q}_k-h\left(\sum_{k\in I_j}p_k\right)\right)\\
    &=\beta+\sum^{2^m-2}_{j=1}\lambda_jh\left(\sum_{k\in I_j}p_k\right)+\sum^m_{j=1}\bar{q}_i\left(-u(f(\mathbf{a},\mathbf{x}_i))-\beta-\sum_{j: i\in I_j}\lambda_j\right).
\end{align*}
Hence, we have
\begin{align*}
    &\sup_{\bar{\mathbf{q}}\in M_h(\mathbf{p})}\sum^m_{i=1}-\bar{q}_iu(f(\mathbf{a},\mathbf{x}_i))&\\
    &=\inf_{\beta\in \mathbb{R},\lambda_j\geq 0}\sup_{\bar{\mathbf{q}}\geq \mathbf{0}}L(\mathbf{a},\bar{\mathbf{q}},\beta,(\lambda_j)_j)\\
    &=\inf_{\beta\in \mathbb{R},\lambda_j\geq 0} \left\{\beta+\sum^{2^m-2}_{j=1}\lambda_jh\left(\sum_{k\in I_j}p_k\right)~\middle |~-u(f(\mathbf{a},\mathbf{x}_i))-\beta-\sum_{j: i\in I_j}\lambda_j\leq 0,~\forall i\in [m]\right\},
\end{align*}
where the above strong duality holds since we are solving a linear programming problem.
Therefore, we have that $\sup_{\bar{\mathbf{q}}\in M_h(\mathbf{p})}\sum^m_{i=1}-\bar{q}_iu(f(\mathbf{a},\mathbf{x}_i))\leq c$ if and only if there exist $\beta\in \mathbb{R}, (\lambda_j)_j\geq 0$ such that
\begin{align*}
    \begin{cases}
    \beta+\sum^{2^m-2}_{j=1}\lambda_jh\left(\sum_{k\in I_j}p_k\right)\leq c\\
    -u(f(\mathbf{a},\mathbf{x}_i))-\beta-\sum_{j: i\in I_j}\lambda_j\leq 0,~\forall i\in [m]\\
    \beta\in \mathbb{R}, \lambda_j\geq 0,~ \forall j\in [2^m-2].
    \end{cases}
\end{align*}
 \end{proof}
\begin{proof}[\textbf{Proof of Lemma~\ref{lem:conic_conjugate}}]
The proof follows \citet{ModernCVX}, except that we replace the quadratic cone with a general cone. 
We have
\begin{align*}
    \mathrm{Epi}(f^*)=\{(\mathbf{y},s):\mathbf{y}^T\mathbf{x}-f(\mathbf{x})\leq s,\forall \mathbf{x}\}&=\{(\mathbf{y},s):\mathbf{y}^T\mathbf{x}-t\leq s,\forall (\mathbf{x},t)\in \mathrm{Epi}(f)\}.
\end{align*}
Therefore, $(\mathbf{y},s)\in \mathrm{Epi}(f^*)$ if and only if
\begin{align*}
    \min_{\mathbf{x},t,\mathbf{w}}\{-\mathbf{y}^T\mathbf{x}+t:\mathbf{A}\mathbf{x}+t\mathbf{v}+\mathbf{B}\mathbf{w}+\mathbf{b}\succeq_\mathbf{K}\mathbf{0}\}\geq -s.
\end{align*}
This minimization problem is strictly feasible and bounded from below. 
Hence, the conic duality theorem (see \citealp{ModernCVX}) implies that it is equal to 
\begin{align*}
    \max_{\boldsymbol{\xi}}\{-\mathbf{b}^T\boldsymbol{\xi}:\mathbf{A}^T\boldsymbol{\xi} = -\mathbf{y}, \mathbf{B}^T\boldsymbol{\xi}=\mathbf{0}, \mathbf{v}^T\boldsymbol{\xi} =1, \boldsymbol{\xi}\in\mathbf{K}^*\}.
\end{align*}
Therefore, 
\begin{align*}
    \mathrm{Epi}(f^*)=\{(\mathbf{y},s):\exists~ \boldsymbol{\xi}\in \mathbf{K}^*: \mathbf{A}^T\boldsymbol{\xi} = -\mathbf{y}, \mathbf{B}^T\boldsymbol{\xi}=\mathbf{0}, \mathbf{v}^T\boldsymbol{\xi} =1, s\geq \mathbf{b}^T\boldsymbol{\xi}\}.
\end{align*}
\end{proof}
\begin{proof}[\textbf{Proof of Lemma~\ref{lem:conic_pers}}] 
Following \citet{ModernCVX},
we have
\begin{align*}
    \mathrm{Epi}(\tilde{f})=\left\{(\mathbf{x},\lambda,t):\lambda f\left(\frac{\mathbf{x}}{\lambda}\right)\leq t\right\}&=\left\{(\mathbf{x},\lambda,t):\left(\frac{\mathbf{x}}{\lambda},\frac{t}{\lambda}\right)\in \mathrm{Epi}(f)\right\}\\
    &=\{(\mathbf{x},\lambda,t):\exists~ \mathbf{w}\in \mathbb{R}^k:\mathbf{A}(\mathbf{x}/\lambda, \mathbf{w}, t/\lambda)^T-b\succeq_{\mathbf{K}}\mathbf{0}\}\\
    &=\{(\mathbf{x},\lambda,t):\exists~ \tilde{\mathbf{w}}\in \mathbb{R}^k:\mathbf{A}(\mathbf{x}/\lambda, \Tilde{\mathbf{w}}/\lambda, t/\lambda)^T-b\succeq_{\mathbf{K}}\mathbf{0}\}\\
    &=\{(\mathbf{x},\lambda,t):\exists~ \tilde{\mathbf{w}}\in \mathbb{R}^k:\mathbf{A}(\mathbf{x},\tilde{\mathbf{w}},t)^T-\lambda b\succeq_{\mathbf{K}}\mathbf{0}\}\\
    &=\{(\mathbf{x},\lambda,t):\exists~ \tilde{\mathbf{w}}\in \mathbb{R}^k:[\mathbf{A},-b](\mathbf{x},\tilde{\mathbf{w}},t,\lambda)^T\succeq_{\mathbf{K}}\mathbf{0}\}.
\end{align*}
 \end{proof}

\begin{proof}[\textbf{Proof of Lemma~\ref{lem: qq_in_U}}]
By definition, $\mathbf{q}^*\in \mathcal{D}_{\phi}(\mathbf{p},r)$. Hence, we only need to show that $\bar{\mathbf{q}}^*\in M_h(\mathbf{q}^*)$.
    Using Lemma~4.98 of \citet{Follmer2011}, we have that $\rho_{u,h,\mathbf{q}^*}(X)\geq \mathbb{E}_{\bar{\mathbf{q}}^*}[-X]$ for all random variables $X$ (where $\rho_{u,h,\mathbf{q}^*}(X)$ is as defined in \eqref{def:distortionriskmeasure} with measure $\mathbb{Q}=\mathbf{q}^*$). 
    In particular, this holds for all $X=-\mathbbm{1}_{A}$, for any measurable set $A\subset \Omega$. 
    Hence, $(\mathbf{q}^*,\bar{\mathbf{q}}^*)\in \mathcal{U}_{\phi,h}(\mathbf{p})$.
\end{proof}

\begin{proof}[\textbf{Proof of Theorem~\ref{thm:cuttingplane_conv}}]
First, we have, for any $\mathbf{a}^{1},\mathbf{a}^{2}\in \mathcal{A}$, 
\begin{align}
    \begin{split}
\left|\sum_{i=1}^m\bar{q}_iu(f(\mathbf{a}^{2},\mathbf{x}_i))-\sum_{i=1}^m\bar{q}_iu(f(\mathbf{a}^{1},\mathbf{x}_i))\right|&\leq \|\mathbf{\bar{q}}\|_2\|(u(f(\mathbf{a}^{2},\mathbf{x}_i))-u(f(\mathbf{a}^{1},\mathbf{x}_i)))^m_{i=1}\|_2\\
    &\leq \|\mathbf{\bar{q}}\|_1\|(u(f(\mathbf{a}^{2},\mathbf{x}_i))-u(f(\mathbf{a}^{1},\mathbf{x}_i)))^m_{i=1}\|_2\\
    &=\|(u(f(\mathbf{a}^{2},\mathbf{x}_i))-u(f(\mathbf{a}^{1},\mathbf{x}_i)))^m_{i=1}\|_2,
    \end{split}
\end{align}
where the first inequality follows from Cauchy-Schwarz; 
and the second inequality follows from $\|\bar{\mathbf{q}}\|_2\leq \|\bar{\mathbf{q}}\|_1$ for a probability vector $\bar{\mathbf{q}}$, since $\bar{q}_k^2\leq \bar{q}_k$ for $|\bar{q}_k|\leq 1$.

Suppose now that the cutting-plane method has not terminated at the $t$-th iteration, i.e., the optimal solution and objective value $(\mathbf{a}_t,c_t)$ violate the $\epsilon_{\mathrm{tol}}$-feasibility condition at step \ref{step4} of Algorithm~\ref{cutting_plane_algo}. 
Let $(\mathbf{q}^*_t, \mathbf{\bar{q}}^*_{t})$ be the new worst-case scenarios that are added to $\mathcal{U}_t$ at step \ref{step5} of Algorithm \ref{cutting_plane_algo}. 
Then, by definition, we have
\begin{align}\label{t_iteration}
    -\sum_{i=1}^m\bar{q}^*_{t,i}u(f(\mathbf{a}_t,\mathbf{x}_i))-c_t>\epsilon_{\mathrm{tol}}.
\end{align}
For any $s>t$, we also have that at the $s$-th iteration:
\begin{align}\label{s_iteration}
    -\sum_{i=1}^m\bar{q}^*_{t,i}u(f(\mathbf{a}_s,\mathbf{x}_i))-c_s\leq 0,
\end{align}
since this is part of the constraint at the $s$-th iteration. Let $\tilde{c}$ be the optimal objective value of \eqref{P-ref}. 
By Assumption~\ref{assumption:well-defined}, we have that $-\infty<\tilde{c}<\infty$. 
Furthermore, we have $\tilde{c}\geq c_i$ for any iteration $i$ since the cutting-plane algorithm always yields a lower bound on \eqref{P-ref}. 
Hence, we may assume that for $t$ and $s$ sufficiently large, the lower bound improvement is upper bounded, i.e., $c_s-c_t\leq \frac{1}{2}\epsilon_{\mathrm{tol}}$, since otherwise the cutting-plane will yield a lower bound that exceeds $\tilde{c}$, after finitely many iterations.  
Therefore, it follows from \eqref{t_iteration}--\eqref{s_iteration} that
\begin{align*}
    \left|\sum_{i=1}^m\bar{q}^*_{t,i}u(f(\mathbf{a}_t,\mathbf{x}_i))-\sum_{i=1}^m\bar{q}^*_{t,i}u(f(\mathbf{a}_s,\mathbf{x}_i))\right|>\epsilon_{\mathrm{tol}}-(c_s-c_t)\geq \frac{1}{2}\epsilon_{\mathrm{tol}},
\end{align*}
which implies that
\begin{align}\label{no_intersection}
   \|(u(f(\mathbf{a}_s,\mathbf{x}_i))-u(f(\mathbf{a}_t,\mathbf{x}_i)))^m_{i=1}\|_2>\frac{1}{2}\epsilon_{\mathrm{tol}}.
\end{align}
This shows that the minimum distance between any two outcomes of the cutting-plane method, when evaluated in utilities, is at least $\frac{1}{2}\epsilon_{\mathrm{tol}}$. 

The idea is now to show that if the cutting-plane method does not terminate, then there is a sequence of infinitely many cutting-plane solutions $\{\mathbf{a}_j\}^\infty_{j=1}$ for which the corresponding vector $(u(f(\mathbf{a}_j,\mathbf{x}_i)))^m_{i=1}$ remains in a bounded set. 
Since we know from above that for each of these solutions their utility values are a distance $\frac{1}{2}\epsilon_{\mathrm{tol}}$ away from each other, we conclude that the cutting-plane method must terminate since a bounded set can not contain infinitely many disjoint balls with radius $\frac{1}{2}\epsilon_{\mathrm{tol}}$, as argued in \citet{Boyd09}.

Therefore, we define
\begin{align*}
  \mathcal{T}\triangleq\left\{(u(f(\mathbf{a},\mathbf{x}_i)))^m_{i=1}~\middle |~\mathbf{a}\in \mathcal{A},~-\sum_{i=1}^mp_iu(f(\mathbf{a},\mathbf{x}_i))\leq \tilde{c}\right\}.  
\end{align*}
Then, for all iterations $t$, we have $(u(f(\mathbf{a}_t,\mathbf{x}_i)))^m_{i=1}\in \mathcal{T}$, since $\mathbf{p}\in \mathcal{U}_t$ for all $t\geq 1$, hence we have the inequality $-\sum^m_{i=1}p_iu(f(\mathbf{a}_t,\mathbf{x}_i))\leq c_t\leq \tilde{c}$. 
We show that $\mathcal{T}$ is bounded in the Euclidean 2-norm $\|.\|_2$. 
Indeed, by assumption, $M\triangleq\sup_{\mathbf{a}\in \mathcal{A},i\in [m]}u(f(\mathbf{a},\mathbf{x}_i))<\infty$. 
Hence, for all vectors $(u(f(\mathbf{a},\mathbf{x}_i)))^m_{i=1}\in \mathcal{T}$, its individual entry is always bounded from above by $M$. 
It remains to show that all its entries are also bounded from below. 
Let $p_{\min}=\min_{i=1}^mp_i>0$ (by Assumption~\ref{assump: nominal_p}) and $i_{\min}(\mathbf{a})=\mathrm{argmin}_{i}u(f(\mathbf{a},\mathbf{x}_i)$. 
Then, we have for all $(u(f(\mathbf{a},\mathbf{x}_i)))^m_{i=1}\in \mathcal{T}$,
\begin{align*}
   -p_{i_{\min}(\mathbf{a})}u(f(\mathbf{a},\mathbf{x}_{i_{\min}(\mathbf{a})}))-\sum_{i\neq i_{\min}(\mathbf{a})}p_iu(f(\mathbf{a},\mathbf{x}_i))\leq \tilde{c},
\end{align*}
which implies
\begin{align*}
    -u(f(\mathbf{a},\mathbf{x}_{i_{\min}(\mathbf{a})}))\leq& \frac{\tilde{c}+M}{p_{i_{\min}(\mathbf{a})}}\leq \frac{\tilde{c}+M}{p_{\min}}\\
    &\Leftrightarrow\\
     u(f(\mathbf{a},\mathbf{x}_{i_{\min}(\mathbf{a})}))\geq& -\frac{\tilde{c}+M}{p_{\min}}>-\infty.
\end{align*}
Hence, $\mathcal{T}$ is bounded in the Euclidean 2-norm $\|.\|_2$.
\end{proof}

\begin{proof}[\textbf{Proof of Theorem~\ref{thm:cuttingplane_conv_constr}}]
Before we proceed with the proof, we recall the notion of a KKT-vector. 
For a generic convex optimization problem 
\begin{align*}
    \min_{x\in \mathcal{X}}\{f_0(x)~|~f_i(x)\leq 0,\forall i\in [L]\},
\end{align*}
with $\mathcal{X}$ a suitable convex subset and $f_0,\ldots,f_L$ convex functions, the KKT-vector corresponding to the constraints $f_i(x)\leq 0, i\in [L]$ is a vector $\mathbf{\lambda} \in \mathbb{R}^L$ such that
\begin{align*}
    \inf_{x\in \mathcal{X}}\left\{f_0(x)+\sum^L_{i=1}\lambda_if_i(x)\right\}= \min_{x\in \mathcal{X}}\{f_0(x)~|~f_i(x)\leq 0,\forall i\in [L]\}.
\end{align*}
The existence of the KKT-vector is guaranteed when Slater's condition and the boundedness of the optimization problem are satisfied (see Theorem~28.2, \citealp{Rockafellar}).

The proof now consists of two parts. 
In the first part, we show that Algorithm~\ref{Algo: cutting_plane_constraint} terminates after finitely many iterations, for any $\epsilon_{\mathrm{tol}}>0$. 
The second part shows the convergence as $\epsilon_{\mathrm{tol}}\to 0$.

\textit{First part}:
    We reexamine the proof of Theorem~\ref{thm:cuttingplane_conv}. 
    First, we have, for any $\mathbf{a}^{1},\mathbf{a}^{2}\in \mathcal{A}$, the following inequality:
\begin{align*}
\left|\sum_{i=1}^m\bar{q}_iu(f(\mathbf{a}^{2},\mathbf{x}_i))-\sum_{i=1}^m\bar{q}_iu(f(\mathbf{a}^{1},\mathbf{x}_i))\right|\leq \|(u(f(\mathbf{a}^{2},\mathbf{x}_i))-u(f(\mathbf{a}^{1},\mathbf{x}_i)))^m_{i=1}\|_2.
\end{align*}
Let $t$ be an iteration where Algorithm~\ref{Algo: cutting_plane_constraint} has not yet terminated and 
let $(\mathbf{q}^*_t, \mathbf{\bar{q}}^*_{t})$ be the new worst-case scenarios that are added to $\mathcal{U}_t$. 
Then, we have
$    -\sum_{i=1}^m\bar{q}^*_{t,i}u(f(\mathbf{a}_t,\mathbf{x}_i))-c>\epsilon_{\mathrm{tol}}.
$ 
For any $s>t$, we also have that at the $s$-th iteration
$    -\sum_{i=1}^m\bar{q}^*_{t,i}u(f(\mathbf{a}_s,\mathbf{x}_i))-c\leq 0.
$ 
Hence, 
\begin{align*}
    \left|\sum_{i=1}^m\bar{q}^*_{t,i}u(f(\mathbf{a}_t,\mathbf{x}_i))-\sum_{i=1}^m\bar{q}^*_{t,i}u(f(\mathbf{a}_s,\mathbf{x}_i))\right|>\epsilon_{\mathrm{tol}},
\end{align*}
which implies that
\begin{align*}
   \|(u(f(\mathbf{a}_s,\mathbf{x}_i))-u(f(\mathbf{a}_t,\mathbf{x}_i)))^m_{i=1}\|_2>\epsilon_{\mathrm{tol}}.
\end{align*}
We define the set
\begin{align*}
  \mathcal{T}\triangleq\left\{(u(f(\mathbf{a},\mathbf{x}_i)))^m_{i=1}~\middle |~\mathbf{a}\in \mathcal{A},~-\sum_{i=1}^mp_iu(f(\mathbf{a},\mathbf{x}_i))\leq c\right\}.  
\end{align*}
Then, for all iterations $t$, we have $(u(f(\mathbf{a}_t,\mathbf{x}_i)))^m_{i=1}\in \mathcal{T}$, since $\mathbf{p}\in \mathcal{U}_t$ for all $t\geq 1$. 
It follows from the proof of Theorem~\ref{thm:cuttingplane_conv} that $\mathcal{T}$ must be bounded in the Euclidean 2-norm $\|.\|_2$. 
Hence, Algorithm~\ref{Algo: cutting_plane_constraint} must terminate after finitely many steps.

\textit{Second part}: Let $g(\mathbf{a}_{\epsilon_{\mathrm{tol}}})$ be the objective value of the solution $\mathbf{a}_{\epsilon_{\mathrm{tol}}}$ obtained at the final iteration of Algorithm~\ref{Algo: cutting_plane_constraint}, for a tolerance parameter $\epsilon_{\mathrm{tol}}>0$. 
Let $P(0)$ be the optimal objective value of \eqref{P-constraint}. 
Define, for any $\epsilon>0$,
\begin{align*}
    P(\epsilon)\triangleq\min_{\mathbf{a}\in \mathcal{A}}\ \left\{g(\mathbf{a})~\middle |~\sup_{(\mathbf{q},\bar{\mathbf{q}})\in \mathcal{U}_{\phi,h}(\mathbf{p})}-\sum^N_{k=1}\bar{q}_ku(x_k(\mathbf{a}))\leq c+\epsilon \right\}.
\end{align*}
By construction, we have $P(\epsilon_{\mathrm{tol}})\leq g(\mathbf{a}_{\epsilon_{\mathrm{tol}}})$ since $\mathbf{a}_{\epsilon_{\mathrm{tol}}}$ is feasible for the problem of $P(\epsilon_{\mathrm{tol}})$. 
Furthermore, the cutting-plane algorithm yields a lower bound on $P(0)$. 
Hence,
$g(\mathbf{a}_{\epsilon_{\mathrm{tol}}})\leq P(0)$ for all $\epsilon_{\mathrm{tol}}>0$. 
Therefore,
\begin{align*}
    0\leq P(0)-g(\mathbf{a}_{\epsilon_{\mathrm{tol}}})\leq P(0)-P(\epsilon_{\mathrm{tol}}).
\end{align*}
Thus, it remains to bound $P(0)-P(\epsilon_{\mathrm{tol}})$ from above. 
This can be done by utilizing the strong duality theorem, which is guaranteed by Assumption~\ref{assump: finite_strict_feasibility}. 
Therefore, we have, for any $\epsilon>0$,
\begin{align*}
     P(\epsilon)&=\sup_{\lambda\geq 0}\inf_{\mathbf{a}\in \mathcal{A}} g(\mathbf{a})+\lambda\left(\sup_{(\mathbf{q},\bar{\mathbf{q}})\in \mathcal{U}_{\phi,h}(\mathbf{p})}-\sum^N_{k=1}\bar{q}_ku(x_k(\mathbf{a}))-c-\epsilon\right)\\
     &\geq -\lambda^*\epsilon+ \inf_{\mathbf{a}\in \mathcal{A}} g(\mathbf{a})-\lambda^*\left(\sup_{(\mathbf{q},\bar{\mathbf{q}})\in \mathcal{U}_{\phi,h}(\mathbf{p})}-\sum^N_{k=1}\bar{q}_ku(x_k(\mathbf{a}))-c\right)\\
     &=-\lambda^*\epsilon+P(0),
\end{align*}
where $\lambda^*$ is the KKT-vector of the problem $P(0)$ corresponding to the supremum constraint, which is a strictly positive constant (by Assumption~\ref{assump: finite_strict_feasibility}) independent of $\epsilon$. 
Hence, we have
\begin{align*}
    P(0)-P(\epsilon_{\mathrm{tol}})\leq \lambda^*\epsilon_{\mathrm{tol}}.
\end{align*}
Therefore, the convergence follows as $\epsilon_{\mathrm{tol}}\to 0$.
\end{proof}

\begin{proof}[\textbf{Proof of Lemma~\ref{lem: optimal_ranking}}]
We first show that, for any $\mathbf{a}\in \mathcal{A}$, we have that
\begin{align}\label{equality_reduced_uc}
\sup_{(\mathbf{q},\bar{\mathbf{q}})\in \mathcal{U}_{\phi,h}(\mathbf{p})}\sum^m_{i=1}-\bar{q}_iu(f(\mathbf{a},\mathbf{x}_i))=\sup_{(\mathbf{q},\bar{\mathbf{q}})\in \mathcal{U}^{(i_1,\ldots, i_m)}_{\phi,h}(\mathbf{p})}-\sum^m_{i=1}\bar{q}_iu(f(\mathbf{a},\mathbf{x}_i)),
\end{align}
for any ranking $(i_1,\ldots, i_m)\in \mathcal{I}(\mathbf{a})$ as in Definition~\ref{def: ranked_index}.
We fix $\mathbf{a}$, denote $Y\triangleq f(\mathbf{a},\mathbf{X})$ and consider the ranking
$u(y_{(1)})\geq \ldots \geq u(y_{(m)})$. 
By the definition of $\rho_{u,h,\mathbf{q}}(Y)$ in \eqref{def:distortionriskmeasure}, we have that in our discrete setting $\rho_{u,h,\mathbf{q}}(Y)$ is equal to the rank-dependent sum
\begin{align*}
    \rho_{u,h,\mathbf{q}}(Y)=-\sum^{m}_{i=1}h\left(\sum^m_{j=i}q_{(j)}\right)(u(y_{(i)})-u(y_{(i-1)})),
\end{align*}
where $-u(y_{(0)})\triangleq 0$.  
We now claim that we have an equality between the rank-dependent sum above and the following optimization problem:
\begin{align*}
    \max_{\bar{\mathbf{q}}\in \mathbb{R}^m}&\quad\sum^m_{i=1}-\left(\sum^m_{j=i}\bar{q}_{(j)}\right)(u(y_{(i)})-u(y_{(i-1)}))\\
    \mathrm{subject~to}&\quad \sum^m_{j=i}\bar{q}_{(j)}\leq h\left(\sum^m_{j=i} q_{(j)}\right),\quad \forall i\in [m]\\
    &\quad \sum^m_{i=1}\bar{q}_i=1\\
    &\quad\bar{q}_i\geq 0,\quad \forall i\in [m].
\end{align*}
Indeed, we can define $\bar{q}^*_{(i)}\triangleq h\left(\sum^m_{j=i}q_{(j)}\right)-h\left(\sum^m_{j=i+1}q_{(j)}\right)$, where $\bar{q}^*_{(m)}\triangleq h(q_{(m)})$. 
Then, $\sum^m_{j=i}\bar{q}^*_{(i)}=h\left(\sum^m_{j=i} q_{(j)}\right)$, and $\bar{\mathbf{q}}^*$ is a probability vector since $h(1)=1$, $h(0)=0$, and $h$ is non-decreasing. 
Hence, $\bar{\mathbf{q}}^*$ is feasible for the above optimization problem. 
Furthermore, since $-(u(y_{(i)})-u(y_{(i-1)}))\geq 0$ for all $i\geq 2$, the maximum is attained at a vector $\bar{\mathbf{q}}$ such that the constraint $\sum^m_{j=i}\bar{q}_{(i)}\leq h\left(\sum^m_{j=i} q_{(j)}\right)$ is an equality for all $i$, which uniquely defines $\bar{\mathbf{q}}^*$. 
An expansion of the alternating sum $\sum^m_{i=1}-\left(\sum^m_{j=i}q_{(j)}\right)(u(y_{(i)})-u(y_{(i-1)}))$ shows that it is also equal to the sum $\sum^m_{i=1}-q_{(i)}u(y_{(i)})$. 
Therefore, \eqref{equality_reduced_uc} holds for all $\mathbf{a}\in \mathcal{A}$ and any ranking $(i_1,\ldots, i_m)\in \mathcal{I}(\mathbf{a})$. 
Let $V^*$ be the optimal objective value of \eqref{P-constraint}. 
Then, for any $\mathbf{a}_0$ such that $\mathcal{I}(\mathbf{a}_0)\subset \mathcal{I}(\mathbf{a}^*)$, we have that $U^*(\mathbf{a}_0)\leq V^*$, since $\mathbf{a}^*$ is feasible for the problem \eqref{ranked_upper} by \eqref{equality_reduced_uc}. 
On the other hand, we also have the upper bound relation $V^*\leq U^*(\mathbf{a}_0)$, since $\mathcal{U}_{\phi,h}(\mathbf{p})\subset \mathcal{U}^{(i_1,\ldots, i_m)}_{\phi,h}(\mathbf{p})$ for any index vector $(i_1,\ldots, i_m)$. 
Hence, $U^*(\mathbf{a}_0)=V^*$.
\end{proof}

\begin{proof}[\textbf{Proof of Theorem~\ref{thm: convergence_cutting_plane_constraint}}]
Let $(\mathbf{q}^*_n,\bar{\mathbf{q}}^*_n)\in \mathrm{argmax}_{(\mathbf{q},\bar{\mathbf{q}})\in \mathcal{U}_{\phi,h}(\mathbf{p})}-\sum^m_{i=1}\bar{q}_iu(f(\mathbf{a}_n,\mathbf{x}_i))$. 
Denote $g(\mathbf{a},\mathbf{q},\bar{\mathbf{q}})=-\sum^m_{i=1}\bar{q}_iu(f(\mathbf{a},\mathbf{x}_i))$. 
Since the set $\mathcal{A}\times \mathcal{U}_{\phi,h}(\mathbf{p})$ is compact,\footnote{The set $\mathcal{U}_{\phi,h}(\mathbf{p})$ is compact due to the lower-semicontinuity assumption made in Assumption~\ref{assump:lower_semi}.} we may assume that there exists a limit $(\mathbf{a}_n,\mathbf{q}^*_n,\bar{\mathbf{q}}^*_n)\to (\mathbf{a}_L,\mathbf{q}_L,\bar{\mathbf{q}}_L)\in \mathcal{A}\times \mathcal{U}_{\phi,h}(\mathbf{p})$, as $n\to \infty$. 
We will show that $\mathbf{a}_L$ must be an optimal solution for \eqref{P-constraint}. 
Indeed, since $(\mathbf{q}^*_n,\bar{\mathbf{q}}^*_n)$ are maximizers, we have that 
        \begin{align*}
c+\epsilon_{\mathrm{tol},n}\geq -\sum^m_{i=1}\bar{q}^*_{n,i}u(f(\mathbf{a}_n,\mathbf{x}_i))\geq  \max_{(\mathbf{q},\bar{\mathbf{q}})\in \mathcal{U}_{\phi,h}(\mathbf{p})}-\sum^m_{i=1}\bar{q}_{i}u(f(\mathbf{a}_n,\mathbf{x}_i)).    
        \end{align*}
Taking the limit as $n\to \infty$ yields
\begin{align*}
           c\geq -\sum^m_{i=1}\bar{q}^*_{L,i}u(f(\mathbf{a}_L,\mathbf{x}_i))\geq \max_{(\mathbf{q},\bar{\mathbf{q}})\in \mathcal{U}_{\phi,h}(\mathbf{p})} -\sum^m_{i=1}\bar{q}_{i}u(f(\mathbf{a}_L,\mathbf{x}_i)).
\end{align*}
Hence, by Theorem~\ref{thm:ReformulationWholeProb}, this implies that $\mathbf{a}_L$ is feasible for \eqref{P-constraint}. 
Since $\mathbf{a}_n$ is a sequence of solutions such that $g(\mathbf{a}_n)$ converges to the optimal objective value of \eqref{P-constraint} by Theorem~\ref{thm:cuttingplane_conv_constr}, it follows from the continuity of $g$ that $g(\mathbf{a}_L)$ equals to the optimal objective value of \eqref{P-constraint}. 
Hence, $\mathbf{a}_L$ is an optimal solution of \eqref{P-constraint}. 
Continuity of the functions $-u(f(\mathbf{a},\mathbf{x}_i))$ in $\mathbf{a}$, for all $i\in [m]$, implies that there exists some $N>0$, such that for all $n\geq N$, we have the inclusion of the ranking set: $\mathcal{I}(\mathbf{a}_n)\subset \mathcal{I}(\mathbf{a}_L)$. 
Therefore, Lemma~\ref{lem: optimal_ranking} implies that $U^*(\mathbf{a}_n)$ converges to the optimal objective value of \eqref{P-constraint}.
\end{proof}

\begin{proof}[\textbf{Proof of Lemma~\ref{lem:PL_UC}}]
If $(\mathbf{q},\bar{\mathbf{q}}, \mathbf{t})$ satisfies the constraints in \eqref{PL_UC}, then $\bar{q}_i\leq l_j\cdot q_i+t_{i,j}$ for all $i\in [m]$ and $j\in [K]$. Then, by the non-negativity of the variables $t_{i,j}$, we also have that, for any subset $I\subset [m]$ and any $j\in [K]$,
\begin{align*}
    \sum_{i\in I}\bar{q}_i\leq l_j\sum_{i\in I}q_i+\sum_{i\in I} t_{i,j}\leq l_j\sum_{i\in I}q_i+\sum^m_{i=1} t_{i,j}\leq l_j\sum_{i\in I}q_i+b_j.  
\end{align*}
Hence, $\sum_{i\in I}\bar{q}_i\leq \min_{j\in [K]}h_j(\sum_{i\in I}q_i)$ and thus $(\mathbf{q},\bar{\mathbf{q}})\in \mathcal{U}_{\phi,h}(\mathbf{p})$. 

Conversely, let $(\mathbf{q},\bar{\mathbf{q}})\in \mathcal{U}_{\phi,h}(\mathbf{p})$. 
Then, $\bar{q}_i\leq l_j\cdot q_i+t_{i,j}$ for all $i\in [m]$ and $j\in [K]$ with $t_{i,j}\triangleq\max\{\bar{q}_i-l_jq_i,0\}$. 
Moreover, we have that, for all $j\in [K]$,
\begin{align*}
    \sum^m_{i=1}t_{i,j}=\sum^m_{i=1}\max\{\bar{q}_i-l_jq_i,0\}=\sum_{i\in I_+}\bar{q}_i-\sum_{i\in I_+}l_jq_i\leq b_j,~\text{where}~ I_+\triangleq\{i: \bar{q}_i-l_jq_i\geq 0\}.
\end{align*}
Hence, $(\mathbf{q},\bar{\mathbf{q}},\mathbf{t})$ satisfies the constraints in \eqref{PL_UC}.
\end{proof}

\begin{proof}[\textbf{Proof of Theorem~\ref{thm:RCpiecewiselin}}]
We reformulate the constraint
\begin{align*}
\max_{(\mathbf{q},\mathbf{\bar{q}},\mathbf{t})\in \mathcal{\bar{U}}}-\sum^m_{i=1}\bar{q}_iu(f(\mathbf{a},\mathbf{x}_i))\leq c,
\end{align*}
where
\begin{align*}
    \mathcal{\bar{U}}\triangleq\left\{(\mathbf{q},\bar{\mathbf{q}},\mathbf{t})\in \mathbb{R}^{2m}_{\geq 0}\times \mathbb{R}^{mK}_{\geq 0} \; 
    \begin{tabular}{|l}
    $\sum^m_{i=1} q_i=1,\sum^m_{i=1} \bar{q}_i=1$, $\sum^m_{i=1}p_i\phi\left(\frac{q_i}{p_i}\right)\leq r$\\
    $\sum^m_{i=1}t_{i,j}\leq b_j,~\forall j\in [K]$\\
    $\bar{q}_i-l_jq_i\leq t_{i,j},~\forall i\in [m],\ \forall j\in [K]$
    \end{tabular}
    \right\}.
\end{align*}
We note that $(\mathbf{p},\mathbf{p},\{\max\{(l_j-1)p_i,0\}_{i,j}\})$ is a Slater point in $\bar{\mathcal{U}}$ and that the left-hand side of the constraint above constitutes a maximization problem upper bounded by $c\in \mathbb{R}$. 
Therefore, strong duality holds, and we examine the Lagrangian function
\begin{align*}
    &L(\mathbf{a},\mathbf{q}, \bar{\mathbf{q}},\alpha,\beta,\gamma,\lambda_{ij}, t_{i,j},\nu_j)\\
    &=-\sum^m_{i=1}\bar{q}_iu(f(\mathbf{a},\mathbf{x}_i))-\alpha\left(\sum^m_{i=1}q_i-1\right)-\beta\left(\sum^m_{i=1}\bar{q}_i-1\right)-\gamma\left(\sum^m_{i=1}p_i\phi\left(\frac{q_i}{p_i}\right)-r\right)\\
    &~~~~~~-\sum^m_{i=1}\sum^K_{j=1}\lambda_{ij}(\bar{q}_i-l_jq_i-t_{i,j})-\sum^K_{j=1}\nu_j\left(\sum^m_{i=1}t_{i,j}-b_j\right)\\
    &=\alpha+\beta+\gamma r+\sum^K_{j=1}\nu_jb_j-\sum^m_{i=1}\left(u(f(\mathbf{a},\mathbf{x}_i))+\beta+\sum^K_{j=1}\lambda_{ij}\right)\bar{q}_i\\
    &~~~~~~+\sum^m_{i=1}p_i\left(\left(-\alpha+\sum^K_{j=1}\lambda_{ij}l_j\right)\frac{q_i}{p_i}-\gamma \phi\left(\frac{q_i}{p_i}\right)\right)+\sum^m_{i=1}\sum^K_{j=1}(\lambda_{ij}-\nu_j)t_{i,j}.
\end{align*}
We have
\begin{align}\label{id_sup_q}
\begin{split}
   &\sup_{\bar{q}_1,\ldots, \bar{q}_m\geq 0}-\sum^m_{i=1}\left(u(f(\mathbf{a},\mathbf{x}_i))+\beta+\sum^K_{j=1}\lambda_{ij}\right)\bar{q}_i\\
   &=\begin{cases}
        0&~\text{if}~u(f(\mathbf{a},\mathbf{x}_i))+\beta+\sum^K_{j=1}\lambda_{ij}\geq 0,~\forall i\in[m]\\
        \infty&~\text{else},
    \end{cases} 
\end{split}
\end{align}
\begin{align*}
    \sup_{q_1,\ldots, q_m\geq 0}\sum^m_{i=1}p_i\left(\left(-\alpha+\sum^K_{j=1}\lambda_{ij}l_j\right)\frac{q_i}{p_i}-\gamma \phi\left(\frac{q_i}{p_i}\right)\right)=\sum^m_{i=1}p_i\gamma\phi^*\left(\frac{-\alpha+\sum^K_{j=1}\lambda_{ij}l_j}{\gamma}\right),
\end{align*}
and
\begin{align}\label{id_sup_t}
    \sup_{t_{1,1},\ldots, t_{m,K}\geq 0}\sum^m_{i=1}\sum^K_{j=1}(\lambda_{ij}-\nu_j)t_{i,j}=\begin{cases}
        0&~\text{if}~\lambda_{ij}\leq \nu_j,~ \forall i\in [m],~ \forall j\in [K]\\
        \infty&~\text{else}.
    \end{cases}
\end{align}
Therefore, employing the same arguments as in the proof of Theorem~\ref{thm:RC}, the reformulated robust counterpart is given by
\begin{align*}
\begin{cases}
    \alpha +\beta+\gamma r +\sum^K_{j=1}\nu_jb_j+\sum^m_{i=1}p_i\gamma\phi^*\left(\frac{-\alpha+\sum^K_{j=1}\lambda_{ij}l_j}{\gamma}\right)\leq c\\
    -u(f(\mathbf{a},\mathbf{x}_i))-\beta-\sum^K_{j=1}\lambda_{ij}\leq 0,~\forall i\in [m]\\
    \lambda_{ij}\leq \nu_j,~\forall i\in [m],~\forall j\in [K]\\
    \alpha,\beta\in \mathbb{R}, \gamma, \lambda_{ij},\nu_j\geq 0.
\end{cases}
\end{align*}

In the nominal case where $\mathbf{q}=\mathbf{p}$, we have to reformulate the following constraint:
\begin{align*}
    \max_{(\mathbf{\bar{q}},\mathbf{t})\in \mathcal{\bar{U}}_{\mathrm{nom}}}-\sum^m_{i=1}\bar{q}_iu(f(\mathbf{a},\mathbf{x}_i))\leq c,
\end{align*}
where
\begin{align*}
    \mathcal{\bar{U}}_{\mathrm{nom}}\triangleq\left\{(\bar{\mathbf{q}},\mathbf{t})\in \mathbb{R}^{m}_{\geq 0}\times \mathbb{R}^{mK}_{\geq 0} \; 
    \begin{tabular}{|l}
    $\sum^m_{i=1} \bar{q}_i=1$\\
    $\sum^m_{i=1}t_{i,j}\leq b_j,~\forall j\in [K]$\\
    $\bar{q}_i-l_jp_i\leq t_{i,j},~\forall i\in [m],\ \forall j\in [K]$
    \end{tabular}
    \right\}.
\end{align*}
The above maximization problem is a linear programming problem bounded from above. 
Therefore, strong duality applies, and we examine the Lagrangian function
\begin{align*}
  &L(\mathbf{a}, \bar{\mathbf{q}},\beta,\lambda_{ij}, t_{i,j},\nu_j)\\
    &=-\sum^m_{i=1}\bar{q}_iu(f(\mathbf{a},\mathbf{x}_i))-\beta\left(\sum^m_{i=1}\bar{q}_i-1\right)-\sum^m_{i=1}\sum^K_{j=1}\lambda_{ij}(\bar{q}_i-l_jp_i-t_{i,j})-\sum^K_{j=1}\nu_j\left(\sum^m_{i=1}t_{i,j}-b_j\right)\\
    &=\beta+\sum^K_{j=1}\nu_jb_j+\sum^m_{i=1}\sum^K_{j=1}\lambda_{ij}l_jp_i+\sum^m_{i=1}\bar{q}_i\left(-u(f(\mathbf{a},\mathbf{x}_i))-\beta-\sum^K_{j=1}\lambda_{ij}\right)+\sum^m_{i=1}\sum^K_{j=1}(\lambda_{ij}-\nu_j)t_{i,j}.
\end{align*}
By \eqref{id_sup_q}--\eqref{id_sup_t}, we have that
\begin{align*}
    &\inf_{\substack{\lambda_{ij},\nu_j\geq 0\\ \beta\in \mathbb{R}}}\sup_{\bar{\mathbf{q}},\mathbf{t}\geq \mathbf{0}}L(\mathbf{a}, \bar{\mathbf{q}},\beta,\lambda_{ij}, t_{i,j},\nu_j)\\
    &=\inf_{\substack{\lambda_{ij},\nu_j\geq 0\\ \beta\in \mathbb{R}}}\left\{\beta+\sum^K_{j=1}\nu_jb_j+\sum^m_{i=1}\sum^K_{j=1}\lambda_{ij}l_jp_i~\middle|~-u(f(\mathbf{a},\mathbf{x}_i))-\beta-\sum^K_{j=1}\lambda_{ij}\leq 0, \forall i,~\lambda_{ij}\leq \nu_j, \forall i,j\right\}.
\end{align*}
Hence, the reformulated robust counterpart is given by
\begin{align*}
    \begin{cases}
        \beta+\sum^K_{j=1}\nu_jb_j+\sum^m_{i=1}\sum^K_{j=1}\lambda_{ij}l_jp_i\leq c\\
        -u(f(\mathbf{a},\mathbf{x}_i))-\beta-\sum^K_{j=1}\lambda_{ij}\leq 0,~ \forall i\in [m]\\
        \lambda_{ij}\leq \nu_j,~ \forall i\in [m],~\forall j\in [K]\\
        \lambda_{ij},\nu_j\geq 0 ,~ \forall i\in [m],~\forall j\in [K].
    \end{cases}
\end{align*}
\end{proof}

\begin{proof}[\textbf{Proof of Lemma~\ref{lem:PL_h_construct}}]
    First, we have that
    \begin{align*}
         e_i(x_{i+1})=\sup_{x\in [x_i,1]}\left\{h(x)-\frac{h(x_{i+1})-h(x_i)}{x_{i+1}-x_i}(x-x_i)-h(x_i)\right\},
    \end{align*}
    since, by concavity, the supremum is only taken in the interval $[x_i,x_{i+1}]$. 
    Therefore, we can extend the feasibility region to $[x_i,1]$.

    Next, again by concavity, we have that $-\frac{h(x_{i+1})-h(x_i)}{x_{i+1}-x_i}(x-x_i)$ is an increasing function of $x_{i+1}$, for all $x\in (x_i,1]$. 
    Therefore, $e_i(x_{i+1})$ is increasing in $x_{i+1}$. 
    It is also a continuous function in $x_{i+1}$. 
    This is because the function 
    \begin{align*}
        \tilde{e}_i(y)\triangleq\sup_{x\in [x_i,1]}\left\{h(x)-y(x-x_i)-h(x_i)\right\},
    \end{align*}
    is convex in $y$; 
    indeed, it is a supremum of a linear function of $y$. 
    Hence, $\tilde{e}_i$ is continuous on the interior of its domain, which is the whole of $\mathbb{R}$, because $\tilde{e}_i(y)$ is a supremum of a continuous function on a compact interval. 
    Thus, $\tilde{e}_i(y)$ exists and is finite everywhere. 
    This implies that $e_i$ is continuous for all $x_{i+1}\in (x_i,1]$, since $e_i(x_{i+1})=\tilde{e}_i\left(\frac{h(x_{i+1})-h(x_i)}{x_{i+1}-x_i}\right)$ is a composition of continuous functions. 

    Finally, we show the existence of a $x_{i+1}$ such that $e_i(x_{i+1})=\epsilon$, for any given $\epsilon>0$, under the assumption that $e_{i}(1)>\epsilon$. 
    This is guaranteed if we can find a $z\in (x_i,1)$ such that $e_{i}(z)<\epsilon$. 
    Since $h$ has decreasing slope and is increasing, we have that for any $y\in (x_i,1)$ and all $x\in [x_i,y]$:
    \begin{align*}
        h(x)-\frac{h(y)-h(x_i)}{y-x_i}(x-x_i)-h(x_i)\leq h(y)-h(x_i).
    \end{align*}
    We note that the maximization problem in $e_i(y)$ can be restricted to the interval $[x_i,y]$. 
    Therefore, 
        $e_i(y)\leq h(y)-h(x_i)$.
    Taking $y\downarrow x_i$, it follows by the continuity of $h$ that such a point $z$ must exist.
\end{proof}

\begin{proof}[\textbf{Proof of Theorem~\ref{thm:convergencePL} (Part I)}]
We split the proof into two parts. 
We first treat the case of problem \eqref{P} and next the case of problem \eqref{P-constraint}. 
Fix an $\mathbf{a}\in \mathcal{A}$ and consider the ranked realizations 
$         u(f(\mathbf{a},\mathbf{x}_{(1)}))\geq \ldots \geq u(f(\mathbf{a},\mathbf{x}_{(m)}))$. 
Let $h_1,h_2$ be any two concave distortion functions such that $\sup_{t\in [0,1]}|h_1(t)-h_2(t)|\leq \epsilon$. 
Set $u(f(\mathbf{a},\mathbf{x}_{(0)}))\triangleq 0$. 
Then, we have
\begin{align*}
\rho_{u,h_1,\mathbf{q}}(f(\mathbf{a},\mathbf{X}))&=\sum^{m}_{i=1}h_1\left(\sum^m_{j=i}q_{(j)}\right)(u(f(\mathbf{a},\mathbf{x}_{(i-1)}))-u(f(\mathbf{a},\mathbf{x}_{(i)})))\\
    &\leq -u(f(\mathbf{a},\mathbf{x}_{(1)}))+ \sum^{m}_{i=2}\left( h_2\left(\sum^m_{j=i}q_{(j)}\right)+\epsilon\right)(u(f(\mathbf{a},\mathbf{x}_{(i-1)}))-u(f(\mathbf{a},\mathbf{x}_{(i)})))\\
    &=\rho_{u,h_{2},\mathbf{q}}(f(\mathbf{a},\mathbf{X})) + \epsilon\cdot \left(\max_{\substack{i\in [m]}}u(f(\mathbf{a},\mathbf{x}_i))-\min_{i\in [m]} u(f(\mathbf{a},\mathbf{x}_i))\right)\\
    &\leq \rho_{u,h_{2},\mathbf{q}}(f(\mathbf{a},\mathbf{X}))+ \epsilon\cdot \left(M+\max_{\substack{i\in [m]}}-u(f(\mathbf{a},\mathbf{x}_i))\right),
\end{align*}
where $M\triangleq\sup_{\mathbf{a}\in \mathcal{A},i\in [m]}u(f(\mathbf{a},\mathbf{x}_i))<\infty$. 
The idea now is to bound the term $\max_{\substack{i\in [m]}}-u(f(\mathbf{a},\mathbf{x}_i))$ on a subset $\mathcal{A}_0\subset \mathcal{A}$ of the feasible region, for which the restriction of the minimization problem \eqref{P} on $\mathcal{A}_0$ does not change the original optimal value.

We take $h_1\equiv h$ and $h_2\in \{h_\epsilon, \tilde{h}_\epsilon\}$. 
By Assumption~\ref{assumption:well-defined}, there exists $\mathbf{a}_0\in \mathcal{A}$ such that
$\sup_{\mathbf{q}\in \mathcal{D}_{\phi}(\mathbf{p},r)}\rho_{u,h,\mathbf{q}}(f(\mathbf{a}_0,\mathbf{X}))<\infty$. 
This implies $\rho_{u,h,\mathbf{p}}(f(\mathbf{a}_0,\mathbf{X}))<\infty$, hence $\max_{i\in [m]}-u(f(\mathbf{a}_0,\mathbf{x}_i))$ $<\infty$. 
Then, we also have that
\begin{align*}
    \sup_{\mathbf{q}\in \mathcal{D}_{\phi}(\mathbf{p},r)}\rho_{u,h_2,\mathbf{q}}(f(\mathbf{a}_0,\mathbf{X}))\leq \max_{i\in [m]}-u(f(\mathbf{a}_0,\mathbf{x}_i))<\infty.
\end{align*}
Therefore, we may define the finite number
\begin{align*}
    c_0\triangleq\max_{j=1,2}\left\{\sup_{\mathbf{q}\in \mathcal{D}_{\phi}(\mathbf{p},r)}\rho_{u,h_j,\mathbf{q}}(f(\mathbf{a}_0,\mathbf{X}))\right\}<\infty.
\end{align*}
Then, for all $\mathbf{a}\in \mathcal{A}$ such that $\sup_{\mathbf{q}\in \mathcal{D}_{\phi}(\mathbf{p},r)}\rho_{u,h_j,\mathbf{q}}(f(\mathbf{a},\mathbf{X}))$ $\leq c_0$ for any $j\in \{1,2\}$, we have that 
\begin{align}
    -\sum^m_{i=1}p_iu(f(\mathbf{a},\mathbf{x}_i))\leq \rho_{u,h_j,\mathbf{p}}(f(\mathbf{a},\mathbf{X}))\leq c_0,
\end{align}
due to concavity (which implies $h_j(x)\geq x, \forall x\in [0,1]$, for any $j\in \{1,2\}$). Therefore, for all such $\mathbf{a}$, we have,
\begin{align}\label{bound_wc_uf(a)}
\sup_{i\in [m]}-u(f(\mathbf{a},\mathbf{x}_i))\leq \frac{c_0+M}{p_{\min}},    
\end{align}
as shown in the proof of Theorem~\ref{thm:cuttingplane_conv}. 
Define,
\begin{align*}
    \mathcal{A}_0\triangleq\left\{\mathbf{a}\in \mathcal{A}~\middle |~\sup_{i\in [m]}-u(f(\mathbf{a},\mathbf{x}_i))\leq \frac{c_0+M}{p_{\min}}\right\}.
\end{align*}
Then, we have that for any $j=1,2$:
\begin{align*}
    \min_{\mathbf{a}\in \mathcal{A}}\sup_{\mathbf{q}\in \mathcal{D}_{\phi}(\mathbf{p},r)}\rho_{u,h_j,\mathbf{q}}(f(\mathbf{a},\mathbf{X}))=\min_{\mathbf{a}\in \mathcal{A}_0}\sup_{\mathbf{q}\in \mathcal{D}_{\phi}(\mathbf{p},r)}\rho_{u,h_j,\mathbf{q}}(f(\mathbf{a},\mathbf{X})),
\end{align*}
since any potential minimizer $\mathbf{a}$ satisfies $\sup_{\mathbf{q}\in \mathcal{D}_{\phi}(\mathbf{p},r)}\rho_{u,h_j,\mathbf{q}}(f(\mathbf{a},\mathbf{X}))\leq c_0$ and thus \eqref{bound_wc_uf(a)}. Therefore,
\begin{align*}
   \min_{\mathbf{a}\in \mathcal{A}}\sup_{\mathbf{q}\in \mathcal{D}_{\phi}(\mathbf{p},r)}\rho_{u,h_1,\mathbf{q}}(f(\mathbf{a},\mathbf{X}))&=\min_{\mathbf{a}\in \mathcal{A}_0}\sup_{\mathbf{q}\in \mathcal{D}_{\phi}(\mathbf{p},r)}\rho_{u,h_1,\mathbf{q}}(f(\mathbf{a},\mathbf{X}))\\
   &\leq \min_{\mathbf{a}\in \mathcal{A}_0}\sup_{\mathbf{q}\in \mathcal{D}_{\phi}(\mathbf{p},r)}\rho_{u,h_2,\mathbf{q}}(f(\mathbf{a},\mathbf{X}))+\epsilon \cdot \left(M+\frac{c_0+M}{p_{\min}}\right)\\
   &=\min_{\mathbf{a}\in \mathcal{A}}\sup_{\mathbf{q}\in \mathcal{D}_{\phi}(\mathbf{p},r)}\rho_{u,h_2,\mathbf{q}}(f(\mathbf{a},\mathbf{X}))+\epsilon \cdot \left(M+\frac{c_0+M}{p_{\min}}\right).
\end{align*}
By symmetry, we thus have
\begin{align*}
    \left|\min_{\mathbf{a}\in \mathcal{A}}\sup_{\mathbf{q}\in \mathcal{D}_{\phi}(\mathbf{p},r)}\rho_{u,h_1,\mathbf{q}}(f(\mathbf{a},\mathbf{X}))-\min_{\mathbf{a}\in \mathcal{A}}\sup_{\mathbf{q}\in \mathcal{D}_{\phi}(\mathbf{p},r)}\rho_{u,h_2,\mathbf{q}}(f(\mathbf{a},\mathbf{X}))\right|\leq \epsilon \cdot \left(M+\frac{c_0+M}{p_{\min}}\right),
\end{align*}
which approaches zero as $\epsilon\to 0$. 
\end{proof}

 \begin{proof}[\textbf{Proof of Theorem~\ref{thm:convergencePL} (Part II)}]
We define
    \begin{align*}
        L_\epsilon\triangleq \min_{\mathbf{a}\in \mathcal{A}}\ \left\{g(\mathbf{a})~\middle |~\sup_{\mathbf{q}\in\mathcal{D}_{\phi}(\mathbf{p},r)}\rho_{u,h_\epsilon,\mathbf{q}}(f(\mathbf{a},\mathbf{X}))\leq c \right\},\\
        U_\epsilon\triangleq \min_{\mathbf{a}\in \mathcal{A}}\ \left\{g(\mathbf{a})~\middle |~\sup_{\mathbf{q}\in\mathcal{D}_{\phi}(\mathbf{p},r)}\rho_{u,\tilde{h}_\epsilon,\mathbf{q}}(f(\mathbf{a},\mathbf{X}))\leq c \right\},
    \end{align*}
and let $P_0$ denote the optimal objective value of \eqref{P-constraint}. 
By definition, $L_\epsilon\leq P_0\leq U_\epsilon$.

We first note that $L_\epsilon$ has a nonempty feasible set, which follows from Assumption~\ref{assump: finite_strict_feasibility} and the fact that $\rho_{u,h_\epsilon,\mathbf{q}}(f(\mathbf{a},\mathbf{X}))\leq \rho_{u,h,\mathbf{q}}(f(\mathbf{a},\mathbf{X}))$ for all $\mathbf{a}\in \mathcal{A}$ and any probability vector $\mathbf{q}$. 
Then, following the first part of the proof of Theorem~\ref{thm:convergencePL}, we can show that for any $\mathbf{a}\in \mathcal{A}$ that is feasible for $L_\epsilon$, we have that, for all $\mathbf{q}$,
    \begin{align*}
        \rho_{u,h_\epsilon,\mathbf{q}}(f(\mathbf{a},\mathbf{X}))\geq \rho_{u,h,\mathbf{q}}(f(\mathbf{a},\mathbf{X}))-\epsilon\left(M+\frac{c+M}{p_{\min}}\right). 
    \end{align*}
Therefore, we also have the following implication for any $\mathbf{a}\in \mathcal{A}$: 
    \begin{align*}
     \sup_{\mathbf{q}\in\mathcal{D}_{\phi}(\mathbf{p},r)}\rho_{u,h_\epsilon,\mathbf{q}}(f(\mathbf{a},\mathbf{X}))\leq c \Rightarrow   \sup_{\mathbf{q}\in\mathcal{D}_{\phi}(\mathbf{p},r)}\rho_{u,h,\mathbf{q}}(f(\mathbf{a},\mathbf{X}))\leq c+\epsilon\left(M+\frac{c+M}{p_{\min}}\right).
    \end{align*}
Hence,
    \begin{align*}
        L_\epsilon&\geq \min_{\mathbf{a}\in \mathcal{A}}\ \left\{g(\mathbf{a})~\middle |~\sup_{\mathbf{q}\in\mathcal{D}_{\phi}(\mathbf{p},r)}\rho_{u,h,\mathbf{q}}(f(\mathbf{a},\mathbf{X}))\leq c+\epsilon\left(M+\frac{c+M}{p_{\min}}\right)\right\}\\
        &\stackrel{(*)}{\geq} \sup_{\lambda\geq 0}\inf_{\mathbf{a}\in \mathcal{A}}g(\mathbf{a})+\lambda \left(\sup_{\mathbf{q}\in\mathcal{D}_{\phi}(\mathbf{p},r)}\rho_{u,h,\mathbf{q}}(f(\mathbf{a},\mathbf{X}))-c-\epsilon\left(M+\frac{c+M}{p_{\min}}\right)\right)\\
        &\geq \inf_{\mathbf{a}\in \mathcal{A}}g(\mathbf{a})+\lambda^* \left(\sup_{\mathbf{q}\in\mathcal{D}_{\phi}(\mathbf{p},r)}\rho_{u,h,\mathbf{q}}(f(\mathbf{a},\mathbf{X}))-c-\epsilon\left(M+\frac{c+M}{p_{\min}}\right)\right)\\
        &\stackrel{(**)}{=}P_0-\lambda^*\cdot \epsilon\left(M+\frac{c+M}{p_{\min}}\right),
    \end{align*}
where in $(*)$ we used weak duality and in $(**)$ we used $\lambda^*$, the KKT-vector of \eqref{P-constraint} corresponding to the constraint $\sup_{\mathbf{q}\in\mathcal{D}_{\phi}(\mathbf{p},r)}\rho_{u,h,\mathbf{q}}(f(\mathbf{a},\mathbf{X}))\leq c$, which is a strictly positive constant by Assumption~\ref{assump: finite_strict_feasibility} and does not depend on $\epsilon$. 
Therefore, we have
    \begin{align*}
        0\leq P_0-L_\epsilon\leq \lambda^*\epsilon\left(M+\frac{c+M}{p_{\min}}\right),
    \end{align*}
which approaches zero as $\epsilon\to 0$.

Similarly, we have the following implication for any $\mathbf{a}\in \mathcal{A}$:
    \begin{align*}
     \sup_{\mathbf{q}\in\mathcal{D}_{\phi}(\mathbf{p},r)}\rho_{u,h,\mathbf{q}}(f(\mathbf{a},\mathbf{X}))\leq c-\epsilon\left(M+\frac{c+M}{p_{\min}}\right) \Rightarrow   \sup_{\mathbf{q}\in\mathcal{D}_{\phi}(\mathbf{p},r)}\rho_{u,\tilde{h}_\epsilon,\mathbf{q}}(f(\mathbf{a},\mathbf{X}))\leq c. 
    \end{align*}
Therefore, 
    \begin{align}\label{U_epsilon_bound}
        U_\epsilon &\leq \min_{\mathbf{a}\in \mathcal{A}}\ \left\{g(\mathbf{a})~\middle |~\sup_{\mathbf{q}\in\mathcal{D}_{\phi}(\mathbf{p},r)}\rho_{u,h,\mathbf{q}}(f(\mathbf{a},\mathbf{X}))\leq c-\epsilon\left(M+\frac{c+M}{p_{\min}}\right)\right\}.
    \end{align}
We note that the minimization problem on the right-hand side of \eqref{U_epsilon_bound} contains a Slater point, for $\epsilon$ sufficiently small. 
Indeed, by Assumption~\ref{assump: finite_strict_feasibility}, there exists a point $\mathbf{a}_0\in  \mathrm{int}(\mathcal{A})$, such that
    \begin{align*}
        \sup_{\mathbf{q}\in\mathcal{D}_{\phi}(\mathbf{p},r)}\rho_{u,h,\mathbf{q}}(f(\mathbf{a}_0,\mathbf{X}))\triangleq c_0<c.
    \end{align*}
Then, for $\epsilon\leq \frac{c-c_0}{2\left(M+\frac{c+M}{p_{\min}}\right)}$, we have that
    \begin{align}\label{feasible_a_0}
       \sup_{\mathbf{q}\in\mathcal{D}_{\phi}(\mathbf{p},r)}\rho_{u,h,\mathbf{q}}(f(\mathbf{a}_0,\mathbf{X}))<c-\epsilon \left(M+\frac{c+M}{p_{\min}}\right).
    \end{align}
Therefore, together with Assumption~\ref{assump: finite_strict_feasibility}, as well as the reformulation in Theorem~\ref{thm:ReformulationWholeProb}, we may apply the strong duality theorem and obtain the upper estimation
    \begin{align*}
        U_\epsilon&\leq \sup_{\lambda \geq 0}\min_{\mathbf{a}\in \mathcal{A}}g(\mathbf{a})+\lambda \left(\sup_{\mathbf{q}\in\mathcal{D}_{\phi}(\mathbf{p},r)}\rho_{u,h,\mathbf{q}}(f(\mathbf{a},\mathbf{X}))-c+\epsilon\left(M+\frac{c+M}{p_{\min}}\right)\right)\\
        &=\min_{\mathbf{a}\in \mathcal{A}}g(\mathbf{a})+\lambda^*(\epsilon) \left(\sup_{\mathbf{q}\in\mathcal{D}_{\phi}(\mathbf{p},r)}\rho_{u,h,\mathbf{q}}(f(\mathbf{a},\mathbf{X}))-c+\epsilon\left(M+\frac{c+M}{p_{\min}}\right)\right)\\
        &\leq \lambda^*(\epsilon)\cdot \epsilon\left(M+\frac{c+M}{p_{\min}}\right)+\sup_{\lambda\geq 0}\min_{\mathbf{a}\in \mathcal{A}}g(\mathbf{a})+\lambda \left(\sup_{\mathbf{q}\in\mathcal{D}_{\phi}(\mathbf{p},r)}\rho_{u,h,\mathbf{q}}(f(\mathbf{a},\mathbf{X}))-c\right)\\
        &=\lambda^*(\epsilon)\cdot \epsilon\left(M+\frac{c+M}{p_{\min}}\right)+P_0,
    \end{align*}
    where $\lambda^*(\epsilon)$ is the KKT-vector of the minimization problem on the right-hand side of \eqref{U_epsilon_bound}, corresponding to the supremum constraint, which depends on $\epsilon$. 
    As a final step, we will show that $\lambda^*(\epsilon)$ can be further bounded by a constant that does not depend on $\epsilon$, for $\epsilon$ sufficiently small. 
    Indeed, let $\mathbf{a}_0$ be the point in \eqref{feasible_a_0} and let $\epsilon\leq \frac{c-c_0}{2\left(M+\frac{c+M}{p_{\min}}\right)}$. 
    Since $P_0\leq U_\epsilon$, 
    \begin{align*}
        P_0&\leq \min_{\mathbf{a}\in \mathcal{A}}g(\mathbf{a})+\lambda^*(\epsilon) \left(\sup_{\mathbf{q}\in\mathcal{D}_{\phi}(\mathbf{p},r)}\rho_{u,h,\mathbf{q}}(f(\mathbf{a},\mathbf{X}))-c+\epsilon\left(M+\frac{c+M}{p_{\min}}\right)\right)\\
        &\leq g(\mathbf{a}_0)+\lambda^*(\epsilon)\left(\sup_{\mathbf{q}\in\mathcal{D}_{\phi}(\mathbf{p},r)}\rho_{u,h,\mathbf{q}}(f(\mathbf{a}_0,\mathbf{X}))-c+\epsilon\left(M+\frac{c+M}{p_{\min}}\right)\right)\\
        &\leq g(\mathbf{a}_0)-\lambda^*(\epsilon)\frac{c-c_0}{2},
    \end{align*}
    which implies that, for all $\epsilon\leq \frac{c-c_0}{2\left(M+\frac{c+M}{p_{\min}}\right)}$, we have
    \begin{align*}
        \lambda^*(\epsilon)\leq \frac{2(g(\mathbf{a}_0)-P_0)}{c-c_0}\triangleq C^*.
    \end{align*}
    Hence, for $\epsilon$ sufficiently small, 
    \begin{align*}
        0\leq U_\epsilon-P_0\leq C^*\cdot \epsilon\left(M+\frac{c+M}{p_{\min}}\right),
    \end{align*}
    which converges to zero as $\epsilon\to 0$.
\end{proof}

\begin{proof}[\textbf{Proof of Proposition~\ref{prop:impossiblility}}]
Choose the function $h(p)=p^2$. 
Then $h(0)=0$, $h(1)=1$ and $h(p)<p,~\forall p\in (0,1)$.
Due to the compactness of $\mathcal{U}$, there exists a maximizer $\mathbf{q}^*$ of $\sup_{\mathbf{q}\in \mathcal{U}}\rho_{h,\mathbf{q}}(X)$. 
Let $\hat{h}(p)=p$ be the concave envelope of $h$. 
By the assumptions on $\mathcal{U}$, we have that $\mathbf{q}^*_1<1$. 
Hence,
\begin{align*}
    \rho_{\hat{h},\mathbf{q}^*}(X)-\rho_{h,\mathbf{q}^*}(X)=\sum^m_{i=2}\left(\hat{h}\left(\sum^m_{k={i}}q^*_k\right)-h\left(\sum^m_{k={i}}q^*_k\right)\right)(x_{i-1}-x_{i})>0,
\end{align*}
since $\hat{h}(p)>h(p)$ for all $p\in (0,1)$ and $x_{i-1}-x_i> 0$ for all $i\in [m]$. 
Therefore, $\sup_{\mathbf{q}\in \mathcal{U}}\rho_{\hat{h},\mathbf{q}}(X)> \sup_{\mathbf{q}\in \mathcal{U}}\rho_{h,\mathbf{q}}(X)$.
\end{proof}

\begin{proof}[\textbf{Proof of Theorem~\ref{thm: inverse-s-shape}}]
It is sufficient to show that for any $X$, with outcomes $\{x_i\}^m_{i=1}$, we have
\begin{align*}
     \rho_{h,\mathbf{p}}(X)=\sup_{\mathbf{q}\in M^{\text{ca}}_h(\mathbf{p})}\sum^m_{i=1}-q_ix_i -\sup_{\bar{\mathbf{q}}\in N^{\text{cv}}_{\bar{h}}(\mathbf{p})}\sum^m_{i=1}\bar{q}_ix_i,
 \end{align*}
 since $\rho_{u,h,\mathbf{p}}(f(\mathbf{a},\mathbf{X}))=\rho_{h,\mathbf{p}}(u(f(\mathbf{a},\mathbf{X}))$.
    Let $-x_{(1)}\geq \ldots \geq -x_{(m)}$ be the ranked realizations of $X$. Let $k^*$ be the index where $h\left(\sum^{k^*}_{s=1}p_{(s)}\right)\leq h(p^0)\leq h\left(\sum^{k^*+1}_{s=1}p_{(s)}\right)$. 
    Then, 
    \begin{align*}
        &\rho_{h,\mathbf{p}}(X)&\\
        &=\sum^{k^*}_{i=1}\left(h\left(\sum^i_{s=1}p_{(s)}\right)-h\left(\sum^{i-1}_{s=1}p_{(s)}\right)\right)\cdot (-x_{(i)})+\left(h(p^0)-h\left(\sum^{k^*}_{s=1}p_{(s)}\right)\right)\cdot (-x_{(k^*+1)})\\
        &\qquad   + \left(h\left(\sum^{k^*+1}_{s=1}p_{(s)}\right)-h(p^0)\right)\cdot (-x_{(k^*+1)})+\sum^{m}_{i=k^*+2}\left(h\left(\sum^i_{s=1}p_{(s)}\right)-h\left(\sum^{i-1}_{s=1}p_{(s)}\right)\right)\cdot (-x_{(i)}),
    \end{align*}
    where an empty sum $\sum^0_{s=1}$ is zero by convention.
    We can also express the second line of the previous expression in terms of the dual function $\bar{h}(p)\triangleq 1-h(1-p)$, and rearrange the indices to obtain:
    \begin{align*}
        &\sum^{m}_{i=k^*+2}\left(h\left(\sum^i_{s=1}p_{(s)}\right)-h\left(\sum^{i-1}_{s=1}p_{(s)}\right)\right)\cdot (-x_{(i)})+ \left(h\left(\sum^{k^*+1}_{s=1}p_{(s)}\right)-h(p^0)\right)\cdot (-x_{(k^*+1)})\\
        &=\sum^{m}_{i=k^*+2}\left(\bar{h}\left(\sum^{m}_{s=i}p_{(s)}\right)-\bar{h}\left(\sum^{m}_{s=i+1}p_{(s)}\right)\right)\cdot (-x_{(i)})+ \left(\bar{h}(1-p^0)-\bar{h}\left(\sum^{m}_{s=k^*+2}p_{(s)}\right)\right)\cdot (-x_{(k^*+1)})\\
        &=-\sum^{m-k^*-1}_{i=1}\left(\bar{h}\left(\sum^i_{s=1}p_{(m-s+1)}\right)-\bar{h}\left(\sum^{i-1}_{s=1}p_{(m-s+1)}\right)\right)\cdot x_{(m-i+1)}\\
        &\qquad\qquad\qquad\qquad\qquad\qquad\qquad\qquad- \left(\bar{h}(1-p^0)-\bar{h}\left(\sum^{m-k^*-1}_{s=1}p_{(m-s+1)} \right)\right)\cdot x_{(k^*+1)},
    \end{align*}
where the empty sum $\sum^m_{s=m+1}$ is again zero. 
It remains to show that the sums are equal to their dual representations:
\begin{align}\label{dual_concave}
\begin{split}
   \sup_{\mathbf{q}\in M^{\text{ca}}_h(\mathbf{p})}\sum^m_{i=1}-q_ix_i&=\sum^{k^*}_{i=1}\left(h\left(\sum^i_{s=1}p_{(s)}\right)-h\left(\sum^{i-1}_{s=1}p_{(s)}\right)\right)\cdot (-x_{(i)})\\
   &\qquad\qquad\qquad+\left(h(p^0)-h\left(\sum^{k^*}_{s=1}p_{(s)}\right)\right)\cdot (-x_{(k^*+1)}), 
\end{split}
\end{align}
and
\begin{align}\label{dual_convex}
\begin{split}
    \sup_{\bar{\mathbf{q}}\in N^{\text{cv}}_{\bar{h}}(\mathbf{p})}\sum^m_{i=1}\bar{q}_ix_i
    &=\sum^{m-k^*-1}_{i=1}\left(\bar{h}\left(\sum^i_{s=1}p_{(m-s+1)}\right)-\bar{h}\left(\sum^{i-1}_{s=1}p_{(m-s+1)}\right)\right)\cdot x_{(m-i+1)}\\
    &\qquad\qquad\qquad+ \left(\bar{h}(1-p^0)-\bar{h}\left(\sum^{m-k^*-1}_{s=1}p_{(m-s+1)} \right)\right)\cdot x_{(k^*+1)}.
\end{split}   
\end{align}
We will first show \eqref{dual_concave}; \eqref{dual_convex} follows similarly. 
Let $h_0$ be defined as follows: 
\begin{align} \label{def: h_0}
    h_0(p)\triangleq\begin{cases}
        h(p)& p\leq p^0\\
        h(p^0)& p\geq p^0.
    \end{cases}
\end{align}
Then, $h_0$ is non-decreasing and concave. 
Let $P$ be the probability measure induced by the vector $(p_i)^m_{i=1}$. 
Then, by Lemma~\ref{lem: submodular}, $\mu_1\triangleq h_0\circ P$ is a monotone, submodular set function. 
By the definition of a Choquet integral, we have
\begin{align*}
    &\int -X \mathrm{d}\mu_1\\
    &=\sum^{k^*}_{i=1}\left(h_0\left(\sum^i_{s=1}p_{(s)}\right)-h_0\left(\sum^{i-1}_{s=1}p_{(s)}\right)\right)\cdot (-x_{(i)})+\left(h_0(p^0)-h_0\left(\sum^{k^*}_{s=1}p_{(s)}\right)\right)\cdot (-x_{(k^*+1)})\\
    &=\sum^{k^*}_{i=1}\left(h\left(\sum^i_{s=1}p_{(s)}\right)-h\left(\sum^{i-1}_{s=1}p_{(s)}\right)\right)\cdot (-x_{(i)})+\left(h(p^0)-h\left(\sum^{k^*}_{s=1}p_{(s)}\right)\right)\cdot (-x_{(k^*+1)}).
\end{align*}
Moreover, since $\mu_1$ is monotone and submodular, we have by Proposition 10.3 of \citet{Denneberg},
\begin{align*}
 \int -X \mathrm{d}\mu_1= \left\{\sum^m_{i=1}-q_ix_i~\middle |~ q_i\geq 0, \sum_{i\in J}q_{i}\leq h_0\left(\sum_{i\in J} p_i\right),~\forall J\subset [m],~\sum^m_{i=1}q_i=h_0(p^0)\right\},
\end{align*}
which involves the same feasible set as in  \eqref{uc_concave}.

Similarly, define the function
\begin{align}\label{def:h_bar0}
    \bar{h}_0(p)\triangleq\begin{cases}
        \bar{h}(p)& p\leq 1-p^0\\
        \bar{h}(1-p^0)& p\geq 1-p^0.
    \end{cases}
\end{align}
Then, $\bar{h}_0$ is concave and non-decreasing. 
Hence, by the same arguments as above, we have that $\mu_2\triangleq\bar{h}_0\circ P$ is monotone and submodular. 
Therefore,
\begin{align*}
    \int X \mathrm{d}\mu_2&=
    \sum^{m-k^*-1}_{i=1}\left(\bar{h}\left(\sum^i_{s=1}p_{(m-s+1)}\right)-\bar{h}\left(\sum^{i-1}_{s=1}p_{(m-s+1)}\right)\right)\cdot x_{(m-i+1)}\\
    &\qquad\qquad\qquad+ \left(\bar{h}(1-p^0)-\bar{h}\left(\sum^{m-k^*-1}_{s=1}p_{(s)} \right)\right)\cdot x_{(k^*+1)}.
\end{align*}
Moreover, we have
\begin{align*}
 \int X \mathrm{d}\mu_2= \sup\left\{\sum^m_{i=1}\bar{q}_ix_i~\middle |~ \bar{q}_i\geq 0, \sum_{i\in J}\bar{q}_{i}\leq \bar{h}_0\left(\sum_{i\in J} p_i\right),~\forall J\subset [m],~\sum^m_{i=1}\bar{q}_i=\bar{h}_0(1-p^0)\right\},
\end{align*}
which implies \eqref{uc_convex}.
\end{proof}

\begin{lemma}\label{lem: submodular}
    Let $\Omega$ be a set and let $\mathcal{F}=2^\Omega$ be the collection of all subsets of $\Omega$. 
    Furthermore, let $h:[0,1]\to \mathbb{R}$ be a non-decreasing concave function such that $h(0)=0$ and 
    let $P:\mathcal{F}\to [0,\infty)$ be an additive set function. 
    Then, $\mu\triangleq h\circ P$ is a monotone, submodular set function. 
\end{lemma}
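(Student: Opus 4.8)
The plan is to verify the two defining properties separately, each reducing to an elementary fact: monotonicity to the non-negativity of $P$ together with the monotonicity of $h$, and submodularity to the concavity of $h$ expressed through convex combinations. Note that $h(0)=0$ is not actually needed for either property (it only serves to normalize $\mu(\emptyset)=0$); monotonicity of $h$ drives the first claim and concavity drives the second.

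For monotonicity, I would take $A\subseteq B$ and use that $P$ is additive and non-negative to write $P(B)=P(A)+P(B\setminus A)\ge P(A)$; applying the non-decreasing map $h$ then gives $\mu(A)=h(P(A))\le h(P(B))=\mu(B)$.

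For submodularity, the key idea is to parametrize the four relevant sets through their disjoint pieces. For arbitrary $A,B\in\mathcal{F}$, set $a\triangleq P(A\setminus B)$, $b\triangleq P(B\setminus A)$, and $c\triangleq P(A\cap B)$, all non-negative by assumption. Additivity yields $P(A)=a+c$, $P(B)=b+c$, $P(A\cap B)=c$, and $P(A\cup B)=a+b+c$, so the submodularity inequality $\mu(A\cap B)+\mu(A\cup B)\le \mu(A)+\mu(B)$ becomes the purely scalar statement
\[
h(c)+h(a+b+c)\le h(a+c)+h(b+c).
\]
I would then prove this scalar inequality from concavity. Both $a+c$ and $b+c$ lie between the endpoints $c$ and $a+b+c$; writing them as convex combinations, $a+c=(1-t)c+t(a+b+c)$ with $t=a/(a+b)$ and $b+c=(1-s)c+s(a+b+c)$ with $s=b/(a+b)$, so that $s+t=1$, concavity of $h$ gives the two bounds $h(a+c)\ge (1-t)h(c)+t\,h(a+b+c)$ and $h(b+c)\ge (1-s)h(c)+s\,h(a+b+c)$; summing them and using $s+t=1$ collapses the right-hand side to $h(c)+h(a+b+c)$, which is exactly the claim.

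The main obstacle here is essentially bookkeeping rather than conceptual: one must check that additivity genuinely produces $P(A\cup B)=a+b+c$ (using the disjointness of $A\setminus B$, $B\setminus A$, and $A\cap B$), and one must dispatch the degenerate case $a+b=0$ separately, since $t$ and $s$ are then undefined—but in that case $a=b=0$ and the inequality holds with equality, so it is immediate. I would also remark that all arguments of $h$ stay within its domain, because in the intended application $P$ is the probability measure induced by a probability vector, so every value $P(\cdot)\in[0,1]$.
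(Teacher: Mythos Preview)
Your proof is correct and follows essentially the same approach as the paper: both reduce monotonicity to additivity of $P$ plus monotonicity of $h$, and reduce submodularity to a scalar inequality of the form $h(i)+h(u)\le h(a)+h(b)$ with $i+u=a+b$ and $i\le a\le b\le u$, which is then derived from concavity. The only cosmetic difference is that the paper applies concavity via the decreasing-difference-quotient form, whereas you use convex combinations with weights summing to one; the two arguments are equivalent.
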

\begin{proof}
Monotonicity of $\mu$ is clear due to the additivity of $P$ and the monotonicity of $h$. 
Let $A,B\in \mathcal{F}$. 
Additivity of $P$ also implies that
    \begin{align*}
        P(A\cup B)+P(A\cap B)=P(A)+P(B).
    \end{align*}
Let $a\triangleq P(A),\ b\triangleq P(B)$. 
Then, 
we have
    \begin{align*}
        P(A\cap B)\triangleq i\leq a\leq b\leq u\triangleq P(A\cup B).
    \end{align*}
Since $b-i=u-a$, 
we have that by concavity of $h$,
    \begin{align*}
        \frac{h(b)-h(i)}{b-i}\geq \frac{h(u)-h(a)}{u-a},
    \end{align*}
    which, via 
        $h(i)+h(u)\leq h(a)+h(b)$,
    implies the submodularity of $\mu$.
\end{proof}
\begin{proof}[\textbf{Proof of Theorem~\ref{thm: bilinear_nom}}]
By Theorem~\ref{thm: inverse-s-shape}, it is sufficient to show that the sets $M^{\text{ca}}_{h}(\mathbf{p})$ and $N^{\text{cv}}_{\bar{h}}(\mathbf{p})$ are the projection of the following sets on the coordinates $\mathbf{q}$ and $\bar{\mathbf{q}}$, respectively:
\begin{align}\label{uc_concave_piecewise}
M^{\text{ca}}_{h,l}(\mathbf{p})=\left\{\mathbf{q}\in \mathbb{R}^m, \mathbf{t}^{(1)}\in \mathbb{R}^{m\times K_1} \;
    \begin{tabular}{|l}
      $q_i\geq 0$\\
      $q_i\leq l^{(1)}_kp_i+t^{(1)}_{ik},~\forall i\in [m],~ \forall k\in [K_1] $\\
      $\sum^{m}_{i=1}t^{(1)}_{ik}\leq b^{(1)}_k,~\forall k\in [K_1] $\\
      $\sum^m_{i=1}q_i=h(p^0)$
\end{tabular}
\right\},
\end{align}
and 
\begin{align}\label{uc_convex_piecewise}
N^{\text{cv}}_{\bar{h},l}(\mathbf{p})=\left\{\bar{\mathbf{q}}\in \mathbb{R}^m, \mathbf{t}^{(2)}\in \mathbb{R}^{m\times K_2} \;
    \begin{tabular}{|l}
      $\bar{q}_i\geq 0$\\
      $\bar{q}_i\leq l^{(2)}_kp_i+t^{(2)}_{ik},~\forall i\in [m],~\forall k\in [K_2] $\\
      $\sum^{m}_{i=1}t^{(2)}_{ik}\leq b^{(2)}_k, \forall k\in [K_2] $\\
      $\sum^m_{i=1}\bar{q}_i=\bar{h}(1-p^0)$
\end{tabular}
\right\},
\end{align}
We will only show this for $M^{\text{ca}}_{h,l}(\mathbf{p})$, since the case for $N^{\text{cv}}_{\bar{h},l}(\mathbf{p})$ is identical, \textit{mutatis mutandis}. 

Let $(\mathbf{q}, \mathbf{t}^{(1)})\in M^{\text{ca}}_{h,l}(\mathbf{p})$. 
Then, $q_i\leq l^{(1)}_k\cdot p_i+t^{(1)}_{ik}$ for all $i\in [m]$ and $k\in [K_1]$. 
Thus, for any subset $I\subset [m]$, we also have that, for all $j\in [K_1]$, 
\begin{align*}
    \sum_{i\in I}q_i\leq l^{(1)}_k\sum_{i\in I}p_i+\sum_{i\in I} t^{(1)}_{ik}\leq l^{(1)}_k\sum_{i\in I}p_i+\sum^m_{i=1} t^{(1)}_{ik}\leq l^{(1)}_k\sum_{i\in I}p_i+b^{(1)}_k.  
\end{align*}
Hence, $\mathbf{q}\in M^{\text{ca}}_h(\mathbf{p})$. 
Conversely, let $\mathbf{q}\in M^{\text{ca}}_h(\mathbf{p})$. 
Then, $q_i\leq l^{(1)}_k\cdot p_i+t^{(1)}_{ik}$ for all $i\in [m]$ and $k\in [K_1]$ with $t^{(1)}_{ik}\triangleq\max\{q_i-l^{(1)}_k p_i,0\}$. 
Moreover, we have, for all $k\in [K_1]$, 
\begin{align*}
    \sum^m_{i=1}t^{(1)}_{ik}=\sum^m_{i=1}\max\{q_i-l^{(1)}_k p_i,0\}=\sum_{i\in I_+}q_i-\sum_{i\in I_+}l_k^{(1)}p_i\leq b^{(1)}_k,~\text{where}~I_+\triangleq\{i: q_i-l^{(1)}_k p_i\geq 0\}.
\end{align*}
Indeed, the last inequality follows from the fact that, for any $\mathbf{q}\in M^{\text{ca}}_h(\mathbf{p})$, we have that $\sum_{i\in I}\bar{q}_i\leq l^{(1)}_k\sum_{i\in I}q_i+b^{(1)}_k$ if $\sum_{i\in I}p_i\leq h(p^0)$. 
For any index set $I$ such that $\sum_{i\in I}p_i>h(p^0)$, we also have that $l_k^{(1)}\sum_{i\in I}p_i+b^{(1)}_k> l_k^{(1)}h(p^0)+b^{(1)}_k\stackrel{(*)}{\geq} l_{K_1}^{(1)}h(p^0)+b^{(1)}_{K_1}= h(p^0)\geq \sum_{i\in I}q_i$, where $(*)$ follows from our ordering of the slopes and the intercepts $(l^{(1)}_k,b^{(1)}_k)^{K_1}_{k=1}$ as described in \eqref{pw_concave_convex}. 
Hence, the inequality under consideration indeed holds for the index set $I_+$. 
Thus, $(\mathbf{q}, \mathbf{t}^{(1)})\in M^{\text{ca}}_{h,l}(\mathbf{p})$.
The reformulation in \eqref{RC: invS} now follows as in Theorem~\ref{thm:RCpiecewiselin}.
\end{proof}

\begin{proof}[\textbf{Proof of Theorem~\ref{thm: cut_end_invS}}]
We first note that, by Theorem~\ref{thm: inverse-s-shape}, 
 \begin{align*}
     \rho_{u,h,\mathbf{q}}(f(\mathbf{a},\mathbf{X}))=\sup_{\bar{\mathbf{q}}\in M^{\text{ca}}_h(\mathbf{q})}\sum^m_{i=1}-\bar{q}_iu(f(\mathbf{a},\mathbf{x}_i)) -\sup_{\tilde{\mathbf{q}}\in N^{\text{cv}}_{\bar{h}}(\mathbf{q})}\sum^m_{i=1}\tilde{q}_iu(f(\mathbf{a},\mathbf{x}_i)).
 \end{align*}
Since $\mathcal{U}_{t}\subset \mathcal{D}_{\phi}(\mathbf{p},r)$, we have that for each iteration $t$, the cutting-plane procedure yields a lower bound $c^t$ on the true optimal objective value \eqref{P}, which we denote by $\tilde{c}$. 
By Assumption~\ref{assumption:well-defined}, $\tilde{c}$ is finite. 
Hence, we may assume that $|c^s-c^t|\leq \frac{1}{2}\epsilon_{\mathrm{tol}}$, for all $s,t$ sufficiently large. 
Otherwise, the lower bounds will attain or exceed $\tilde{c}$ after finitely many iterations. 
Assume now that $s>t$. 
Let $\mathbf{q}^t$ be the worst-case probability vector added to $\mathcal{U}_t$ and let $\mathbf{a}_t,\mathbf{a}_s$ be the optimal solutions obtained at the $t,s$-th iteration. 
By definition, 
    \begin{align}
        \sup_{\bar{\mathbf{q}}\in M^{\text{ca}}_h(\mathbf{q}^t)}\sum^m_{i=1}-\bar{q}_iu(f(\mathbf{a}^t,\mathbf{x}_i)) -\sup_{\tilde{\mathbf{q}}\in N^{\text{cv}}_{\bar{h}}(\mathbf{q}^t)}\sum^m_{i=1}\tilde{q}_iu(f(\mathbf{a}^t,\mathbf{x}_i))>c^t+\mathrm{\epsilon}_{\mathrm{tol}},
   \end{align}
and  
\begin{align}
        \sup_{\bar{\mathbf{q}}\in M^{\text{ca}}_h(\mathbf{q}^t)}\sum^m_{i=1}-\bar{q}_iu(f(\mathbf{a}^s,\mathbf{x}_i)) -\sup_{\tilde{\mathbf{q}}\in N^{\text{cv}}_{\bar{h}}(\mathbf{q}^t)}\sum^m_{i=1}\tilde{q}_iu(f(\mathbf{a}^s,\mathbf{x}_i))\leq c^s.
    \end{align}
Therefore, 
\begin{align*}
    &\sup_{\bar{\mathbf{q}}\in M^{\text{ca}}_h(\mathbf{q}^t)}\sum^m_{i=1}-\bar{q}_iu(f(\mathbf{a}^t,\mathbf{x}_i))-\sup_{\bar{\mathbf{q}}\in M^{\text{ca}}_h(\mathbf{q}^t)}\sum^m_{i=1}-\bar{q}_iu(f(\mathbf{a}^s,\mathbf{x}_i))\\
    &\qquad\qquad\qquad+\sup_{\tilde{\mathbf{q}}\in N^{\text{cv}}_{\bar{h}}(\mathbf{q}^t)}\sum^m_{i=1}\tilde{q}_iu(f(\mathbf{a}^s,\mathbf{x}_i))-\sup_{\tilde{\mathbf{q}}\in N^{\text{cv}}_{\bar{h}}(\mathbf{q}^t)}\sum^m_{i=1}\tilde{q}_iu(f(\mathbf{a}^t,\mathbf{x}_i))\\
    &> \epsilon_{\mathrm{tol}}-(c^s-c^t)\geq \frac{1}{2}\epsilon_{\mathrm{tol}}.
\end{align*}
We can further upper bound the differences of the suprema as follows:
\begin{align*}
    &\sup_{\bar{\mathbf{q}}\in M^{\text{ca}}_h(\mathbf{q}^t)}\sum^m_{i=1}-\bar{q}_iu(f(\mathbf{a}^t,\mathbf{x}_i))-\sup_{\bar{\mathbf{q}}\in M^{\text{ca}}_h(\mathbf{q}^t)}\sum^m_{i=1}-\bar{q}_iu(f(\mathbf{a}^s,\mathbf{x}_i))\\
    &\leq \sup_{\bar{\mathbf{q}}\in M^{\text{ca}}_h(\mathbf{q}^t)}\left|\sum^m_{i=1}-\bar{q}_i(u(f(\mathbf{a}^s,\mathbf{x}_i))-u(f(\mathbf{a}^t,\mathbf{x}_i)))\right|\\
    &\leq \sup_{\bar{\mathbf{q}}\in M^{\text{ca}}_h(\mathbf{q}^t)}\|\mathbf{\bar{q}}\|_2\|(u(f(\mathbf{a}^{s},\mathbf{x}_i))-u(f(\mathbf{a}^{t},\mathbf{x}_i)))^m_{i=1}\|_2\\
    &\leq \|(u(f(\mathbf{a}^{s},\mathbf{x}_i))-u(f(\mathbf{a}^{t},\mathbf{x}_i)))^m_{i=1}\|_2. 
\end{align*}
Similarly, 
\begin{align*}
    &\sup_{\tilde{\mathbf{q}}\in N^{\text{cv}}_{\bar{h}}(\mathbf{q}^t)}\sum^m_{i=1}\tilde{q}_iu(f(\mathbf{a}^s,\mathbf{x}_i))-\sup_{\tilde{\mathbf{q}}\in N^{\text{cv}}_{\bar{h}}(\mathbf{q}^t)}\sum^m_{i=1}\tilde{q}_iu(f(\mathbf{a}^t,\mathbf{x}_i))\leq \|(u(f(\mathbf{a}^{s},\mathbf{x}_i))-u(f(\mathbf{a}^{t},\mathbf{x}_i)))^m_{i=1}\|_2.
\end{align*}
Therefore, 
\begin{align*}
    \|(u(f(\mathbf{a}^{s},\mathbf{x}_i))-u(f(\mathbf{a}^{t},\mathbf{x}_i)))^m_{i=1}\|_2\geq \frac{1}{4}\epsilon_{\mathrm{tol}}.
\end{align*}
However, we have the assumption that $\sup_{\mathbf{a}\in \mathcal{A}, i\in [m]}|u(f(\mathbf{a},\mathbf{x}_i))|<\infty$. 
Hence, this leads to a similar contradiction as in the proof of Theorem~\ref{thm:cuttingplane_conv}. 
Therefore, the cutting-plane procedure must terminate.
\end{proof}

\begin{proof}[\textbf{Proof of Theorem~\ref{thm: invS_PL_error}}]
Let $M\triangleq\sup_{\mathbf{a}\in \mathcal{A}, i\in [m]}|u(f(\mathbf{a},\mathbf{x}_i))|<\infty$. 
Similar to the proof of Theorem~\ref{thm:convergencePL}, for any two distortion functions $h_1, h_2$ such that $\sup_{p\in [0,1]}|h_1(p)-h_2(p)|\leq \epsilon$, we have that
\begin{align*}
    \rho_{u,h_1,\mathbf{p}}(f(\mathbf{a},\mathbf{X}))&\leq \rho_{u,h_2,\mathbf{p}}(f(\mathbf{a},\mathbf{X}))+\epsilon\cdot \left(\max_{\substack{i\in [m]}}u(f(\mathbf{a},\mathbf{x}_i))-\min_{i\in [m]} u(f(\mathbf{a},\mathbf{x}_i))\right)\\
    &\leq \rho_{u,h_2,\mathbf{p}}(f(\mathbf{a},\mathbf{X}))+2M\epsilon.
\end{align*}
Since $2M\epsilon$ does not depend on both $\mathbf{a}$ and $\mathbf{q}$, we have that, in the nominal case,
\begin{align*}
    \left|\min_{\mathbf{a}\in \mathcal{A}} \rho_{u,h_1,\mathbf{p}}(f(\mathbf{a},\mathbf{X}))- \min_{\mathbf{a}\in \mathcal{A}}\rho_{u,h_2,\mathbf{p}}(f(\mathbf{a},\mathbf{X}))\right|\leq 2M\epsilon,
\end{align*}
and similarly, in the robust case,
\begin{align*}
    \left|\min_{\mathbf{a}\in \mathcal{A}} \sup_{\mathbf{q}\in \mathcal{D}_\phi(\mathbf{p},r)}\rho_{u,h_1,\mathbf{q}}(f(\mathbf{a},\mathbf{X}))- \min_{\mathbf{a}\in \mathcal{A}}\sup_{\mathbf{q}\in \mathcal{D}_\phi(\mathbf{p},r)}\rho_{u,h_2,\mathbf{q}}(f(\mathbf{a},\mathbf{X}))\right|\leq 2M\epsilon,
\end{align*}
which both approach zero as $\epsilon\to 0$. 
Since Algorithm~\ref{algo: cutting_plane_inv_S} yields a final objective value $c^*$ such that 
\begin{align*}
    \left| c^*-\min_{\mathbf{a}\in \mathcal{A}}\sup_{\mathbf{q}\in \mathcal{D}_\phi(\mathbf{p},r)}\rho_{u,h_\epsilon,\mathbf{q}}(f(\mathbf{a},\mathbf{X}))\right|\leq \epsilon_{\mathrm{tol}},
\end{align*}
it follows that
\begin{align*}
    \left| c^*-\min_{\mathbf{a}\in \mathcal{A}}\sup_{\mathbf{q}\in \mathcal{D}_\phi(\mathbf{p},r)}\rho_{u,h,\mathbf{q}}(f(\mathbf{a},\mathbf{X}))\right|\leq \epsilon_{\mathrm{tol}} + \epsilon 2M\to 0,
\end{align*}
as $\epsilon_{\mathrm{tol}},\epsilon\to 0$.
\end{proof}

\begin{proof}[\textbf{Proof of Theorem~\ref{thm: inv_S_constraint_cut}}]
The proof that Algorithm~\ref{algo: cutting_plane_inv_S} terminates after finitely many steps for problem \eqref{P-constraint} is similar to the proof of Theorem~\ref{thm: cut_end_invS}, where we now take $c^s=c^t=c$, with $c$  the constraint value in \eqref{P-constraint}. 
To show convergence, we note that $\mathcal{U}_j\subset \mathcal{D}_{\phi}(\mathbf{p},r)$. 
Hence, $g(\mathbf{a}_{\epsilon_{\mathrm{tol}}})$ is always a lower bound of \eqref{P-constraint}, for which we denote its optimal objective value by $P(0)$. 
We define 
    \begin{align*}
        P(\epsilon_{\mathrm{tol}})\triangleq\min_{\mathbf{a}\in \mathcal{A}}\ \left\{g(\mathbf{a})~\middle |~\sup_{\mathbf{q}\in\mathcal{D}_{\phi}(\mathbf{p},r)}\rho_{u,h,\mathbf{q}}(f(\mathbf{a},\mathbf{X}))\leq c+\epsilon_{\mathrm{tol}} \right\}.
    \end{align*}
Then, by Algorithm~\ref{algo: cutting_plane_inv_S}, the final solution $\mathbf{a}_{\epsilon_{\mathrm{tol}}}$ is feasible for the minimization problem of $P(\epsilon_{\mathrm{tol}})$. 
Hence, $P(\epsilon_{\mathrm{tol}})\leq g(\mathbf{a}_{\epsilon_{\mathrm{tol}}})\leq P(0)$. 
It remains to show that $\lim_{\epsilon_{\mathrm{tol}}\to 0}P(\epsilon_{\mathrm{tol}})=P(0)$, which follows from the proof of Theorem~\ref{thm: inv_S_constraint} below.
\end{proof}

\begin{proof}[\textbf{Proof of Theorem~\ref{thm: inv_S_constraint}}]
     Let
     \begin{align*}
        L_\epsilon\triangleq \min_{\mathbf{a}\in \mathcal{A}}\ \left\{g(\mathbf{a})~\middle |~\sup_{\mathbf{q}\in\mathcal{D}_{\phi}(\mathbf{p},r)}\rho_{u,h_\epsilon,\mathbf{q}}(f(\mathbf{a},\mathbf{X}))\leq c \right\}.
    \end{align*}
    Then, by the proof of Theorem~\ref{thm: invS_PL_error},  for all $\mathbf{a}\in \mathcal{A}$, 
    \begin{align*}
    \left|\sup_{\mathbf{q}\in \mathcal{D}_\phi(\mathbf{p},r)}\rho_{u,h_\epsilon,\mathbf{q}}(f(\mathbf{a},\mathbf{X}))- \sup_{\mathbf{q}\in \mathcal{D}_\phi(\mathbf{p},r)}\rho_{u,h,\mathbf{q}}(f(\mathbf{a},\mathbf{X}))\right|\leq \epsilon M,
\end{align*}
where $M\triangleq 2\sup_{\mathbf{a}\in \mathcal{A}, i\in [m]}|u(f(\mathbf{a},\mathbf{x}_i))|<\infty$, which follows from the continuity of $u(f(\mathbf{a},\mathbf{x}_i))$ in $\mathbf{a}$ and the compactness of $\mathcal{A}$. 
Hence, we conclude that $P(\epsilon)\leq L_\epsilon\leq P(0)$, where $P(0)$ is the optimal objective value of \eqref{P-constraint}, and
\begin{align*}
    P(\epsilon)\triangleq\min_{\mathbf{a}\in \mathcal{A}}\ \left\{g(\mathbf{a})~\middle |~\sup_{\mathbf{q}\in\mathcal{D}_{\phi}(\mathbf{p},r)}\rho_{u,h,\mathbf{q}}(f(\mathbf{a},\mathbf{X}))\leq c+\epsilon M \right\}.
\end{align*}
We will now show that $\lim_{\epsilon\downarrow 0}P(\epsilon)=P(0)$, hence concluding the proof. 
To show this, we use Berge's maximum theorem (\citealp{Berge}), for which we have to show that the set-valued function
\begin{align*}
    \epsilon\mapsto G(\epsilon)\triangleq\left\{\mathbf{a}\in \mathcal{A}: \sup_{\mathbf{q}\in\mathcal{D}_{\phi}(\mathbf{p},r)}\rho_{u,h,\mathbf{q}}(f(\mathbf{a},\mathbf{X}))\leq c+\epsilon M \right\}
\end{align*}
is a compact-valued, continuous correspondence at $\epsilon=0$. 
We first examine compactness, which entails that for each $\epsilon\geq 0$, the set $G(\epsilon)$ must be compact. We note that since $\mathcal{A}$ is compact, we have that $G(\epsilon)$ is bounded for all $\epsilon\geq 0$. Hence, we only need to show that $G(\epsilon)$ is closed. 
This holds if $\mathbf{a}\mapsto \sup_{\mathbf{q}\in\mathcal{D}_{\phi}(\mathbf{p},r)}\rho_{u,h,\mathbf{q}}(f(\mathbf{a},\mathbf{X}))$ is continuous, which can be proven as follows: by Lemma~\ref{lem: continuity_rank_dependent}, we have that $(\mathbf{q},\mathbf{a})\mapsto \rho_{u,h,\mathbf{q}}(f(\mathbf{a},\mathbf{X}))$ is jointly continuous in $(\mathbf{q},\mathbf{a})$. 
Since the set $\mathcal{D}_{\phi}(\mathbf{p},r)$ is compact and independent of $\mathbf{a}$, Berge's maximum theorem implies that $\mathbf{a}\mapsto \sup_{\mathbf{q}\in\mathcal{D}_{\phi}(\mathbf{p},r)}\rho_{u,h,\mathbf{q}}(f(\mathbf{a},\mathbf{X}))$ is continuous.
Hence, $G(\epsilon)$ is compact.

We now show that $G(\epsilon)$ is lower-and upper-hemicontinuous at $\epsilon = 0$. 
For upper-hemicontinuity, we must show that $G(0)$ is non-empty, which holds due to Assumption~\ref{assump: finite_strict_feasibility}, and that, for any $\epsilon_j\to 0$ and any sequence $\mathbf{a}_j\in G(\epsilon_j)$, there exists a convergent subsequence $\mathbf{a}_{j_k}$ such that $\mathbf{a}_{j_k}\to \mathbf{a}_0\in G(0)$. 
Note that due to the compactness of $\mathcal{A}$, there is indeed a subsequence $\mathbf{a}_{j_k}\to \mathbf{a}_0\in \mathcal{A}$. 
It remains to show that we have $\mathbf{a}_0\in G(0)$. 
This follows from continuity: $\sup_{\mathbf{q}\in\mathcal{D}_{\phi}(\mathbf{p},r)}\rho_{u,h,\mathbf{q}}(f(\mathbf{a}_{0},\mathbf{X}))=\lim_{k\to \infty}\sup_{\mathbf{q}\in\mathcal{D}_{\phi}(\mathbf{p},r)}\rho_{u,h,\mathbf{q}}(f(\mathbf{a}_{j_k},\mathbf{X}))\leq \lim_{k\to \infty}c+\epsilon_{j_k}M=c$.

Finally, for lower-hemicontinuity at $\epsilon=0$, we have to show that, for every $\mathbf{a}_0\in G(0)$ and every sequence $\epsilon_j\to 0$, there exist a $J\geq 1$ and a sequence $\mathbf{a}_j$ such that, for all $j\geq J$, we have $\mathbf{a}_j\in G(\epsilon_j)$ and $\mathbf{a}_j\to \mathbf{a}_0$. 
However, since $\epsilon_j>0$ for all $j$, we can take the sequence $\mathbf{a}_j=\mathbf{a}_0\in G(\epsilon_j)$.
Hence, we may apply Berge's maximum theorem to conclude that $\lim_{\epsilon\downarrow 0}P(\epsilon)=P(0)$, which concludes the proof.
\end{proof}

\begin{lemma}\label{lem: continuity_rank_dependent}
    Let $u(f(\mathbf{a},\mathbf{x}_i))$ be continuous in $\mathbf{a}$ for all $i\in [m]$ and let $h$ be a continuous distortion function. 
    Then, the rank-dependent evaluation function
    \begin{align*}
        (\mathbf{q},\mathbf{a})\mapsto \rho_{u,h,\mathbf{q}}(f(\mathbf{a},\mathbf{X})),
    \end{align*}
    is jointly continuous in $(\mathbf{q},\mathbf{a})$. 
\end{lemma}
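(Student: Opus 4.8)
The plan is to reduce the statement to the joint continuity of the abstract Choquet-integral map over a finite closed cover indexed by the orderings of the state space, and then glue. First I would introduce the disutility vector $\boldsymbol{\ell}(\mathbf{a})\triangleq(-u(f(\mathbf{a},\mathbf{x}_1)),\ldots,-u(f(\mathbf{a},\mathbf{x}_m)))\in\mathbb{R}^m$ and the capacity $\nu_{\mathbf{q}}$ on $2^{[m]}$ given by $\nu_{\mathbf{q}}(J)\triangleq h\!\left(\sum_{i\in J}q_i\right)$. By the hypothesis that each $\mathbf{a}\mapsto u(f(\mathbf{a},\mathbf{x}_i))$ is continuous, $\mathbf{a}\mapsto\boldsymbol{\ell}(\mathbf{a})$ is continuous; and since $\mathbf{q}\mapsto\sum_{i\in J}q_i$ is linear and $h$ is continuous, each coordinate $\mathbf{q}\mapsto\nu_{\mathbf{q}}(J)$ is continuous, so $\mathbf{q}\mapsto\nu_{\mathbf{q}}$ is continuous as a map into $\mathbb{R}^{2^m}$ (with $\nu_{\mathbf{q}}(\emptyset)=h(0)=0$, $\nu_{\mathbf{q}}([m])=h(1)=1$, and $\nu_{\mathbf{q}}$ monotone). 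By \eqref{def:distortionriskmeasure}--\eqref{ranked_sum}, $\rho_{u,h,\mathbf{q}}(f(\mathbf{a},\mathbf{X}))$ equals the Choquet integral $\Phi(\boldsymbol{\ell}(\mathbf{a}),\nu_{\mathbf{q}})$ of $\boldsymbol{\ell}$ against $\nu_{\mathbf{q}}$. Hence it suffices to show that $\Phi:\mathbb{R}^m\times\mathcal{C}\to\mathbb{R}$, $(\boldsymbol{\ell},\nu)\mapsto\int\boldsymbol{\ell}\,\mathrm{d}\nu$, is jointly continuous, where $\mathcal{C}\subset\mathbb{R}^{2^m}$ is the closed set of monotone capacities normalized by $\nu(\emptyset)=0,\ \nu([m])=1$; the claim then follows by composition of continuous maps.

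To prove joint continuity of $\Phi$, I would use the sorted representation of the Choquet integral. For each permutation $\sigma$ of $[m]$, let $R_\sigma\triangleq\{\boldsymbol{\ell}\in\mathbb{R}^m:\ell_{\sigma(1)}\leq\cdots\leq\ell_{\sigma(m)}\}$ be the closed cone on which $\sigma$ sorts the coordinates in increasing order, and define on $\mathbb{R}^m\times\mathcal{C}$ the expression
\begin{align*}
F_\sigma(\boldsymbol{\ell},\nu)\triangleq\ell_{\sigma(1)}+\sum_{i=2}^m\big(\ell_{\sigma(i)}-\ell_{\sigma(i-1)}\big)\,\nu\big(\{\sigma(i),\ldots,\sigma(m)\}\big).
\end{align*}
Each $F_\sigma$ is jointly continuous, being a finite sum of products of a linear map in $\boldsymbol{\ell}$ with a coordinate of $\nu$. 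An Abel summation shows that on $R_\sigma\times\mathcal{C}$ the standard rank-dependent formula gives $\Phi=F_\sigma$, so $\Phi$ is continuous on each of the finitely many closed sets $R_\sigma\times\mathcal{C}$, and these sets cover $\mathbb{R}^m\times\mathcal{C}$.

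The main obstacle---indeed the only substantive point---is that the cones $R_\sigma$ overlap exactly on the tie locus $\{\boldsymbol{\ell}:\ell_i=\ell_j\text{ for some }i\neq j\}$, where the induced ranking is ambiguous; to invoke the gluing lemma for finite closed covers I must verify that the pieces agree there, i.e.\ that $F_\sigma(\boldsymbol{\ell},\nu)=F_\tau(\boldsymbol{\ell},\nu)$ whenever $\boldsymbol{\ell}\in R_\sigma\cap R_\tau$. This is precisely the invariance of the Choquet integral under the choice of tie-breaking: if $\boldsymbol{\ell}\in R_\sigma\cap R_\tau$ then the sorted value sequences coincide, $\ell_{\sigma(i)}=\ell_{\tau(i)}$ for all $i$, and any increment $\ell_{\sigma(i)}-\ell_{\sigma(i-1)}$ belonging to a tied block is zero. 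Consequently only the strict-jump indices contribute, and at such an index the upper set $\{\sigma(i),\ldots,\sigma(m)\}=\{k:\ell_k\geq\ell_{\sigma(i)}\}$ is determined by the value alone, hence equal to the corresponding $\tau$-set; the sets on which $\sigma$ and $\tau$ genuinely differ are always multiplied by a vanishing coefficient. Therefore $F_\sigma=F_\tau$ on the overlap.

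With agreement on overlaps established, the gluing lemma applied to the finite closed cover $\{R_\sigma\times\mathcal{C}\}_\sigma$ yields that $\Phi$ is jointly continuous on $\mathbb{R}^m\times\mathcal{C}$. Composing $\Phi$ with the continuous maps $\mathbf{a}\mapsto\boldsymbol{\ell}(\mathbf{a})$ and $\mathbf{q}\mapsto\nu_{\mathbf{q}}$ then shows that $(\mathbf{q},\mathbf{a})\mapsto\rho_{u,h,\mathbf{q}}(f(\mathbf{a},\mathbf{X}))=\Phi(\boldsymbol{\ell}(\mathbf{a}),\nu_{\mathbf{q}})$ is jointly continuous, completing the proof.
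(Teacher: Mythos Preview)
Your proof is correct. Both your argument and the paper's hinge on the same core observation---that the Choquet sum is invariant to how ties among the disutilities are broken---but the packaging is genuinely different. The paper proves sequential continuity directly at a fixed point $(\mathbf{q}_0,\mathbf{a}_0)$: it partitions $[m]$ into blocks on which the disutilities at $\mathbf{a}_0$ coincide, argues by continuity of each $u(f(\cdot,\mathbf{x}_i))$ that for nearby $\mathbf{a}$ the ranking can only permute within those blocks, and then takes the limit term by term, noting that within-block increments vanish while between-block increments converge because the associated upper sets are determined by the block structure alone. You instead factor through the abstract Choquet map $\Phi:\mathbb{R}^m\times\mathcal{C}\to\mathbb{R}$, cover $\mathbb{R}^m$ by the $m!$ closed ordering cones $R_\sigma$, observe that on each cone $\Phi$ coincides with the polynomial $F_\sigma$, verify tie-breaking invariance on the overlaps, and invoke the pasting lemma for finite closed covers. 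Your route is more modular---it cleanly separates the analytic inputs (continuity of $\boldsymbol{\ell}$ and $\nu_{\mathbf{q}}$) from a purely combinatorial fact about Choquet integrals on finite spaces---and yields the slightly stronger statement that $\Phi$ is jointly continuous on all of $\mathbb{R}^m\times\mathcal{C}$; the paper's argument is more self-contained and avoids the detour through general capacities.
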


\begin{proof}
For any $\mathbf{a}\in\mathcal{A}$, we denote the indices $1(\mathbf{a}),\ldots, m(\mathbf{a})$ such that
\begin{align*}
    -u(f(\mathbf{a},\mathbf{x}_{1(\mathbf{a})}))\leq \cdots \leq -u(f(\mathbf{a},\mathbf{x}_{m(\mathbf{a})})),
\end{align*}
and we define
\begin{align*}
    \Delta u(f(\mathbf{a},\mathbf{x}_{1(\mathbf{a})}))\triangleq-u(f(\mathbf{a},\mathbf{x}_{1(\mathbf{a})})),~\Delta u(f(\mathbf{a},\mathbf{x}_{i(\mathbf{a})}))\triangleq u(f(\mathbf{a},\mathbf{x}_{(i-1)(\mathbf{a})}))-u(f(\mathbf{a},\mathbf{x}_{i(\mathbf{a})})),
\end{align*}
for $i=2,\ldots,m$. 
Denote the index set 
\begin{align*}
    \mathcal{I}_+(\mathbf{a}_0)=\{i\in \{2,\ldots ,m\}: \Delta u(f(\mathbf{a}_0,\mathbf{x}_{i(\mathbf{a}_0)}))>0\}.
\end{align*}
We enumerate the elements in $\mathcal{I}_+(\mathbf{a}_0)$ as $i_1<\cdots < i_K$, for $K=|\mathcal{I}_+(\mathbf{a}_0)|\leq m-1$. 
Set $i_0=1$ and $i_{K+1}=m+1$. 
Then, the index set $[m]$ is partitioned into $K+1$ disjoint classes $\Pi_j\triangleq\{i(\mathbf{a}_0): i_{j-1}\leq i<i_j\}$, with $j\in [K+1]$, that have the following two properties: 
(i) If $k,l\in [m]$ are indices of adjacent classes, e.g., $k\in \Pi_j$ and $l\in \Pi_{j+1}$ for some $j$, then $u(f(\mathbf{a}_0,\mathbf{x}_k))-u(f(\mathbf{a}_0,\mathbf{x}_l))=\Delta u(f(\mathbf{a}_0,\mathbf{x}_{i_{j+1}(\mathbf{a}_0)}))>0$. 
(ii) If $k,l$ belong to the same class $\Pi_j$, we have $u(f(\mathbf{a}_0,\mathbf{x}_k))-u(f(\mathbf{a}_0,\mathbf{x}_l))=0$. 
Continuity of $u(f(\mathbf{a},\mathbf{x}))$ in $\mathbf{a}_0$ implies that for all $\mathbf{a}$ sufficiently close to $\mathbf{a}_0$, the ranking indices $1(\mathbf{a}),\ldots, m(\mathbf{a})$ can also be partitioned such that $\{i(\mathbf{a}):i_{j-1}\leq i<i_j\}=\{i(\mathbf{a}_0):i_{j-1}\leq i<i_j\}$, for all $j\in [K+1]$. 
In particular, this means that within each class, the ranking indices of $\mathbf{a}$ constitute a permutation of that of $\mathbf{a}_0$. 

Hence, we have
\begin{align*}
    &\rho_{u,h,\mathbf{q}}(f(\mathbf{a},\mathbf{X}))\\
    \qquad&= \sum^m_{i=1}h\left(\sum^m_{k=i}q_{k(\mathbf{a})}\right)\Delta u(f(\mathbf{a},\mathbf{x}_{i(\mathbf{a})}))=\sum^{K+1}_{j=1}\sum^{i_j-1}_{i=i_{j-1}}h\left(\sum^m_{k=i}q_{k(\mathbf{a})}\right)\Delta u(f(\mathbf{a},\mathbf{x}_{i(\mathbf{a})}))\\
    &=\sum^{K+1}_{j=1}h\left(\sum^m_{k=i_{j-1}}q_{k(\mathbf{a})}\right)\Delta u(f(\mathbf{a},\mathbf{x}_{i_{j-1}(\mathbf{a})}))+\sum^{K+1}_{j=1}\sum^{i_j-1}_{i=i_{j-1}+1}h\left(\sum^m_{k=i}q_{k(\mathbf{a})}\right)\Delta u(f(\mathbf{a},\mathbf{x}_{i(\mathbf{a})}))\\
    &\stackrel{(\mathbf{q},\mathbf{a})\to (\mathbf{q}_0,\mathbf{a}_0)}{=}\sum^{K+1}_{j=1}h\left(\sum^m_{k=i_{j-1}}q_{0,k(\mathbf{a}_0)}\right)\Delta u(f(\mathbf{a}_0,\mathbf{x}_{i_{j-1}(\mathbf{a}_0)}))+0\\
    &=\rho_{u,h,\mathbf{q}_0}(f(\mathbf{a}_0,\mathbf{X})).
\end{align*}
This completes the proof.
\end{proof}

\section{Derivations of the Conjugate Functions and Their Epigraphs}\label{App:RCderivation} 
In this appendix, we provide detailed derivations of the explicit conjugate functions, as well as the epigraphs of $\lambda(-h)^*(\frac{-\nu}{\lambda})\leq z, \lambda>0$, and $\gamma \phi^*(\frac{s}{\gamma})\leq t, \gamma>0$, for the collection of canonical examples in Tables~\ref{tab:h_conjugates}--\ref{tab:phi_conjugate};
we only provide the details in case the derivations are considered to be non-trivial.

\subsection{Distortion Functions}
\begin{itemize}
\item For $h(p)=\begin{cases}
    (1+r)p&p<1/2\\
    (1-r)p+r&p\geq 1/2
\end{cases}$ and $0<r<1$, we examine the conjugate
\begin{align*}
(-h)^*(y)
&=\max\{\sup_{t\in [0,\frac{1}{2})}(y+(1+r))t,\sup_{t\geq \frac{1}{2}}(y+(1-r))t+r\}.
\end{align*}
We have $(-h)^*(y)=\infty$ for $y>-(1-r)$. 
For $-(1+r)\leq y\leq -(1-r)$, we have $(-h)^*(y)=\max\{1/2(y+(1+r)),1/2(y+(1-r))+r\}=1/2(y+(1+r))$. For $y<-(1+r)$, $(-h)^*(y)=\max\{0,1/2(y+(1-r))+r\}=0$. 
Therefore, $(-h)^*(y)=\max\{(y+1+r)/2,0\},~\text{for}~y\leq -(1-r)$.
\item Let $h(p)=(1+r)p-rp^2$, $0<r<1$. 
For $y\leq 0$, we have
\begin{align*}
    (-h)^*(y)=\sup_{t\in [0,1]}\{ty+(1+r)t-rt^2\}=\sup_{t\in [0,1]}\{(y+(1+r))t-rt^2\}.
\end{align*}
Differentiating the objective w.r.t.\ $t$ yields
\begin{align*}
    y+(1+r)-2rt,
\end{align*}
which is non-negative for $t\leq \frac{y+1+r}{2r}$ and negative otherwise. 
If $\frac{y+1+r}{2r}<0$, then the derivative is always negative, hence the maximum is obtained at $t=0$ and thus we have $(-h)^*(y)=0$. 
If $\frac{y+1+r}{2r}>1$, then the derivative is always positive, hence $(-h)^*(y)=y+1$. 
If $\frac{y+1+r}{2r}\in [0,1]$, then the maximum is attained at $t=\frac{y+1+r}{2r}$. 
Hence, for $y\leq 0$, we have
\begin{align*}
    (-h)^*(y)=\begin{cases}
        \frac{1}{4r}\max\{y+1+r,0\}^2 & y+1\leq r\\
        y+1 & y+1>r.
     \end{cases}
\end{align*}

The epigraph of $(-h)^*$ can be represented by
\begin{align*}
    \begin{cases}
        t = t_1+t_2+r\\
        y+1-r= y_1+y_2\\
        t_1\geq \frac{1}{4r}\max\{y_1+2r,0\}^2-r,~t_2\geq y_2\\
        y_1\leq 0, y_2\geq 0.
    \end{cases}
\end{align*}
Indeed, let $(y,t)\in \mathrm{Epi}((-h)^*)$. 
If $y+1-r\leq 0$, then we can choose $y_1=y+1-r$, $t_1=t-r$, $y_2=t_2=0$. If $y+1-r\geq 0$, then choose $y_1=t_1=0$, $y_2=y+1-r$, $t_2=t-r$.

Conversely, let $(y,t,y_1,y_2,t_1,t_2)$ satisfy the above constraints. 
Define 
\begin{align*}
f(z)=\begin{cases}
     \frac{1}{4r}\max\{z+2r,0\}^2-r & z\leq 0\\
        z & z\geq 0.
\end{cases}    
\end{align*}
Then, $(-h)^*(y)=f(y+1-r)+r$.
Hence, $(-h)^*(y)\leq t\Leftrightarrow f(y+1-r)\leq t-r$.
Since $y_1\leq 0$, $y_2\geq 0$, we have that $y_1\leq y_1+y_2\leq y_2$. 
Since $f$ is convex and $f(0)=0$, we have
\begin{align*}
    \frac{f(y_1+y_2)-f(y_1)}{y_2}\leq \frac{f(y_2)-f(0)}{y_2},
\end{align*}
and thus $f(y_1+y_2)\leq f(y_1)+f(y_2)$. 
Therefore, $(-h)^*(y)=f(y+1-r)+r=f(y_1+y_2)\leq f(y_1)+f(y_2)+r\leq t_1+t_2+r=t$.

The epigraph of $\lambda (-h)^*(\frac{-\nu}{\lambda})\leq z $ is then given by
\begin{align*}
    \begin{cases}
        z = z_1+z_2\\
        -\nu+\lambda(1-r)= \xi_1+\xi_2\\
        (z_1+\lambda)\geq \sqrt{\frac{w^2}{r}+(z_1-\lambda)^2},~z_2\geq \xi_2\\
        \xi_1+2r\leq w\\
        \xi_1\leq 0, \xi_2, w, \nu \geq 0.
    \end{cases}
\end{align*}

\item For $h(p)=\begin{cases}
    1-(1-p)^n&0\leq p< 1\\
    1&p\geq 1
\end{cases}$, we will derive a tractable reformulation of the epigraph $(-h)^*$ using duality. 
We have
\begin{align*}
 \mathrm{Epi}(-h)= \{(p,t)\in \mathbb{R}_{\geq 0}\times \mathbb{R}:\exists~ (u_1,u_2)\in \mathbb{R}^2_{\geq 0}:u_1-1\leq t, u_2\leq u_1^{1/n}, 1-p\leq u_2\}.
\end{align*}
Indeed, let $(p,t)\in \mathrm{Epi}(-h)$. 
If $0\leq p<1$, then $(1-p)^n-1\leq t$. 
We can choose $u_2=1-p\geq 0$ and $u_1=u_2^{n}\geq 0$. 
If $p\geq 1$, then $-1\leq t$. 
We can choose $u_1=u_2=0$. 
Conversely, let $(p,t,u_1,u_2)$ satisfy the above constraints. 
If $p\geq 1$, we have $t\geq -1+u_1\geq -1$. 
Thus, $(p,t)\in \mathrm{Epi}(-h)$. 
If $0\leq p<1$, then $(1-p)^n-1\leq u_2^n-1\leq u_1-1\leq t$. 
Hence, we also have $(p,t)\in \mathrm{Epi}(-h)$.

Consider the epigraph of $(-h)^*$. 
We have
\begin{align*}
    \mathrm{Epi}((-h)^*)=\{(y,s): yp-(-h)(p)\leq s, \forall p\geq 0\}&=\{(y,s): yp-t\leq s, \forall (p,t)\in \mathrm{Epi}(-h)\}.
\end{align*}
Therefore, we have that $(y,s)\in \mathrm{Epi}((-h)^*)$ if and only if the optimization problem
\begin{align*}
    \min_{p,u_1,u_2\geq 0, t\in \mathbb{R}}\{-yp+t|u_1-1\leq t, u_2\leq u_1^{1/n}, 1-p\leq u_2\}
\end{align*}
is bounded below by $-s$. 
Since this is a convex problem with a point $p=1,u_2=0,u_1=1,t=0$ that satisfies Slater's condition, we may apply the duality theorem and obtain that this optimization problem is equal to
\begin{align*}
    \max_{\xi_1,\xi_2,\xi_3\geq 0}\inf_{p,u_1,u_2\geq 0, t\in \mathbb{R}}-yp+t+\xi_1(u_1-1-t)+\xi_2(u_2-u_1^{1/n})+\xi_3(1-p-u_2),
\end{align*}
which, after some rewriting is equal to
\begin{align*}
    \max_{\xi_2,\xi_3\geq 0}\{-1+\xi_3+(n^{-\frac{n}{n-1}}-n^{-\frac{1}{n-1}})\xi_2^{\frac{n}{n-1}}~|~y+\xi_3\leq 0, \xi_2\geq \xi_3\}.
\end{align*}
Therefore, we have
\begin{align*}
    &\mathrm{Epi}((-h)^*)\\
    &=\{(y,s):\exists~ \xi_2,\xi_3\geq 0:1-\xi_3+(n^{-\frac{1}{n-1}}-n^{-\frac{n}{n-1}})\xi_2^{\frac{n}{n-1}}\leq s,y+\xi_3\leq 0, \xi_2\geq \xi_3 \}\\
    &=\{(y,s):\exists~ \xi_2,\xi_3,\xi_4\geq 0:1-\xi_3+(n^{-\frac{1}{n-1}}-n^{-\frac{n}{n-1}})\xi_4\leq s,\xi_2\leq \xi_4^{\frac{n-1}{n}}\cdot 1^{1-\frac{n-1}{n}},\\
    &\qquad y+\xi_3\leq 0, \xi_2\geq \xi_3 \}.
\end{align*}
This gives the reformulation of the epigraph of the perspective $\lambda (-h)^*(\frac{-\nu}{\lambda})\leq z$ as
\begin{align*}
  \lambda (-h)^*(\frac{-\nu}{\lambda})\leq z\Leftrightarrow \begin{cases}
      \lambda-\xi_3+(n^{-\frac{1}{n-1}}-n^{-\frac{n}{n-1}})\xi_4\leq z\\
      \xi_2\leq \xi_4^{\frac{n-1}{n}}\cdot \lambda^{1-\frac{n-1}{n}}\\
      -\nu+\xi_3\leq 0\\
      \xi_2\geq \xi_3\\
      \xi_2,\xi_3,\xi_4\geq 0.
  \end{cases} 
\end{align*}
\item For $h(p)=\begin{cases}
    (1-(1-p)^n)^{1/n}&0\leq p< 1\\
    1&p\geq 1
\end{cases}$, we will derive a tractable reformulation of the epigraph $(-h)^*$ using duality. 
We have
\begin{align*}
    \mathrm{Epi}(-h)=\{(p,t)\in \mathbb{R}_{\geq 0}\times \mathbb{R}:\exists~ u_1,u_2\geq 0 :u_1\geq -t,u_2^{1/n}\geq u_1,(1-u_2)^{1/n}\geq 1-p,u_2\leq 1\}.
\end{align*}
Indeed, if $(p,t)\in \mathrm{Epi}(-h)$, then for $0\leq p<1$, we have $-(1-(1-p)^n)^{1/n}\leq t$. 
Choose $u_2=(1-(1-p)^n)$ and $u_1=u_2^{1/n}$ gives the right inclusion. 
If $p\geq 1$, then choose $u_2=1=u_1$, which also gives the right inclusion. 
Conversely, let $(p,t,u_1,u_2)$ satisfy the above constraints. 
If $0\leq p<1$, then, by construction, $(p,t)\in \mathrm{Epi}(-h)$. 
If $p\geq 1$, we have $u_1\leq u_2^{1/n}\leq 1$. 
Hence, $t\geq -u_1\geq -1$, thus $(p,t)\in \mathrm{Epi}(-h)$.

Consider the epigraph of $(-h)^*$. 
Again, we have
\begin{align*}
    \mathrm{Epi}((-h)^*)=\{(y,s): yp-t\leq s, \forall (p,t)\in \mathrm{Epi}(-h)\}.
\end{align*}
Therefore, we have that $(y,s)\in \mathrm{Epi}((-h)^*)$ if and only if the optimization problem
\begin{align*}
    \min_{p,u_1,\geq 0, u_2\in [0,1], t\in \mathbb{R}}\{-yp+t|u_1\geq -t,u_2^{1/n}\geq u_1,(1-u_2)^{1/n}\geq 1-p\}
\end{align*}
is bounded below by $-s$. 
Since this is a convex problem with a point $p=2,u_2=1,u_1=1/2,t=1$ that satisfies Slater's condition, we may apply the duality theorem and obtain that this optimization problem is equal to
\begin{align*}
    &\max_{\xi_1,\xi_2,\xi_3\geq 0}\{\xi_3+\inf_{p,u_1\geq 0, u_2\in [0,1], t\in \mathbb{R}}-(y+\xi_3)p+(1-\xi_1)t+(\xi_2-\xi_1)u_1-\xi_2u_2^{1/n}-\xi_3(1-u_2)^{1/n}\}\\
    &=\max_{\xi_2,\xi_3\geq 0}\{\xi_3+\inf_{u_2\in [0,1]}-\xi_2u_2^{1/n}-\xi_3(1-u_2)^{1/n}|\xi_1=1,y+\xi_3\leq 0,\xi_2\geq \xi_1\}.
\end{align*}
The convex function $-\xi_2u_2^{1/n}-\xi_3(1-u_2)^{1/n}$ has derivative
\[\frac{\mathrm{d}}{\mathrm{d}u_2}=\frac{1}{n}(\xi_3(1-u_2)^{\frac{1-n}{n}}-\xi_2u_2^{\frac{1-n}{n}}),\]
which has a root at 
\begin{align*}
    \frac{\xi_2}{u_2^{\frac{n-1}{n}}}&=\frac{\xi_3}{(1-u_2)^{\frac{n-1}{n}}}\\
    \xi_2(1-u_2)^{\frac{n-1}{n}}&=\xi_3u_2^{\frac{n-1}{n}}\\
    u_2&=\frac{\xi_2^{\frac{n}{n-1}}}{\xi_2^{\frac{n}{n-1}}+\xi_3^{\frac{n}{n-1}}} \in [0,1].
\end{align*}
Since we are examining a convex function, this is where the minimum is attained. 
Hence, we have
\begin{align*}
   \inf_{u_2\in [0,1]}-\xi_2u_2^{1/n}-\xi_3(1-u_2)^{1/n}&=-\xi_2\cdot \left(\frac{\xi_2^{\frac{n}{n-1}}}{\xi_2^{\frac{n}{n-1}}+\xi_3^{\frac{n}{n-1}}}\right)^{1/n}-\xi_3\cdot  \left(\frac{\xi_3^{\frac{n}{n-1}}}{\xi_2^{\frac{n}{n-1}}+\xi_3^{\frac{n}{n-1}}}\right)^{1/n}\\
   &=-\frac{\xi_2^{\frac{n}{n-1}}+\xi_3^{\frac{n}{n-1}}}{(\xi_2^{\frac{n}{n-1}}+\xi_3^{\frac{n}{n-1}})^{1/n}}=-(\xi_2^{\frac{n}{n-1}}+\xi_3^{\frac{n}{n-1}})^{\frac{n-1}{n}}.
\end{align*}
Therefore,
\begin{align*}
    \mathrm{Epi}((-h)^*)&=\{(y,s):\exists~ \xi_2,\xi_3\geq 0:y+\xi_3\leq 0,\xi_2\geq 1, -\xi_3+(\xi_2^{\frac{n}{n-1}}+\xi_3^{\frac{n}{n-1}})^{\frac{n-1}{n}}\leq s\}\\
    &=\{(y,s):\exists~ \xi_2,\xi_3\geq 0:y+\xi_3\leq 0,\xi_2\geq 1, -\xi_3+\|(\xi_2,\xi_3)\|_{\frac{n}{n-1}}\leq s\},
\end{align*}
where $\|.\|_p$ is the p-norm for $p\geq 1$. 
Thus, the reformulation of the epigraph of the perspective $\lambda (-h)^*(\frac{-\nu}{\lambda})\leq z$ is given by
\begin{align*}
    \lambda (-h)^*(\frac{-\nu}{\lambda})\leq z\Leftrightarrow -\nu+\xi_3\leq 0,\ \xi_2\geq \lambda,\, -\xi_3+\|(\xi_2,\xi_3)\|_{\frac{n}{n-1}}\leq z,\ \xi_2,\xi_3\geq 0.
\end{align*}
\end{itemize}

\subsection{Divergence Functions}
\begin{itemize}
    \item For $\phi^*(y)=e^y-1$, we have
\begin{align*}
    \gamma (e^{\frac{s}{\gamma}}-1)&\leq t\Leftrightarrow w-\gamma \leq t,~e^{\frac{s}{\gamma}}\leq \frac{w}{\gamma}\Leftrightarrow w-\gamma \leq t, \gamma \log(\frac{\gamma}{w})+s\leq 0.
\end{align*}
Note that $\gamma \log(\frac{\gamma}{w})$ is the relative entropy.
\item For $\phi^*(y)=2-2\sqrt{1-y}$, $y<1$, we have
\begin{align*}
    \gamma(2-2\sqrt{1-\frac{s}{\gamma}})\leq t, s<\gamma&\Leftrightarrow 2\gamma -2\gamma \sqrt{\frac{\gamma-s}{\gamma}}\leq t,s <\gamma\\
    \Leftrightarrow 2\gamma - 2\gamma \sqrt{\frac{v}{\gamma}}\leq t, v=\gamma -s, s<\gamma
    &\Leftrightarrow 2\gamma -2\sqrt{\gamma v}\leq t, v= \gamma -s, v>0\\
    &\Leftrightarrow 2\gamma -2w \leq t, w^2\leq \gamma v, w\geq 0, v=\gamma -s, v>0\\
    &\Leftrightarrow 2\gamma -2w \leq t, \sqrt{w^2+\frac{1}{4}(\gamma -v)^2}\leq \frac{1}{2}(\gamma+v), w\geq 0, \\
    &\qquad v=\gamma -s, v>0,
\end{align*}
where in the last equivalence we used the equality $xy=\frac{1}{4}((x+y)^2-(x-y)^2)$.
\item For $\phi^*(y)=\frac{y}{1-y}=-1+\frac{1}{1-y}, y<1$, we have
\begin{align*}
    -1+\frac{1}{1-y}\leq t, y<1 &\Leftrightarrow -1+v\leq t, \frac{1}{w}\leq v, w=1-y, w>0\\
    &\Leftrightarrow -1+v\leq t, \sqrt{1+\frac{1}{4}(v-w)^2}\leq \frac{1}{2}(v+w), w=1-y, w>0.
\end{align*}
The epigraph of the perspective function can then be obtained as
\begin{align*}
    \gamma \phi^*(\frac{s}{\gamma})\leq t \Leftrightarrow -\gamma+v\leq t, \sqrt{\gamma^2+\frac{1}{4}(v-w)^2}\leq \frac{1}{2}(v+w), w=\gamma-s, w>0.
\end{align*}
\item For $\phi^*(y)=y+(\theta -1)(\frac{|y|}{\theta})^{\frac{\theta}{\theta-1}}$. 
We show that the epigraph can be represented by a power cone, which is $\mathcal{P}_3^{\alpha,1-\alpha}\triangleq\{x\in \mathbb{R}^3:x_1^\alpha x_2^{1-\alpha}\geq |x_3|, x_1, x_2\geq 0\}$, for $0<\alpha<1$. 
More on tractability of power cones can be found in \citet{Cha09}. 
We have that
\begin{align*}
    y+(\theta -1)(\frac{|y|}{\theta})^{\frac{\theta}{\theta-1}}\leq t&\Leftrightarrow y+(\theta-1)\theta^{\frac{\theta}{1-\theta}}w\leq t,~|y|^{\frac{\theta}{\theta-1}}\leq w\\
    &\Leftrightarrow y+(\theta-1)\theta^{\frac{\theta}{1-\theta}}w\leq t,~|y|\leq w^{\frac{\theta}{\theta-1}}\cdot 1^{1-\frac{\theta}{\theta-1}},
\end{align*}
is conic quadratic representable. 
Hence, so is the epigraph of the perspective
\begin{align*}
    \gamma \phi^*(\frac{s}{\gamma})\leq t\Leftrightarrow s+(\theta-1)\theta^{\frac{\theta}{1-\theta}}w\leq t,~|s|\leq w^{\frac{\theta}{\theta-1}}\cdot \gamma^{1-\frac{\theta}{\theta-1}}.
\end{align*}
\item Similarly, for $\phi^*(y)=\frac{1}{\theta}(1-y(1-\theta))^{\frac{\theta}{\theta -1}}-\frac{1}{\theta}, y<\frac{1}{1-\theta}$, we have that
\begin{align*}
    \frac{1}{\theta}(1-y(1-\theta))^{\frac{\theta}{\theta -1}}-\frac{1}{\theta}\leq t,y<\frac{1}{1-\theta} &\Leftrightarrow |w|\leq (t\theta +1)^{\frac{\theta-1}{\theta}}\cdot 1^{1-\frac{\theta-1}{\theta}}, w = 1-y(1-\theta),\\
    &\qquad y<\frac{1}{1-\theta},
\end{align*}
is conic quadratic. 
Hence, so is the epigraph of the perspective
\begin{align*}
   \gamma \phi^*(\frac{s}{\gamma})\leq t\Leftrightarrow |w|\leq (t\theta +\gamma)^{\frac{\theta-1}{\theta}}\cdot \gamma^{1-\frac{\theta-1}{\theta}}, w = \gamma-s(1-\theta), s<\frac{\gamma}{1-\theta}.
\end{align*}
\end{itemize}

\section{Robust Rank-Dependent Evaluation Using Penalization}\label{app:remark}
This section investigates the comparison between 
\begin{align}\label{rdu_rob}
    \sup_{\mathbf{q}\in \mathcal{D}_\phi(\mathbf{p},r)}\rho_{u,h,\mathbf{q}}(\cdot),
\end{align}
and
\begin{align}\label{rdu-pen}
\tilde{\rho}_{\text{rob}}(\cdot)\triangleq\sup_{\mathbf{q}\in \Delta_m}\rho_{u,h,\mathbf{q}}(\cdot)-\theta I_\phi(\mathbf{q},\mathbf{p}),\quad\theta>0,
\end{align}
and associated optimization problems,
where $\Delta_m$ denotes the set of all $m$-dimensional probability vectors.
\begin{lemma}\label{thm:representation_robust}
Let $h:[0,1]\to [0,1]$ be a concave distortion function. 
Then, the robust risk measure $\tilde{\rho}_{\text{rob}}$ defined in \eqref{rdu-pen} admits the dual representation
\begin{align*}
    \tilde{\rho}_{\text{rob}}(X)=\sup_{\mathbf{\bar{q}}\in \Delta _m}\mathbb{E}_{\mathbf{\bar{q}}}[-u(X)]-c(\mathbf{\bar{q}}),
\end{align*}
with ambiguity index
\begin{align*}
    c(\mathbf{\bar{q}})\triangleq\inf_{\mathbf{q}\in \Delta_m}\theta I_\phi(\mathbf{q},\mathbf{p})+\alpha(\mathbf{\bar{q}},\mathbf{q}),\qquad \alpha(\bar{\mathbf{q}},\mathbf{q})\triangleq\begin{cases}
    0,&~\text{if}~\mathbf{\bar{q}}\in M_h(\mathbf{q});\\
    \infty,&~\text{else};
    \end{cases}
\end{align*}
where $M_h(\mathbf{q})$ is as defined in \eqref{defMhq}.
\end{lemma}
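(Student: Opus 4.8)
The plan is to reduce the statement to the dual representation of a single rank-dependent evaluation that was already established in the proof of Theorem~\ref{thm:ReformulationWholeProb}, and then perform a clean interchange of two suprema. Recall that for any concave distortion $h$ and any probability vector $\mathbf{q}$, Denneberg's Proposition~10.3 (invoked in that earlier proof) yields
\begin{align*}
\rho_{u,h,\mathbf{q}}(X)=\sup_{\mathbf{\bar{q}}\in M_h(\mathbf{q})}\mathbb{E}_{\mathbf{\bar{q}}}[-u(X)],
\end{align*}
where $M_h(\mathbf{q})$ is defined in \eqref{defMhq} and, by that definition, satisfies $M_h(\mathbf{q})\subseteq\Delta_m$. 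Substituting this into the definition \eqref{rdu-pen} of $\tilde{\rho}_{\text{rob}}$ turns a single supremum over $\mathbf{q}$ into a nested supremum.

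First I would write
\begin{align*}
\tilde{\rho}_{\text{rob}}(X)=\sup_{\mathbf{q}\in\Delta_m}\left(\sup_{\mathbf{\bar{q}}\in M_h(\mathbf{q})}\mathbb{E}_{\mathbf{\bar{q}}}[-u(X)]-\theta I_\phi(\mathbf{q},\mathbf{p})\right),
\end{align*}
and, since the penalty $\theta I_\phi(\mathbf{q},\mathbf{p})$ does not depend on $\mathbf{\bar{q}}$, fold both layers into a joint supremum over the constrained set of pairs $\{(\mathbf{q},\mathbf{\bar{q}}):\mathbf{q}\in\Delta_m,\ \mathbf{\bar{q}}\in M_h(\mathbf{q})\}$. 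The next step is to encode the membership constraint $\mathbf{\bar{q}}\in M_h(\mathbf{q})$ through the $\{0,\infty\}$-valued indicator $\alpha(\mathbf{\bar{q}},\mathbf{q})$: because $M_h(\mathbf{q})\subseteq\Delta_m$, maximizing over $\mathbf{\bar{q}}\in M_h(\mathbf{q})$ is identical to maximizing over $\mathbf{\bar{q}}\in\Delta_m$ after subtracting $\alpha(\mathbf{\bar{q}},\mathbf{q})$, which annihilates every infeasible pair. This gives
\begin{align*}
\tilde{\rho}_{\text{rob}}(X)=\sup_{\mathbf{\bar{q}}\in\Delta_m}\sup_{\mathbf{q}\in\Delta_m}\left(\mathbb{E}_{\mathbf{\bar{q}}}[-u(X)]-\theta I_\phi(\mathbf{q},\mathbf{p})-\alpha(\mathbf{\bar{q}},\mathbf{q})\right),
\end{align*}
where I have reordered the two free suprema (an elementary interchange, since the feasible region is now the product $\Delta_m\times\Delta_m$).

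Finally, I would pull $\mathbb{E}_{\mathbf{\bar{q}}}[-u(X)]$ out of the inner supremum over $\mathbf{q}$, rewrite $\sup_{\mathbf{q}}(-g)=-\inf_{\mathbf{q}}g$, and identify the resulting inner problem with the ambiguity index $c(\mathbf{\bar{q}})=\inf_{\mathbf{q}\in\Delta_m}\theta I_\phi(\mathbf{q},\mathbf{p})+\alpha(\mathbf{\bar{q}},\mathbf{q})$, which delivers exactly the claimed representation. The one point requiring a little care—and the closest thing to an obstacle—is justifying that the constrained nested supremum equals the joint supremum with the indicator penalty; this is purely a bookkeeping argument in the discrete, finite-dimensional setting (no compactness, measurability, or minimax/strong-duality theorem is needed, since only suprema are interchanged and no inf-sup swap is performed), so I expect it to go through without technical friction once the set inclusion $M_h(\mathbf{q})\subseteq\Delta_m$ is noted.
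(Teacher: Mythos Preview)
Your proposal is correct and follows essentially the same route as the paper: invoke Denneberg's dual representation for $\rho_{u,h,\mathbf{q}}$, substitute into \eqref{rdu-pen}, interchange the two suprema over $\mathbf{q}$ and $\bar{\mathbf{q}}$, and rewrite the inner supremum as a minus-infimum to recover $c(\bar{\mathbf{q}})$. The only cosmetic difference is that the paper writes the Denneberg representation directly as $\sup_{\bar{\mathbf{q}}\in\Delta_m}\mathbb{E}_{\bar{\mathbf{q}}}[-u(X)]-\alpha(\bar{\mathbf{q}},\mathbf{q})$ from the outset, whereas you first write $\sup_{\bar{\mathbf{q}}\in M_h(\mathbf{q})}$ and then introduce the indicator $\alpha$; the arguments are otherwise identical.
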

\begin{proof}
By \citet{Denneberg}, Proposition~10.3, we have the dual representation
\begin{align*}
 \rho_{u,h,\mathbf{q}}(X)=\sup_{\mathbf{\bar{q}}\in \Delta_m} \mathbb{E}_{\mathbf{\bar{q}}}[-u(X)]-\alpha(\bar{\mathbf{q}},\mathbf{q}).  
\end{align*}
Hence, we derive
\begin{align*}
  \tilde{\rho}_{\text{rob}}(X)&=\sup_{\mathbf{q}\in \Delta_m}\rho_{u,h,\mathbf{q}}(X)-\theta \sum^m_{i=1}p_i\phi\left(\frac{q_i}{p_i}\right)\\
  &=\sup_{\mathbf{q}\in \Delta _m}\sup_{\mathbf{\bar{q}}\in \Delta_m} \mathbb{E}_{\mathbf{\bar{q}}}[-u(X)]-\alpha(\bar{\mathbf{q}},\mathbf{q})-\theta \sum^m_{i=1}p_i\phi\left(\frac{q_i}{p_i}\right)\\
  &=\sup_{\mathbf{\bar{q}}\in \Delta_m}\sup_{\mathbf{q}\in \Delta_m} \mathbb{E}_{\mathbf{\bar{q}}}[-u(X)]-\alpha(\bar{\mathbf{q}},\mathbf{q})-\theta \sum^m_{i=1}p_i\phi\left(\frac{q_i}{p_i}\right)\\
  &=\sup_{\mathbf{\bar{q}}\in \Delta_m}\mathbb{E}_{\mathbf{\bar{q}}}[-u(X)]-\inf_{\mathbf{q}\in \Delta_m} \left(\alpha(\bar{\mathbf{q}},\mathbf{q})+\theta \sum^m_{i=1}p_i\phi\left(\frac{q_i}{p_i}\right)\right)\\
  &=\sup_{\mathbf{\bar{q}}\in \Delta_m}\mathbb{E}_{\mathbf{\bar{q}}}[-u(X)]-c(\mathbf{\bar{q}}).
\end{align*}
 \end{proof}
We next show that a decision-maker who minimizes \eqref{rdu_rob} obtains the same minimizer as when minimizing \eqref{rdu-pen}, for a specific $\theta$.
\begin{proposition}\label{prop: connectieORvsPen}
Assume the existence of a minimizer $\mathbf{a}^*\in \mathrm{argmin}_{\mathbf{a}\in \mathcal{A}}\sup_{\mathbf{q}\in \mathcal{D}_\phi(\mathbf{p},r)}\rho_{u,h,\mathbf{q}}(f(\mathbf{a},\mathbf{X}))$. 
Then for each $r>0$, there exists a $\theta^*$, such that
\begin{align*}
\mathbf{a}^*\in \mathrm{argmin}_{\mathbf{a}\in \mathcal{A}}\sup_{\mathbf{q}\in \Delta_m}\rho_{u,h,\mathbf{q}}(f(\mathbf{a},\mathbf{X}))-\theta^* I_\phi(\mathbf{q},\mathbf{p}).    
\end{align*}
\end{proposition}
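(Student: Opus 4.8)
The plan is to recognize this as the standard equivalence between a constrained distributionally robust problem (divergence ball of radius $r$) and its penalized counterpart, obtained by dualizing the single divergence constraint $I_\phi(\mathbf{q},\mathbf{p})\leq r$ in the \emph{inner} maximization. The crucial observation is that, for a fixed decision $\mathbf{a}$ (hence a fixed ranking of the realizations $f(\mathbf{a},\mathbf{x}_i)$), the map $\mathbf{q}\mapsto \rho_{u,h,\mathbf{q}}(f(\mathbf{a},\mathbf{X}))$ is concave: by the rank-dependent representation \eqref{ranked_sum}, it is a sum of terms $h(\sum_{j\geq i}q_{(j)})$ weighted by the non-negative increments $u(f(\mathbf{a},\mathbf{x}_{(i-1)}))-u(f(\mathbf{a},\mathbf{x}_{(i)}))$, each being a concave function $h$ composed with a linear map of $\mathbf{q}$. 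Thus, for fixed $\mathbf{a}$, the inner problem is a concave maximization over the convex compact set $\mathcal{D}_\phi(\mathbf{p},r)$.

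First I would fix the constrained optimizer $\mathbf{a}^*$ and write $R(\mathbf{a}) := \sup_{\mathbf{q}\in\mathcal{D}_\phi(\mathbf{p},r)}\rho_{u,h,\mathbf{q}}(f(\mathbf{a},\mathbf{X}))$. Since $\mathbf{p}$ satisfies $I_\phi(\mathbf{p},\mathbf{p})=0<r$ and lies in the relative interior of the simplex (Assumption~\ref{assump: nominal_p}), Slater's condition holds, so Lagrangian strong duality applies to the inner concave program and the dual minimizer is attained: there exists $\theta^*\geq 0$ with
\[
R(\mathbf{a}^*)=\sup_{\mathbf{q}\in\Delta_m}\big[\rho_{u,h,\mathbf{q}}(f(\mathbf{a}^*,\mathbf{X}))-\theta^*I_\phi(\mathbf{q},\mathbf{p})\big]+\theta^* r.
\]
I then set this $\theta^*$ — the KKT multiplier at $\mathbf{a}^*$ — as the penalization parameter, and denote $\tilde P(\mathbf{a}) := \sup_{\mathbf{q}\in\Delta_m}\big[\rho_{u,h,\mathbf{q}}(f(\mathbf{a},\mathbf{X}))-\theta^*I_\phi(\mathbf{q},\mathbf{p})\big]=\tilde\rho_{\text{rob}}(f(\mathbf{a},\mathbf{X}))$.

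The key step is a two-sided comparison. At $\mathbf{a}^*$ strong duality gives the exact identity $R(\mathbf{a}^*)=\tilde P(\mathbf{a}^*)+\theta^* r$, whereas for every other $\mathbf{a}\in\mathcal{A}$ weak duality (valid because $\theta^*\geq 0$ is a feasible multiplier for \emph{every} $\mathbf{a}$) yields only the inequality $R(\mathbf{a})\leq\tilde P(\mathbf{a})+\theta^* r$. Combining these with the optimality of $\mathbf{a}^*$ for the constrained problem, $R(\mathbf{a}^*)\leq R(\mathbf{a})$, produces the chain
\[
\tilde P(\mathbf{a}^*)=R(\mathbf{a}^*)-\theta^* r\leq R(\mathbf{a})-\theta^* r\leq\tilde P(\mathbf{a}),\qquad\forall\,\mathbf{a}\in\mathcal{A}.
\]
Here the constant $\theta^* r$, being independent of $\mathbf{a}$, cancels out, which is precisely why the $\mathbf{a}$-dependence of the multiplier does no harm. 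Hence $\mathbf{a}^*$ minimizes $\tilde\rho_{\text{rob}}(f(\cdot,\mathbf{X}))$ over $\mathcal{A}$, as claimed.

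The main obstacle is the inner-duality step rather than the comparison: I must justify that $\mathbf{q}\mapsto\rho_{u,h,\mathbf{q}}$ is concave (using concavity of $h$ and the fixed ranking for fixed $\mathbf{a}$), that the inner supremum is finite (Assumption~\ref{assumption:well-defined}) and attained over the compact set $\mathcal{D}_\phi(\mathbf{p},r)$, and that the dual optimum $\theta^*$ is attained and non-negative (via Slater and \citet{Rockafellar}, Theorem~28.2). A secondary point worth flagging is that complementary slackness forces $\theta^*>0$ exactly when the divergence constraint binds at $\mathbf{a}^*$; for radii $r$ small enough that the worst-case $\mathbf{q}$ saturates $I_\phi(\mathbf{q},\mathbf{p})=r$, this recovers the strict positivity required by the penalized formulation~\eqref{rdu_pen_1}.
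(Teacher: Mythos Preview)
Your proposal is correct and follows essentially the same route as the paper: dualize the single divergence constraint in the inner maximization, take $\theta^*$ to be the KKT multiplier at $\mathbf{a}^*$ (strong duality via Slater), and then use weak duality at every other $\mathbf{a}$ to obtain the chain $\tilde P(\mathbf{a}^*)=R(\mathbf{a}^*)-\theta^*r\leq R(\mathbf{a})-\theta^*r\leq \tilde P(\mathbf{a})$. The only cosmetic difference is that the paper first passes through the dual representation $\rho_{u,h,\mathbf{q}}=\sup_{\bar{\mathbf{q}}\in M_h(\mathbf{q})}\mathbb{E}_{\bar{\mathbf{q}}}[-u(\cdot)]$ before dualizing, whereas you argue concavity of $\mathbf{q}\mapsto\rho_{u,h,\mathbf{q}}$ directly from \eqref{ranked_sum}; both yield the same Lagrangian.
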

\begin{proof}
As we have shown in Lemma~\ref{thm:representation_robust}, $\min_{\mathbf{a}\in \mathcal{A}}\sup_{\mathbf{q}\in \Delta_m}\rho_{u,h,\mathbf{q}}(f(\mathbf{a},\mathbf{X}))-\theta I_\phi(\mathbf{q},\mathbf{p})$ is equivalent to
\begin{align}\label{robust_penalty}
    \min_{\mathbf{a}\in \mathcal{A}}\left\{\sup_{\substack{\mathbf{q}\geq \mathbf{0}\\\mathbf{q}^T\mathbf{1}=1}}\sup_{\mathbf{\bar{q}}\in M_h(\mathbf{q})}-\sum^m_{i=1}\bar{q}_iu(f(\mathbf{a},\mathbf{x}_i))-\theta \sum^m_{i=1}p_i\phi\left(\frac{q_i}{p_i}\right)\right\},
\end{align}
for any fixed constant $\theta>0$. 
Let $\mathbf{a}^*$ be a solution of $\min_{\mathbf{a}\in \mathcal{A}}\sup_{\mathbf{q}\in \mathcal{D}_\phi(\mathbf{p},r)}\rho_{u,h,\mathbf{q}}(f(\mathbf{a},\mathbf{X}))$. 
Then, strong duality implies the existence of a $\gamma^*$ (depending on $\mathbf{a}^*$), such that
\begin{align}\label{sup_with_gamma}
\begin{split}
    &\sup_{\mathbf{q}\in \mathcal{D}_{\phi}(\mathbf{p},r)}\sup_{\mathbf{\bar{q}}\in M_h(\mathbf{q})}-\sum^m_{i=1}\bar{q}_iu(f(\mathbf{a}^*,\mathbf{x}_i))\\
    &= \gamma^* r +   \sup_{\substack{\mathbf{q}\geq \mathbf{0}\\\mathbf{q}^T\mathbf{1}=1}}\sup_{\mathbf{\bar{q}}\in M_h(\mathbf{q})}-\sum^m_{i=1}\bar{q}_iu(f(\mathbf{a}^*,\mathbf{x}_i))-\gamma^*\sum^m_{i=1}p_i\phi\left(\frac{q_i}{p_i}\right).
    \end{split}
\end{align}
Since $\mathbf{a}^*$ is a minimizer, we also have
\begin{align*}
   &\sup_{\mathbf{q}\in \mathcal{D}_{\phi}(\mathbf{p},r)}\sup_{\mathbf{\bar{q}}\in M_h(\mathbf{q})}-\sum^m_{i=1}\bar{q}_iu(f(\mathbf{a}^*,\mathbf{x}_i))\\
   &\leq \sup_{\mathbf{q}\in \mathcal{D}_{\phi}(\mathbf{p},r)}\sup_{\mathbf{\bar{q}}\in M_h(\mathbf{q})}-\sum^m_{i=1}\bar{q}_iu(f(\mathbf{a},\mathbf{x}_i))\\
   &=\inf_{\gamma \geq 0}\left\{\gamma r +   \sup_{\substack{\mathbf{q}\geq \mathbf{0}\\\mathbf{q}^T\mathbf{1}=1}}\sup_{\mathbf{\bar{q}}\in M_h(\mathbf{q})}-\sum^m_{i=1}\bar{q}_iu(f(\mathbf{a},\mathbf{x}_i))-\gamma\sum^m_{i=1}p_i\phi\left(\frac{q_i}{p_i}\right)\right\}\\
   &\leq \gamma^* r +   \sup_{\substack{\mathbf{q}\geq \mathbf{0}\\\mathbf{q}^T\mathbf{1}=1}}\sup_{\mathbf{\bar{q}}\in M_h(\mathbf{q})}-\sum^m_{i=1}\bar{q}_iu(f(\mathbf{a},\mathbf{x}_i))-\gamma^*\sum^m_{i=1}p_i\phi\left(\frac{q_i}{p_i}\right).
\end{align*}
Bringing $\gamma^*r$ to the left-hand side of the inequality, we obtain from \eqref{sup_with_gamma} that
\begin{align*}
    &\sup_{\substack{\mathbf{q}\geq \mathbf{0}\\\mathbf{q}^T\mathbf{1}=1}}\sup_{\mathbf{\bar{q}}\in M_h(\mathbf{q})}-\sum^m_{i=1}\bar{q}_iu(f(\mathbf{a}^*,\mathbf{x}_i))-\gamma^*\sum^m_{i=1}p_i\phi\left(\frac{q_i}{p_i}\right)\\
    &\leq \sup_{\substack{\mathbf{q}\geq \mathbf{0}\\\mathbf{q}^T\mathbf{1}=1}}\sup_{\mathbf{\bar{q}}\in M_h(\mathbf{q})}-\sum^m_{i=1}\bar{q}_iu(f(\mathbf{a},\mathbf{x}_i))-\gamma^*\sum^m_{i=1}p_i\phi\left(\frac{q_i}{p_i}\right),
\end{align*}
for any $\mathbf{a}\in \mathcal{A}$.
Hence, $\mathbf{a}^*$ is also a solution of \eqref{robust_penalty}, where $\theta^*=\gamma^*$.
\end{proof}

\section{A Visualization of the Various Shapes of the Uncertainty Set \texorpdfstring{$\mathcal{U}_{\phi,h}(\mathbf{p})$}{TEXT}}\label{App:Shape_UC}

\begin{figure}[H]
    \centering

   {%
   
        \begin{minipage}{0.48\textwidth} 
            \centering
            \includegraphics[width=\textwidth]{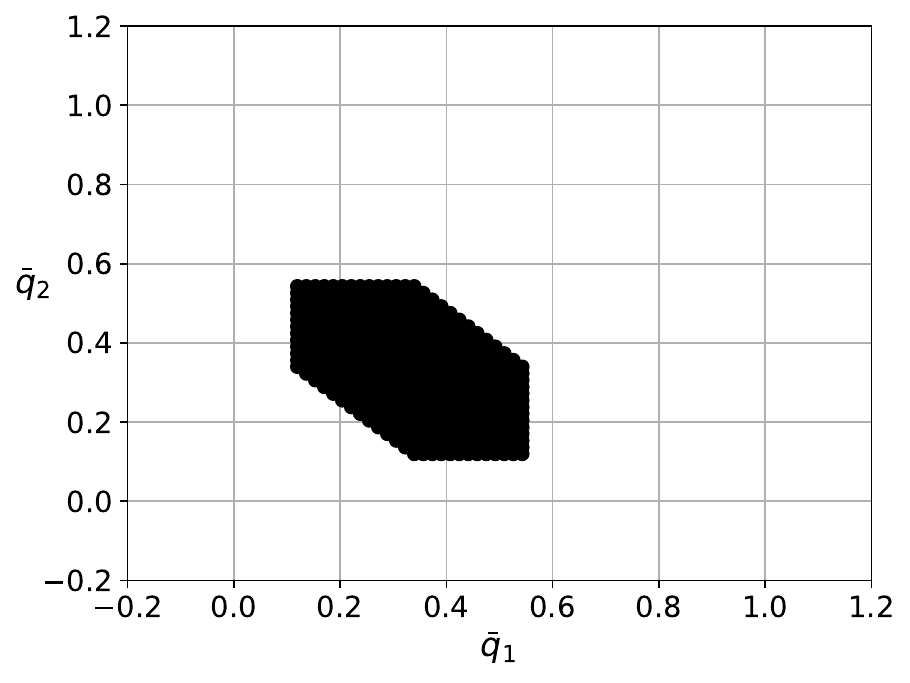} 
            \footnotesize (a). $n = \infty$
        \end{minipage}
    }%
    {%
        \begin{minipage}{0.48\textwidth}
            \centering
            \includegraphics[width=\textwidth]{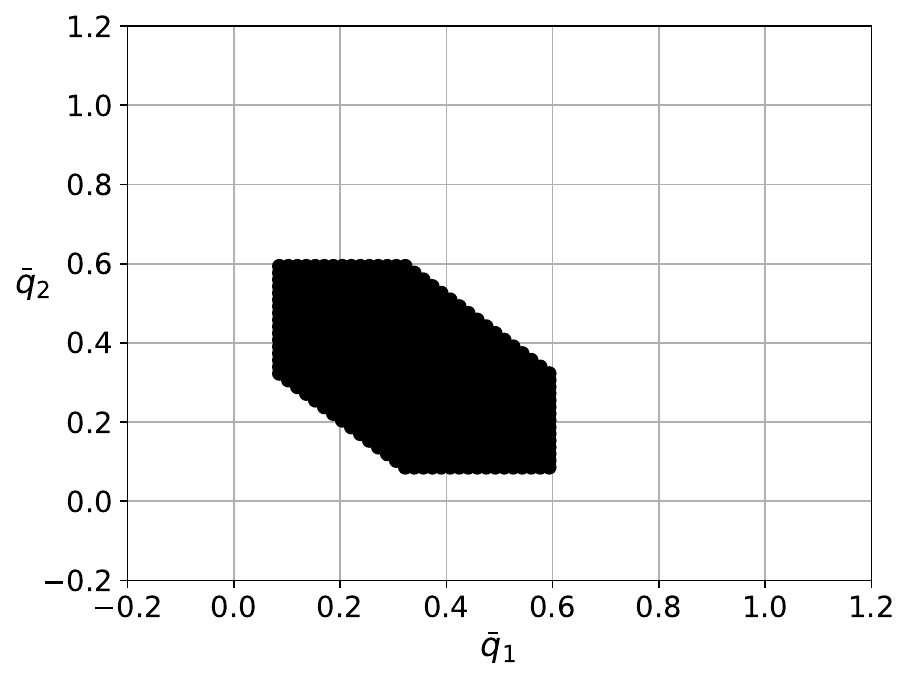} 
            \footnotesize (b). $n = 500$
        \end{minipage}
    }%

    {%
        \begin{minipage}{0.48\textwidth}
            \centering
            \includegraphics[width=\textwidth]{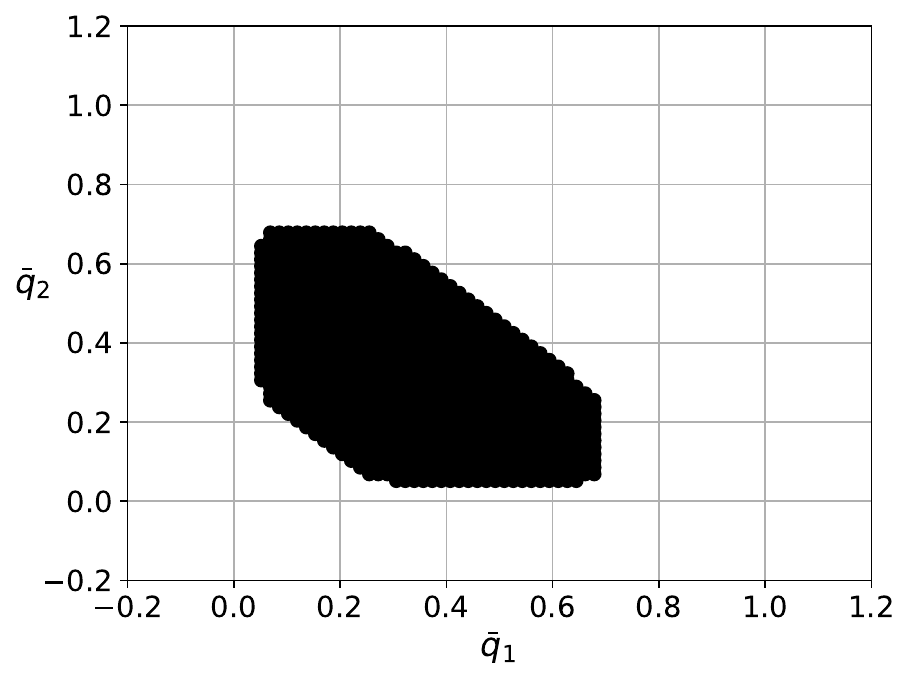} 
            \footnotesize (c). $n = 100$
        \end{minipage}
    }%
    {%
        \begin{minipage}{0.48\textwidth}
            \centering
            \includegraphics[width=\textwidth]{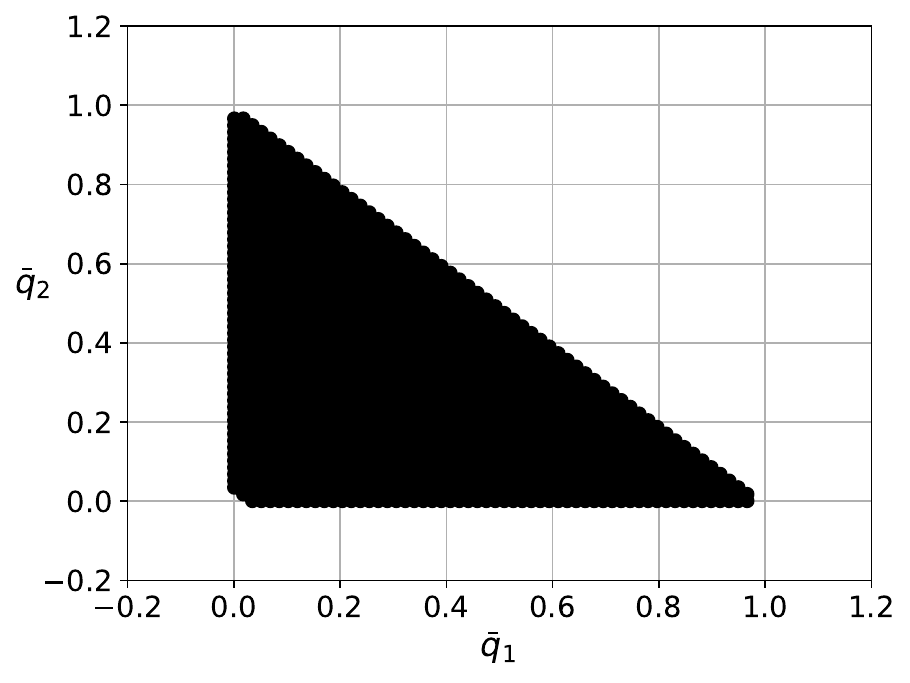} 
            \footnotesize (d). $n = 5$
        \end{minipage}
    }
    \caption{{\small Projections of the uncertainty set $\mathcal{U}_{\phi,h}(\mathbf{p})$ in \eqref{uncertaintyset} on the coordinates $(\bar{q}_1,\bar{q}_2)$, for $\mathbf{p}=(1/3,1/3,1/3)$ and $r=\frac{1}{n}\chi^2_{0.95,2}$, plotted for a range of values of the sample size $n$. 
   We choose the modified chi-squared divergence function $\phi(t)=(t-1)^2$ and the second dual moment distortion function $h(p)=1-(1-p)^2$. 
   As $n$ approaches $0$, we observe that the uncertainty set grows and the projection eventually approaches the entire probability simplex in $\mathbb{R}^2$, the case in which the decision-maker is completely ambiguous w.r.t.\ $\mathbf{p}$.}}\label{fig: formUset}

\end{figure}

\section{The Optimistic Dual Counterpart}\label{app: OPDual}
\citet{PrimalWorstDualBest} and \citet{Gorissen2014} 
have shown that a robust minimization problem with a compact convex uncertainty set can also be reformulated by considering the optimistic dual counterpart, obtained by maximizing the dual of its uncertain problem over all uncertain variables. 
In this section, we derive and reformulate the optimistic dual counterpart of the robust problem
\begin{align}\label{prob1}
    \min_{\mathbf{a}\in \mathcal{A},c\in \mathcal{C}}\left\{\mathbf{a}^T\mathbf{d} + \zeta\cdot c~\middle|~\sup_{(\mathbf{q},\bar{\mathbf{q}})\in \mathcal{U}_{\phi,h}(\mathbf{p})}-\sum^m_{i=1}\bar{q}_iu(f(\mathbf{a},\mathbf{x}_i))\leq c\right\},
\end{align}
where
\begin{align*}
    \mathcal{U}_{\phi,h}(\mathbf{p})=\left\{(\mathbf{q},\bar{\mathbf{q}})\in \mathbb{R}^{2m} \;
    \begin{tabular}{|l}
      $\sum^m_{i=1}q_i=\sum^m_{i=1}\bar{q}_i=1$\\
      $\sum^m_{i=1}p_i\phi\left(\frac{q_i}{p_i}\right)\leq r$\\
      $\sum_{i\in J}\bar{q}_{i}\leq h\left(\sum_{i\in J} q_{i}\right),~\forall J\subset [m]$\\
      $q_i,\bar{q}_i\geq 0,~ \forall i\in [m]$
\end{tabular}
\right\}.
\end{align*}
Here, we have either $(\mathcal{C}=\mathbb{R}, \mathbf{d}=\mathbf{0},\zeta=1)$ or $(\mathcal{C}=\{c_0\}, \mathbf{d}\neq \mathbf{0}, \zeta=0$), where $\mathbf{d}\in \mathbb{R}^{n_a}, c_0\in \mathbb{R}$.
Note that $\mathcal{U}_{\phi,h}(\mathbf{p})$ is a compact set since it is bounded and closed due to Assumption~\ref{assump:lower_semi} that the functions $\phi,-h$ are lower-semicontinuous convex functions.

To derive the optimistic dual counterpart of \eqref{prob1}, we first consider its uncertain problem, which for a given $(\mathbf{q},\mathbf{\bar{q}})\in \mathcal{U}_{\phi,h}(\mathbf{p})$ is defined as
\begin{align}\label{uncertainpb}
\begin{split}
     &\min_{\mathbf{a}\in \mathcal{A},c\in \mathcal{C}}\left\{\mathbf{a}^T\mathbf{d} + \zeta\cdot c~\middle|~-\sum^m_{i=1}\bar{q}_iu(f(\mathbf{a},\mathbf{x}_i))\leq c\right\}\\
     &=\min_{\mathbf{a}\in \mathbb{R}^I, c\in \mathcal{C}}\left\{\mathbf{a}^T\mathbf{d} + \zeta\cdot c~\middle|~ -\sum^m_{i=1}\bar{q}_iu(f(\mathbf{a},\mathbf{x}_i))\leq c, f_i(\mathbf{a})\leq 0, i\in [L]\right\}.
\end{split}
\end{align}
Here, recall that $\mathcal{A}$ is a set represented by convex inequalities, hence we can express it as $\mathcal{A}=\{\mathbf{a}\in \mathbb{R}^I|f_i(\mathbf{a})\leq 0, i=1,\ldots , L\}$, for some convex functions $f_i$'s. 
Assume that \eqref{uncertainpb} satisfies Slater's condition and is bounded from below. 
Then, the dual of \eqref{uncertainpb} is given by
\begin{align}
    \max_{y_0,\ldots,y_L\geq 0}\inf_{\mathbf{a}\in \mathbb{R}^I, c\in \mathcal{C}}\left\{\mathbf{a}^T\mathbf{d} + \zeta\cdot c-y_0\sum^m_{i=1}\bar{q}_iu(f(\mathbf{a},\mathbf{x}_i))-y_0c+\sum^L_{k=1}y_kf_k(\mathbf{a})\right\}.
\end{align}
The optimistic dual counterpart is defined by maximizing the dual over all uncertain variables in the uncertainty set, i.e.,
\begin{align}\label{OD}
\tag{OD}
    \sup_{(\mathbf{q},\mathbf{\bar{q}})\in \mathcal{U}_{\phi,h}(\mathbf{p})} \max_{y_0,\ldots,y_L\geq 0}\inf_{\mathbf{a}\in \mathbb{R}^I, c\in \mathcal{C}}\left\{\mathbf{a}^T\mathbf{d} + \zeta\cdot c-y_0\sum^m_{i=1}\bar{q}_iu(f(\mathbf{a},\mathbf{x}_i))-y_0c+\sum^L_{k=1}y_kf_k(\mathbf{a})\right\}.
\end{align}
The following theorem states a reformulation of \eqref{OD}.
\begin{mytheorem}\label{thm:optdual}
The optimistic dual counterpart \eqref{OD} is equivalent to the following concave problem:
\begin{align}\label{optdual}
\begin{split}
       \begin{split}
    \sup_{\substack{\mathbf{z},\mathbf{\bar{z}}\in \mathbb{R}^m_{\geq 0}\\ y_0,\ldots,y_L\geq 0\\\boldsymbol{\lambda}_1,\ldots ,\boldsymbol{\lambda}_m, \boldsymbol{\eta}_1,\ldots,\boldsymbol{\eta}_L\in \mathbb{R}^I}}&-\sum^m_{i=1}\bar{z}_i(-u\circ f)^*\left(\frac{\mathbf{\boldsymbol{\lambda}}_i}{y_0\bar{q}_i},\mathbf{x}_i\right)-\sum^L_{k=1}y_k(f_k)^*\left(\frac{\boldsymbol{\eta}_k}{y_k}\right)-y_0c_0\mathbbm{1}_{\{\zeta=0\}}\\
        \mathrm{subject}~ \mathrm{to}&~\sum^m_{i=1}\boldsymbol{\lambda}_i=-\sum^L_{k=1}\boldsymbol{\eta}_k-\mathbf{d}\\
        &\sum^m_{i=1}z_i=\sum^m_{i=1}\bar{z}_i=y_0\\
        &\sum_{i\in I}\bar{z}_i-y_0h\left(\frac{\sum_{i\in I}z_i}{y_0}\right)\leq 0, \forall I\in 2^N_-,\\
    &\sum^m_{i=1}y_0p_i\phi\left(\frac{z_i}{y_0p_i}\right)-y_0r\leq 0,
    \end{split}
\end{split}
\end{align}
where $\mathbf{z}=y_0\mathbf{q}$ and $\mathbf{\bar{z}}=y_0\mathbf{\bar{q}}$. 
If Slater's condition holds for problem \eqref{optdual}, then \eqref{prob1} is equal to \eqref{optdual}. 
Moreover, the KKT-vector of \eqref{optdual} corresponding to the dual equality constraints $\sum^m_{i=1}\boldsymbol{\lambda}_i=-\sum^L_{k=1}\boldsymbol{\eta}_k-\mathbf{d}$ gives the optimal solution of \eqref{prob1}.
\end{mytheorem}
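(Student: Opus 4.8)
The plan is to derive \eqref{optdual} directly from \eqref{OD} by conjugate calculus, and then to invoke the primal-worst-equals-dual-best principle of \citet{PrimalWorstDualBest} and \citet{Gorissen2014} to close the loop with \eqref{prob1}. First I would merge the outer $\sup_{(\mathbf{q},\bar{\mathbf{q}})\in\mathcal{U}_{\phi,h}(\mathbf{p})}$ and $\max_{y_0,\ldots,y_L\ge 0}$ in \eqref{OD} into a single joint supremum, since both are maximizations and their order is immaterial. Then I would evaluate the inner $\inf_{\mathbf{a},c}$ in two stages. The minimization over $c\in\mathcal{C}$ is immediate: in the case $(\mathcal{C}=\mathbb{R},\zeta=1)$ the coefficient $(\zeta-y_0)c=(1-y_0)c$ forces $y_0=1$ as an implicit feasibility condition (otherwise the infimum is $-\infty$), while in the case $(\mathcal{C}=\{c_0\},\zeta=0)$ it contributes the constant $-y_0c_0$, matching the indicator term $-y_0c_0\mathbbm{1}_{\{\zeta=0\}}$ in \eqref{optdual}.

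The computational heart is the minimization over $\mathbf{a}\in\mathbb{R}^I$ of
\[ \mathbf{a}^T\mathbf{d}+\sum_{i=1}^m y_0\bar{q}_i\,(-u\circ f)(\mathbf{a},\mathbf{x}_i)+\sum_{k=1}^L y_k f_k(\mathbf{a}), \]
which is convex under the standing assumption that $\mathbf{a}\mapsto-u(f(\mathbf{a},\mathbf{x}_i))$ and the $f_k$ are convex. Here I would use that the conjugate of a sum of convex functions is the inf-convolution of their conjugates (Theorem~2.1 of \citealp{DickBoek}), so that $\inf_{\mathbf{a}}\{\cdots\}=-(\cdots)^*(\mathbf{0})$ unfolds into an infimum over split variables $\boldsymbol{\lambda}_1,\ldots,\boldsymbol{\lambda}_m,\boldsymbol{\eta}_1,\ldots,\boldsymbol{\eta}_L$ subject to $\sum_i\boldsymbol{\lambda}_i+\sum_k\boldsymbol{\eta}_k+\mathbf{d}=\mathbf{0}$. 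Applying the positive-scaling rule $(\alpha g)^*(\mathbf{v})=\alpha g^*(\mathbf{v}/\alpha)$ with $\alpha=y_0\bar{q}_i$ (resp.\ $\alpha=y_k$) turns each conjugate into a perspective, producing exactly the objective terms $-\sum_i y_0\bar{q}_i(-u\circ f)^*(\boldsymbol{\lambda}_i/(y_0\bar{q}_i),\mathbf{x}_i)$ and $-\sum_k y_k f_k^*(\boldsymbol{\eta}_k/y_k)$; once we pass to the joint supremum, the split infimum can be dropped.

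Next I would linearize the uncertainty-set constraints through the substitution $\mathbf{z}=y_0\mathbf{q}$, $\bar{\mathbf{z}}=y_0\bar{\mathbf{q}}$. The normalization $\sum q_i=\sum\bar q_i=1$ becomes $\sum z_i=\sum\bar z_i=y_0$, and multiplying the divergence bound and each distortion inequality by $y_0$ while recognizing the perspective transform yields $\sum_i y_0 p_i\phi(z_i/(y_0p_i))\le y_0r$ and $\sum_{i\in J}\bar z_i\le y_0 h(\sum_{i\in J}z_i/y_0)$, the remaining constraints of \eqref{optdual}. Because perspectives of convex (resp.\ concave) functions are convex (resp.\ concave), these are jointly convex constraints and the objective is concave, so \eqref{optdual} is a concave program; this establishes \eqref{OD}$=$\eqref{optdual}. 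Finally, compactness and convexity of $\mathcal{U}_{\phi,h}(\mathbf{p})$ (guaranteed by Assumption~\ref{assump:lower_semi}) together with the assumed Slater condition let me apply the primal-worst-equals-dual-best theorem to conclude \eqref{prob1}$=$\eqref{OD}$=$\eqref{optdual}; the optimal $\mathbf{a}$ is recovered as the Lagrange multiplier of the linear constraint $\sum_i\boldsymbol{\lambda}_i=-\sum_k\boldsymbol{\eta}_k-\mathbf{d}$, precisely the constraint dualized away in the $\inf_{\mathbf{a}}$ step.

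The main obstacle I anticipate is rigorously handling the degenerate regimes of the perspective transforms, in particular the boundary cases $y_0=0$, $y_k=0$, or $\bar q_i=0$, where the quotients $\boldsymbol{\lambda}_i/(y_0\bar q_i)$ and $\boldsymbol{\eta}_k/y_k$ are undefined and one must fall back on the closure conventions for the perspective $\tilde{g}$ set out in the Further Notation subsection. Ensuring strong duality for the inner uncertain problem \eqref{uncertainpb}, so that the Fenchel step is an equality rather than a mere inequality, likewise requires verifying its Slater condition and boundedness from below, and this is where the hypotheses of the theorem are consumed.
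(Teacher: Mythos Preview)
Your proposal is correct and follows essentially the same route as the paper's proof: separate the inner infimum over $c$ (forcing $y_0=1$ when $\mathcal{C}=\mathbb{R}$, contributing $-y_0c_0$ when $\mathcal{C}=\{c_0\}$), then handle the infimum over $\mathbf{a}$ by splitting the sum into individual conjugates via Lagrange multipliers $\boldsymbol{\lambda}_i,\boldsymbol{\eta}_k$ coupled by $\sum_i\boldsymbol{\lambda}_i+\sum_k\boldsymbol{\eta}_k+\mathbf{d}=\mathbf{0}$, and finally linearize the uncertainty-set constraints through $\mathbf{z}=y_0\mathbf{q}$, $\bar{\mathbf{z}}=y_0\bar{\mathbf{q}}$. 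The only cosmetic difference is that the paper carries out the splitting step by explicitly introducing auxiliary copies $\mathbf{w}_i=\mathbf{a}$, $\mathbf{v}_k=\mathbf{a}$ and dualizing the equality constraints, whereas you invoke the inf-convolution identity for conjugates of sums directly; these are two phrasings of the same computation.
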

\begin{proof}
We have that
\begin{align*}
&\inf_{\mathbf{a}\in \mathbb{R}^I, c\in \mathcal{C}}\left\{\mathbf{a}^T\mathbf{d} + \zeta\cdot c-y_0\sum^m_{i=1}\bar{q}_iu(f(\mathbf{a},\mathbf{x}_i))-y_0c+\sum^L_{k=1}y_kf_k(\mathbf{a})\right\}\\
&=\inf_{\mathbf{a}\in \mathbb{R}^I}\left\{\mathbf{a}^T\mathbf{d}-y_0\sum^m_{i=1}\bar{q}_iu(f(\mathbf{a},\mathbf{x}_i))+\sum^L_{k=1}y_kf_k(\mathbf{a})\right\}+\inf_{c\in \mathcal{C}}(\zeta-y_0)c\\
&=-\sup_{\mathbf{a}\in \mathbb{R}^I}\left\{-\mathbf{a}^T\mathbf{d}+y_0\sum^m_{i=1}\bar{q}_iu(f(\mathbf{a},\mathbf{x}_i))-\sum^L_{k=1}y_kf_k(\mathbf{a})\right\}+\inf_{c\in \mathcal{C}}(\zeta-y_0)c.
\end{align*}
Note that $\mathcal{C}$ is either $\mathbb{R}$ (if $\zeta =1$) or $\{c_0\}$ for some $c_0\in \mathbb{R}$ (if $\zeta = 0$). 
Hence, we have that
\begin{align*}
\inf_{c\in \mathcal{C}}(\zeta-y_0)c=\begin{cases}
-y_0c_0&~\text{if}~\mathcal{C}=c_0\\
0&~\text{if}~\mathcal{C}=\mathbb{R},~y_0=1\\
-\infty&~\text{else}.
\end{cases}    
\end{align*}
We examine the supremum term. 
We have
\begin{align*}
 &\sup_{\mathbf{a}\in \mathbb{R}^I}\left\{-\mathbf{a}^T\mathbf{d}+y_0\sum^m_{i=1}\bar{q}_iu(f(\mathbf{a},\mathbf{x}_i))-\sum^L_{k=1}y_kf_k(\mathbf{a})\right\}\\
 &=\sup_{\substack{\mathbf{a},\mathbf{w}_1,\ldots, \mathbf{w}_m,\\ \mathbf{v}_1,\ldots, \mathbf{v}_L\in \mathbb{R}^I}}\left\{-\mathbf{a}^T\mathbf{d}+y_0\sum^m_{i=1}\bar{q}_iu(f(\mathbf{w}_i,\mathbf{x}_i))-\sum^L_{k=1}y_kf_k(\mathbf{v}_k)~ \middle|~\mathbf{w}_i=\mathbf{a},~\mathbf{v}_k=\mathbf{a}, i\in [m], k\in [L]\right\}\\
 &=\inf_{\substack{\boldsymbol{\lambda}_1,\ldots \boldsymbol{\lambda}_m,\\ \boldsymbol{\eta}_1,\ldots,\boldsymbol{\eta}_L\in \mathbb{R}^I}}\sup_{\substack{\mathbf{a},\mathbf{w}_1,\ldots, \mathbf{w}_m,\\ \mathbf{v}_1,\ldots, \mathbf{v}_L\in \mathbb{R}^I}}\left\{-\mathbf{a}^T\mathbf{d}+\sum^m_{i=1}\mathbf{\boldsymbol{\lambda}}_i^T(\mathbf{w}_i-\mathbf{a})+\sum^L_{k=1}\mathbf{\boldsymbol{\eta}}_k^T(\mathbf{v}_k-\mathbf{a})+y_0\sum^m_{i=1}\bar{q}_iu(f(\mathbf{w}_i,\mathbf{x}_i))\right.\\
 &\qquad \left.-\sum^L_{k=1}y_kf_k(\mathbf{v}_k)\right\}\\
 &=\inf_{\substack{\boldsymbol{\lambda}_1,\ldots \boldsymbol{\lambda}_m,\\ \boldsymbol{\eta}_1,\ldots,\boldsymbol{\eta}_L\in \mathbb{R}^I}}\sup_{\substack{\mathbf{a},\mathbf{w}_1,\ldots, \mathbf{w}_m,\\ \mathbf{v}_1,\ldots, \mathbf{v}_L\in \mathbb{R}^I}}\left\{-\left(\sum^m_{i=1}\boldsymbol{\lambda}_i+\sum^L_{k=1}\boldsymbol{\eta}_k+\mathbf{d}\right)^T\mathbf{a}+\sum^m_{i=1}\boldsymbol{\lambda}_i^T\mathbf{w}_i-y_0\bar{q}_i(-u)(f(\mathbf{w}_i,\mathbf{x}_i))\right.\\
 &\qquad\left.+\sum^L_{k=1}\boldsymbol{\eta}_k^T\mathbf{v}_k-y_kf_k(\mathbf{v}_k)\right\}\\
 &=\inf_{\substack{\boldsymbol{\lambda}_1,\ldots \boldsymbol{\lambda}_m,\\ \boldsymbol{\eta}_1,\ldots,\boldsymbol{\eta}_L\in \mathbb{R}^I}}\left\{\sum^m_{i=1}y_0\bar{q}_i(-u\circ f)^*\left(\frac{\mathbf{\boldsymbol{\lambda}}_i}{y_0\bar{q}_i},\mathbf{x}_i\right)+\sum^L_{k=1}y_k(f_k)^*\left(\frac{\boldsymbol{\eta}_k}{y_k}\right)~\middle |~\sum^m_{i=1}\boldsymbol{\lambda}_i=-\sum^L_{k=1}\boldsymbol{\eta}_k-\mathbf{d}\right\},
\end{align*}
where $(-u\circ f)$ denotes the composition of the two functions. 
A change of variables $\mathbf{z}=y_0\mathbf{q}$ and $\mathbf{\bar{z}}=y_0\mathbf{\bar{q}}$ and a multiplication of the constraints in $\mathcal{U}_{\phi,h}(\mathbf{p})$ by $y_0$ yields the desired statement.
\end{proof}

\section{Optimization of Rank-Dependent Models in the Constraint}\label{app:con}
We describe in detail the adaptation of Algorithm~\ref{cutting_plane_algo} to \eqref{P-constraint} in Algorithm~\ref{Algo: cutting_plane_constraint}.
\begin{algorithm}[htp]
\caption{\textbf{Cutting-Plane Method with RDU Constraint}}\label{Algo: cutting_plane_constraint}
\begin{algorithmic}[1]
\State Start with $\mathcal{U}_1=\{(\mathbf{p},\mathbf{p})\}$. 
Fix a tolerance parameter $\epsilon_{\mathrm{tol}}>0$.
\State\label{step2_constr}At the $j$-th iteration, solve the following problem with the uncertainty set $\mathcal{U}_j$:
\begin{align}\label{cut_pb_j_constr}
    \begin{split}
         &\min_{\mathbf{a}\in \mathcal{A}}~\left\{g(\mathbf{a})~\middle |~ \sup_{(\mathbf{q},\bar{\mathbf{q}}) \in\mathcal{U}_{j}}\sum^m_{i=1}-\bar{q}_iu(f(\mathbf{a},\mathbf{x}_i))\leq c\right\}.
    \end{split}
\end{align}
\State\label{step3_constr}Let $\mathbf{a}_j$ be the optimal solution of \eqref{cutting_plane_pb_j}. 
Determine the ranking: 
\begin{align*}
    -u(f(\mathbf{a}_j,\mathbf{x}_{(1)}))\leq \ldots \leq -u(f(\mathbf{a}_j,\mathbf{x}_{(m)})).
\end{align*}
Then, solve the optimization problem \eqref{ranked_opt_robustcheck}, which gives an optimal objective value $v_j$ and a solution $(\mathbf{q}^*_j,\bar{\mathbf{q}}^*_j)$.
\State\label{step4_constr}If $v_j- c\leq \epsilon_{\mathrm{tol}}$, then the solution is accepted and the process is terminated.
\State\label{step5_constr}If not, set $\mathcal{U}_{j+1} = \mathcal{U}_j \cup \{(\mathbf{q}^*_j,\mathbf{\bar{q}}^*_j)\}$ and repeat steps~\ref{step2_constr}--\ref{step5_constr}.
\end{algorithmic}
\end{algorithm}
\section{SOS2-Constraints Formulation}\label{app:SOS2}
Let $u:[l_0,u_0]\to \mathbb{R}$ be a non-decreasing, piecewise-linear utility function defined on some interval $[l_0,u_0]$ that contains the image set $\{\mathbf{a}^T\mathbf{x}_i~|~\mathbf{a}\in \mathcal{A}, i\in [m]\}$. Let $\{t_j, u(t_j)\}^K_{j=1}$ be the support points of $u$, where $l_0=t_1<\cdots <t_K=u_0$. Then, the constraints in \eqref{RC: invS} can be formulated using the following bilinear and SOS2 constraints:
\begin{align}\label{RC: invS_SOS2}
    \begin{cases}
        \beta\cdot h(p^0)+\sum^{K_1}_{k=1}\nu_kb^{(1)}_k+\sum^m_{i=1}\sum^{K_1}_{k=1}\lambda_{ik}l^{(1)}_kp_i-\sum^m_{i=1}\bar{q}_iz_i\leq c\\
    -z_i-\beta-\sum^{K_1}_{k=1}\lambda_{ik}\leq 0,~\forall i\in [m]\\
        \lambda_{ik}\leq \nu_k,~\forall i\in [m],~\forall k \in [K_1]\\
        \bar{q}_i\leq l^{(2)}_kp_i+t_{ik},~\forall i\in [m],~\forall k\in [K_2]\\
        \sum^{m}_{i=1}t_{ik}\leq b^{(2)}_k,~\forall k\in [K_2] \\ 
        \sum^m_{i=1}\bar{q}_i=\bar{h}(1-p^0)\\
        \mathbf{a}^T\mathbf{x}_i=\sum^K_{j=1}\tilde{\lambda}_{ij}t_j,~\forall i\in [m]\\
        z_i=\sum^K_{j=1}\tilde{\lambda}_{ij}u(t_j),~\forall i\in [m]\\
        \sum^K_{j=1}\tilde{\lambda}_{ij}=1,~\forall i\in [m]\\
        \tilde{\lambda}_{ij}\geq 0, ~\mathrm{SOS}2,~\forall i\in [m],~\forall j\in [K].
    \end{cases}
\end{align}

\section{The Hit-and-Run Algorithm}\label{App:HitandRun}
In this appendix, we explain how the Hit-and-Run algorithm is applied to generate a uniform sample on the $\phi$-divergence set. Let $S$ be an open subset in $\mathbb{R}^d$. 
The general procedure of the Hit-and-Run algorithm is fairly simple. 
Start with an interior point $x_0\in S$. 
Choose uniformly a random direction $u\in \partial \mathcal{D}$ where $\partial \mathcal{D}\triangleq\{x\in \mathbb{R}^d: \|x\|_2=1\}$. 
Draw uniformly a scalar $\lambda\in \{\lambda\in \mathbb{R}: x_0+\lambda u \in S\}$ and then update $x_0$ with $x_0+\lambda u$. 
It is shown by \citet{HitRun} that for a convex set $S$, this sampling process will converge to a uniform sample on $S$.

To apply the Hit-and-Run algorithm to the $\phi$-divergence uncertainty set, we first re-parametrize the set. 
Recall the definition of the $\phi$-divergence set \eqref{defDivset}.
Using the equality $\mathbf{q}^T\mathbf{1}=1$, it is clear that we can parametrize the $\phi$-divergence set with the following set:
\begin{align}\label{phi_repar}
\begin{split}
    &\tilde{\mathcal{D}}_\phi(\mathbf{q}|\mathbf{p},r)\\
&\qquad \triangleq\{\mathbf{q}\in \mathbb{R}^{m-1}|\mathbf{q}\geq \mathbf{0},1-\sum^{m-1}_{i=1}q_i\geq 0, \sum^{m-1}_{i=1}p_i\phi\left(\frac{q_i}{p_i}\right)+p_m\phi\left(\frac{1-\sum^{m-1}_{i=1}q_i}{p_m}\right)\leq r\}.
\end{split}
\end{align}
Since this is a set with convex inequalities, and we assume Slater's condition, its interior is an open convex set. 
Hence, we can apply the Hit-and-Run algorithm to obtain a uniform sample of the interior of $\tilde{\mathcal{D}}_\phi(\mathbf{q}|\mathbf{p},r)$ and thus also the interior of $\mathcal{D}_\phi(\mathbf{p},r)$ due to their one-to-one correspondence. 

The precise procedure is the following.
\begin{itemize}
    \item We start with an interior point $\mathbf{q}_k\in \mathrm{int}\tilde{\mathcal{D}}_\phi(\mathbf{q}|\mathbf{p},r)$, where we set $\mathbf{q}_0=(p_1,\ldots, p_{m-1})$. 
    Then, we draw a uniform element $\mathbf{u}\in \partial \mathcal{D}$ of dimension $d=m-1$. 
    This can be done by drawing $d$ standard normal i.i.d.~samples $\mathbf{X}=(X_1,\ldots, X_{d})$ and setting $\mathbf{u}=\frac{\mathbf{X}}{\|\mathbf{X}\|_2}$, where $\|.\|_2$ is the Euclidean 2-norm. 
    Then, $\mathbf{u}$ follows a uniform distribution on $\partial \mathcal{D}$. 
    \item Given $\mathbf{u}$ and $\mathbf{q}_k$, 
    we determine the set $\Lambda\triangleq\{\lambda\in \mathbb{R}:\mathbf{q}_k+\lambda\mathbf{u}\in \tilde{\mathcal{D}}_\phi(\mathbf{q}|\mathbf{p},r)\}$. 
    Note that since $\tilde{\mathcal{D}}_\phi(\mathbf{q}|\mathbf{p},r)$ is convex, bounded and closed, there exist $\lambda_{\min},\lambda_{\max}$ such that $[\lambda_{\min},\lambda_{\max}]=\Lambda$.
    We may find both values using a bisection search. 
    Indeed, since $\mathbf{q}_k\in \mathrm{int}\tilde{\mathcal{D}}_\phi(\mathbf{q}|\mathbf{p},r)$, there exists a sufficiently small $\lambda_0>0$ such that $\lambda_0\in \Lambda$. 
    Take a sufficiently large $\lambda_1>\lambda_0$ such that $\lambda_1\notin \Lambda$, which also exists since $\Lambda$ is bounded. 
    Then, perform the bisection search on $[\lambda_0,\lambda_1]$ to find $\lambda_{\max}$. 
    Do the same for $\lambda_{\min}$. 
    \item We draw a random $\lambda\in [\lambda_{\min},\lambda_{\max}]$ and update $\mathbf{q}_{k+1}=\mathbf{q}_k+\lambda \mathbf{u}$. 
    We then repeat the process.
\end{itemize}

\end{document}